\newtheorem{lemma}{Lemma}[section]
\newtheorem{theorem}[lemma]{Theorem}
\newtheorem{corollary}[lemma]{Corollary}
\newtheorem{definition}[lemma]{Definition}
\newtheorem{observation}[lemma]{Observation}
\newtheorem{claim}[lemma]{Claim}
\newtheorem{problem}[lemma]{Problem}
\newtheorem{conjecture}[lemma]{Conjecture}
\newcommand\claimproofend{\renewcommand{\qedsymbol}{$\boxdot$}
\end{proof}
\renewcommand{\qedsymbol}{$\square$}}
\newcommand{\E}{\mathbb{E}}
\title{Approximate path decompositions of regular graphs}
\newcommand{\labelinthm}[1]{%
   \label{temp#1}
   \protected@write \@auxout {}{\string \newlabel{#1}{{\emph{\ref{temp#1}}}{\thepage}{\emph{\ref{temp#1}}}{temp#1}{}} }
}
\newcounter{propcounter}
\newcommand{\eps}{\varepsilon}
\newcommand{\labelno}[1]{}
\newcommand{\N}{\mathbb{N}}
\newcommand\polysmall{\ll}
\author{Richard Montgomery\thanks{Mathematics Institute, University of Warwick, Coventry, UK. Research supported by the European Research Council (ERC) under the European Union Horizon 2020 research and innovation programme (grant agreement No.\ 947978). 
\\ Email: {\tt
richard.montgomery@warwick.ac.uk}.} 
\and 
Alp M\"uyesser\thanks{Department of Mathematics, University College London, Gower Street, London, UK. \\ {Emails}: {\tt alp.muyesser.21@ucl.ac.uk} and {\tt dralexeypokrovskiy@gmail.com.}} \and Alexey Pokrovskiy\footnotemark[2] \and Benny Sudakov\thanks{
Department of Mathematics, ETH, Z\"urich, Switzerland. Research supported in part by SNSF grant 200021-228014.
\newline
{Email}: {\tt benjamin.sudakov@math.ethz.ch}.
}}
\date{}
\begin{document}

\maketitle

\begin{abstract}
 We show that the edges of any $d$-regular graph can be almost decomposed into paths of length roughly $d$, giving an approximate solution to a problem of Kotzig from 1957. Along the way, we show that almost all of the vertices of a $d$-regular graph can be partitioned into $n/(d+1)$ paths, asymptotically confirming a conjecture of Magnant and Martin from 2009.
\end{abstract}

%%%%%%%%%%%%%%%%%%%%%%%%%%%%%%%%%%%%%%%%%%%%%%%%%%%%%%%%%%%%%%%%%%%%%%%%%%%%%%%%%%%%%%%%%%%%%%%%%%%%%%%%%%%%%%%%%%%%%%%%%%%%%%%%%%%%%%%%%%%%%%%%
%%%%%%%%%%%%%%%%%%%%%%%%%%%%%%%%%%%%%%%%%%%%%%%%%%%%%%%%%%%%%%%%%%%%%%%%%%%%%%%%%%%%%%%%%%%%%%%%%%%%%%%%%%%%%%%%%%%%%%%%%%%%%%%%%%%%%%%%%%%%%%%%
%%%%%%%%%%%%%%%%%%%%%%%%%%%%%%%%%%%%%%%%%%%%%%%%%%%%%%%%%%%%%%%%%%%%%%%%%%%%%%%%%%%%%%%%%%%%%%%%%%%%%%%%%%%%%%%%%%%%%%%%%%%%%%%%%%%%%%%%%%%%%%%%

\section{Introduction}\label{sec:intro}
%\begin{theorem}\label{Theorem_main} For every $\eps>0$, there exists $d_0\in\mathbb{N}$ such that for all $d\geq d_0$ the following holds. Let $G$ be a $d$-regular $n$-vertex graph. Then, all but at most $\eps nd$ edges of $G$ can be decomposed into paths of length $\lceil (1-\eps)d \rceil$.\end{theorem}

%\begin{theorem}\label{thm:pathcovers}For every $\eps$, there exists $d_0\in\mathbb{N}$ such that for all $d\geq d_0$ the following holds. Let $G$ be a $d$-regular $n$-vertex graph. Then, all but at most $\eps n$ vertices of $G$ can be partitioned into at most $n/(d+1)$ paths.\end{theorem}

A typical decomposition problem asks if the edges of some graph can be partitioned into copies of another graph. The origins of this area can be traced back to Euler who asked the following question in 1782: for which $n$ does the balanced complete $4$-partite graph $K_{n,n,n,n}$ decompose into copies of the complete graph on $4$ vertices (i.e.\ copies of $K_4$)? Euler's problem is customarily phrased using the language of Latin squares and transversals, but the two formulations are equivalent. In 1847, Kirkman studied decompositions of a complete graph into triangles (copies of $K_3$). Such decompositions are also referred to as \textit{Steiner triple systems}. In 1882, Walecki studied decompositions of complete graphs into Hamilton paths and cycles.
All of this work, and other early approaches to graph decomposition problems, used exclusively algebraic or constructive techniques. In contrast, there have been many applications of the probabilistic method \cite{alon2016probabilistic} to graph decomposition problems in the last few decades, and several major problems have recently been resolved. Highlights include, but are certainly not limited to, Keevash's resolution \cite{keevash2014existence} of the Existence Conjecture for designs, which is a far reaching generalisation of the work of Kirkman (see also \cite{glock2023existence}), and the proof for large $n$ of Ringel's conjecture \cite{montgomery2021proof, keevash2014existence} which states that $K_{2n+1}$ decomposes into copies of any $n$-edge tree. We refer the reader to \cite{bottcher2022graph, glock2021extremal} for two recent surveys of the area.

The common denominator in these recent advances is that they concern decompositions of highly dense structures, such as complete graphs. 
There is also a large body of conjectures on the decomposition of sparse graphs, where there has been much less progress as existing proof techniques like those involving Szemer\'edi's regularity lemma only apply to dense graphs.  Perhaps the most famous conjecture about decomposing sparse graphs is the linear arboricity conjecture of Akiyama, Exoo, and Harary~\cite{akiyama1980covering} from 1980, which says that every graph with maximum degree $\Delta$ can be decomposed into $\lceil(\Delta+1)/2\rceil$ path forests. The best general bound on this is by Lang and Postle~\cite{lang2023improved} who proved that such a graph always has a decomposition into $\Delta/2+3\sqrt{\Delta}\log^4\Delta$ path forests. 
For decompositions into paths rather than path forests, there is a conjecture of Gallai which predicts that every connected $n$-vertex graph can be decomposed into at most $(n+1)/2$ paths (see~\cite{lovasz1968covering}). The best general result for this is that every such graph can be decomposed into at most $2n/3$ paths, proved in~\cite{dean2000gallai, yan1998path}. What if we require all the paths to be of the same length $d$? Here some extra conditions are required on the host graph $G$, since at the very least we need $d$ to divide $e(G)$. 
Botler, Mota, Oshiro, and Wakabayashi~\cite{botler2017decomposingb} showed that such a $P_d$-decomposition exists if, furthermore, the graph is $f(d)$-edge-connected, for some function $f$, a result motivated by a now-solved conjecture of Bar\'at and Thomassen (see~\cite{bensmail2017proof}). 
A natural way to ensure $d$ divides $e(G)$ when $d$ is odd is to require $G$ to be $d$-regular. In 1957, Kotzig~\cite{kotzig1957theorie} proved that a $3$-regular graph has a $P_3$-decomposition if and only if $G$ has a perfect matching, and raised the following general question for larger $d$.

\begin{problem}\label{prob:kotzig}
When $d$ is odd, which $d$-regular graphs can be decomposed into paths of length $d$?
\end{problem}
\par  In 1990, Bondy \cite{bondy1990perfect} proposed a further extension of this problem without the regularity assumption. There are very few general results known about these problems. Some progress has been made on a related problem of Favaron, Genest, and Koudier \cite{favaron2010regular}, who conjectured that a $d$-regular graph $G$ (for odd $d$) decomposes into copies of $P_d$ if $G$ contains a perfect matching. See \cite{botler2015decompositions,botler2017decomposing,botler2022decomposition} for some partial progress on this conjecture concerning the cases when $d$ is small, or when the host graph $G$ is assumed to have high girth.

As obtaining a precise decomposition appears to be so difficult, it is of independent interest to show that the existing conjectures hold true approximately, in part as it may eventually be used as a stepping stone towards a full resolution.  
For example, even the approximate version of the Existence Conjecture for designs was a longstanding open problem of Erd\H{o}s and Hanani \cite{erdos1963limit} before it was resolved through the influential work of Rödl \cite{rodl1985packing}. This approximate solution plays a key component in the work of Keevash \cite{keevash2014existence} resolving the full conjecture.

\par In this paper, we address a relaxed version of Kotzig's problem by showing that almost all of the edges of a $d$-regular graph can be decomposed into paths of length roughly $d$.

\begin{theorem}\label{Theorem_main} For every $\eps>0$, there exists $d_0\in\mathbb{N}$ such that the following holds for all $d\geq d_0$. If $G$ is a $d$-regular $n$-vertex graph, then all but at most $\eps nd$ edges of $G$ can be decomposed into paths of length $\lceil (1-\eps)d \rceil$.
\end{theorem}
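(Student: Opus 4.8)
The plan is to reduce the edge decomposition to an approximate version of the Magnant--Martin conjecture -- that almost all vertices of a $d$-regular graph can be covered by about $n/(d+1)$ vertex-disjoint paths -- which I would establish first.

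\textbf{Step 1 (approximate Magnant--Martin).} Show that every $d$-regular $n$-vertex graph, $d$ large, has a family of at most $(1+\eps)n/(d+1)$ vertex-disjoint paths covering all but $\eps n$ vertices. Begin from a (near-)$2$-factor $F$: for even $d$ this exists by Petersen's theorem (which in fact gives a full $2$-factorisation), and for odd $d$ one first deletes a sparse subgraph to reach an even-regular graph. Write $F$ as a disjoint union of cycles; deleting one edge per cycle of length at least $d+1$ turns those into paths cheaply, so the difficulty is the short cycles. As every vertex on a cycle of length below $d$ has at least $d-2$ neighbours off that cycle, short cycles cannot be locally isolated, so one can iteratively splice them onto longer paths through these outgoing edges; the awkward case -- a cluster of short cycles whose combined external neighbourhood is trapped in a small already-committed set -- is handled by a more global rerouting, or, failing that, by charging those vertices to the $\eps n$ error.

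\textbf{Step 2 (edge decomposition).} Put $\ell := \lceil(1-\eps)d\rceil$. The naive attempt is to iterate Step 1: peel off a near-spanning path system, chop each of its paths into subpaths of length exactly $\ell$ (splitting a path of length $L\ge\ell$ into pieces of length in $[\ell,2\ell-1]$, then trimming to $\ell$), and recurse on the remaining, roughly $(d-2)$-regular, graph. This breaks down -- and herein lies the main obstacle -- because chopping only consumes paths of length at least $\ell$, so it makes progress only while the current regularity exceeds $(1-\eps)d$, i.e.\ for about $\eps d/2$ rounds, removing only an $\eps$-fraction of the edges; one is then left with a nearly $(1-\eps)d$-regular graph facing the same problem. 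To get past this, one must extract in a single step a subgraph of regularity linear in $d$ that is itself decomposable into length-$\ell$ paths, and locating such a subgraph seems to need a structural dichotomy. If $G$ looks like a disjoint union of cliques $K_{d+1}$, one decomposes each clique almost perfectly into length-$\ell$ paths (cut its $\sim d/2$ Hamilton cycles, wasting only $\sim\eps d$ edges per cycle). If instead $G$ is sufficiently well-connected, one takes an (approximate) Hamilton decomposition and then \emph{stitches together} path-arcs drawn from different Hamilton cycles into length-$\ell$ paths, so as to avoid the wastage of chopping cycles of intermediate length separately. A general $G$ is reduced to these two regimes by repeatedly splitting along sparse separators and discarding the few crossing edges. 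The technical heart is then: performing the stitching while keeping the total discarded edges below $\eps nd$; pushing the near-Hamilton-decomposition input down to regular graphs that are only mildly expanding; and controlling the recursion over sparse cuts. Step 1 enters both as a tool for absorbing leftover pieces and because its proof -- isolating the union-of-cliques extremal case from everything else -- is precisely what powers the dichotomy.
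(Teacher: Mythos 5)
Your high-level diagnosis is sound -- you correctly identify that naive iteration of a path cover stalls after $\approx\eps d/2$ rounds, and that disjoint unions of $K_{d+1}$'s are the extremal obstruction that forces a structural case split -- but both of your steps rest on tools that either do not exist or do not do the work you assign them. In Step 1, Petersen's theorem may hand you a $2$-factor consisting entirely of triangles, in which case \emph{every} cycle is short, you start with $n/3$ paths, and the "splicing" is the whole theorem; moreover the vertices on short cycles can be all of $V(G)$, so the fallback of charging them to the $\eps n$ error is unavailable, and "a more global rerouting" is precisely the missing argument. The paper never goes near $2$-factors: it randomly partitions $V(G)$ into $s\approx d^{0.15}$ classes (local lemma), decomposes each bipartite graph between classes into near-perfect matchings (Vizing), concatenates matchings along a path decomposition of the auxiliary $K_s$ to get $d/2$ path forests each with $\approx n/d^{1/8}$ paths, and then repeatedly joins endpoints through a reserved random set $Y$; the path count drops to $(1+\eps)n/d$ because endpoints of distinct paths that cannot be joined must have disjoint neighbourhoods in the unused part of $Y$, so there are at most $|Y|/(pd/10)\approx n/d$ of them.

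In Step 2, the "well-connected" branch of your dichotomy requires an approximate Hamilton decomposition of a sparse regular graph that is only "mildly expanding". No such theorem is available -- Hamilton decompositions of regular graphs are known only in the dense regime, and obtaining them for sparse expanders is essentially as hard as the problem you are trying to solve -- so this is not a technical detail but the entire difficulty relocated. The sparse-separator recursion also does not terminate in your two regimes: a piece admitting no sparse cut need not be close to a clique nor admit anything like a Hamilton decomposition. The paper's actual mechanism avoids Hamiltonicity entirely: it sets aside a \emph{maximal} family of $(\eta,d,K)$-dense spots (subgraphs of minimum degree $(1-\eta)d$ on at most $Kd$ vertices, not necessarily cliques), runs the matching-based path-forest construction on the complement, and connects endpoints through a carefully sampled set $X$ either to each other (yielding paths of length $\ge Kd$, which chop into length-$\lceil(1-\eps)d\rceil$ pieces with negligible waste) or -- this is the heart of the proof -- into a dense spot, using the fact that an endpoint whose iterated neighbourhood in $X$ stops growing must sit next to a small dense subgraph of $G[X]$, which by the sampling property lies inside a dense spot of $\mathcal{F}$. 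Each dense spot, together with its attached path-stubs, is then almost-decomposed by finding edge-disjoint path forests of prescribed vertex-counts (via an approximate path packing of an auxiliary complete graph) and gluing them with a robust connectivity lemma. If you want to salvage your outline, the piece to replace is the Hamilton-decomposition branch with some mechanism for attaching leftover path-ends to the dense pieces.
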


Constructing large edge-disjoint path forests  plays a large part in our methods. However, note that Theorem~\ref{Theorem_main} cannot be strengthened to require that the paths in the decomposition can be arranged into $\lceil (d+1)/2\rceil$ path forests (as in the linear arboricity conjecture), as seen by considering the vertex-disjoint union of $d$-regular graphs of order $3d/2$. Our methods, though, can be used to make progress on a problem related to the linear arboricity conjecture, as follows. Note that, when $d$ is odd, if the linear arboricity conjecture is true for $d$-regular graphs then one of the path forests in such a decomposition would have at most $\frac{nd}{2}/\frac{d+1}{2}=n-\frac{n}{d+1}$ edges, and therefore (adding isolated vertices if necessary) form a spanning path forest with at most $\frac n {d+1}$ paths. Though naturally much weaker than the linear arboricity conjecture, showing even the existence of such a path forest in a $d$-regular graph appears very difficult. This (for odd or even $d$) is the topic of the following conjecture by Magnant and Martin \cite{magnant2009note} from 2009 
 (which, as per \cite{feige2014short}, also has an interesting connection to tour length problems).

\begin{conjecture}\label{conj:MM}
The vertices of every $d$-regular $n$-vertex graph can be partitioned into $\lfloor n/(d+1)\rfloor$ paths.
\end{conjecture}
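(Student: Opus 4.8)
The plan is to obtain the exact conjecture from an approximate version by absorption, after peeling off the easy and the near-extremal cases. First I would reduce to connected graphs: since $\sum_i\lfloor x_i\rfloor\le\lfloor\sum_i x_i\rfloor$, if each component $G_i$ (on $n_i$ vertices) admits a partition into at most $\lfloor n_i/(d+1)\rfloor$ paths, then so does $G$. A connected $d$-regular graph on at most $2d+1$ vertices has $\delta\ge(n-1)/2$ and hence a Hamilton path, by a standard degree condition; so I may assume $n\ge 2(d+1)$, which guarantees the ``budget'' $\lfloor n/(d+1)\rfloor$ is at least $2$. I would then treat $d\ge d_0$ for a large constant $d_0$ and split into two regimes: an ``expanding'' regime, where $G$ has enough connectivity and spread for the absorption argument below, and a ``structural'' regime, where $G$ decomposes along sparse cuts into a bounded number of clique-like clusters (this regime also absorbs the range where $n$ is only a bounded multiple of $d$).

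In the expanding regime I would use the following gadget: if $v$ lies on no path and two of its neighbours $u,u'$ appear consecutively on a path $P$, then replacing the edge $uu'$ of $P$ by $uv,vu'$ inserts $v$ while keeping the number of paths fixed. I would set aside, up front, a well-spread vertex set $S$ of size $\Theta(\eps n)$ carrying a \emph{robust absorber}: a bounded number of vertex-disjoint paths containing at least $\eps n$ vertex-disjoint insertion gadgets, arranged through a ``robustly matchable'' template so that for \emph{every} leftover set $U$ with $|U|\le\eps n$ one can choose a system of distinct gadgets, one adjacent to each vertex of $U$, and insert all of $U$ at once. Next I would apply the approximate path-partition result that the methods behind Theorem~\ref{Theorem_main} yield (as advertised in the abstract) to $G-S$ — still almost $d$-regular because $S$ is well spread — to cover all but at most $\eps n$ of its vertices with at most $\lfloor n/(d+1)\rfloor$ paths. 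Adding the boundedly many absorber paths and then merging that bounded excess along edges of the connected graph $G$ joining path endpoints brings the count back to at most $\lfloor n/(d+1)\rfloor$, after which inserting every still-uncovered vertex through the robust absorber produces a spanning path partition of the required size.

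The structural regime is where I expect the real difficulty to lie, together with the construction of the robust absorber in a sparse $d$-regular host whose global structure is arbitrary — that is, adapting the dense-graph absorption machinery, in the spirit of R\"odl--Ruci\'nski--Szemer\'edi and of the robustly matchable bipartite graphs of Montgomery, to the regular sparse setting. For $G$ close to the extremal configuration — a ``tree'' or ``spider'' of clique-like clusters joined along only a few edges — the budget slack is a bounded constant, so absorption may create no new path and there may be no room to merge; here I would instead argue directly on the bounded-size cluster graph, spanning each cluster robustly by the right number of paths and using the inter-cluster edges to merge paths so that the rounding loss incurred at each cluster is exactly paid for. Making the two regimes meet — proving a clean dichotomy under which one of the two arguments always applies, and showing the structural clusters are robust enough to route through — is likely to require the most care.
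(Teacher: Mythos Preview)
The statement you are trying to prove is Conjecture~\ref{conj:MM}, which the paper explicitly leaves \emph{open}. The paper does not prove this conjecture; it proves only the approximate version, Theorem~\ref{thm:pathcovers}, covering all but $\eps n$ vertices with at most $n/(d+1)$ paths. Indeed, the introduction remarks that even the weaker question of whether $O(n/(d+1))$ paths always suffice (a conjecture of Feige and Fuchs) ``remains wide open.'' So there is no proof in the paper to compare your proposal against.

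Your proposal is a plan of attack rather than a proof, and you yourself flag the two places where it is incomplete: building a robust absorber inside a sparse $d$-regular host of arbitrary global structure, and handling the near-extremal ``structural'' regime where the budget slack is $O(1)$. Both are genuine obstacles, not just details. For the first, the robustly-matchable-template machinery you invoke requires that almost every leftover vertex has many candidate gadgets inside the reserved set $S$; in a sparse $d$-regular graph containing clique-like clusters, a vertex's entire neighbourhood may sit in one cluster, and the paper's own treatment of such dense spots (Sections~\ref{sec:connecttodensebits}--\ref{sec:insidedensebits}) already shows how delicate routing through them is even for the \emph{approximate} problem. For the second, the step ``merging that bounded excess along edges of the connected graph $G$ joining path endpoints'' hides a real difficulty: connectivity alone gives you no edge between two specified path endpoints, and in the structural regime there may be exactly one edge between two clusters, already used by a path, so no merging is available.

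In short, you have correctly identified where the hard parts lie but have not resolved them; the conjecture is open and the paper makes no claim to prove it.
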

The disjoint union of complete graphs on $d+1$ vertices shows that this conjecture would be optimal. It was confirmed for all $d\leq 5$ by Magnant and Martin \cite{magnant2009note}, and for $d=6$ by Feige and Fuchs \cite{feige2022path}. It has also recently been confirmed in the dense case ($d=\Omega(n)$) by Gruslys and Letzter \cite{gruslys2021cycle}, but in general even the weaker conjecture by Feige and Fuchs \cite{feige2022path} that every $d$-regular $n$-vertex graph can be partitioned into $O(n/(d+1))$ paths remains wide open. We prove the following approximate version of Conjecture~\ref{conj:MM}. 

\begin{theorem}\label{thm:pathcovers}
For every $\eps>0$, there exists $d_0\in\mathbb{N}$ such that the following holds for all $d\geq d_0$. Let $G$ be a $d$-regular $n$-vertex graph. Then, all but at most $\eps n$ of the vertices of $G$ can be partitioned into at most $n/(d+1)$ paths.
\end{theorem}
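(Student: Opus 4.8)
\textbf{Proof proposal for Theorem~\ref{thm:pathcovers}.}

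The plan is to obtain the paths as the components of a near-spanning linear forest, and to control their number by a trivial counting step rather than directly. Concretely, it suffices to find vertex-disjoint paths $P_1,\dots,P_k$ in $G$, each with at least $(1-\eps')(d+1)$ vertices for a suitable $\eps'=\eps'(\eps)$, which together cover all but at most $\tfrac{\eps}{2}n$ vertices. Indeed, from the size bound we get $k(1-\eps')(d+1)\le \sum_i|V(P_i)|\le n$, so $k$ is at most $\tfrac{1}{1-\eps'}\cdot\tfrac{n}{d+1}$; if this exceeds $n/(d+1)$ we simply delete the $t:=\lceil k-n/(d+1)\rceil$ smallest paths, which removes at most $\tfrac{t}{k}\cdot n\le \tfrac{\eps}{2}n$ vertices once $\eps'$ is small, leaving at most $n/(d+1)$ paths covering all but at most $\eps n$ vertices. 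Since a $d$-regular graph is a disjoint union of components, each on at least $d+1$ vertices, and both the vertex count and the allowed loss are additive over components, it is enough to prove the following: every connected graph $H$ on $m$ vertices with minimum degree at least $(1-o(1))d$ contains vertex-disjoint paths, each on at least $(1-\eps')(d+1)$ vertices, covering all but $o(m)$ of its vertices.

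For this core statement I would argue by an expander-decomposition-type recursion. If $H$ is already a sufficiently robust expander, then by the usual robust-expansion technology (à la Kühn--Osthus) it contains a cycle, hence a path, on all but $o(m)$ of its vertices; since $m\ge d+1$ this single path has at least $(1-o(1))(d+1)$ vertices and we are done for $H$. Otherwise $H$ has a sparse cut $(A,B)$; we delete the few vertices lying on many cut-edges (there are at most $O(e_G(A,B)/d)$ of them), which makes $A$ and $B$ each have minimum degree $(1-o(1))d$, and recurse on the two sides. Because the pieces strictly shrink and any graph on $O(d)$ vertices with minimum degree $(1-o(1))d$ is essentially complete and hence a robust expander, the recursion terminates; every leaf piece is a robust expander on at least $(1-o(1))(d+1)$ vertices and contributes one path, and since the leaf pieces are vertex-disjoint the total vertex count is $\le m$ — which is exactly why the final count comes out at most $n/(d+1)$ after the pruning step above. (When $d$ is even one could instead start from a $2$-factor via Petersen's theorem and merge its short cycles using cross-edges, but the expander viewpoint treats odd $d$ uniformly.)

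The main obstacle is making this recursion quantitatively honest. Off-the-shelf expander decompositions lose polylogarithmic factors in the number of removed edges, which here one cannot afford: to keep every piece on at least $(1-\eps')(d+1)$ vertices one needs the \emph{total} number of edges ever removed to be $o(nd)$, so that minimum degrees stay $(1-o(1))d$ throughout, while simultaneously each leaf piece must expand enough to contain a near-spanning path. Reconciling these two demands for sparse, (near-)regular $G$ — where Szemerédi-type tools are unavailable — is the heart of the matter, and is presumably where the bespoke machinery developed for Theorem~\ref{Theorem_main} (the same tools used there to build large edge-disjoint path forests) replaces generic expander decomposition. A secondary subtlety is the near-extremal regime, where $G$ consists of clusters resembling disjoint copies of $K_{d+1}$: there every cycle has length essentially $d+1$, the bound $n/(d+1)$ is attained with no slack, and the bookkeeping of path lengths against $d+1$ must be carried out with care. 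It is precisely the freedom to omit up to $\eps n$ vertices — used in the pruning step — that absorbs the small overcounts coming from pieces slightly smaller than $d+1$.
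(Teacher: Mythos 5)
Your opening and closing bookkeeping is fine: reducing to a near-spanning collection of vertex-disjoint paths and then pruning the shortest ones is a valid way to get the count down to $n/(d+1)$. But the core claim — that every connected piece with minimum degree $(1-o(1))d$ admits vertex-disjoint paths, each on $\geq(1-\eps')(d+1)$ vertices, covering almost everything — is exactly what is not proved, and both pillars it rests on are genuine gaps rather than citable facts. First, ``robust expansion \`a la K\"uhn--Osthus implies a near-spanning path'' is a dense-graph statement (it requires linear minimum degree); for a sparse near-$d$-regular graph, the absence of sparse cuts does \emph{not} come with an off-the-shelf theorem producing a path on $(1-o(1))m$ vertices. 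That claim is essentially the Hamiltonicity-of-sparse-expanders problem, which is far from routine and certainly not ``usual technology''. Second, the recursion's quantitative control is the part you explicitly defer: you need not just $o(nd)$ edges removed in total, but that every surviving vertex keeps degree $(1-o(1))d$, so that every leaf has at least $(1-\eps')(d+1)$ vertices; generic expander decomposition loses polylogarithmic factors and gives no per-vertex guarantee. Since you acknowledge that this reconciliation ``is the heart of the matter'' and is ``presumably'' handled by unspecified machinery, the proposal is a plan with its crucial step missing, not a proof.

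It is worth noting that the paper avoids both difficulties by never trying to find near-spanning paths in expanders and never requiring each individual path to be long. It randomly reserves a set $Y$ with $d_G(v,Y)\approx pd$ for all $v$, builds $d/2$ edge-disjoint path forests in $G-Y$ by a local-lemma vertex partition plus large bipartite matchings (Lemma~\ref{Lemma_decomposition_path_forests_regular}), and then joins paths within each forest through $Y$ via length-two connectors; a maximality argument shows that if a forest still had many more than $n/d$ paths, two unjoined endpoints would share an unused neighbour in $Y$, a contradiction (Lemma~\ref{Lemma_all_first_connecting_paths_newnew}). Averaging over the $d/2$ forests, one of them has $(1-\eps/2)n$ edges spread over at most $(1+\eps/4)n/d$ paths, and taking its $\lfloor n/(d+1)\rfloor$ longest paths already covers $(1-\eps)n$ vertices. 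Only the number of paths and the total edge count are controlled — individual paths may be short — which is a strictly weaker (and achievable) requirement than the uniform length lower bound your reduction demands.
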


If we consider decomposing approximately the edges of a $d$-regular graph $G$ into copies of $P_\ell$ starting with small $\ell$, then the extremal examples for why we cannot increase $\ell$ beyond $d$ all contain small dense subgraphs (e.g.\ the cliques in the disjoint union of copies of $K_{d+1}$), which we refer to as `dense spots'. In $d$-regular graphs without these dense spots it would be relatively easy (using techniques developed for the linear arboricity conjecture) to decompose almost all of the edges of $G$ into paths with length $\lceil(1-\eps)d\rceil$ by using the local lemma to partition $V(G)$ into sets between which we can find many edge-disjoint matchings which combine to form many paths, and then iteratively connect some of these paths together using a set of reserved vertices to get longer and longer paths (as we do later in parts of our proof). The main idea in our proof is to set aside the dense spots in a $d$-regular graph and then show that if the preceding approach goes wrong then we will be able to connect paths which we cannot lengthen into dense spots, so that we can then decompose approximately the dense spots along with their attached paths into paths with length $\lceil(1-\eps)d\rceil$. To do so, we will need the set of reserved vertices to `sample' each dense spot appropriately in a precise manner, and how we manage this is the key technical novelty in our methods. However, there are many further challenges in developing these initial ideas into a proof of Theorem~\ref{Theorem_main}, and even approximately decomposing the dense spots with connected paths requires new ideas. Thus, in the next section, after outlining our notation, we give a detailed sketch of our methods. Following this, we outline the rest of the paper. After our proofs, we make some concluding remarks in Section~\ref{sec:concluding}, including on generalisations of path decomposition conjectures to other trees.

%%%%%%%%%%%%%%%%%%%%%%%%%%%%%%%%%%%%%%%%%%%%%%%%%%%%%%%%%%%%%%%%%%%%%%%%%%%%%%%%%%%%%%%%%%%%%%%%%%%%%%%%%%%%%%%%%%%%%%%%%%%%%%%%%%%%%%%%%%%%%%%%
%%%%%%%%%%%%%%%%%%%%%%%%%%%%%%%%%%%%%%%%%%%%%%%%%%%%%%%%%%%%%%%%%%%%%%%%%%%%%%%%%%%%%%%%%%%%%%%%%%%%%%%%%%%%%%%%%%%%%%%%%%%%%%%%%%%%%%%%%%%%%%%%
%%%%%%%%%%%%%%%%%%%%%%%%%%%%%%%%%%%%%%%%%%%%%%%%%%%%%%%%%%%%%%%%%%%%%%%%%%%%%%%%%%%%%%%%%%%%%%%%%%%%%%%%%%%%%%%%%%%%%%%%%%%%%%%%%%%%%%%%%%%%%%%%

\section{Preliminaries}\label{sec:prelim}
Following an overview of our notation in Section~\ref{sec:notation}, in Section~\ref{sec:proofsketch} we give a sketch of the proof before outlining the rest of the paper. For ease of notation, we will assume throughout that $d$ is even, for example constructing paths forests $\mathcal{P}_i$, $i\in [d/2]$. The case where $d$ is odd follows almost identically as the proofs apply to $(d-1)$-regular graphs without any significant alteration.

\subsection{Notation}\label{sec:notation}
For a graph $G$, let $V(G)$ and $E(G)$ denote the vertex set and edge set of $G$, respectively, and let $|G|=|V(G)|$. For a vertex $v\in V(G)$, let $N_G(v)$ denote the neighbourhood of $v$ in $G$ and, for a subset $U\subset V(G)$, let $N_G(v,U)=N_G(v)\cap U$. We let $d_G(v)=|N_G(v)|$ denote the degree of $v$ and write $d_G(v,U)=|N_G(v,U)|$ for the degree of $v$ into a subset $U\subset V(G)$. As we do elsewhere, we omit $G$ from the subscript whenever there is no risk of confusion. The minimum and maximum degree of $G$ is denoted by $\delta(G)$ and $\Delta(G)$ respectively. For a subset $U\subseteq V(G)$, let $G[U]$ denote the graph induced by $U$ and, given a subset $U'\subset V(G)\setminus U$, let $G[U,U']$ denote the bipartite graph with bipartition $U\cup U'$ and edges of the form $uu'\in E(G)$ with $u\in U$ and $u'\in U'$.
Given a subset $U\subset V(G)$, we let $G-U$ be the graph $G[V(G)\setminus U]$, which is $G$ with the vertices in $U$ removed, and use similar natural and common other notation. When $\mathcal{F}$ is a collection of graphs, $V(\mathcal{F})$ and $E(\mathcal{F})$ will be the set of vertices and edges used in some graph in $\mathcal{F}$ respectively.

For each positive integer $n$, we let $[n]=\{1,\ldots, n\}$. Given real numbers $a,b,c$, we write $a=b\pm c$ to denote that $b-c\le a\le b+c$.  We say a graph $G$ is $(1\pm \gamma)d$-regular if $d_G(v)=(1\pm \gamma)d$ for each $v\in V(G)$. We use standard notation for ``hierarchies'' of constants, writing $x\ll y$ to mean that there is a non-decreasing function $f : (0,1] \rightarrow (0, 1]$ such that all the relevant subsequent statements hold for $x\leq f(y)$. Hierarchies with multiple constants are defined similarly, with the functions chosen from right to left, for example, for $x\ll y\ll z$.  We omit rounding signs where they are not crucial.

In this paper, we will often add paths to connect paths in a path forest. Given two path forests $\mathcal{P}$ and $\mathcal{Q}$ we use $\mathcal{P}+\mathcal{Q}$ to denote the graph with vertex set $V(\mathcal{P})\cup V(\mathcal{Q})$ and edge set $E(\mathcal{P})\cup E(\mathcal{Q})$. Furthermore, we exclusively use this when the resulting graph is also a path forest, and each path in $\mathcal{Q}$ has at least one endpoint among the endpoints of $\mathcal{P}$.

%%%%%%%%%%%%%%%%%%%%%%%%%%%%%%%%%%%%%%%%%%%%%%%%%%%%%%%%%%%%%%%%%%%%%%%%%%%%%%%%%%%%%%%%%%%%%%%%%%%%%%%%%%%%%%%%%%%%%%%%%%%%%%%%%%%%
%%%%%%%%%%%%%%%%%%%%%%%%%%%%%%%%%%%%%%%%%%%%%%%%%%%%%%%%%%%%%%%%%%%%%%%%%%%%%%%%%%%%%%%%%%%%%%%%%%%%%%%%%%%%%%%%%%%%%%%%%%%%%%%%%%%%
%%%%%%%%%%%%%%%%%%%%%%%%%%%%%%%%%%%%%%%%%%%%%%%%%%%%%%%%%%%%%%%%%%%%%%%%%%%%%%%%%%%%%%%%%%%%%%%%%%%%%%%%%%%%%%%%%%%%%%%%%%%%%%%%%%%%
%%%%%%%%%%%%%%%%%%%%%%%%%%%%%%%%%%%%%%%%%%%%%%%%%%%%%%%%%%%%%%%%%%%%%%%%%%%%%%%%%%%%%%%%%%%%%%%%%%%%%%%%%%%%%%%%%%%%%%%%%%%%%%%%%%%%

\subsection{Proof sketch}\label{sec:proofsketch}
Let $G$ be an $n$-vertex $d$-regular graph and let $1/d\ll \eps$ and $\ell\leq (1-\eps)d$. Suppose we wish to find edge-disjoint paths of length $\ell$ in $G$ covering all but at most $\eps n d$ edges (noting that if we can do this with $\ell=(1-\eps)d$ then we will have proved Theorem~\ref{Theorem_main}).
The following natural strategy, developed in part for attacking the linear arboricity conjecture (see, for example, Section~1.1 in \cite{ferber2020towards} which builds on \cite{alon2001linear, kelmans2001asymptotically}), can accomplish this if $\ell\leq d^{1/5-o(1)}$. Take a random partition of $V(G)$ into roughly equal sets $A_1\cup \ldots \cup A_{\ell+1}$. Using the local lemma (see Section~\ref{sec:local}), show that, with positive probability, each bipartite subgraph $G[A_i,A_j]$ with $i\neq j$ is almost $(d/(\ell+1))$-regular.
Taking such a partition, then, decompose each $G[A_i,A_j]$ into $k_0$ matchings $M_{ij,k}$, $k\in [k_0]$, where $k_0$ is only slightly more than $d/(\ell+1)$ (using, say, Vizing's theorem).
Decompose the complete graph $K_{\ell+1}$ into paths with length $\ell$ (it is well-known that this is possible whenever $\ell+1$ is even), and then, for each path and each $k$, take the corresponding matchings $M_{ij,k}$ for the edges $ij$ of the path, and combine them to find many paths of length $\ell$ in $G$ (along with some shorter paths where the matchings do not align exactly). Carried out carefully, this approach will give paths of length $\ell$ covering all but at most $\eps nd$ edges, as long as $\ell\leq d^{1/5-o(1)}$ so that not too many edges are lost from the potentially misaligned matchings. The constant $1/5$ here is the natural barrier for our implementation of this method later, but more generally such an approach encounters a strong natural barrier for $\ell\approx d^{1/2}$ (see \cite{lang2023improved}).

Of course, we wish to have an approach that works for $\ell$ up to $(1-\eps)d$. A tempting route forward is to first take aside a small `sample' set $X$ of $\approx pn$ vertices (likely chosen via the local lemma), with $p\ll \eps$, where $G-X$ is close to regular, and then decompose almost all of $G-X$ into paths with length $d^{1/5-o(1)}$. Then, we could use vertices from $X$ to iteratively join these paths into longer paths. 
The above partitioning and matching argument naturally produces edge-disjoint path forests $\mathcal{P}_1,\ldots,\mathcal{P}_{d/2}$ covering most of the edges of $G-X$ using paths with length $d^{1/5-o(1)}$. Given any set of $(1+\eps)n/d$ paths in one of these path forests, if we take one endpoint from each path to form the set $Y$, then, as the neighbours of the vertices in $Y$ must overlap, we could hope that some of the overlap will be sampled into $X$ so that we can join up two of the paths using a single vertex in $X$, and, perhaps, even to do this iteratively until the path forest contains at most $(1+\eps)n/d$ paths.

By making sure that the initial path forests do not together use any vertex as an endpoint too much, and that no path forest counts too many neighbours of a vertex among its endpoints, and by further dividing $X$ into several subsets to exhaust in turn while connecting paths, this can be made to work (see Sections~\ref{sec:initialpathforest} and \ref{sec:betterpathforest}), and, furthermore, even, used to prove Theorem~\ref{thm:pathcovers} (see Section~\ref{sec:pathcovers}). However, this approach will approximately decompose the $n$-vertex $d$-regular graph $G$ into paths with \emph{average} length $(1-\eps)d$ (see Lemma~\ref{Lemma_decomposition_path_forests}), rather than paths of length $(1-\eps)d$, so really is only the starting point for our proof of Theorem~\ref{Theorem_main}.% (later, we state this part of our proof as step \ref{sketch:outside}).

To go further, we consider what might stop us from joining up more paths in our path forests via $X$. After all, if we could continue to connect paths together until our paths forests mostly had paths with length at least $Kd$, for some $1/K\ll \eps$, then each such path can be further decomposed into paths with length $(1-\eps)d$ and at most $d-1$ other edges, where these other edges will then in total be a small portion of the edges of the graph $G$ so that we will not need to decompose them. The disjoint union of copies of $K_{d+1}$ demonstrates that hoping to always get paths of this length, $Kd$, is fanciful. However, the presence of small dense subgraphs (which we shall call `dense spots') turns out to be the only thing that can stop this from working. Indeed, suppose we had $r:=n/Kd$ vertex-disjoint paths $P_1,\ldots, P_r$ in $G-X$, and could not find a short path (say of length at most $k$, with $1/d \ll 1/k\ll 1/K$) in $G$ between two endpoints of different paths, so that the short path has all its interior vertices in $X$. Then, if, for each $i\in [r]$, $Y_i$ was the set of vertices in $X$ which can be reached by a path in $G$ of length at most $k/2$ from an endpoint of $P_i$ while using otherwise only vertices from $X$, then the sets $Y_i$, $i\in [r]$, must all be disjoint. Then, one of these sets $Y_i$ will be small (with $\leq Kd$ vertices), which will imply that $G[Y_i]$ must contain some dense subgraph. It is reasonable (if challenging), then, to hope that, starting with some initial paths forests, we could join the endpoints of these paths together via short connecting paths using new vertices in $X$ until most of the paths are either \textbf{a)} long (with length $\geq Kd$) or \textbf{b)} connected into a `dense spot' in $G[X]$.

In the case \textbf{b)}, we cannot hope to decompose these paths along with the dense spot in $G[X]$ they are connected to, for, as $|X|\approx pn$ and $X$ will be chosen randomly, such a dense spot (say a sample of a copy of $K_{d+1}$) may only have around $pd$ vertices, and thus only have paths of length up to $pd$. Therefore, we will have to argue that we can take the sample $X$ accurately enough that any dense spot in $G[X]$ must lie within some dense spot in $G$, so that we can mostly decompose the original dense spot along with any paths we have attached to it into paths with length $(1-\eps)d$. Reordering, slightly, then, we will do the following (see Figure~\ref{fig:proofsketch}):
\begin{enumerate}[label = \roman*)]
    \item Take a maximal collection $\mathcal{F}=\{G_1,\ldots,G_t\}$ of dense spots in $G$ and a small `sample set' $X$.\label{sketch:first}
    \item Mostly decompose $G-V(\mathcal{F})-X$ into $d/2$ path forests consisting of medium-length paths.\label{sketch:outside}% using a partitioning argument with the local lemma.
    \item  Iteratively join paths within each path forest together using short paths with internal vertices in $X$.
    \item Having exhausted the possible connections, we argue that, except for some small number of paths, most of the paths must be connectable into a dense spot in $G[X]$ and therefore, somehow, connectable into one of the dense spots $G_i$, $i\in [t]$.\label{sketch:connect} % in $\mathcal{F}$.
    \item Mostly decompose the long paths into paths of length $\ell=(1-\eps)d$ and, similarly, mostly decompose the dense spots along with the paths connected into them.\label{sketch:inside}\label{sketch:last}% into paths of length $\ell$.
\end{enumerate}

\begin{figure}
\begin{center}
\begin{tikzpicture}

\def\wi{8}
\def\hgt{6}
\def\horiz{(0.14285*\wi,0)}
\def\verti{(0,0.2*\hgt)}
\def\minihoriz{0.16666667*\horiz}
\def\vx{0.035cm}

\draw ($(0,0.97*\hgt)-0.14285*0.4*(\wi,0)$) node {\footnotesize $V(G)$};

%set partition
\draw [black!50] ($0.2*(0,\hgt)$) to ++($6*0.14285*(\wi,0)$) to ++($0.8*(0,\hgt)$) to ++($-6*0.14285*(\wi,0)$) to ++($-0.8*(0,\hgt)$);
\draw [black!75,densely dotted] ($0.14285*(\wi,0)+(0,0.2*\hgt)$) to ++($(0,0.8*\hgt)$);
\draw [black!75,densely dotted] ($2*0.14285*(\wi,0)+(0,0.2*\hgt)$) to ++($(0,0.8*\hgt)$);
\draw [black!75,densely dotted] ($5*0.14285*(\wi,0)+(0,0.6*\hgt)$) to ++($(0,0.4*\hgt)$);
\draw [black!50] ($5*0.14285*(\wi,0)+(0,0.2*\hgt)$) to ++($(0,0.4*\hgt)$);
%\draw [black!50] ($0.2*(0,\hgt)$) to ++($2*0.14285*(\wi,0)$);
\draw [black!75,densely dotted] ($0.4*(0,\hgt)$) to ++($2*0.14285*(\wi,0)$);
\draw [black!50] ($0.6*(0,\hgt)$) to ++($6*0.14285*(\wi,0)$);
\draw [black!75,densely dotted] ($0.8*(0,\hgt)$) to ++($6*0.14285*(\wi,0)$);
\draw [black!75,densely dotted] ($3*0.14285*(\wi,0)+0.6*(0,\hgt)$) to ++($0.4*(0,\hgt)$);
\draw [black!75,densely dotted] ($4*0.14285*(\wi,0)+0.6*(0,\hgt)$) to ++($0.4*(0,\hgt)$);

%horiz
\foreach \xx/\yy in {1/3,1/4}
{
\foreach \zz in {1,...,5}
{
\coordinate (A\xx\yy\zz) at ($\xx*(0.14285*\wi,0)-1*(0.14285*\wi,0)+\yy*(0,0.2*\hgt)-0.5*(0,0.2*\hgt)+\zz*(0.166666667*0.14285*\wi,0)$);
\draw [fill] (A\xx\yy\zz)  circle [radius=\vx];
}
}

%densespots
\coordinate (G1) at ($(5.5*(0.14285*\wi,0)+2.75*(0,0.2*\hgt)-(0.15*0.14285*\wi,0)$);
\draw (G1) circle[x radius=0.08*\wi,y radius=0.03*\hgt];
\draw (G1) node {\footnotesize $G_1$};
\coordinate (G2) at ($(5.5*(0.14285*\wi,0)+2.75*(0,0.2*\hgt)-(0.15*0.14285*\wi,0)-2.8*(0,0.03*\hgt)$);
\draw (G2) circle[x radius=0.08*\wi,y radius=0.03*\hgt];
\draw (G2) node {\footnotesize $G_2$};
\coordinate (G3) at ($(5.5*(0.14285*\wi,0)+2.75*(0,0.2*\hgt)-(0.15*0.14285*\wi,0)-5.6*(0,0.03*\hgt)$);
\draw (G3) circle[x radius=0.08*\wi,y radius=0.03*\hgt];
\draw (G3) node {\footnotesize $G_3$};
\coordinate (Gt) at ($(5.5*(0.14285*\wi,0)+2.75*(0,0.2*\hgt)-(0.15*0.14285*\wi,0)-10.2*(0,0.03*\hgt)$);
\draw (Gt) circle[x radius=0.08*\wi,y radius=0.03*\hgt];
\draw (Gt) node {\footnotesize $G_t$};
\draw ($0.4*(Gt)+0.6*(G3)$) node {\footnotesize \vdots};

\coordinate (G1vx1) at ($(G1)-0.0625*(\wi,0)+0.008*(0,\hgt)$);
\draw [fill] (G1vx1)  circle [radius=\vx];
\coordinate (G1vx2) at ($(G1)-0.0625*(\wi,0)-0.008*(0,\hgt)$);
\draw [fill] (G1vx2)  circle [radius=\vx];
\coordinate (G2vx) at ($(G2)-0.0625*(\wi,0)$);
\draw [fill] (G2vx)  circle [radius=\vx];
\coordinate (Gtvx) at ($(Gt)-0.0625*(\wi,0)$);
\draw [fill] (Gtvx)  circle [radius=\vx];

%vertical
\foreach \xx/\yy in {2/2,2/5,3/4,3/5,4/4,4/5,5/5,5/4}
{
\foreach \zz in {1,...,5}
{
\coordinate (A\xx\yy\zz) at ($\xx*(0.14285*\wi,0)-0.5*(0.14285*\wi,0)+\yy*(0,0.2*\hgt)-0*(0,0.2*\hgt)-\zz*(0,0.166666667*0.2*\hgt)$);
%\draw [fill] (A\xx\yy\zz)  circle [radius=\vx];
}
}

%diagdown
\foreach \xx/\yy in {1/5,2/4,6/4}
{
\foreach \zz in {1,...,5}
{
\coordinate (A\xx\yy\zz) at ($\xx*(0.14285*\wi,0)-1*(0.14285*\wi,0)+\yy*(0,0.2*\hgt)-0*(0,0.2*\hgt)+\zz*0.166666667*(0.14285*\wi,-0.2*\hgt)$);
%\draw [fill] (A\xx\yy\zz)  circle [radius=\vx];
}
}

%diagup
\foreach \xx/\yy in {1/2,6/5,2/3}
{
\foreach \zz in {1,...,5}
{
\coordinate (A\xx\yy\zz) at ($\xx*(0.14285*\wi,0)-1*(0.14285*\wi,0)+\yy*(0,0.2*\hgt)-1*(0,0.2*\hgt)+\zz*0.166666667*(0.14285*\wi,0.2*\hgt)$);
%\draw [fill] (A\xx\yy\zz)  circle [radius=\vx];
}
}

\coordinate (A235prime) at ($(A235)+0.05*(\wi,0)$);
%\draw [fill] (A235prime)  circle [radius=\vx];
\coordinate (A234prime) at ($(A234)+0.05*(\wi,0)-0.02*(0,\hgt)+0.166666667*(0.14285*\wi,0)$);
%\draw [fill] (A234prime)  circle [radius=\vx];
\coordinate (A232prime) at ($(A232)+0.075*(\wi,0)+3*0.166666667*(0.14285*\wi,0)$);
%\draw [fill] (A232prime)  circle [radius=\vx];
\coordinate (A231prime) at ($(A231)+0.05*(\wi,0)+4*0.166666667*(0.14285*\wi,0)$);
%\draw [fill] (A231prime)  circle [radius=\vx];

\coordinate (A222prime) at ($(A222)+0.05*(\wi,0)+2*0.166666667*(0.14285*\wi,0)$);
%\draw [fill] (A222prime)  circle [radius=\vx];
\coordinate (A223prime) at ($(A223)+0.075*(\wi,0)+2*0.166666667*(0.14285*\wi,0)$);
%\draw [fill] (A223prime)  circle [radius=\vx];

\coordinate (A345prime) at ($(A345)-0.166666667*0.2*2*(0,\hgt)$);
%\draw [fill] (A345prime)  circle [radius=\vx];
\coordinate (A343prime) at ($(A345prime)+3*0.166666667*(0.14285*\wi,0)$);
%\draw [fill] (A343prime)  circle [radius=\vx];

\draw [red] (A235) -- (A235prime);
\draw [red] (A234) -- (A234prime);
\draw [red] (A232) -- (A232prime);
\draw [red] (A231) -- (A231prime);
\draw [red] (A222) -- (A222prime);
\draw [red] (A223) -- (A223prime);
\draw [red] (A345) -- (A345prime);
\draw [red] (A343) -- (A343prime);

\draw[snake=coil,segment aspect=0, segment amplitude=.4mm,segment length=2mm,orange] (A225) -- (Gtvx);
\draw[snake=coil,segment aspect=0, segment amplitude=.4mm,segment length=2mm,red] (A223prime) -- (A232prime);
\draw[snake=coil,segment aspect=0, segment amplitude=.4mm,segment length=2mm,red] (A222prime) -- (A231prime);
\draw[snake=coil,segment aspect=0, segment amplitude=.4mm,segment length=2mm,red] (A235prime) -- (A234prime);
\draw[snake=coil,segment aspect=0, segment amplitude=.4mm,segment length=2mm,red] (A343prime) -- (A345prime);
\draw[snake=coil,segment aspect=0, segment amplitude=.4mm,segment length=2mm,orange] (A545) -- (G1vx1);
\draw[snake=coil,segment aspect=0, segment amplitude=.4mm,segment length=2mm,orange] (A443) -- (G1vx2);
\draw[snake=coil,segment aspect=0, segment amplitude=.4mm,segment length=2mm,orange] (A445) -- (G2vx);

%matchings
\foreach \xx/\yy/\xxx/\yyy in {1/2/1/3,1/4/1/5,1/5/2/5,2/5/3/5,3/5/4/5,2/3/2/4,4/5/5/5,6/4/6/5,5/4/6/4}
{
\foreach \zz in {1,...,5}
{
\draw (A\xx\yy\zz) -- (A\xxx\yyy\zz);
}
}
%colouredmatching
\foreach \xx/\yy/\xxx/\yyy in {1/3/1/4}
{
\foreach \zz in {1,...,5}
{
\draw [blue,thick] (A\xx\yy\zz) -- (A\xxx\yyy\zz);
}
}

%switchingmatchings
\foreach \xx/\yy/\xxx/\yyy in {5/5/6/5,1/2/2/2}
{
\foreach \zz/\zzz in {1/5,2/4,3/3,4/2,5/1}
{
\draw (A\xx\yy\zz) -- (A\xxx\yyy\zzz);
}
}

%matchingsnonfull
\foreach \xx/\yy/\xxx/\yyy in {4/4/5/4,2/4/3/4}
{
\foreach \zz in {1,...,4}
{
\draw (A\xx\yy\zz) -- (A\xxx\yyy\zz);
}
}
\foreach \xx/\yy/\xxx/\yyy in {3/4/4/4}
{
\foreach \zz in {1,2,4,5}
{
\draw (A\xx\yy\zz) -- (A\xxx\yyy\zz);
}
}

%redboxes
\draw [red,thick] ($0.399*(-0.02,\hgt)$) to ++($0.14285*(\wi,0)$) to ++($0.2*(0,\hgt)$) to ++($-0.14285*(\wi,0)$) to ++($-0.2*(0,\hgt)$);
\draw [red,thick] ($0.601*(0,\hgt)$) to ++($0.14285*(\wi,0)$) to ++($0.2*(0,\hgt)$) to ++($-0.14285*(\wi,0)$) to ++($-0.2*(0,\hgt)$);

%orangeboxes
\draw [orange,thick] ($0.199*(0,\hgt)+0.14385*(\wi,0)$) to ++($0.14285*(\wi,0)$) to ++($0.2*(0,\hgt)$) to ++($-0.14285*(\wi,0)$) to ++($-0.2*(0,\hgt)$);
\draw [orange,thick] ($0.401*(0,\hgt)+0.14385*(\wi,0)$) to ++($0.14285*(\wi,0)$) to ++($0.2*(0,\hgt)$) to ++($-0.14285*(\wi,0)$) to ++($-0.2*(0,\hgt)$);

\draw [red] ($2.5*0.2*(0,\hgt)-(0.35,0)$) node  {\footnotesize $A_j$};
\draw [red] ($3.5*0.2*(0,\hgt)-(0.35,0)$) node  {\footnotesize $A_i$};
\draw [blue] ($3*0.2*(0,\hgt)-(0.45,0)$) node  {\footnotesize $M_{ij,k}$};
\draw ($(A225)-0.1666667*0.5*(0,\hgt)$) node  {\footnotesize \textcolor{orange}{$A_i'$} and \textcolor{orange}{$A_j'$}};

\draw  ($3.5*0.14285*(\wi,0)+0.4*(0,\hgt)$) node  {\footnotesize $X$};%+(0,0.25)

\draw [fill] (A235prime)  circle [radius=\vx];
\draw [fill] (A234prime)  circle [radius=\vx];
\draw [fill] (A232prime)  circle [radius=\vx];
\draw [fill] (A231prime)  circle [radius=\vx];
\draw [fill] (A222prime)  circle [radius=\vx];
\draw [fill] (A223prime)  circle [radius=\vx];
\draw [fill] (A345prime)  circle [radius=\vx];
\draw [fill] (A343prime)  circle [radius=\vx];
%vertical
\foreach \xx/\yy in {2/2,2/5,3/4,3/5,4/4,4/5,5/5,5/4}
{
\foreach \zz in {1,...,5}
{
\draw [fill] (A\xx\yy\zz)  circle [radius=\vx];
}
}

%diagdown
\foreach \xx/\yy in {1/5,2/4,6/4}
{
\foreach \zz in {1,...,5}
{
\draw [fill] (A\xx\yy\zz)  circle [radius=\vx];
}
}

%diagup
\foreach \xx/\yy in {1/2,6/5,2/3}
{
\foreach \zz in {1,...,5}
{
\draw [fill] (A\xx\yy\zz)  circle [radius=\vx];
}
}

\end{tikzpicture}\hspace{1cm}\begin{tikzpicture}

\def\wi{4.4}%8
\def\hgt{3.3}%
\def\horiz{(0.14285*\wi,0)}
\def\verti{(0,0.2*\hgt)}
\def\minihoriz{0.16666667*\horiz}
\def\vx{0.035cm}

\draw [white] (0,-0.175*6) to ++(0,0.8*6);

\draw ($(0,0.97*\hgt)-0.14285*0.4*(\wi,0)-(0.45,0.05)$) node {\footnotesize $V(K_{\ell+1})$};

%set partition
\draw [black!50] ($0.2*(0,\hgt)$) to ++($2*0.14285*(\wi,0)$) to ++($0.4*(0,\hgt)$) to ++($4*0.14285*(\wi,0)$) to ++($0.4*(0,\hgt)$) to ++($-6*0.14285*(\wi,0)$) to ++($-0.8*(0,\hgt)$);
\draw [black!75,densely dotted] ($0.14285*(\wi,0)+(0,0.2*\hgt)$) to ++($(0,0.8*\hgt)$);
\draw [black!75,densely dotted] ($2*0.14285*(\wi,0)+(0,0.2*\hgt)$) to ++($(0,0.8*\hgt)$);
\draw [black!75,densely dotted] ($5*0.14285*(\wi,0)+(0,0.6*\hgt)$) to ++($(0,0.4*\hgt)$);
%\draw [black!50] ($5*0.14285*(\wi,0)+(0,0.2*\hgt)$) to ++($(0,0.4*\hgt)$);
%\draw [black!50] ($0.2*(0,\hgt)$) to ++($2*0.14285*(\wi,0)$);
\draw [black!75,densely dotted] ($0.4*(0,\hgt)$) to ++($2*0.14285*(\wi,0)$);
\draw [black!75,densely dotted] ($0.6*(0,\hgt)$) to ++($6*0.14285*(\wi,0)$);
\draw [black!75,densely dotted] ($0.8*(0,\hgt)$) to ++($6*0.14285*(\wi,0)$);
\draw [black!75,densely dotted] ($3*0.14285*(\wi,0)+0.6*(0,\hgt)$) to ++($0.4*(0,\hgt)$);
\draw [black!75,densely dotted] ($4*0.14285*(\wi,0)+0.6*(0,\hgt)$) to ++($0.4*(0,\hgt)$);

%horiz
\foreach \xx/\yy in {1/3,1/4}
{
\foreach \zz in {3}
{
\coordinate (A\xx\yy\zz) at ($\xx*(0.14285*\wi,0)-1*(0.14285*\wi,0)+\yy*(0,0.2*\hgt)-0.5*(0,0.2*\hgt)+\zz*(0.166666667*0.14285*\wi,0)$);
\draw [fill] (A\xx\yy\zz)  circle [radius=\vx];
}
}

%vertical
\foreach \xx/\yy in {2/2,2/5,3/4,3/5,4/4,4/5,5/5,5/4}
{
\foreach \zz in {3}
{
\coordinate (A\xx\yy\zz) at ($\xx*(0.14285*\wi,0)-0.5*(0.14285*\wi,0)+\yy*(0,0.2*\hgt)-0*(0,0.2*\hgt)-\zz*(0,0.166666667*0.2*\hgt)$);
\draw [fill] (A\xx\yy\zz)  circle [radius=\vx];
}
}

%diagdown
\foreach \xx/\yy in {1/5,2/4,6/4}
{
\foreach \zz in {3}
{
\coordinate (A\xx\yy\zz) at ($\xx*(0.14285*\wi,0)-1*(0.14285*\wi,0)+\yy*(0,0.2*\hgt)-0*(0,0.2*\hgt)+\zz*0.166666667*(0.14285*\wi,-0.2*\hgt)$);
\draw [fill] (A\xx\yy\zz)  circle [radius=\vx];
}
}

%diagup
\foreach \xx/\yy in {1/2,6/5,2/3}
{
\foreach \zz in {3}
{
\coordinate (A\xx\yy\zz) at ($\xx*(0.14285*\wi,0)-1*(0.14285*\wi,0)+\yy*(0,0.2*\hgt)-1*(0,0.2*\hgt)+\zz*0.166666667*(0.14285*\wi,0.2*\hgt)$);
\draw [fill] (A\xx\yy\zz)  circle [radius=\vx];
}
}

%matchings
\foreach \xx/\yy/\xxx/\yyy in {1/2/1/3,1/4/1/5,1/5/2/5,2/5/3/5,3/5/4/5,2/3/2/4,4/5/5/5,6/4/6/5,5/4/6/4}
{
\foreach \zz in {3}
{
\draw (A\xx\yy\zz) -- (A\xxx\yyy\zz);
}
}
%colouredmatching
\foreach \xx/\yy/\xxx/\yyy in {1/3/1/4}
{
\foreach \zz in {3}
{
\draw  (A\xx\yy\zz) -- (A\xxx\yyy\zz);
}
}

%switchingmatchings
\foreach \xx/\yy/\xxx/\yyy in {5/5/6/5,1/2/2/2}
{
\foreach \zz/\zzz in {3/3}
{
\draw (A\xx\yy\zz) -- (A\xxx\yyy\zzz);
}
}

%matchingsnonfull
\foreach \xx/\yy/\xxx/\yyy in {4/4/5/4,2/4/3/4}
{
\foreach \zz in {3}
{
\draw (A\xx\yy\zz) -- (A\xxx\yyy\zz);
}
}
\foreach \xx/\yy/\xxx/\yyy in {3/4/4/4}
{
\foreach \zz in {3}
{
\draw (A\xx\yy\zz) -- (A\xxx\yyy\zz);
}
}

\draw ($0.5*(A143)+0.5*(A133)-(0.6,0)$) node {\footnotesize $P:$};
\draw ($(A143)-(0.2,0)$) node {\footnotesize $i$};
\draw ($(A133)-(0.2,0)$) node {\footnotesize $j$};
\draw ($(A223)+(0.2,0)$) node {\footnotesize $j'$};
\draw ($(A233)+(0.2,0)$) node {\footnotesize $i'$};

\end{tikzpicture}
\end{center}\vspace{-0.5cm}
\caption{As shown on the left, having found a maximal collection of vertex-disjoint dense spots $G_1,\ldots,G_t$ in a graph $G$, we take a `sample set' $X$ which will intersect with each of these dense spots. Partitioning $V(G)\setminus (X\cup V(G_1)\cup \ldots\cup V(G_t))$ into sets $A_1\cup \ldots\cup A_{\ell+1}$, we then cover most of the edges between each $A_i$ and $A_j$ with matchings $M_{ij,k}$, for $k\in [d']$ with $d'\approx d/s$. For a path $P$ with length $\ell$ in $K_{\ell+1}$ (as on the right, with endvertices $i',j'\in V(K_{\ell+1}$) and each $k$ we put together the matchings $M_{ij,k}$ for each $ij\in E(P)$ to get a path forest of few paths, most of which end in $A_{i'}$ and $A_{j'}$. We then connect up as many as possible of these endvertices with short paths using internal vertices in $X$ (shown in red) and connect more of them into the dense spots with short paths using vertices in $X$ (shown in orange).
}\label{fig:proofsketch}
\end{figure}
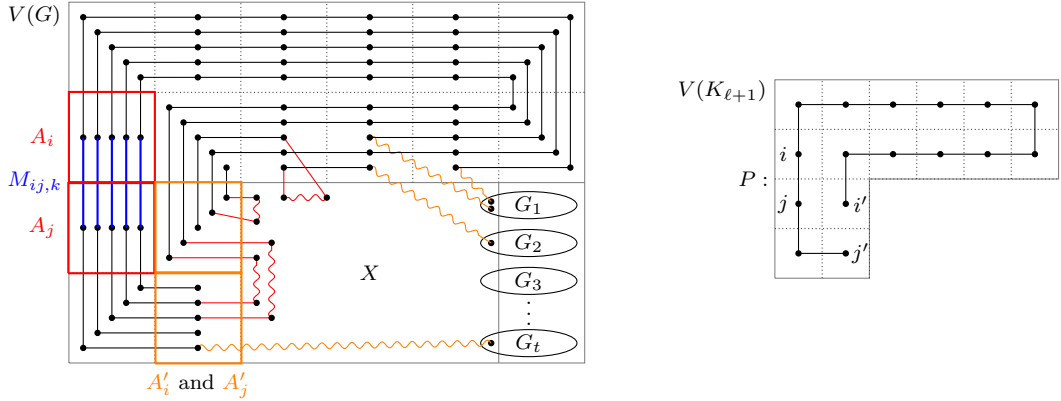

By this scheme, we will find the required decomposition of all but at most $\eps n d$ edges of $G$ into paths of length $\ell=(1-\eps)d$, but in setting aside the collection $\mathcal{F}$ we have introduced a further issue. While we can choose $X$ so that $G-X$ is nearly regular, we now have the problem that the graph $G-V(\mathcal{F})-X$ may not be nearly regular (which we require for partitioning into vertex sets and finding many edge-disjoint matchings between them). Critically, here, we will use two results shown by very recent techniques of Chakraborti, Janzer, Methuku, and Montgomery~\cite{regularising,edgedisjointcycles} to efficiently find nearly-regular subgraphs. Moreover, we will develop these results non-trivially, using them to show that we can mostly decompose $G-V(\mathcal{F})-X$ into a small number of edge-disjoint nearly-regular subgraphs (see Section~\ref{sec:nearregular}). We also apply one of the results of~\cite{regularising,edgedisjointcycles} while decomposing the dense spots along with the attached paths.

This completes an overall sketch of our methods for Theorem~\ref{Theorem_main}. In Section~\ref{sec:divisionintothreelemmas}, we will show that Theorem~\ref{Theorem_main} follows from three key lemmas, Lemmas~\ref{Lemma_decomposition_path_forests}, \ref{Lemma_X_connects}, and \ref{Lemma_decompose_inside_dense_bits}, which roughly correspond to steps \ref{sketch:outside}, \ref{sketch:connect} and \ref{sketch:inside} above, respectively. The proofs of each of these three key lemmas will need more new ideas than could be included in the sketch above, and they are discussed in more detail later where appropriate.
In Section~\ref{sec:tools}, we cover some tools we will need, including some comments on our use of the local lemma in Section~\ref{sec:local}. In Section~\ref{sec:outsidedensebits}, we give the decomposition into path forests we will use for $G-V(\mathcal{F})-X$, thus proving Lemma~\ref{Lemma_decomposition_path_forests}. In Section~\ref{sec:connecttodensebits}, we will show the existence of a good `sample set' $X$ and show how to connect some paths to small dense spots within $G[X]$ and hence to some dense spot in $\mathcal{F}$, thus proving Lemma~\ref{Lemma_X_connects}. In Section~\ref{sec:insidedensebits}, we will decompose the dense spots along with some attached paths, proving Lemma~\ref{Lemma_decompose_inside_dense_bits} and hence completing the proof of Theorem~\ref{Theorem_main}. Finally, in Section~\ref{sec:concluding}, we will make some concluding remarks.

%%%%%%%%%%%%%%%%%%%%%%%%%%%%%%%%%%%%%%%%%%%%%%%%%%%%%%%%%%%%%%%%%%%%%%%%%%%%%%%%%%%%%%%%%%%%%%%%%%%%%%%%%%%%%%%%%%%%%%%%%%%%%%%%%%%%
%%%%%%%%%%%%%%%%%%%%%%%%%%%%%%%%%%%%%%%%%%%%%%%%%%%%%%%%%%%%%%%%%%%%%%%%%%%%%%%%%%%%%%%%%%%%%%%%%%%%%%%%%%%%%%%%%%%%%%%%%%%%%%%%%%%%
%%%%%%%%%%%%%%%%%%%%%%%%%%%%%%%%%%%%%%%%%%%%%%%%%%%%%%%%%%%%%%%%%%%%%%%%%%%%%%%%%%%%%%%%%%%%%%%%%%%%%%%%%%%%%%%%%%%%%%%%%%%%%%%%%%%%
%%%%%%%%%%%%%%%%%%%%%%%%%%%%%%%%%%%%%%%%%%%%%%%%%%%%%%%%%%%%%%%%%%%%%%%%%%%%%%%%%%%%%%%%%%%%%%%%%%%%%%%%%%%%%%%%%%%%%%%%%%%%%%%%%%%%

\subsection{Proof of Theorem~\ref{Theorem_main} subject to three key lemmas}\label{sec:divisionintothreelemmas}
We now introduce two key definitions to formalise some of the notions in Section~\ref{sec:proofsketch}. The first we will use to quantify what we want from our `dense spots'.
\begin{restatable}{definition}{defdensespot}\label{def:dense}
    A graph $G$ is $(\eta, d, K)$-\textbf{dense} if $0<|V(G)|\leq Kd$ and $\delta(G)\geq (1-\eta)d$.
\end{restatable}%{definition}
Our second definition records properties of path forests, where we require each forest to not contain too many paths and that the endpoints of the paths are relatively well spread out around a graph.
\begin{definition}\label{def:bounded}
Say a collection of path forests $\mathcal{P}_1,\ldots,\mathcal{P}_\ell$ is \textbf{$(m,\Delta_0,\Delta_1)$-bounded} if each $\mathcal{P}_i$, $i\in [\ell]$, has at most $m$ paths, each vertex appears as the endvertex of at most $\Delta_0$ paths in total in all of the path forests, and, for each $i\in [\ell]$, each vertex has at most $\Delta_1$ neighbours among the endvertices of $\mathcal{P}_i$.
\end{definition}

We can now give our lemma which we will use to find a good collection of path forests for step \ref{sketch:outside} in Section~\ref{sec:proofsketch}. The lemma applies more generally than to a $d$-regular graph as we apply it to a regular graph with a maximal set of dense spots (and a `sample set') removed. 

\begin{restatable}{lemma}{lemmadecompoutsidedense}\label{Lemma_decomposition_path_forests}
    Let $1/d \ll \gamma \ll \eps\leq 1$. Let $G$ be an $n$-vertex graph with maximum degree at most $d$ in which all but at most $\gamma n$ vertices have degree at least $(1-\gamma)d$.
     Then, $G$ contains a $((1+\eps)n/d,d^{1/4},d^{1/4})$-bounded edge-disjoint collection of $d/2$ path forests which cover all but at most $\eps nd$ of the edges of $G$.
\end{restatable}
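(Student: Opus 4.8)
The plan is to follow the partition-and-matchings strategy sketched in Section~\ref{sec:proofsketch}, carefully tracking the boundedness parameters. Fix an auxiliary length $\ell_0 = d^{1/5-o(1)}$ (a small power of $d$, chosen so that $\ell_0+1$ is even), and take a uniformly random partition $V(G) = A_1 \cup \dots \cup A_{\ell_0+1}$ into parts of size as equal as possible. First I would use the local lemma (via the tools of Section~\ref{sec:local}) to show that with positive probability the following hold simultaneously: for all but a small fraction of the vertices $v$ and all $i$, the degree $d_G(v, A_i)$ is within $(1\pm\gamma')d/(\ell_0+1)$ of its expectation, and for each pair $i \neq j$ the bipartite graph $G[A_i, A_j]$ is $(1\pm\gamma')\cdot\frac{d}{\ell_0+1}$-regular on all but a small fraction of its vertices. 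The degrees being at most $d$ and at least $(1-\gamma)d$ on all but $\gamma n$ vertices means the standard concentration arguments go through with room to spare; the exceptional vertices of low degree are few enough that we can afford to simply discard all edges incident to them (losing at most $\gamma n d \ll \eps n d$ edges).

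Next, fix such a partition and, for each pair $i \neq j$, apply Vizing's theorem (or a proper edge-colouring of the near-regular bipartite graph $G[A_i, A_j]$) to decompose it into $d' := (1+\gamma')d/(\ell_0+1) + 1$ matchings $M_{ij,k}$, $k \in [d']$, each of which is near-perfect between the two parts (missing only a $\gamma'$-fraction of vertices, plus the previously discarded low-degree vertices). Decompose $K_{\ell_0+1}$ into $\ell_0/2$ Hamilton paths $Q_1, \dots, Q_{\ell_0/2}$ (possible since $\ell_0+1$ is even). For each Hamilton path $Q_r$ and each colour $k \in [d']$, glue together the matchings $\{M_{ij,k} : ij \in E(Q_r)\}$ along the vertex classes in the order dictated by $Q_r$: this union is a disjoint union of paths, almost all of which have the full length $\ell_0$ and end in the two parts corresponding to the endpoints of $Q_r$, with the rest being shorter ``fragments'' created where consecutive matchings fail to align on a vertex. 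I would then concatenate, for each $k$, the path forests coming from $Q_1, \dots, Q_{\ell_0/2}$ into $d'$ larger path forests (or keep them as $(\ell_0/2)\cdot d' \approx d/2$ separate forests — the latter is cleaner for the count). Each resulting forest has at most $O(n/\ell_0) \le (1+\eps)n/d \cdot \ell_0$... here I need to be a little careful: the lemma asks for at most $(1+\eps)n/d$ paths \emph{per forest}, so I would package the output as $d/2$ forests, each obtained from a single colour $k$ and a single Hamilton path $Q_r$, so that each has roughly $n/(\ell_0+1)$ full-length paths plus $O(n/\ell_0^2 \cdot \ell_0) = O(n/\ell_0)$ fragments — wait, this exceeds the budget. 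The fix is to merge fragments within a forest into longer paths greedily, or to note that the number of fragments per forest is $O(\gamma' n)$ since misalignments only occur at the $\gamma'$-fraction of badly-behaved vertices, giving at most $(1+\eps)n/d$ paths once $\gamma' \ll \eps/\ell_0$ — which is consistent with $1/d \ll \gamma \ll \eps$.

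For the boundedness: a vertex $v$ serves as an endpoint of a path in a given forest only if it is a ``misalignment point'' for that forest or lies in an endpoint-part of the corresponding $Q_r$ and is unmatched by some $M_{ij,k}$; summing over all $d/2$ forests, a crude bound is that $v$ is an endpoint at most $O(\ell_0 \cdot (\text{number of forests using } v \text{ as an endpoint-part vertex})) $ times. By choosing $\ell_0 = d^{1/5}$ and controlling, again via the local lemma, how often each vertex is ``unmatched'' across the $M_{ij,k}$ (each vertex is unmatched by at most one matching per pair $i,j$ in a proper edge colouring of a near-regular graph, giving $O(\ell_0)$ total, times $O(d/\ell_0) = O(d)$ colours is too weak) — the honest count is $\Delta_0 = O(d^{1/5}\cdot\text{polylog}) \le d^{1/4}$, and similarly $\Delta_1$: the endpoints of a single forest $\mathcal{P}_i$ lie in just two parts $A_{i'}, A_{j'}$ and number $O(n/\ell_0)$, so a fixed vertex has at most $d_G(v, A_{i'}) + d_G(v,A_{j'}) \le O(d/\ell_0) \le d^{1/4}$ neighbours among them. \textbf{The main obstacle} I anticipate is exactly this bookkeeping: simultaneously guaranteeing (a) few fragments so that each forest has $\le (1+\eps)n/d$ paths, (b) $\Delta_0 \le d^{1/4}$ endpoint-multiplicity, and (c) $\Delta_1 \le d^{1/4}$ neighbour-among-endpoints, all from one local lemma application with the exceptional low-degree vertices handled separately. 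Getting the dependencies $1/d \ll \gamma \ll \eps$ to actually close requires choosing $\ell_0$ in the sweet spot $d^{1/5}$ and being careful that the $O(\cdot)$'s hide only polylogarithmic factors, which is why the paper singles out $1/5$ as the natural exponent.
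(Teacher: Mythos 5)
There is a genuine gap, and it is the one you half-noticed and then papered over: the partition-and-matchings construction on its own cannot possibly produce forests with only $(1+\eps)n/d$ paths each. With $\ell_0=d^{1/5}$, each of your $d/2$ forests consists of $\ell_0$ near-perfect matchings glued along a Hamilton path of $K_{\ell_0+1}$, so it covers roughly $n$ edges using paths of length at most $\ell_0$; even if \emph{every} path were full-length (zero fragments, zero misalignments), the forest would contain about $n/(\ell_0+1)=n/d^{1/5}$ paths, which exceeds the budget $(1+\eps)n/d$ by a factor of about $d^{4/5}$. Your proposed fix (controlling the number of fragments, or taking $\gamma'\ll\eps/\ell_0$) addresses only the misaligned short paths, not the full-length ones, so it cannot close this gap. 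To get down to $(1+\eps)n/d$ paths per forest you need paths of average length close to $d$, and the paper is explicit that the matching approach hits a hard barrier well below that (around $\ell\approx d^{1/2}$, and $d^{1/5}$ in their implementation). The paper's actual proof therefore has a second, essential stage that your proposal omits entirely: it first reserves a random sample set $Y$ with $d_G(v,Y)\approx pd$ for every vertex, runs the matching argument only to obtain an $(n/d^{1/9},d^{8/9},d^{8/9})$-bounded collection (their Lemma~\ref{Lemma_decomposition_path_forests_regular}), and then iteratively joins paths within each forest using length-two connectors through $Y$ (their Lemma~\ref{Lemma_all_first_connecting_paths_newnew}); a counting argument on the disjoint neighbourhoods in $Y$ of the surviving endpoints is what forces the path count down to $(1+\eps)n/d$, and a randomized attachment of final edges into $Y$ is what delivers the $d^{1/4}$ spread conditions.

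A secondary issue: your plan to ``discard all edges incident to the low-degree vertices'' does not make the remaining graph nearly regular, since surviving vertices may have many neighbours among the discarded ones, and this can cascade. The paper handles irregularity robustly via a separate near-regularization step (Lemma~\ref{lem_regularise_nearly_spanning_defective_vertices}), decomposing most of the edges into a bounded number of genuinely near-regular subgraphs before running the matching argument on each. This is a fixable bookkeeping point, but the missing connection stage above is structural and cannot be repaired within the framework you describe.
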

 Note that (as seen by the disjoint union of $(d+1)$-vertex cliques), the bound $(1+\eps)n/d$ in Lemma~\ref{Lemma_decomposition_path_forests} is close to optimal. Given $d/2$ edge-disjoint path forests which each contain $(1+\eps)n/d\leq 2n/d$ paths in an $n$-vertex $d$-regular graph, note that there are in total at most $2n$ endvertices, so that on average each vertex appears in total as the endvertex of at most 2 paths. Thus, the corresponding upper bound of $d^{1/4}$ in Lemma~\ref{Lemma_decomposition_path_forests} is relatively unambitious; while it could be pushed much further with our methods this will be sufficient for our purpose. Similarly, for just one of these path forests, on average a vertex can expect to have at most $2$ neighbours among the endpoints of the at most $(1+\eps)n/d\leq 2n/d$ paths. Again, the corresponding upper bound of $d^{1/4}$ in Lemma~\ref{Lemma_decomposition_path_forests} is chosen loosely, only so that it is comfortably enough for our later proofs.

Next, we give the lemma that represents the heart of our proof. It shows the existence of our `sample set' $X$ such that, given a collection of path forests as produced by Lemma~\ref{Lemma_decomposition_path_forests}, we can join paths together using vertices from $X$ and find further paths to get a collection of path forests in which the paths are connected into the dense spots (in a well spread manner similar to the conditions in Definition~\ref{def:bounded}) or can be collectively mostly decomposed into paths with length $(1-\eps)d$. 

\stepcounter{propcounter}
\begin{restatable}{lemma}{lemmaconnecttodense}\label{Lemma_X_connects}
Let $1/d\ll p,\eta  \polysmall1/K \polysmall \eps $.
 Let $G$ be a $d$-regular $n$-vertex graph.
 Let $\mathcal{F}=\{G_1, \ldots, G_t\}$ be a maximal family of vertex-disjoint $(\eta, d, K)$-dense subgraphs. Then, there is a set $X\subset V(G)$ with
 \begin{enumerate}[label = {{\emph{\textbf{A\arabic{enumi}}}}}]
 \item $|X|\leq 2pn$ and, for each $v\in V(G)$, $d_G(v,X)=(1\pm \eta)pd$, and\labelinthm{propX:minusone}
\item   for each $j\in [t]$, $|X\cap V(G_j)|\leq 2p|V(G_j)|$ and, for each $v\in V(G_j)$, $d_G(v,X\cap V(G_i))=(1\pm 3\eta) 2pd$,\labelinthm{propX:zero}
\end{enumerate}
  such that the following holds.

For any $(2n/d,d^{1/4},d^{1/4})$-bounded edge-disjoint collection of path forests $\mathcal{P}_1, \mathcal{P}_2, \ldots, \mathcal{P}_{d/2}$ in $G - V(\mathcal{F})-X$ we can find in $G-(V(\mathcal{F})\setminus X)$ an edge-disjoint collection of path forests $\mathcal{P}'_1, \mathcal{P}'_2, \ldots, \mathcal{P}'_{d/2}$, such that the following hold.
\begin{enumerate}[label = {{\emph{\textbf{A\arabic{enumi}}}}}]\addtocounter{enumi}{2}
    \item For each $i\in [d/2]$, every path in $\mathcal{P}'_{i}$ has both of its endvertices in $X\cap V(\mathcal{F})$.\labelinthm{propX:one}
    \item \labelinthm{propX:twob} Each vertex in $G$ is in total an endvertex of at most $\sqrt{d}$ paths in $\mathcal{P}'_i$, $i\in [d/2]$.
    \item \labelinthm{propX:twoc} For each $j\in [t]$ and $i\in [d/2]$ at most $\sqrt{d}$ of the paths in $\mathcal{P}'_i$ end in $G_j$.
    \item All but at most $\eps nd/4$ of the edges of $E(\mathcal{P}_1\cup \ldots \cup \mathcal{P}_{d/2})\setminus  E(\mathcal{P}'_1\cup \ldots \cup \mathcal{P}'_{d/2})$ can be decomposed into copies of $P_{(1-\eps)d}$.\labelinthm{propX:three}
\end{enumerate}
\end{restatable}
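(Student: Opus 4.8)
The plan is to first build the set $X$ via the local lemma, then set up the connection procedure, and finally analyse where the procedure can get stuck, using the maximality of $\mathcal{F}$ to force stuck paths into dense spots. For the construction of $X$, I would take each vertex of $V(G)$ independently into $X$ with probability $p$, and then use the Lovász Local Lemma (as set up in Section~\ref{sec:local}) to show that with positive probability the degree conditions \textbf{A1} and \textbf{A2} hold: since $d$ is large relative to $1/p$ and $1/\eta$, each vertex $v$ has $d_G(v,X)$ concentrated around $pd$ (within a $(1\pm\eta)$ factor), and similarly $d_G(v, X\cap V(G_j))$ is concentrated around $2pd$ because $|X\cap V(G_j)|$ (which, by \textbf{A2}, should be $\le 2p|V(G_j)|$) — wait, here one needs to be slightly careful that the target is $2pd$ rather than $pd$; presumably $X$ is sampled at rate $2p$ on the dense spots, or the factor comes from a separate second sample. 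I would sample $X = X_0 \cup X_1 \cup \dots \cup X_s$ as a disjoint union of several independently chosen sets (as hinted in the sketch — "further dividing $X$ into several subsets to exhaust in turn"), each at rate roughly $p/s$, so that \textbf{A1}, \textbf{A2} hold for the union and each $X_i$ individually satisfies a regularity condition. Bad events are defined locally (one per vertex for each degree count), each depends on boundedly many others, and the probabilities are exponentially small in $pd$, so the local lemma applies.

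Next, for the main statement: given the bounded collection $\mathcal{P}_1,\dots,\mathcal{P}_{d/2}$ in $G - V(\mathcal{F}) - X$, I would process each path forest $\mathcal{P}_i$ in turn, and within each I would run an iterative connection procedure using the reservoir sets $X_1,\dots,X_s$ one at a time. At each stage, while there are still more than $2n/d$ paths in the current forest and more than, say, $\sqrt d/2$ of them have an endpoint from which a short path of length $\le k$ (with $1/d\ll 1/k\ll 1/K$) through unused vertices of the current $X_j$ reaches either (a) another path's endpoint, or (b) a vertex of some $G_j\in\mathcal{F}$, I add such a connecting path. The bookkeeping here is that each connection uses only $O(k)$ vertices of $X_j$ and touches $O(k)$ paths' worth of endpoint-degree, so by choosing $|X_j|$ large enough and $s$ large enough we can guarantee the endpoint-multiplicity bound $\sqrt d$ in \textbf{A4} and the per-spot bound $\sqrt d$ in \textbf{A5} are respected (this is why we split $X$ and also why the input bound is $d^{1/4}$, leaving room). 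Paths that get connected into a dense spot are moved into $\mathcal{P}'_i$; paths that become "long" (length $\ge Kd$) are set aside and later decomposed directly into copies of $P_{(1-\eps)d}$ with a bounded remainder. When the procedure halts, I claim only few paths remain un-dealt-with: this is the crux.

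The main obstacle — and the heart of the proof — is the halting analysis: showing that when no more connections of either type are available, all but $O(\sqrt d)$ (per forest) of the remaining paths have already been routed into dense spots or made long. Here I would argue as in the sketch: suppose $r$ paths remain, none long, none connected to a spot, and no short connections available. For each remaining path $P$, let $Y_P$ be the set of unused $X_j$-vertices reachable from an endpoint of $P$ by a path of length $\le k/2$ using only $X_j$-vertices. Unavailability of type-(a) connections forces the $Y_P$ to be pairwise disjoint, and unavailability of type-(b) forces each $Y_P$ to avoid $V(\mathcal{F})$; since $|X_j|\le 2pn$ and the $Y_P$ are disjoint, some $Y_P$ has $\le 2pn/r$ vertices. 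If $r$ is not tiny this makes $|Y_P|$ small, and a short-path/ball-growth argument (a vertex whose $X_j$-ball of radius $k/2$ is small must, by the degree condition \textbf{A1} giving $d_G(v,X_j)\gtrsim pd$, have $G[Y_P]$ containing a subgraph of min-degree $\gtrsim (1-\eta)d$ on $\le Kd$ vertices — essentially because the ball stops growing only if almost all neighbours stay inside) — exhibits an $(\eta,d,K)$-dense subgraph \emph{disjoint from $V(\mathcal{F})$}. But this contradicts maximality of $\mathcal{F}$. Hence $r$ is small, i.e. $O(\sqrt d)$, and the few leftover paths (together with the bounded leftovers from chopping long paths and from misaligned connections) contribute at most $\eps nd/4$ edges to the undecomposed remainder — giving \textbf{A6}, while \textbf{A3}–\textbf{A5} follow from the construction. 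The delicate points I expect to fight with are: (i) making the ball-growth/density extraction quantitatively tight enough that the extracted dense subgraph genuinely has $\le Kd$ vertices and min-degree $\ge(1-\eta)d$ in $G$ (not just in $G[X_j]$ — one needs that $X_j$ accurately samples neighbourhoods, which is exactly what \textbf{A1}/\textbf{A2} buy, plus the fact that a dense spot in $G[X]$ sits inside a dense spot in $G$, as flagged in the sketch); and (ii) simultaneously juggling the three counting bounds \textbf{A4}, \textbf{A5} and edge-disjointness across all $d/2$ forests, which is what forces the multi-stage reservoir $X=X_1\cup\dots\cup X_s$ and careful accounting of how many connections each vertex/spot can absorb.
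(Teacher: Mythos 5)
Your overall architecture (local-lemma sample, maximal systems of short connecting paths of the two types, disjointness of the reachable sets $Y_P$ plus pigeonhole to find a small one, ball-growth to extract a dense spot) matches the paper's, but there is a genuine gap at the crux, and it is exactly the point the paper calls its key technical novelty. The ball-growth takes place inside $X$, so the dense spot you extract lives in $G[X]$ and has minimum degree only about $(1-\eta)pd$, not $(1-\eta)d$; properties \textbf{A1}--\textbf{A2} alone do not let you ``unsample'' it to a genuine $(\eta,d,K)$-dense subgraph of $G$. The paper has to build a much stronger property into $X$ at construction time (Definition~\ref{defn:goodsample}\emph{\ref{good:denseinheritance}} and Lemma~\ref{Lemma_good_sample_exists}): for every dense spot $H\subset G[X]$ that is \emph{approximable by the $X$-neighbourhoods of a few pairwise-close vertices}, there is a dense spot $H'\subset G$ whose large overlaps with members of $\mathcal{F}$ are witnessed inside $H$ itself. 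The approximability restriction is what keeps the number of bad events per vertex polynomial in $d$ so the local lemma applies; a blanket statement over all subsets of $X$ would not be provable this way, and your proposal never sets up this machinery.

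Moreover, your final contradiction is aimed at the wrong maximality. You claim the extracted dense subgraph of $G$ is disjoint from $V(\mathcal{F})$ and hence contradicts the maximality of $\mathcal{F}$. But the lifted spot $H'$ is (roughly) the full $G$-neighbourhood of the ball's centres, and nothing keeps it out of $V(\mathcal{F})$ --- indeed, by maximality of $\mathcal{F}$ it \emph{must} meet some $G_m$ in at least $d/2$ vertices. The actual contradiction in the paper runs the other way: maximality of $\mathcal{F}$ forces $H'$ to overlap some $G_m$ substantially, the good-sample property then forces the small spot $H\subset G[X]$ to already contain a vertex of $G_m$, and this yields a short path from the stuck endpoint into $G_m$ through $X$ that should have been added to the type-(b) system $\mathcal{R}_i$ --- contradicting the maximality of the connecting-path collection, not of $\mathcal{F}$. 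Without the sampling property and this redirected contradiction, the halting analysis does not close.
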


Finally, we give our lemma which can decompose the dense spots along with some paths which all have their endpoints among the dense spots in a relatively well spread manner, as follows.

    \stepcounter{propcounter}
\begin{restatable}{lemma}{decomposedensespots}\label{Lemma_decompose_inside_dense_bits}
 Let $1/d\ll  \eta \ll p, 1/K\polysmall \eps$.
    Let $G$ be a graph consisting of a vertex-disjoint family $\mathcal{F}=\{G_1,\ldots,G_t\}$ of $(\eta,d,K)$-dense graphs and path forests $\mathcal{P}_1,\ldots, \mathcal{P}_{d/2}$ where each path in each path forest has both its endpoints in $V(\mathcal{F})$ and all its internal vertices not in $V(\mathcal{F})$. Suppose also that the following properties hold.

    \begin{enumerate}[label = {{\emph{\textbf{B\arabic{enumi}}}}}]
        \item\labelinthm{prop:densedecomp1} For each $i\in [t]$, there is some $X_i\subseteq V(G_i)$ so that $X_i$ contains all of the vertices of $V(\mathcal{P}_1)\cup\ldots\cup V(\mathcal{P}_{d/2})$ in $V(G_i)$ (which are necessarily endpoints), and, for each $v\in V(G_i)$, $d_{G_i}(v,X_i)=(1\pm \eta)pd$.% and $d_{G_i}(v, C_i)\leq 2pd$ for each $c\in C_i$ and $v\in G_i$.
        \item\labelinthm{prop:densedecomp2}  Each vertex $v\in V(\mathcal{F})$ is an endpoint of in total at most $\sqrt{d}$ paths from $\mathcal{P}_1,\ldots, \mathcal{P}_{d/2}$.
        \item\labelinthm{prop:densedecomp3} For each $j\in [t]$ and $i\in [d/2]$ at most $\sqrt{d}$ of the paths in $\mathcal{P}_i$ have at least one endpoint in $G_j$.
    \end{enumerate}
    Then, all but at most $\eps d\cdot |V(\mathcal{F})|$ of the edges of $G$ can be decomposed into copies of $P_{(1-\eps)d}$.
\end{restatable}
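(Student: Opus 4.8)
The plan is to handle each dense spot $G_i$ together with the paths attached to it more or less independently, and then sum the leftover edges. Fix $i \in [t]$. Write $H_i$ for the subgraph of $G$ consisting of $G_i$ together with all the paths in $\mathcal P_1, \ldots, \mathcal P_{d/2}$ that have an endpoint in $G_i$; by hypothesis every such path has \emph{both} endpoints in $V(\mathcal F)$, so each attached path either lies entirely between $G_i$ and another $G_j$, or is a ``pendant'' path with both endpoints in $G_i$. (A path with one endpoint in $G_i$ and one in $G_j$ will be split in half, assigning an internal vertex arbitrarily — or better, assigned wholesale to one of the two spots and counted there; since $|E(\mathcal P)|$ is tiny compared to $d|V(\mathcal F)|$ the exact convention will not matter.) The key structural point is that $G_i$ is $(\eta,d,K)$-dense, so it has between $(1-\eta)d$ and $Kd$ vertices, minimum degree $\geq (1-\eta)d$, hence $G_i$ is very close to a disjoint union of a bounded number of near-complete graphs on $\approx d$ vertices; in particular $e(G_i) = \Theta(d^2)$ and $e(G_i) \leq \tfrac12 K d^2$, while the number of attached path-edges incident to $G_i$ is at most (number of attached paths) $\times$ (max path length) $\leq d/2 \cdot \sqrt d \cdot$ (something), which by \textbf{B2}, \textbf{B3} is $O(d^{3/2})$ per spot — negligible against $d^2$. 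So it suffices to show that we can decompose all but an $\eps$-fraction of the edges of each $H_i$ into copies of $P_{(1-\eps)d}$.

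For a single $H_i$, I would proceed as follows. First, absorb the pendant paths: each pendant path $P$ has both endpoints $a,b$ in $V(G_i)$, and I want to route $P$ through $G_i$ to extend it to a path of length exactly $(1-\eps)d$ inside $H_i$ using otherwise-unused edges of $G_i$, or, if $P$ is already too long, chop $P$ into segments of length $(1-\eps)d$ with a bounded remainder. The point of hypothesis \textbf{B1} is that the set $X_i$ of vertices of $G_i$ used by the path forests satisfies $d_{G_i}(v, X_i) = (1\pm\eta)pd$ for all $v$ — i.e. the attached paths only ever ``touch'' $G_i$ through a sparse, well-spread sample, so after deleting the path-edges the graph $G_i$ is still $(1\pm O(\eta + p))d$-regular-ish, certainly still has minimum degree $\geq (1 - \eta - 2p)d$. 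Hence $G_i$ minus the used path-edges is still dense, and I can greedily extend/reroute each attached path endpoint into it. Then I am left with a graph that is (a disjoint-ish union of) near-complete graphs on $\approx d$ vertices, possibly with a few pendant/through paths already pinned down and a few low-degree vertices; I need to decompose its remaining edges into $P_{(1-\eps)d}$'s up to an $\eps$-fraction. For this I would invoke the same machinery used elsewhere in the paper: either a direct argument that $K_m$ (and near-$K_m$) with $m = (1\pm\eta)d$ decomposes into paths of length $(1-\eps)d$ up to few leftover edges (e.g. via a Hamilton-path / Walecki-type decomposition of $K_m$ and then cutting each Hamilton path into $(1-\eps)d$-segments — note $m/((1-\eps)d) \approx 1/(1-\eps)$ is just over $1$, so each Hamilton path gives one $P_{(1-\eps)d}$ plus a short remainder, which is $O(\eps d)$ per Hamilton path and there are $\approx d/2$ of them, for $O(\eps d^2)$ wasted edges total, which is the allowed budget), or, as the paper hints, one of the near-regular-decomposition results of Chakraborti–Janzer–Methuku–Montgomery to first split near-$K_m$ into near-regular pieces of the right degree and then path-decompose each piece.

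The bookkeeping I would do carefully: (i) attach-path edges total $o(d^2)$ per spot by \textbf{B2}/\textbf{B3}, so they can simply be thrown into the leftover pile if routing one of them turns out to be awkward, at total cost $o(d^2 \cdot t) = o(d \cdot |V(\mathcal F)|)$; (ii) vertices of $G_i$ of degree below $(1-\eta)d$ — there are none, by density, but vertices that after edge-removal drop below the threshold for the near-complete decomposition should be isolated and their $\leq \eta d + pd$ incident remaining edges discarded, costing $\leq (\eta + p)d \cdot |V(G_i)|$ per spot, i.e. $\leq (\eta+p) d |V(\mathcal F)|$ overall — this needs $\eta, p \ll \eps$, which is exactly the constant hierarchy assumed; (iii) the ``one short remainder per Hamilton path'' loss, bounded by $\eps d$ per Hamilton path and $\leq d/2$ Hamilton paths per near-clique, $\leq K$ near-cliques per spot, giving $\leq K\eps d^2/2$ per spot — here I'd want to rescale and cut into segments of length $(1-\eps/2)d$ say, so that the final count comes out under $\eps d |V(\mathcal F)|$. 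Summing (i)–(iii) over $i \in [t]$ and using $|V(\mathcal F)| = \sum_i |V(G_i)| \geq t(1-\eta)d$, all three leftover contributions are at most $(\text{const})\cdot(\eta + p + \eps)\cdot d|V(\mathcal F)|$, which is $\leq \eps d |V(\mathcal F)|$ after adjusting constants.

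The main obstacle I expect is \textbf{not} the near-clique path-decomposition (that is classical up to $\eps$-slack) but rather the \emph{interaction} between the attached paths and the clique structure: an attached path may enter and leave $G_i$ many times (its internal vertices outside $V(\mathcal F)$, but it can have many vertices in $V(G_i)$), so ``$G_i$ after removing the path-edges'' is not literally a clique minus a matching — it's a clique minus a bounded-degree graph, and I must make sure the leftover structure is still amenable to the path decomposition and that the path-endpoints I need to incorporate are spread out enough not to overload any vertex. This is precisely what \textbf{B1} (the $(1\pm\eta)pd$-regularity of the touched sample $X_i$), \textbf{B2}, and \textbf{B3} are engineered to guarantee, so the real work is a careful argument that a near-complete graph on $\approx d$ vertices, with a bounded-degree subgraph removed and a bounded number of path-endpoints to be matched up, still path-decomposes into $P_{(1-\eps)d}$'s up to $O(\eps d^2)$ edges — likely via first completing the attached paths to length $(1-\eps)d$ one at a time (greedily routing through the huge remaining clique, which has min degree $\geq (1 - O(\eta + p))d$ so Dirac-type extension is trivial), then decomposing what remains.
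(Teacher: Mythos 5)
Your proposal has several genuine gaps, two of which are fatal. First, the claim that the attached paths contribute only $o(d^2)$ edges per spot, ``so they can simply be thrown into the leftover pile,'' is false: \emph{\ref{prop:densedecomp2}} and \emph{\ref{prop:densedecomp3}} bound the number of attached \emph{paths} (up to $\sqrt{d}$ per forest per spot, so up to $d^{3/2}/2$ per spot), not their \emph{lengths}. The internal vertices lie outside $V(\mathcal{F})$ and the paths can be arbitrarily long -- in the application they are the long paths covering most of $G-V(\mathcal{F})$ -- so their edges can vastly exceed the budget $\eps d|V(\mathcal{F})|$ and must be covered by the decomposition, not discarded. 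This is precisely why the paper completes each attached path of length $\ell_i$ to a multiple of $(1-\eps)d$ by finding, inside the dense spot, a path forest on a \emph{prescribed} number $n_i$ of vertices with $\ell_i+n_i\equiv 0 \pmod{(1-\eps)d}$ (Lemma~\ref{Lemma_dense_specfic_lengths} and Corollary~\ref{Corollary_dense_specfic_lengths}), and then joins its components up. Relatedly, you cannot extend each of the up to $d^{3/2}$ attached paths separately: each completion may need up to $d$ new edges, for a total of order $d^{5/2}$, far more than $e(G_j)\leq Kd^2/2$. The paper first \emph{pairs up} endpoints of the same forest landing in the same (connected piece of a) dense spot via short connecting paths, so that at most one path per forest per piece needs a length completion -- at most $d/2$ completions of length at most $(1-\eps)d$, which just fits into $e(G_j)$. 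Your proposal has no analogue of this pairing step, and also does not correctly handle paths whose two endpoints lie in different dense spots (the paper's induction on $t$ exists for exactly this reason).

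Second, your structural premise that an $(\eta,d,K)$-dense graph ``is very close to a disjoint union of a bounded number of near-complete graphs on $\approx d$ vertices'' is wrong: the hypotheses are only $|V(G_i)|\leq Kd$ and $\delta(G_i)\geq(1-\eta)d$, which admits, say, $K_{d,d}$ or any $(1-\eta)d$-regular graph on up to $Kd$ vertices, possibly with sparse cuts. So the Walecki/Hamilton-path decomposition of near-cliques does not apply, and greedy Dirac-type routing can get stuck behind a sparse cut. The paper instead partitions each dense spot into $(\lambda^{1/4},\lambda,d)$-connected pieces (Lemma~\ref{Lemma_partition_to_connected_sets}), reserves a random connecting set $W$ supplying many short edge-disjoint connecting paths between any two neighbourhoods (Lemma~\ref{connectingwithindense}), and decomposes each piece via the partition-into-parts-plus-matchings machinery rather than via Hamilton paths. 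Your instinct that the interaction between attached paths and the spot is the crux is right, but the resolution requires the prescribed-length completion, the pairing step, and the connectivity machinery, none of which appear in your outline.
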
%{lemma}

%%%%%%%%%%%%%%%%%%%%%%%%%%%%%%%%%%%%%%%%%%%%%%%%%%%%%%%%%%%%%%%%%%%%%%%%%%%%%%%%%%%%%%%%%%%%%%%%%%%%%%%%%%%%%%%%%%%%%%%%%%%%%%%%%%%%
%%%%%%%%%%%%%%%%%%%%%%%%%%%%%%%%%%%%%%%%%%%%%%%%%%%%%%%%%%%%%%%%%%%%%%%%%%%%%%%%%%%%%%%%%%%%%%%%%%%%%%%%%%%%%%%%%%%%%%%%%%%%%%%%%%%%
%%%%%%%%%%%%%%%%%%%%%%%%%%%%%%%%%%%%%%%%%%%%%%%%%%%%%%%%%%%%%%%%%%%%%%%%%%%%%%%%%%%%%%%%%%%%%%%%%%%%%%%%%%%%%%%%%%%%%%%%%%%%%%%%%%%%
%%%%%%%%%%%%%%%%%%%%%%%%%%%%%%%%%%%%%%%%%%%%%%%%%%%%%%%%%%%%%%%%%%%%%%%%%%%%%%%%%%%%%%%%%%%%%%%%%%%%%%%%%%%%%%%%%%%%%%%%%%%%%%%%%%%%

Subject to the proof of these three key lemmas, we can now prove Theorem~\ref{Theorem_main}, following the steps \ref{sketch:first}--\ref{sketch:last} in Section~\ref{sec:proofsketch}.

\begin{proof}[Proof of Theorem~\ref{Theorem_main}]
Let $K\in \N$ and $\eta,p>0$ satisfy
\[
1/d\ll  \eta\ll p \ll 1/K \ll \eps\leq 1.
\]
Let $G$ be an $n$-vertex $d$-regular graph. Let $\mathcal{F}=\{G_1,\ldots,G_t\}$ be a maximal collection of vertex-disjoint $(\eta,d,K)$-dense graphs in $G$.
Using Lemma~\ref{Lemma_X_connects}, let $X\subset V(G)$ be such that the following hold.
\stepcounter{propcounter}
\begin{enumerate}[label = {{{\textbf{\Alph{propcounter}\arabic{enumi}}}}}]
\item $|X|\leq 2pn$, and, for each $v\in V(G)$, $d_G(v,X)\leq 2pd$.\label{C1}
\item For each $i\in [t]$, $|X\cap V(G_i)|\leq 2p|V(G_i)|$ and, for each $v\in V(G_i)$, $d_{G}(v,X\cap V(G_i))=(1\pm \eta)pd$.\label{littleprop:2}
\item For any $(2n/d,d^{1/4},d^{1/4})$-bounded edge-disjoint collection of path forests $\mathcal{P}_1, \mathcal{P}_2, \ldots, \mathcal{P}_{d/2}$ in $G - V(\mathcal{F})-X$, we can find in $G-V(\mathcal{F}\setminus X)$ an edge-disjoint collection of path forests $\mathcal{P}'_1, \mathcal{P}'_2, \ldots, \mathcal{P}'_{d/2}$ such that the following hold.\label{prop:Xuseful}
\begin{enumerate}[label = \textbf{\roman{enumii})}]
   \item For each $i\in [d/2]$, every path in $\mathcal{P}'_{i}$ has both of its endvertices in $X\cap V(\mathcal{F})$.\label{littleprop:1}
   \item Each vertex in $V(G)$ is an endvertex of at most $\sqrt{d}$ paths in $\mathcal{P}'_i$, $i\in [d/2]$.\label{littleprop:2b}
   \item For each $j\in [t]$, and each $i\in [d/2]$, at most $\sqrt{d}$ of the paths in $\mathcal{P}_i'$ end in $G_j$.\label{littleprop:2c}
   \item All but at most $\eps nd/4$ edges of $E(\mathcal{P}_1\cup \ldots \cup \mathcal{P}_{d/2})\setminus  E(\mathcal{P}'_1\cup \ldots \cup \mathcal{P}'_{d/2})$ can be decomposed into copies of $P_{(1-\eps)d}$.\label{littleprop:3}
\end{enumerate}
\end{enumerate}

Let $G'=G-V(\mathcal{F})-X$.
If $|V(G')|\leq \eps n/2$, then let $\mathcal{P}_1=\ldots=\mathcal{P}_{d/2}=\emptyset$, noting that these cover all but at most $\eps n d/4$ edges of $G'$ and they therefore trivially form a $(2n/d,d^{1/4},d^{1/4})$-bounded collection of paths which cover all but at most $\eps nd/4$ of the edges of $G'$. Suppose, otherwise, that $|V(G')|>\eps n/2$.
As each $G_i$, $i\in [t]$, is $(\eta,d,K)$-dense, the number of edges incident on $V(G')$ with, for some $i\in [t]$, one vertex in some $G_i$ is at most $\sum_{i\in [t]}|V(G_i)|\cdot \eta d=\eta d|V(\mathcal{F})|$. Therefore, by \ref{C1}, %as $d_G(v,X)\leq 2pd$ for each $v\in V(G)$,
\[
|E(G')|\geq \frac{1}{2}d|V(G')|-\eta d|V(\mathcal{F})|-2pd|X|\geq \frac{1}{2}d(1-5p)|V(G')|.
\]
Let $\gamma=\sqrt{5p}$.
As $\Delta(G')\leq d$, if $N$ is the number of vertices in $G'$ with degree at most $(1-\gamma)d$ in $G'$, then $N\cdot \gamma d\leq 5dp|V(G')|$, so that $N\leq \gamma|V(G')|$. Thus, as $1/d\ll p\ll\eps$, by Lemma~\ref{Lemma_decomposition_path_forests}, $G'$ contains a $(2n/d,d^{1/4},d^{1/4})$-bounded edge-disjoint collection of path-forests
$\mathcal{P}_1,\ldots,\mathcal{P}_{d/2}$ which together cover all but at most $\eps nd/4$ of the edges of $G'=G-V(\mathcal{F})-X$.

Then, by \ref{prop:Xuseful}, we can find an edge-disjoint collection of path-forests $\mathcal{P}'_1,  \ldots, \mathcal{P}'_{d/2}$ in $G-V(\mathcal{F})$ such that \ref{littleprop:1}--\ref{littleprop:3} hold. %For each $i\in [d/2]$, let $\mathcal{P}''_i$ be the path forest of paths in $\mathcal{P}'_i$ which have both of their endvertices in $X\cap V(\mathcal{F})$.
Using \ref{littleprop:3}, let $\mathcal{Q}_1$ be a set of edge-disjoint copies of $P_{(1-\eps)d}$ which decompose
all but at most $\eps nd/4$ edges of $E(\mathcal{P}_1\cup \ldots \cup \mathcal{P}_{d/2})\setminus  E(\mathcal{P}'_1\cup \ldots \cup \mathcal{P}'_{d/2})$.

Let $G''=\big(\bigcup_{i\in [t]}G_i\big)\cup \big(\bigcup_{i\in [d/2]}\bigcup_{P\in \mathcal{P}_i'}P\big)$. By Lemma~\ref{Lemma_decompose_inside_dense_bits}
(with $X_i=X\cap V(G_i)$ for each $i\in [t]$, $\eta'=3\eta$,  and using \ref{littleprop:2} and \ref{littleprop:1}--\ref{littleprop:2c}), let $\mathcal{Q}_2$ be a set of edge-disjoint copies of $P_{(1-\eps)d}$ which decomposes
all but at most $\eps nd/4$ of the edges of $G''$.

Then, $\mathcal{Q}_1\cup \mathcal{Q}_2$ is an edge-disjoint collection of copies of $P_{(1-\eps)d}$ which decomposes all but at most $\eps nd/2$ edges of $\mathcal{P}_1,\ldots,\mathcal{P}_{d/2},\mathcal{F}$, and therefore all but at most $3\eps nd/4$ of the edges in $E(G')\cup E(\mathcal{F})$. As $G_i$ is $(\eta,d,K)$-dense for each $i\in [t]$, each vertex in $V(\mathcal{F})$ has at most $\eta d$ neighbours in $V(G')$, so that there are at most $\eta n d$ edges in $G$ between $V(\mathcal{F})$ and $V(G')$. Furthermore, as $|X|\leq 2pn$ by \ref{C1}, there are at most $2pnd$ edges in $G$ between $X$ and $V(G)\setminus X$. Therefore, as the only edges of $G$ which are not in $E(G')\cup E(\mathcal{F})$ are those with a vertex in $X$ or which are between $G'$ and $V(\mathcal{F})$, $\mathcal{Q}_1\cup \mathcal{Q}_2$ decomposes all but at most $3\eps nd/4+\eta nd+2pnd\leq \eps nd$ of the edges of $G$ into copies of $P_{(1-\eps)d}$, as required.
\end{proof}

%%%%%%%%%%%%%%%%%%%%%%%%%%%%%%%%%%%%%%%%%%%%%%%%%%%%%%%%%%%%%%%%%%%%%%%%%%%%%%%%%%%%%%%%%%%%%%%%%%%%%%%%%%%%%%%%%%%%%%%%%%%%%%%%%%%%
%%%%%%%%%%%%%%%%%%%%%%%%%%%%%%%%%%%%%%%%%%%%%%%%%%%%%%%%%%%%%%%%%%%%%%%%%%%%%%%%%%%%%%%%%%%%%%%%%%%%%%%%%%%%%%%%%%%%%%%%%%%%%%%%%%%%
%%%%%%%%%%%%%%%%%%%%%%%%%%%%%%%%%%%%%%%%%%%%%%%%%%%%%%%%%%%%%%%%%%%%%%%%%%%%%%%%%%%%%%%%%%%%%%%%%%%%%%%%%%%%%%%%%%%%%%%%%%%%%%%%%%%%
%%%%%%%%%%%%%%%%%%%%%%%%%%%%%%%%%%%%%%%%%%%%%%%%%%%%%%%%%%%%%%%%%%%%%%%%%%%%%%%%%%%%%%%%%%%%%%%%%%%%%%%%%%%%%%%%%%%%%%%%%%%%%%%%%%%%

\subsection{Tools}\label{sec:tools}
We collect here some standard results we will use in our proofs.
\subsubsection{Concentration inequalities}
We will use the following standard version of Chernoff's bound (see, for example, \cite[Corollary 2.2 and Theorem 2.10]{janson2011random}).
\begin{lemma}[Chernoff's bound]\label{chernoff} 
Let $X$ be a random variable with mean $\mu$ which is binomially distributed or hypergeometrically distributed. 
Then, for any $0<\gamma<1$, we have that $\mathbb{P}(|X-\mu|\geq \gamma \mu)\leq 2e^{-\mu \gamma^2/3}$.
\end{lemma}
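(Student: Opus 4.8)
The plan is to treat this as the standard Chernoff--Hoeffding estimate and, rather than reprove it from scratch, to derive it via the exponential moment (Laplace transform) method, handling the two distributional cases and then combining the one-sided bounds with a union bound.

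First I would handle the binomial case. Write $X=\sum_{i=1}^N X_i$ as a sum of independent Bernoulli variables, so that $\mu=\sum_i \mathbb{E}X_i$. For $t>0$, independence gives $\mathbb{E}[e^{tX}]=\prod_i \mathbb{E}[e^{tX_i}]\le \prod_i \exp\!\big((e^t-1)\mathbb{E}X_i\big)=\exp\!\big((e^t-1)\mu\big)$, using $1+x\le e^x$. By Markov's inequality applied to $e^{tX}$, for the upper tail $\mathbb{P}(X\ge(1+\gamma)\mu)\le \exp\!\big((e^t-1)\mu-t(1+\gamma)\mu\big)$; optimising at $t=\ln(1+\gamma)$ yields $\mathbb{P}(X\ge(1+\gamma)\mu)\le \exp\!\big(-\mu((1+\gamma)\ln(1+\gamma)-\gamma)\big)$. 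For $0<\gamma<1$ the elementary inequality $(1+\gamma)\ln(1+\gamma)-\gamma\ge \gamma^2/3$ gives the upper-tail bound $e^{-\mu\gamma^2/3}$. Symmetrically, applying the same argument with $t<0$ to the lower tail gives $\mathbb{P}(X\le(1-\gamma)\mu)\le \exp\!\big(-\mu(\gamma^2/2)\big)\le e^{-\mu\gamma^2/3}$. Adding the two one-sided estimates gives $\mathbb{P}(|X-\mu|\ge\gamma\mu)\le 2e^{-\mu\gamma^2/3}$.

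Next I would reduce the hypergeometric case to the binomial one. The key input is Hoeffding's observation that if $X$ is hypergeometric with mean $\mu$ and $Y$ is binomial with the same mean, then $\mathbb{E}[f(X)]\le \mathbb{E}[f(Y)]$ for every convex $f$; in particular $\mathbb{E}[e^{tX}]\le \mathbb{E}[e^{tY}]$ for all real $t$ (this follows from realising $X$ as sampling without replacement and $Y$ as sampling with replacement, together with a martingale/negative-association comparison). Feeding this moment-generating-function bound into exactly the same Markov-plus-optimisation computation as above yields the same two-sided estimate. The main obstacle here is not any deep difficulty but simply the bookkeeping of the two elementary convexity inequalities for the exponents and the hypergeometric MGF comparison; since the statement is a textbook fact, in the write-up I would instead just cite \cite[Corollary 2.2 and Theorem 2.10]{janson2011random} and omit the details.
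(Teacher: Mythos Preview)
Your proposal is correct and in fact aligns exactly with the paper's treatment: the paper does not prove this lemma at all but simply records it as a standard result with the citation \cite[Corollary~2.2 and Theorem~2.10]{janson2011random}, which is precisely what you conclude you would do. Your additional sketch of the exponential-moment argument and the Hoeffding comparison for the hypergeometric case is sound and goes beyond what the paper provides.
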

Given a product probability space $\Omega = \prod_{i\in[n]} \Omega_i$, a random variable $X\colon \Omega\to \mathbb{R}$ is called $C$-Lipschitz if $|X(\omega)-X(\omega')|\leq C$ whenever $\omega$ and $\omega'$ differ in at most $1$ co-ordinate. We will use the following standard version of Azuma's inequality (see, for example, \cite[Corollary 2.27]{janson2011random}).
\begin{lemma}[Azuma's inequality]\label{lem:Azuma}
Let $X$ be $C$-Lipschitz random variable on a product probability space with $n$ co-ordinates. Then, for any $t>0$,
$$\mathbb{P}(|X-\mathbb{E}(X)|> t)\leq 2e^{\frac{-t^2}{2nC^2}}.$$
\end{lemma}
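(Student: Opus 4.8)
The plan is to use the standard martingale (Azuma--Hoeffding) method: expose the $n$ coordinates one at a time to build the Doob martingale, show that its increments are bounded by $C$, control the conditional exponential moments, and finish with Markov's inequality.

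Write $\Omega=\prod_{i\in[n]}\Omega_i$ with its product measure $\prod_i\mu_i$, and let $\mathcal{F}_k$ be the $\sigma$-algebra generated by the first $k$ coordinates, so $\mathcal{F}_0$ is trivial and $\mathcal{F}_n$ is the full $\sigma$-algebra. Set $X_k=\mathbb{E}(X\mid\mathcal{F}_k)$, so $X_0=\mathbb{E}(X)$ and $X_n=X$, and put $D_k=X_k-X_{k-1}$ for $k\in[n]$, so that $X-\mathbb{E}(X)=\sum_{k=1}^n D_k$. First I would note that $X$ is bounded: fixing any reference point and changing its coordinates one at a time, the $C$-Lipschitz hypothesis gives $|X(\omega)-X(\omega')|\le nC$ for all $\omega,\omega'$, so all conditional expectations above are well defined. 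The key step is the pointwise bound $|D_k|\le C$. Writing $X_k$ and $X_{k-1}$ as integrals over the last $n-k$ and $n-k+1$ coordinates respectively gives
$$D_k(\omega)=\int\Big(X(\omega_1,\dots,\omega_k,x_{k+1},\dots,x_n)-X(\omega_1,\dots,\omega_{k-1},x_k,x_{k+1},\dots,x_n)\Big)\prod_{j=k}^n d\mu_j(x_j),$$
and for every choice of $x_k,\dots,x_n$ the integrand has absolute value at most $C$, since its two arguments differ only in coordinate $k$; hence $|D_k|\le C$.

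Next I would bound the conditional moment generating function of each increment. Since $\mathbb{E}(D_k\mid\mathcal{F}_{k-1})=0$ and $|D_k|\le C$, convexity of $y\mapsto e^{\lambda y}$ on $[-C,C]$ gives $e^{\lambda D_k}\le\frac{C-D_k}{2C}e^{-\lambda C}+\frac{C+D_k}{2C}e^{\lambda C}$, and taking conditional expectations yields $\mathbb{E}(e^{\lambda D_k}\mid\mathcal{F}_{k-1})\le\cosh(\lambda C)\le e^{\lambda^2 C^2/2}$ for all $\lambda\in\mathbb{R}$. Iterating this via the tower property, peeling off $D_n$, then $D_{n-1}$, and so on (at each stage the earlier increments form an $\mathcal{F}_{k-1}$-measurable factor), gives $\mathbb{E}\big(e^{\lambda(X-\mathbb{E}X)}\big)\le e^{n\lambda^2 C^2/2}$.

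Finally I would apply Markov's inequality: for $\lambda>0$,
$$\mathbb{P}(X-\mathbb{E}X>t)\le e^{-\lambda t}\,\mathbb{E}\big(e^{\lambda(X-\mathbb{E}X)}\big)\le e^{-\lambda t+n\lambda^2 C^2/2},$$
and choosing $\lambda=t/(nC^2)$ gives $\mathbb{P}(X-\mathbb{E}X>t)\le e^{-t^2/(2nC^2)}$. Applying the same argument to $-X$, which is also $C$-Lipschitz, bounds $\mathbb{P}(X-\mathbb{E}X<-t)$ by the same quantity, and a union bound yields $\mathbb{P}(|X-\mathbb{E}X|>t)\le 2e^{-t^2/(2nC^2)}$, as claimed. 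The only genuinely delicate point is the bound $|D_k|\le C$, which is where the product structure of $\Omega$ is used (conditioning on the first $k$ coordinates is honest integration against the remaining independent factors); the rest is the routine exponential-moment estimate.
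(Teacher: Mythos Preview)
Your proof is correct and is the standard Doob-martingale/Hoeffding argument. The paper itself does not prove this lemma: it is stated as a standard tool and referenced to \cite[Corollary~2.27]{janson2011random}, so there is nothing to compare against beyond noting that you have supplied the classical proof the paper omits.
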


\subsubsection{The local lemma}\label{sec:local}
We will use the local lemma of Lov\'asz (see~\cite{alon2016probabilistic}) many times throughout our proofs, so after stating the form that we use we will make some remarks on how we use it.%
\begin{lemma}[The local lemma]\label{Lemma_local_symmetric} Let $B_1,B_2,\ldots, B_n$ be events in an arbitrary probability space.  Suppose that, for each $i\in [n]$, the event $B_i$ is mutually independent of a set of at most $\Delta$ of the other events and $\mathbb{P}(B_i)\leq p$.
 Then, if $ep(\Delta+1)\leq 1$, the probability that none of the events $B_i$, $i\in [n]$, occurs is strictly positive.
\end{lemma}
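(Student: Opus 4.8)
The plan is to deduce this symmetric statement from the general weighted Lov\'asz Local Lemma and to prove that general form by a single induction on the size of a conditioning set. First I would fix, for each $i\in[n]$, a set $N(i)\subseteq[n]\setminus\{i\}$ with $|N(i)|\le\Delta$ such that $B_i$ is mutually independent of $\{B_j: j\notin N(i)\cup\{i\}\}$; the hypothesis guarantees such sets exist. The general form I would establish is: if there are reals $x_i\in[0,1)$ with $\mathbb{P}(B_i)\le x_i\prod_{j\in N(i)}(1-x_j)$ for every $i$, then $\mathbb{P}\bigl(\bigcap_i\overline{B_i}\bigr)\ge\prod_i(1-x_i)>0$. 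To recover the lemma as stated, set $x_i=\tfrac1{\Delta+1}$ for all $i$; then
\[
x_i\prod_{j\in N(i)}(1-x_j)\ge\frac1{\Delta+1}\Bigl(1-\frac1{\Delta+1}\Bigr)^{\Delta}\ge\frac1{e(\Delta+1)}\ge p,
\]
where the middle inequality uses $(1-\tfrac1{\Delta+1})^{\Delta}=\bigl((1+\tfrac1\Delta)^{\Delta}\bigr)^{-1}\ge e^{-1}$ and the last inequality is exactly the hypothesis $ep(\Delta+1)\le1$. Hence the general form applies and gives the claimed positivity.

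For the general form, the heart of the matter is the claim that for every $i$ and every $S\subseteq[n]\setminus\{i\}$ one has $\mathbb{P}\bigl(B_i\mid\bigcap_{j\in S}\overline{B_j}\bigr)\le x_i$, which I would prove by induction on $|S|$ while carrying the side-invariant $\mathbb{P}\bigl(\bigcap_{j\in S}\overline{B_j}\bigr)>0$ so that all conditionings make sense. The base case $S=\emptyset$ is the hypothesis. For the inductive step, split $S=S_1\cup S_2$ with $S_1=S\cap N(i)$ and $S_2=S\setminus N(i)$, and write
\[
\mathbb{P}\Bigl(B_i\mid\bigcap_{j\in S}\overline{B_j}\Bigr)=\frac{\mathbb{P}\bigl(B_i\cap\bigcap_{j\in S_1}\overline{B_j}\mid\bigcap_{k\in S_2}\overline{B_k}\bigr)}{\mathbb{P}\bigl(\bigcap_{j\in S_1}\overline{B_j}\mid\bigcap_{k\in S_2}\overline{B_k}\bigr)}.
\]
The numerator is at most $\mathbb{P}\bigl(B_i\mid\bigcap_{k\in S_2}\overline{B_k}\bigr)=\mathbb{P}(B_i)\le x_i\prod_{j\in N(i)}(1-x_j)$, since $B_i$ is independent of the events indexed by $S_2$. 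The denominator is at least $\prod_{j\in S_1}(1-x_j)\ge\prod_{j\in N(i)}(1-x_j)$: enumerating $S_1=\{j_1,\dots,j_r\}$ and telescoping, each factor $\mathbb{P}\bigl(\overline{B_{j_\ell}}\mid\overline{B_{j_1}}\cap\dots\cap\overline{B_{j_{\ell-1}}}\cap\bigcap_{k\in S_2}\overline{B_k}\bigr)$ is at least $1-x_{j_\ell}$ by the induction hypothesis, as the conditioning set here has size at most $|S|-1$. Dividing gives the bound $x_i$, and the positivity invariant propagates by the same telescoping estimate.

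Finally, the chain rule gives
\[
\mathbb{P}\Bigl(\bigcap_{i=1}^n\overline{B_i}\Bigr)=\prod_{i=1}^n\mathbb{P}\Bigl(\overline{B_i}\mid\bigcap_{j<i}\overline{B_j}\Bigr)\ge\prod_{i=1}^n(1-x_i)>0,
\]
since each factor equals $1-\mathbb{P}\bigl(B_i\mid\bigcap_{j<i}\overline{B_j}\bigr)\ge 1-x_i$ by the claim. The only genuine obstacle here is the bookkeeping inside the key claim: correctly decomposing $S$ into neighbours and non-neighbours of $i$, lower-bounding the denominator by peeling off events one at a time while checking that every conditioning set that arises is strictly smaller than $S$, and simultaneously maintaining the positivity invariant so that all conditional probabilities are well defined. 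The auxiliary estimate $(1-1/(\Delta+1))^{\Delta}\ge 1/e$, the choice $x_i=1/(\Delta+1)$, and the concluding chain-rule product are all routine.
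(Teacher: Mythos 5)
Your proof is correct: it is the standard Erd\H{o}s--Lov\'asz argument, deducing the symmetric form from the general weighted local lemma with $x_i=1/(\Delta+1)$, the estimate $(1-1/(\Delta+1))^{\Delta}\ge e^{-1}$, and the induction on $|S|$ with the positivity invariant; all the steps check out. Note that the paper does not prove this lemma at all --- it cites it from Alon and Spencer's book --- so your write-up supplies exactly the standard proof that the cited reference contains.
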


Each time we apply the local lemma we will do so within a $d$-regular (or approximately $d$-regular) $n$-vertex graph $G$. The `bad events' $B_i$, $i\in I$, that we use, where $I$ is some appropriate index set, will all be of the form that `a binomial or hypergeometric variable is not concentrated around its mean', where these variables will depend on some partition of the vertices (or in one case of the edges) for which each vertex (or edge) is placed in a set of the partition independently at random.
Thus, using Lemma~\ref{chernoff}, we can bound the probability of each `bad event' occurring, which in each instance will be at most $p:=e^{-d^{0.1}/100}$. 

Each bad event will, for some central vertex, depend only on the placement of vertices within a distance $k$ of the central vertex, for some $k$ with $1/d\ll 1/k$. Changing the placement of a single vertex (or edge) in the partition will affect at most $d^k$ of the `bad events'. Therefore, for each `bad event', $B$ say, after identifying the central vertex, we take the set of events which depend on any vertex within a distance $k$ of the central vertex, and note that $B$ is mutually independent of the set of all the other `bad events'. As we have a graph with maximum degree $d$, the number of events we omitted from the set is at most $\Delta:=2d^k\cdot d^k$. Therefore, as $1/d\ll 1/k$, we comfortably have that $ep(\Delta+1)= e\cdot e^{-d^{0.1}/100} (2d^k+1)\cdot d^k\leq 1$, and we can apply the local lemma, Lemma~\ref{Lemma_local_symmetric}, to show that there is some instance in which none of the `bad events' occur.

Let us also note that, when for example choosing a subset $X\subset V(G)$ by including vertices independently at random with probability $p$, the `bad events' that we wish to consider may include that $|X|\neq (1\pm \gamma)pn$ (where $1/d\ll p,\gamma$). This event will not be independent of any other bad event we define as it is influenced by every vertex in $G$. We could deal with this using the asymmetric local lemma, but, instead, to keep using Lemma~\ref{Lemma_local_symmetric}, we will take an arbitrary partition of $V(G)$ into sets $V_1,\ldots,V_t$ which each have size between $d/2$ and $d$. If, for each $i\in [t]$, $B_i$ is the event that $V_i\cap X\neq (1\pm \gamma)p|V_i|$ then, when no event $B_i$, $i\in [t]$, holds then $|X|=(1\pm \gamma)pn$ and, furthermore, each of these events only depends on the location of at most $d$ vertices.

As an example of how we use the local lemma, we will give here the details of our first application, from Lemma~\ref{sec:initialpathforest}. Let $s=2\lceil d^{0.15}\rceil$ and $\eta=4d^{-0.4}$ with $1/d\ll\eta \ll 1$. Suppose that $G$ has vertices degrees which are $(1\pm \eta/2)d$ and let $V(G)=V_1\cup \ldots\cup V_t$ be a partition of $V(G)$ into sets of size between $d/2$ and $d$. Let $V(G)=A_1\cup \ldots\cup A_{s}$ be a partition of $V(G)$ formed by taking each $v\in V(G)$ and, independently at random, placing it in each $A_i$, $i\in [s]$, with probability $1/s$.
For each $v\in V(G)$ and $i\in [s]$, let $B_{v,i}$ be the event that $d_{G}(v,A_i)\neq(1\pm \eta)d/s$. 
For each $i\in [s]$ and $j\in [t]$, let $B_{i,j}$ be the event that $|A_i\cap V_j|\neq (1\pm \eta)|V_j|/s$.
Note that, by Chernoff's bound, each of these events occur with probability at most $\exp(-\eta^2\cdot (d/2s)/3)\leq \exp(-d^{0.1})$. For each $v\in V(G)$ and $i\in [s]$, the event $B_{v,i}$ only depends on whether each vertex in $N_G(v)$ is in $A_i$ or not, so therefore on the placement of at most $d$ vertices in the partition $A_1\cup\ldots\cup A_s$. Similarly, for each $i\in [s]$ and $j\in [t]$, $B_{i,j}$ only depends on the placement of at most $d$ vertices in the partition $A_1\cup\ldots\cup A_s$.
For any vertex $w\in V(G)$, the placement of $w$ only affects an event $B_{v,i}$ with $v\in V(G)$ and $i\in [s]$ if $v$ is a neighbour of $w$, so at most $ds$ pairs $(v,i)$. Furthermore, the placement of $w$ only affects an event $B_{i,j}$ with $i\in [s]$ and $j\in [t]$ if $w\in V_j$, and thus affects only at most $s$ such events. Therefore, the placement of each vertex affects at most $ds+s\leq d^2$ events. Therefore, each event in $\{B_{v,i}:v\in V(G),i\in [s]\}\cup \{B_{i,j}:i\in [s],j\in [t]\}$ is mutually independent of a set of all but at most $\Delta:=d\cdot d^2=d^3$ other events. Thus, as $ep\Delta=e\cdot \exp(-d^{0.1})\cdot d^3\leq 1$, by Lemma~\ref{Lemma_local_symmetric}, we have that, with positive probability, none of the events in $\{B_{v,i}:v\in V(G),i\in [s]\}\cup \{B_{i,j}:i\in [s],j\in [t]\}$ hold. Therefore, we can take a partition of $V(G)$ into $A_1\cup \ldots\cup A_{s}$ for which none of these events hold, and hence, for each $v\in V(G)$ and $i\in [s]$ we have $d_G(v,A_i)=(1\pm \eta)d/s$ and, for each $i\in [s]$, as no event $B_{i,j}$, $j\in [t]$, holds, we have $|A_i|=(1\pm \eta)n/s$.

Finally, here let us highlight the instance in which we use the most `bad events', in the proof of Lemma~\ref{Lemma_good_sample_exists}. There, working with a $d$-regular graph $G$, we will consider the collection $\mathcal{S}$ of all subsets of $V(G)$ of size at most $k$ such that these vertices are all pairwise at most $k$ apart in $G$, where $1/d\ll 1/k$. Each vertex in $G$ will be allocated to some set $X$ or not independently at random with probability $p$. There will be some bad events $B_i$, $i\in I$, where, for each $U\in \mathcal{S}$, at most $d|U|+1\leq dk+1$ of the indices $i\in I$ will contain $U$. Any event $B_i$ whose index includes $U\in \mathcal{S}$ will depend only whether or not the vertices in $\bigcup_{v\in U}N_G(v)$ are in $X$, where this set has size at most $|U|d\leq dk$. As each vertex $w$ will appear as a neighbour of at most $(2d^{2k+1})^{k-1}$ sets in $\mathcal{S}$, we will have that at most $d^{2k^2}\cdot (dk+1)$ events $B_i$, $i\in I$, depend on whether or not $w$ is in $X$. As $1/d\ll 1/k$, then, we can think of this as every event depending on at most a polynomial of $d$ (whose power is a function of $k$) many other bad events, while the probability of each bad event will be $\exp(-d^{0.1}/100)$, allowing us to apply Lemma~\ref{Lemma_local_symmetric} once again.

\subsubsection{Matchings in bipartite graphs}
In order to find the matchings $M_{ij,k}$ described in Section~\ref{sec:proofsketch}, we will use the following result.
\begin{restatable}{lemma}{vizinglemma}\label{lem:vizing} Let $d,n>0$ and let $0<\gamma<1$ satisfy $\gamma d\geq 1$. Let $G$ be a bipartite graph with vertex class sizes $(1\pm \gamma)n$ and $d(v)=(1\pm \gamma)d$ for each $v\in V(G)$. Then, $G$ contains $(1-10\sqrt{\gamma})d$ edge-disjoint matchings which have size $(1-10\sqrt{\gamma})n$.
\end{restatable}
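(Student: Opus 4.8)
My plan is to reduce the problem to an almost-regular \emph{bipartite} multigraph with integer degrees and then apply the easy bipartite edge-colouring fact that a bipartite graph with maximum degree $\Delta$ has a proper edge-colouring with $\Delta$ colours (equivalently, by König's theorem, decomposes into $\Delta$ matchings). The first step is to clean up the degrees. Write the bipartition as $A\cup B$ with $|A|, |B| = (1\pm\gamma)n$; every vertex has degree $(1\pm\gamma)d$. Delete edges greedily to reduce every degree down to exactly $d' := \lfloor(1-\gamma)d\rfloor$: since each vertex currently has degree at least $(1-\gamma)d \geq d'$, we can do this (for instance, repeatedly pick a vertex of degree $>d'$ and delete an incident edge whose other endpoint also still has degree $>d'$; standard parity/counting shows this terminates with all degrees equal to $d'$, possibly after deleting an even number of "excess" edges — or simply pass to a $d'$-regular subgraph via a flow argument). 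Now $H$ is a $d'$-regular bipartite graph, hence by König's edge-colouring theorem it decomposes into exactly $d'$ perfect matchings $N_1,\dots,N_{d'}$, each of size equal to the common side size — but the two sides need not be equal, so "perfect matching" here means a matching saturating the smaller side, of size $\min(|A|,|B|) \geq (1-\gamma)n$. Since $d' \geq (1-\gamma)d - 1 \geq (1-2\gamma)d \geq (1-10\sqrt\gamma)d$ and each matching has size $\geq (1-\gamma)n \geq (1-10\sqrt\gamma)n$, taking the $N_i$ (or any $(1-10\sqrt\gamma)d$ of them) as the $M_i$ completes the argument.

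The only subtlety is extracting a $d'$-regular bipartite subgraph when $|A|\neq|B|$: a $d'$-regular bipartite graph forces $|A| = |B|$, so strictly we cannot demand regularity on both sides. The fix is not to insist on regularity: instead, after reducing degrees to be \emph{at most} $d'$ with the smaller side, say $A$ (WLOG $|A|\le|B|$), having all degrees \emph{equal} to $d'$, we add a set of dummy vertices to $A$ and dummy edges so that every vertex of $B$ also has degree exactly $d'$ and the graph stays bipartite and simple; this is possible because $d'|B| - e(H) \le d'(|B|-|A|) \le d'\cdot 2\gamma n \le$ (room available), and we can distribute the needed dummy edges to keep the graph simple since $d'$ is much smaller than the number of available dummy vertices. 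König's theorem then gives $d'$ matchings in the augmented graph; deleting the dummy vertices leaves $d'$ matchings in $H$, each of which saturated all of $A$ before deletion and hence still has size $|A| \geq (1-\gamma)n$.

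Alternatively — and this is probably the cleanest route — I would invoke a defect form of the bipartite edge-colouring result directly: any bipartite graph with maximum degree $\Delta$ decomposes into $\Delta$ matchings, and if additionally $\delta(G) \geq \Delta - s$ then one can arrange that at most $s$ of these matchings are "small", the rest having size at least $\delta(G)$ on the smaller side; here $\Delta \leq (1+\gamma)d$ and $\delta(G) \geq (1-\gamma)d$, so $s \leq 2\gamma d$, and we keep the $\Delta - s \geq (1-3\gamma)d \geq (1-10\sqrt\gamma)d$ large matchings, each of size at least $(1-\gamma)n \geq (1-10\sqrt\gamma)n$ once we note a matching missing only $O(\gamma d)$ of the degree at each vertex, summed over the bipartition, still covers all but $O(\gamma n)$ vertices of the smaller side. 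The main obstacle, such as it is, is purely bookkeeping: controlling exactly how many vertices a given matching in the colouring misses, which is why the slack between $(1-\gamma)$ and $(1-10\sqrt\gamma)$ is generous — the $\sqrt\gamma$ is far more room than needed and absorbs all rounding. No deep idea is required; the lemma is a robust, defect version of "bipartite $\Leftrightarrow$ class-1", and I would present it as such.
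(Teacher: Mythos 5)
Your overall instinct (edge-colour the bipartite graph and discard the small colour classes) is the right one, but neither of your two routes closes the argument, and both have concrete flaws. In the first route, the regularisation step is not available: a spanning subgraph in which every vertex of the smaller side $A$ has degree exactly $d'=\lfloor(1-\gamma)d\rfloor$ while every vertex of $B$ has degree at most $d'$ need not exist, because with degrees only known to be $(1\pm\gamma)d$ a set $T\subseteq A$ can have $|N_G(T)|$ as small as $\tfrac{1-\gamma}{1+\gamma}|T|<|T|$, which is exactly the Hall-type obstruction your flow argument would run into; ``standard parity/counting'' does not rescue this. The augmentation step is also unjustified: the number of dummy vertices is $|B|-|A|\le 2\gamma n+1$, which can be far \emph{smaller} than $d'$ (your claim that ``$d'$ is much smaller than the number of available dummy vertices'' has the inequality backwards), and a $B$-vertex whose degree dropped substantially during the reduction has deficiency exceeding the number of dummies, so no simple augmentation exists. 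In the second route, the ``defect form'' of K\"onig's theorem that you invoke is precisely the content of the lemma and is asserted rather than proved; moreover the numerology is wrong --- at most $2\gamma d$ small classes with ``small'' meaning size below $(1-\gamma)n$ is \emph{stronger} than what averaging gives, and the justification via per-vertex degree deficits says nothing about the size of an individual colour class.

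What is missing is the one genuine step of the paper's (very short) proof: a counting argument. Apply Vizing (or K\"onig) to split $E(G)$ into at most $(1+\gamma)d+1\le(1+2\gamma)d$ matchings, note $e(G)\ge(1-\gamma)^2nd>(1-2\gamma)nd$, and observe that if fewer than $(1-10\sqrt{\gamma})d$ of the classes had size at least $(1-10\sqrt{\gamma})n$, then bounding the large classes by $(1+\gamma)n$ and the remaining at most $(10\sqrt{\gamma}+2\gamma)d$ classes by $(1-10\sqrt{\gamma})n$ gives $e(G)\le(1-2\gamma)nd$, a contradiction. Note also that your remark that ``the $\sqrt{\gamma}$ is far more room than needed'' is a misreading: the $\sqrt{\gamma}$ is exactly the optimisation of the trade-off in this count between how many classes you discard and how small a discarded class may be (demanding size $(1-O(\gamma))n$ would force you to discard $\Omega(d)$ classes), so it is essential, not slack.
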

\begin{proof} Note that $e(G)\geq (1-\gamma)d\cdot (1-\gamma)n> (1-2\gamma)nd$. 
By Vizing's theorem, the edge set of $G$ can be partitioned into at most $(1+\gamma)d+1\leq (1+2\gamma)d$ matchings, where we used that $\gamma d\geq 1$. If the statement of the lemma fails, then 
\[
e(G)\leq (1-10\sqrt{\gamma})d\cdot (1+\gamma)n + (10\sqrt{\gamma} +2\gamma)d\cdot (1-10\sqrt{\gamma})n= (1-97\gamma +30\gamma^{3/2})nd \leq (1 - 2\gamma)nd,
\] 
a contradiction.
\end{proof}

\begin{comment}
\begin{restatable}{lemma}{vizinglemma}\label{lem:vizing} Let $d\in \mathbb{N}$ and let $0<\gamma<1$, and suppose that $\gamma d\geq 1$. Let $G$ be a bipartite graph with $\Delta(G)\leq (1+\gamma)d$ and $d(v)=(1\pm \gamma)d$ for all but at most  $\gamma |V(G)|$ many vertices $v\in V(G)$. Then, $G$ contains $(1-10\sqrt{\gamma})d$ edge-disjoint matchings which have size $(1-10\sqrt{\gamma})|V(G)|/2$.
\end{restatable}
\begin{proof} Let $(X,Y)$ be the parts of $G$. Let $n=|V(G)|/2$ hence $e(G)\geq (1-\gamma)d (n- 2\gamma n)> n(1-3\gamma)d$ (as $\gamma>0$). %Note that $e(G)=|X|(1\pm 2\gamma)d=|Y|(1\pm 2\gamma)d$. This implies that $|X|= (1\pm 4\gamma)n =|Y|$.
By Vizing's theorem, the edge set of $G$ can be partitioned into at most $(1+\gamma)d+1$ matchings. If the statement of the lemma fails, $G$ can have at most $(1-10\sqrt{\gamma})d n + (10\sqrt{\gamma}d +\gamma d + 1)(1-10\sqrt{\gamma})n\leq (1-99\gamma +1/d)nd \leq (1 - 3\gamma)d$ edges (using $\gamma d\geq 1$ in the last inequality), a contradiction.
\end{proof}    
\end{comment}

\subsubsection{Decomposing complete graphs into paths}
In decomposing our dense spots, we will use the following result from \cite{bottcher2016approximate}, which is the special case for paths of a result for bounded degree trees. 
\begin{theorem}[Böttcher-Hladk\'y-Piguet-Taraz] \label{packingpaths} Let $1/n\ll \eps$. Let $\mathcal{P}$ be a collection of paths of length at most $n$ whose total length is at most $\binom{n}{2}$. Then, $\mathcal{P}$ packs into $K_{(1+\eps)n}$.
\end{theorem}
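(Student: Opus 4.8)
The plan is to pack the paths into $K_{(1+\eps)n}$ by a Rödl--nibble style random greedy embedding, using a small reserved set of vertices to absorb the few leftover paths at the end. Write $m=(1+\eps)n$, rounded to an even integer, and assume $\eps$ is small, since the statement for a given $\eps$ follows from the statement for any smaller value (there is only more room in the host). First I would reduce to a collection of paths of uniformly bounded length: fix $\ell$ with $1/n\ll 1/\ell\ll \eps$, and replace each path of length exceeding $2\ell$ by the edge-disjoint subpaths obtained by cutting it into consecutive pieces of length in $[\ell,2\ell]$. This does not change the total length and only decreases the maximum length, so afterwards $\mathcal{P}$ consists of paths of length at most $2\ell$ with total length at most $\binom n2$; moreover, by deciding later to embed short paths end-to-end (so that their union is embedded as a genuine path), we may assume every path has length in $[\ell,2\ell]$, apart from at most one shorter residual path per length class, which we handle with the reservoir at the end. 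We then choose uniformly at random a reservoir $R\subseteq V(K_m)$ with $|R|=\lceil\eps n/10\rceil$ and keep the complete graph $K_m[R]$ entirely unused until the final step.

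The bulk of the paths are embedded into $H:=K_m$ in $T$ rounds, where $1/n\ll 1/T\ll \eps$. In round $t$, given the paths already embedded edge-disjointly, let $U_t$ be the set of still-unembedded paths; include each $P\in U_t$ in this round independently with a small constant probability $\gamma$, and then embed the chosen paths one by one by a random greedy procedure, growing each path $v_0v_1\ldots v_q$ by choosing $v_{j+1}$ uniformly at random among the vertices joined to $v_j$ by an edge not yet used by any embedded path and not already used by the current path, and aborting the path if it ever gets stuck. The technical heart is to maintain, round after round and with positive probability, that almost every attempted path is successfully embedded and that the used edges stay spread out — concretely, that every vertex has used-degree close to its expectation and that the number of available extensions at every partial embedding is close to its expectation. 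This is exactly the kind of invariant the nibble method propagates: one bounds the failure probability of each local ``bad'' event by Chernoff's bound (Lemma~\ref{chernoff}) or Azuma's inequality (Lemma~\ref{lem:Azuma}) applied to the round's random choices, uses that each path has bounded length $2\ell$ and the process is local to ensure that each bad event depends on only boundedly many others, and then applies the local lemma (Lemma~\ref{Lemma_local_symmetric}) to extract a good outcome. Iterating through all $T$ rounds, all but a $(1-\gamma)^T<\eps^2/10^5$ fraction of the paths get embedded, so the leftover collection has total length at most $(\eps^2/10^5)\binom n2$.

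Finally, I would embed the leftover paths greedily into the untouched complete graph $K_m[R]\cong K_{|R|}$: each leftover path has length at most $2\ell\ll|R|$, so it fits, and since the leftover has at most $(\eps^2/10^5)\binom n2\le \binom{|R|}{2}/100$ edges, a greedy embedding that always extends to a vertex of small current used-degree (of which there are almost all, by an averaging bound on used-degrees) never gets stuck, completing the packing. The main obstacle is the middle step: keeping the partially-used host pseudorandom across the many nibble rounds so that the random greedy extensions keep succeeding, and checking that cutting the long paths into bounded pieces (and concatenating short ones) has genuinely kept all the dependencies among bad events local enough for the local lemma. This is, unsurprisingly, a special case of the tree-packing theorem of Böttcher, Hladký, Piguet and Taraz~\cite{bottcher2016approximate}, whose proof follows essentially this scheme for bounded-degree trees.
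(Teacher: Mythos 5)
First, a point of reference: the paper does not prove this statement at all --- it is quoted verbatim from Böttcher--Hladký--Piguet--Taraz~\cite{bottcher2016approximate} as the special case for paths of their bounded-degree tree packing theorem. So there is no in-paper argument to compare against; the question is only whether your sketch stands on its own. It does not, because of the very first reduction. Cutting a path $P$ of length up to $n$ into consecutive pieces of length in $[\ell,2\ell]$ and then packing the \emph{pieces} edge-disjointly into $K_{(1+\eps)n}$ does not produce an embedding of $P$: the images of consecutive pieces will not share the required endpoint, and even if you forced them to, the union would have to be a \emph{simple} path, i.e.\ all $|P|+1$ image vertices distinct across all pieces. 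Your parenthetical ``embed short paths end-to-end so that their union is a genuine path'' only works in the opposite direction (embedding a concatenation certifies embeddings of its constituents); it does not re-glue the pieces of a long path that were embedded with independent randomness in different nibble rounds. If instead you embed all pieces of one original path consecutively within a single round, you are simply embedding the whole path vertex-by-vertex, and then the two pillars of your middle step collapse: the bad events for a path of length $\Theta(n)$ are no longer local (they depend on essentially every random choice in the round, so the local lemma as invoked does not apply), and the greedy extension must avoid all previously used vertices of the same path, which for a nearly spanning path leaves only $\eps n$ vertices of slack --- this vertex-disjointness constraint, not edge-availability, is the actual difficulty, and it is precisely what the BHPT proof spends its effort controlling.

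Put differently, for paths of bounded length $2\ell$ the theorem is easy (e.g.\ decompose $K_m$ into Hamilton paths and chop them up greedily), so all the content lies in the paths of length comparable to $n$, and those are exactly the ones your reduction discards. The reservoir endgame and the general nibble framework are fine in spirit, but as written the proposal proves only the bounded-length case. To repair it you would need a genuinely different mechanism for near-spanning paths --- for instance, the randomized tree-embedding procedure of~\cite{bottcher2016approximate} itself, or a connecting/absorbing scheme that reserves vertices (not just edges) to stitch the pieces of each long path back together while certifying global vertex-distinctness. Given that the paper uses this theorem as an imported black box, citing~\cite{bottcher2016approximate} is the appropriate resolution rather than attempting a from-scratch proof.
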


%%%%%%%%%%%%%%%%%%%%%%%%%%%%%%%%%%%%%%%%%%%%%%%%%%%%%%%%%%%%%%%%%%%%%%%%%%%%%%%%%%%%%%%%%%%%%%%%%%%%%%%%%%%%%%%%%%%%%%%%%%%%%%%%%%%%%%%%%%%%%%%%
\section{Decomposing outside the dense bits}\label{sec:outsidedensebits}

In this section, we  prove Lemma~\ref{Lemma_decomposition_path_forests}. We begin, in Section~\ref{sec:initialpathforest}, by finding an initial collection of path forests which are, together, weakly bounded (see Definition~\ref{def:bounded}), before using this to find a collection of path forests in Section~\ref{sec:betterpathforest} which are more strongly bounded. In Section~\ref{sec:pathcovers}, we give a short proof of Theorem~\ref{thm:pathcovers} from this. In Section~\ref{sec:nearregular}, we take near-regularisation results from~\cite{regularising,edgedisjointcycles} and develop them to find very nearly regular subgraphs in a roughly regular graph. Finally, we put this all together in Section~\ref{sec:prooflemmadecomppathforest} to prove Lemma~\ref{Lemma_decomposition_path_forests}.

\subsection{Initial path forests}\label{sec:initialpathforest}
We prove the following lemma by partitioning the vertex set of a regular graph using the local lemma and putting together matchings between different vertex classes, as sketched in Section~\ref{sec:proofsketch}.
\begin{restatable}{lemma}{lemmadecompoutsidedensee}\label{Lemma_decomposition_path_forests_regular}
    Let $1/d \ll  \eps\leq 1$ and $\gamma=2d^{-0.4}$. Let $G$ be an $n$-vertex graph with vertex degrees in $(1\pm \gamma)d$.
     Then, $G$ contains an $(n/d^{1/8},d^{7/8},d^{7/8})$-bounded edge-disjoint collection $\mathcal{P}_1,\ldots, \mathcal{P}_{d/2}$ of path forests which cover all but at most $\eps nd$ edges of $G$.
\end{restatable}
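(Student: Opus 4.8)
The plan is to carry out the partition-and-matchings strategy sketched in Section~\ref{sec:proofsketch}, with parameters chosen so that the misalignment losses stay below $\eps nd$. Set $s = 2\lceil d^{0.15}\rceil$ and $\ell+1 = s$ (so $\ell \approx d^{0.15}$ is the length of the "medium" paths we build), and write $d' = d/s$. First I would apply the local lemma exactly as in the worked example following Lemma~\ref{Lemma_local_symmetric}: partition $V(G) = A_1 \cup \dots \cup A_s$ by placing each vertex independently and uniformly, and also fix an auxiliary partition $V(G) = V_1 \cup \dots \cup V_t$ into blocks of size between $d/2$ and $d$ to control the class sizes. With bad events "$d_G(v,A_i) \ne (1\pm\eta)d/s$" and "$|A_i \cap V_j| \ne (1\pm\eta)|V_j|/s$" for $\eta = 4d^{-0.4}$, Chernoff plus the local lemma give a partition in which every bipartite graph $G[A_i,A_j]$ ($i\ne j$) has all degrees $(1\pm 2\eta)d/s$ and all parts of size $(1\pm\eta)n/s$. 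Then I apply Lemma~\ref{lem:vizing} (with its $\gamma$ of order $\eta \asymp d^{-0.4}$, so $10\sqrt{\gamma} \asymp d^{-0.2}$) to decompose, for each pair $\{i,j\}$, all but a $O(d^{-0.2})$ fraction of $E(G[A_i,A_j])$ into $(1-O(d^{-0.2}))d'$ edge-disjoint near-perfect matchings $M_{ij,k}$, $k \in [d_0]$ with $d_0 := \lfloor(1-10\sqrt{\gamma})d'\rfloor$.

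Next I would fix a decomposition of $K_s$ into Hamilton paths: since $s$ is even, $K_s$ decomposes into $s/2$ Hamilton paths $R_1,\dots,R_{s/2}$, each of length $\ell = s-1$. For each $a \in [s/2]$ and each $k \in [d_0]$, concatenate the matchings $\{M_{ij,k} : ij \in E(R_a)\}$ along the path $R_a$: this union is a subgraph of $G$ with maximum degree $\le 2$ and hence a disjoint union of paths and cycles; discard one edge from each cycle (few of these, since each component has $\ge s$ edges if it's a cycle, bounding their number by $\lesssim n/s$), obtaining a path forest $\mathcal{P}_{a,k}$. This gives $s/2 \cdot d_0 \le d/2$ path forests in total (pad with empty forests to get exactly $d/2$). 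The edges lost are: the $O(d^{-0.2})$ fraction from Vizing, the $O(\eta)$ fraction from the unequal part sizes, one edge per cycle, and one edge per "broken" path where consecutive matchings don't align — all of which sum to at most $\eps nd$ for $d$ large. I would also observe that typical components of $\mathcal{P}_{a,k}$ have length close to $\ell = s-1 \approx d^{0.15}$, which is what "medium-length" means here, though the lemma statement itself only demands the boundedness and coverage conclusions.

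It remains to verify the $(n/d^{1/8}, d^{7/8}, d^{7/8})$-boundedness. For the number of paths in each $\mathcal{P}_{a,k}$: its vertices number at most $n$ and each path has length $\approx s \ge d^{0.15}$, except for short stubs; a crude count bounds the number of paths by $O(n/s) + (\text{number of alignment failures})$, and the alignment failures across one forest are at most the number of vertices, scaled down — in any case this is comfortably below $n/d^{1/8}$ since $s \gg d^{1/8}$. For the endvertex degree $\Delta_0$: a vertex $v$ can be an endvertex of a path in $\mathcal{P}_{a,k}$ only if $v$ sits at a place where the matchings $M_{ij,k}$ fail to extend, and summing over all $a,k$ the number of such incidences at a fixed $v$ is bounded by the number of pairs $\{i,j\}$ with $v$ incident to a non-matched or misaligned edge, which is at most $O(s \cdot d/s) = O(d)$ in the worst case but in fact much smaller — certainly below $d^{7/8}$. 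Similarly each vertex has at most $d_G(v) \le (1+\gamma)d$ neighbours total, so at most $d^{7/8}$ of them lie among the endvertices of any single $\mathcal{P}_i$ (indeed the number of endvertices of $\mathcal{P}_i$ near $v$ is at most $\min(d_G(v), \text{paths in } \mathcal{P}_i)$, both well within the bound). The main obstacle is bookkeeping the endvertex bounds cleanly: one has to be careful that the edge-discarding when breaking cycles, and the misalignments between consecutive matchings, do not secretly create a vertex that is an endpoint of many paths; the safe route is to argue that each "defect" (a cycle-edge removed, or a vertex where $M_{ij,k}$ and $M_{jh,k}$ disagree) contributes at most two path-endpoints, and bound the total number of defects incident to a fixed vertex by its degree, giving the required $d^{7/8}$ (in fact $O(d^{0.85})$) slack with room to spare.
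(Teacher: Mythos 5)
Your setup (random partition via the local lemma, Vizing matchings between classes, and a decomposition of $K_s$ into Hamilton paths) matches the paper's proof exactly, and your coverage accounting is essentially right. The genuine gap is in the verification of $(n/d^{1/8},d^{7/8},d^{7/8})$-boundedness, and it stems from a structural choice: you keep \emph{all} components of each union of matchings (including the short ``stubs'' created by misalignment), whereas the paper keeps \emph{only} the paths of full length $s-1$ and discards the rest (this is the path forest $\mathcal{P}'_{j,k}$ versus $\mathcal{P}_{j,k}$, with Claim~3.2 showing the discarded edges are few). That discarding is not cosmetic: it forces every endpoint of every kept path to lie in the two classes $A_{j_1},A_{j_s}$ corresponding to the endvertices of the Hamilton path $Q_j$ of $K_s$, and since each vertex of $K_s$ is an endvertex of exactly one $Q_j$, a vertex $v\in A_j$ can be a path-endpoint only in the $d'\le d/s\le d^{7/8}$ forests built from that single $Q_j$ (giving $\Delta_0$), and $v$ has at most $4d/s\le d^{7/8}$ neighbours in the union of two classes (giving $\Delta_1$). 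Your construction has neither property: stub endpoints are scattered over all $s$ classes.

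Concretely, two of your boundedness claims do not follow as written. For $\Delta_0$, Lemma~2.6 used as a black box gives no per-vertex coverage guarantee, so a fixed $v\in A_j$ could in principle be missed by \emph{all} $d_0$ matchings $M_{ij,k}$ for some neighbour $i$ of $j$, making it an endpoint of up to $d_0\approx d^{0.85}$ paths from a single Hamilton path of $K_s$ and up to $\Theta(d)$ in total; your ``in fact much smaller --- certainly below $d^{7/8}$'' needs a real argument (one can extract a per-vertex bound of roughly $\eta' d/s$ missed matchings per pair by opening up the Vizing proof, but you would have to do that calculation, and it is exactly what keeping only full-length paths lets the paper avoid). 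For $\Delta_1$ the claim is a non sequitur: ``$v$ has at most $(1+\gamma)d$ neighbours, so at most $d^{7/8}$ of them lie among the endvertices of a single $\mathcal{P}_i$'' does not follow since $(1+\gamma)d\gg d^{7/8}$, and in your construction the endpoints of one forest can include up to $6\eta' n/s$ missed vertices in \emph{each} of the $s-2$ internal classes, of which $v$ could neighbour up to $d/s$ per class, i.e.\ $\Theta(d)$ in total. (Minor point: your cycle-breaking step is unnecessary --- the union of matchings along a \emph{path} of classes has no cycles, since a cycle would need a vertex in its extremal class with two edges into the same adjacent class.) The repair is to adopt the paper's device: define $\mathcal{P}'_{a,k}$ as the sub-forest of length-$(s-1)$ paths, check as in Claim~3.2 that the discarded edges number at most $O(s\eta' n)\le\eps n/3$ per forest, and then both boundedness parameters drop out immediately.
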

\begin{proof}   Let $s=2\lceil d^{0.15}\rceil$ and $\eta=4d^{-0.4}$.
Let $V(G)=A_1\cup \ldots\cup A_{s}$ be a partition of $V(G)$ formed by taking each $v\in V(G)$ and, independently at random, placing it in each $A_i$, $i\in [s]$, with probability $1/s$.% and in each $A_i$, $i\in [s+s']\setminus [s]$, with probability $2\lambda$ and

By the local lemma (see Section~\ref{sec:local} for details), with positive probability, we can have the following properties.
\stepcounter{propcounter}
\begin{enumerate}[label = {{{\textbf{\Alph{propcounter}\arabic{enumi}}}}}]
\item For each $v\in V(G)$ and $i\in [s]$, $d_{G}(v,A_i)=(1\pm \eta)d/s$.\label{prop:degreeintoAi}
\item For each $i\in [s]$, $|A_i|=(1\pm \eta)n/s$.\label{prop:Aisize}
\end{enumerate}

Let $\eta'=20\sqrt{\eta}=40d^{-0.2}$ and $d'=(1-\eta')d/s$. For each edge $e=jk$ in the complete $s$-vertex graph $K_s$, using Lemma~\ref{lem:vizing}, \ref{prop:degreeintoAi} and \ref{prop:Aisize},  find $d'$ edge-disjoint matchings in ${G}[A_j,A_k]$ which each have at least $(1-\eta')n/s$ edges.
Call these matchings $M_{e,i}$, $i\in [d']$.
Now, take paths $Q_1,\ldots,Q_{s/2}$ of length $s-1$ in the complete graph $K_s$ in which each vertex is the endpoint of exactly one path. (Such a decomposition is well-known to exist when $s$ is even. See, for example, Figure~\ref{pathtorotate}.)
% For each $i\in [s]$, add one edge to each end of $Q_i$ to get a path $Q_i'$ in $K_{s+s'}$, moreover doing this so that each vertex in $V(K_{s+s'}\setminus V(K_s)=[s+s']\setminus [s]$ has at most $2s/s'$ vertices embedded to it.
For each $j\in [s/2]$ and $k\in [d']$, let $\mathcal{P}_{j,k}$ be the union of all the matchings $M_{e,k}$ with $e\in E(Q_j)$ and let $\mathcal{P}'_{j,k}$ be the subcollection of paths in $\mathcal{P}_{j,k}$ of length $s-1$.

\begin{figure}
\begin{center}
\begin{tikzpicture}[line join=bevel]
\def\rad{0.8}
\def\vx{0.03cm}

\foreach \n in {1,...,12}
{
\coordinate (A\n) at (\n*30:\rad);
}
\begin{scope}[transform canvas={rotate=-30}]
\draw [thick,red!50,densely dotted] (A3) -- (A8) -- (A4) -- (A7) -- (A5)-- (A6);% -- (A9) -- (A8);,densely dottedblack!30,densely dotted
\draw [thick,red!50,densely dotted] (A3) -- (A9) -- (A2) -- (A10) -- (A1) -- (A11) -- (A12);
\end{scope}

\draw [thick] (A3) -- (A8) -- (A4) -- (A7) -- (A5)-- (A6);% -- (A9) -- (A8);
\draw [thick] (A3) -- (A9) -- (A2) -- (A10) -- (A1) -- (A11) -- (A12);

\foreach \n in {1,...,12}
{
\draw [fill] ($1*(A\n)$) circle [radius=1.5*\vx];
}
\end{tikzpicture}\end{center}\vspace{-0.5cm}
\caption{For $s=12$, an embedding of a path of length $s-1=11$ which can be rotated $s/2=6$ times to form together a complete graph with $s=12$ vertices, with one clockwise rotation depicted in red, giving a decomposition of $K_s$ into paths of length $s-1$ in which each vertex appears as an endpoint exactly once.}\label{pathtorotate}
\end{figure}
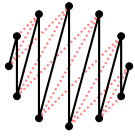

\begin{claim}\label{nottoomanymissing} For each $j\in [s/2]$ and $k\in [d']$, there are at most $\eps n/3$ edges of
$\mathcal{P}_{j,k}$ which are not in $\mathcal{P}'_{j,k}$.
\end{claim}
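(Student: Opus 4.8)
The plan is to bound the number of ``lost'' edges --- those of $\mathcal{P}_{j,k}$ lying in paths of length strictly less than $s-1$ --- by comparing $e(\mathcal{P}_{j,k})$ with $(s-1)$ times the number of paths of length exactly $s-1$. Fix $j\in[s/2]$ and $k\in[d']$, write the vertices of $Q_j$ in order as $i_1,\dots,i_s$ (a permutation of $[s]$, since $Q_j$ is a Hamilton path of $K_s$), and set $e_l=i_li_{l+1}$ for $l\in[s-1]$, so that $\mathcal{P}_{j,k}=\bigcup_{l\in[s-1]}M_{e_l,k}$, where each $M_{e_l,k}$ is a matching in $G[A_{i_l},A_{i_{l+1}}]$ and these matchings are pairwise edge-disjoint (they lie in distinct bipartite graphs). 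Since the classes $A_{i_1},\dots,A_{i_s}$ are distinct and two matchings overlap only if they correspond to consecutive edges of $Q_j$, the graph $\mathcal{P}_{j,k}$ has maximum degree at most $2$ with no cycles, so it is a path forest, and a path in it has length $s-1$ if and only if it uses exactly one edge of each $M_{e_l,k}$. Thus, writing $N$ for the number of such paths, $e(\mathcal{P}'_{j,k})=(s-1)N$, whereas $e(\mathcal{P}_{j,k})=\sum_{l\in[s-1]}|M_{e_l,k}|\le (s-1)(1+\eta)n/s$ by~\ref{prop:Aisize}. So it suffices to lower bound $N$.

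To do so, define the ``reachable'' sets $L_1=A_{i_1}$ and, inductively, $L_{l+1}=\{v\in A_{i_{l+1}}: v \text{ is matched by } M_{e_l,k} \text{ to a vertex of }L_l\}$ for $l\in[s-1]$. A straightforward induction on $l$ (using that $M_{e_l,k}$ is a matching, so each saturated vertex has a unique partner) shows that the paths of length $s-1$ in $\mathcal{P}_{j,k}$ are in bijection with $L_s$, via the map sending such a path to its endvertex in $A_{i_s}$; hence $N=|L_s|$. Moreover $|L_l|-|L_{l+1}|$ is at most the number of vertices of $A_{i_l}$ left unsaturated by $M_{e_l,k}$, which is $|A_{i_l}|-|M_{e_l,k}|\le (1+\eta)n/s-(1-\eta')n/s=(\eta+\eta')n/s$ using~\ref{prop:Aisize} and the fact that each matching $M_{e_l,k}$ has at least $(1-\eta')n/s$ edges. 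Telescoping, $N=|L_s|\ge |A_{i_1}|-\sum_{l\in[s-1]}\big(|A_{i_l}|-|M_{e_l,k}|\big)\ge \tfrac{n}{s}\big(1-\eta-(s-1)(\eta+\eta')\big)$.

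Combining the two bounds, the number of edges of $\mathcal{P}_{j,k}$ not in $\mathcal{P}'_{j,k}$ is
\[
e(\mathcal{P}_{j,k})-e(\mathcal{P}'_{j,k})\le (s-1)\Big((1+\eta)\tfrac{n}{s}-N\Big)\le n\big(2\eta+(s-1)(\eta+\eta')\big).
\]
Since $s=2\lceil d^{0.15}\rceil$, $\eta=4d^{-0.4}$ and $\eta'=40d^{-0.2}$, the right-hand side is $O(d^{-0.05})\cdot n$, which is at most $\eps n/3$ once $d$ is large enough, as guaranteed by $1/d\ll\eps$. I expect the only genuinely fiddly point to be the identification $N=|L_s|$ --- that is, verifying carefully that following the matchings $M_{e_{s-1},k},\dots,M_{e_1,k}$ backwards from a vertex of $L_s$ produces a well-defined path of length $s-1$ and that this gives a bijection with $L_s$; everything after that is routine arithmetic with the chosen parameters.
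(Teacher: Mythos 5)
Your proof is correct. It rests on the same basic fact as the paper's argument --- each matching $M_{e_l,k}$ misses at most $|A_{i_l}|-(1-\eta')n/s\leq(\eta+\eta')n/s$ vertices of its class --- but the bookkeeping is organised differently. The paper bounds the lost edges from above directly: every path of $\mathcal{P}_{j,k}$ of length less than $s-1$ must have an endpoint in some $A_i$ with $i$ an internal vertex of $Q_j$, and such an endpoint is a vertex of $A_i$ unsaturated by one of the two matchings meeting $A_i$; summing $2(|A_i|-(1-\eta')n/s)$ over the internal classes bounds the number of short paths by $3\eta'n$, and multiplying by the maximum path length $s-1$ gives the claim. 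You instead lower-bound the number $N$ of full-length paths via the reachable sets $L_1\supseteq$-image$\cdots$ and the telescoping estimate, then compare $e(\mathcal{P}_{j,k})\leq(s-1)(1+\eta)n/s$ with $e(\mathcal{P}'_{j,k})=(s-1)N$. Both give a bound of order $s\eta' n=O(d^{-0.05})n$. Your route requires the extra verification (which you handle correctly) that $\mathcal{P}_{j,k}$ is a path forest whose full-length paths biject with $L_s$; the paper's count of endpoints in middle classes is marginally more direct and avoids the bijection, but neither approach buys anything substantial over the other.
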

\begin{proof} Set $j\in [s/2]$ and $k\in [d']$. For each $i\in [s]$ which is not an endpoint of $Q_k$, note that the number of vertices in $A_i$ which can be an endpoint of a path in $\mathcal{P}_{j,k}$ is at most $2(|A_i|-(1-\eta')n/s)\leq 3\eta'n/s$, using \ref{prop:Aisize}. Therefore, at most $3\eta'n$ paths in $\mathcal{P}_{j,k}$ can have an endpoint in some $A_i$ where $i$ is a middle vertex of $Q_k$. As each path in $\mathcal{P}_{j,k}\setminus \mathcal{P}'_{j,k}$ has such an endpoint, there are at most $(s-1)\cdot 3\eta' n\leq \eps n/3$ edges of $\mathcal{P}_{j,k}$ which are not in $\mathcal{P}'_{j,k}$, where we have used that $s=2\lfloor d^{0.15}\rfloor$, $\eta'=40d^{-0.2}$ and $1/d\ll \eps$.
\claimproofend

Relabel the path forests $\mathcal{P}'_{j,k}$, $j\in [s/2]$ and $k\in [d']$, as $\mathcal{P}_i$, $i\in [sd'/2]$. Noting that $sd'/2=(1-\eta')d/2\leq d/2$, let $\mathcal{P}_i=\emptyset$ for each $sd'/2<i\leq d/2$. The number of edges in $\mathcal{P}_i$, $i\in [d/2]$, is, by Claim~\ref{nottoomanymissing} and as $1/s,\eta'\ll \eps$, at least
\begin{align*}
\frac{sd'}{2}\cdot (s-1)\cdot \frac{(1-\eta')n}{s}-\frac{sd'}{2}\cdot \frac{\eps n}3
&\geq \frac{sd'}{2}\left(\left(1-\frac{\eps}{3}\right)n-\frac{\eps n}{3}\right) = \frac{(1-\eta')d}{2}\cdot \left(1-\frac{2\eps}{3}\right)n\geq \frac{n(1+\eta)d}{2}-\eps nd \\
&\geq |E(G)|-\eps nd.
\end{align*}
Thus, it is only left to show that the path forests $\mathcal{P}_i$, $i\in [d/2]$, form an  $(n/d^{1/8},d^{7/8},d^{7/8})$-bounded collection.

For each $i\in [d/2]$, if $\mathcal{P}_i\neq \emptyset$, then there are some distinct $j,k\in V(K_s)$ such that all the endvertices of $\mathcal{P}_i$ are in $A_j\cup A_k$. Thus, by \ref{prop:degreeintoAi}, each vertex in $G$ has at most $4d/s\leq d^{7/8}$ neighbours among the endvertices of $\mathcal{P}_i$ and, by \ref{prop:Aisize}, $\mathcal{P}_i$ contains at most $2n/s\leq n/d^{1/8}$ paths.
Furthermore, for each $v\in V(G)$, if $j,k$ are such that $v\in A_j$ and $j$ is an endvertex of $Q_k$, then the only paths in $\mathcal{P}_i$, $i\in [d/2]$, that can end in $v$ are those from $\mathcal{P}_{k,k'}$, $k'\in [d']$, so that $v$ is the endvertex of at most $d'\leq d/s\leq  d^{7/8}$ paths in $\mathcal{P}_i$, $i\in [d/2]$. Therefore, the path forests $\mathcal{P}_i$, $i\in [d/2]$, form an  $(n/d^{1/8},d^{7/8},d^{7/8})$-bounded collection, as required.
\end{proof}

%%%%%%%%%%%%%%%%%%%%%%%%%%%%%%%%%%%%%%%%%%%%%%%%%%%%%%%%%%%%%%%%%%%%%%%%%%%%%%%%%%%%%%%%%%%%%%%%%%%%%%%%%%%%%%%%%%%%%%%%%%%%%%%%%%%%
%%%%%%%%%%%%%%%%%%%%%%%%%%%%%%%%%%%%%%%%%%%%%%%%%%%%%%%%%%%%%%%%%%%%%%%%%%%%%%%%%%%%%%%%%%%%%%%%%%%%%%%%%%%%%%%%%%%%%%%%%%%%%%%%%%%%
%%%%%%%%%%%%%%%%%%%%%%%%%%%%%%%%%%%%%%%%%%%%%%%%%%%%%%%%%%%%%%%%%%%%%%%%%%%%%%%%%%%%%%%%%%%%%%%%%%%%%%%%%%%%%%%%%%%%%%%%%%%%%%%%%%%%
%%%%%%%%%%%%%%%%%%%%%%%%%%%%%%%%%%%%%%%%%%%%%%%%%%%%%%%%%%%%%%%%%%%%%%%%%%%%%%%%%%%%%%%%%%%%%%%%%%%%%%%%%%%%%%%%%%%%%%%%%%%%%%%%%%%%

\subsection{Improving a bounded collection of path forests}\label{sec:betterpathforest}
Having set aside a set $Y$ (which fulfils a similar `sample set' function as $X$ in the sketch in Section~\ref{sec:proofsketch}) in a $d$-regular graph $G$, we can then improve a collection of path forests in $G-Y$ by taking each path forest in turn and iteratively connecting paths in any path forest using a path of length two whose middle vertex comes from $Y$ (for practical reasons, we actually write this in the proof through taking some maximal set $I$). In order to do this efficiently, we use the local lemma to partition $Y$ into ten parts and find as many such paths using a middle vertex from these sets, exhausting each set in turn. Furthermore,  for all but only a few of the remaining endpoints in the resulting path forest, we attach an extra edge randomly from this endpoint to the last set in the partition of $Y$, using the local lemma with regard to all these choices to make sure that no vertex has too many neighbours among these new endpoints. This last step is critical in ensuring that (once a few paths are removed from the resulting path forests) the endpoints of the path forests will be well spread.

\begin{lemma}\label{Lemma_all_first_connecting_paths_newnew}
    Let $1/d\ll \gamma\polysmall p\ll\eps$. Let $G$ be a $n$-vertex graph with $\Delta(G)\leq d$.
    Let $Y\subset V(G)$ be such that $|Y|\leq (1+\gamma)pn$ and, for each $v\in V(G)$, $d_G(v,Y)=(1\pm \gamma)pd$. Let $\mathcal{P}_1,\ldots, \mathcal{P}_{d/2}$ be edge-disjoint path forests in $G-Y$ which are $(n/d^{1/9},d^{8/9},d^{8/9})$-bounded.

    Then, $G$ contains an edge-disjoint collection of path forests  $\mathcal{P}'_1,\ldots, \mathcal{P}'_{d/2}$ such that at most $\eps n/d$ paths can be removed from each $\mathcal{P}_i'$, $i\in [d/2]$, to get altogether  a $((1+\eps)n/d,d^{1/4},d^{1/4})$-bounded edge-disjoint collection of path forests, and, for each $i\in [d/2]$, $E(\mathcal{P}_i)\subset E(\mathcal{P}'_i)$.
\end{lemma}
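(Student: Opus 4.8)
\textbf{Proof plan for Lemma~\ref{Lemma_all_first_connecting_paths_newnew}.}

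The plan is to process the path forests $\mathcal{P}_1,\ldots,\mathcal{P}_{d/2}$ one at a time, using disjoint pieces of the reservoir $Y$ to connect paths together two-at-a-time, and then at the very end to append a random edge to most remaining endpoints so as to control how many neighbours any vertex has among the final set of endpoints. First I would split $Y$, using the local lemma as described in Section~\ref{sec:local}, into eleven parts $Y_1,\ldots,Y_{10},Y^*$, each roughly a $1/11$-fraction of $Y$, so that every $v\in V(G)$ has $d_G(v,Y_r)=(1\pm\gamma')(p/11)d$ for a slightly worse error $\gamma'$; the reason for ten connecting sets is that each path forest, after connecting, can only realistically be reduced from $n/d^{1/9}$ paths down to about $(1+\eps)n/d$ if we make roughly $\log$-many passes — more precisely each pass through a fresh $Y_r$ lets us halve (or better) the number of `connectable' pairs of endpoints, so a constant number of passes suffices to get from $n/d^{1/9}$ to $O(n/d)$. (One should double-check whether $10$ passes is actually the right count for the specific ratio $d^{1/9}$ versus $d^{1}$; ten is comfortably enough since each pass reduces the path count by a constant factor bounded away from $1$, say by matching up a constant proportion of the endpoints, and $(\text{const})^{10} \cdot d^{1/9}<d$.)

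The heart of a single pass: fix a path forest $\mathcal{P}_i$ with its current set of endpoints $Z_i$ (two per path). Build an auxiliary graph $H$ on $Z_i$ where $x\sim y$ if there is a vertex $w\in Y_r$, not yet used, with $w\in N_G(x)\cap N_G(y)$ and $x,y$ endpoints of distinct paths of $\mathcal{P}_i$. A maximal matching $I$ in $H$ — formalised exactly as the `maximal set $I$' in the sketch — greedily picks such connecting triples, each using a distinct middle vertex from $Y_r$ (we can insist the $w$'s are distinct by just deleting $w$ from $Y_r$ once used; since $|Z_i|\le 2n/d^{1/9}$ and each $w\in Y_r$ has $d_G(w,Z_i)\le \Delta_1$-type bounds inherited from $(m,\Delta_0,\Delta_1)$-boundedness, this deletion is cheap). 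The key counting claim is that if $I$ is maximal and still leaves more than, say, $\eps' |Z_i|$ unmatched endpoints, then those unmatched endpoints have all their $Y_r$-neighbourhoods pairwise almost-disjoint — but each has $\approx (p/11)d$ neighbours in $Y_r$ and $|Y_r|\approx (p/11)n/11$, forcing at most $O(n/d)$ such endpoints, a contradiction once $\eps'|Z_i|$ exceeds this. So after the pass $\mathcal{P}_i$ has at most $\max\{ \tfrac12|Z_i|\text{-worth of paths},\ O(n/d)\}$ paths. Iterating over $Y_1,\ldots,Y_{10}$ drives the path count of each $\mathcal{P}_i$ down to $O(n/d)$, and in fact below $(1+\eps/2)n/d$.

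Two bookkeeping points must then be handled. First, \textbf{edge-disjointness across the forests}: the connecting paths for $\mathcal{P}_i$ use only edges incident to $Y_r$, and vertices of $Y_r$ are deleted as used, so different $i$ reuse $Y$-vertices but never the same $Y$-incident edge provided we also track, for each $w\in Y$, which of its (at most $d$) edges to $V(G)\setminus Y$ have been consumed — since across all $d/2$ forests $w$ can be asked to serve as a connector at most, crudely, $d/2$ times but has $d$ edges, and more carefully the $(m,\Delta_0,\Delta_1)$-bound limits how often $w$ is near endpoints, this is fine; I would make the sets $Y_1,\ldots,Y_{10}$ themselves disjoint and process all forests within a fixed $Y_r$ before moving to $Y_{r+1}$, or alternatively give each forest its own slice — the cleanest is the latter, splitting each $Y_r$ further or simply noting $d/2\cdot(\text{connections per forest})$ is tiny compared to $|Y|$. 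Second, and this is \textbf{the main obstacle}: after connecting, the $\le \eps n/d$ `bad' leftover endpoints aside, the genuine endpoints of $\mathcal{P}_i'$ could be badly clustered — many of them adjacent to a single vertex $v$ — which would violate the $\Delta_1=d^{1/4}$ requirement. To fix this, for all but $\le\eps n/d$ of the endpoints $x$ of each $\mathcal{P}_i'$ we append one extra edge from $x$ into $Y^*$, choosing the new neighbour uniformly at random among the $\approx (p/11)d$ available ones in $Y^*$ (available = not yet an internal/endpoint vertex of this forest), and apply the local lemma to the collection of `bad events' $\{$vertex $v$ has $>d^{1/4}$ chosen neighbours$\}$ and $\{$vertex $w\in Y^*$ is chosen too often, so that the forest property or $\Delta_0$-bound breaks$\}$; since in expectation $v$ is chosen by $\le (\text{\# endpoints near }v)\cdot\frac{11}{pd}\le d^{8/9}\cdot\frac{11}{pd}=O(d^{-1/9}/p)\ll 1$ times, Chernoff plus the local lemma gives all counts $\le d^{1/4}$ simultaneously, and the at most $\eps n/d$ endpoints we could not extend are exactly the paths we are permitted to discard. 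Finally one checks $E(\mathcal{P}_i)\subseteq E(\mathcal{P}_i')$ holds by construction (we only ever add edges incident to $Y$), that the total discarded per forest is $\le \eps n/d$ (leftover unconnectable endpoints from the last pass, plus un-extendable endpoints, plus paths touching over-used $Y^*$-vertices — each an $O(n/d)$ term, absorbed into $\eps n/d$ with room to spare), and that $\Delta_0\le d^{1/4}$ follows since a vertex is an endpoint of at most its original $\Delta_0$-count $\le d^{8/9}$... which is \emph{not} $\le d^{1/4}$ — so here I would instead observe that after all the connecting, all but $\le\eps n/d$ paths have both endpoints \emph{in $Y^*$} (every extended endpoint lies in $Y^*$), and a vertex $w\in Y^*$ is an endpoint of $O(d^{-1/9}/p)\cdot$ something $\ll d^{1/4}$ paths by the same local-lemma bound, while the non-$Y^*$ endpoints belong to the discarded paths; hence the surviving collection is genuinely $((1+\eps)n/d,d^{1/4},d^{1/4})$-bounded, completing the proof.
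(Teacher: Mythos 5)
Your overall architecture matches the paper's: iteratively connect endpoints of each $\mathcal{P}_i$ through distinct middle vertices drawn from successive slices $Y_1,\ldots,Y_{10}$ of $Y$, using maximality plus the disjointness of the unmatched endpoints' neighbourhoods in the unused part of the slice to bound the surviving path count, and then attach random pendant edges into a final slice, controlled by the local lemma, to force the $\Delta_1$-bound, discarding the $\le \eps n/d$ unextendable paths. However, your quantitative justification for why ten passes suffice does not work as stated. A constant-factor reduction per pass over ten passes cannot take $n/d^{1/9}$ down to $(1+\eps)n/d$ (you would need $(\text{const})^{10}\ge d^{8/9}$), and your alternative claim that a single maximal pass already forces $O(n/d)$ unmatched endpoints ignores exactly the term you dismiss as ``cheap'': the up-to-$n/d^{1/9}$ middle vertices consumed in a pass each sit on up to $d^{8/9}$ edges to endpoints (this is the $\Delta_1$-bound of the input collection), so they can destroy the $\approx pd/10$-neighbourhood guarantee for up to roughly $n d^{7/9}/(pd)\gg n/d$ endpoints, and for those the disjointness count says nothing. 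The correct accounting (the paper's Claim~\ref{claim:nottoomanypaths}) shows that each pass shrinks the \emph{excess} path count by a factor of $d^{1/10}$ — because the damage term is (connections made in this pass)$\cdot d^{8/9}\div(pd/10)$ and the number of connections is bounded by the path count entering the pass — and this polynomial gain per pass is the actual reason ten passes bridge $d^{1/9}$ to $d$.

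The second, more structural gap is the $\Delta_0\le d^{1/4}$ bound. You bound the number of times a vertex $w\in Y^*$ is chosen as a pendant target \emph{per forest} by $O(d^{-1/9}/p)\ll 1$, but the lemma requires a bound on the total over all $d/2$ forests, and the expected total is of order $(d/2)\cdot d^{-1/9}/p=\Theta(d^{8/9}/p)\gg d^{1/4}$ for a vertex with many endpoint-neighbours. No concentration or local-lemma argument on the random choices alone can rescue this, since the expectation itself is too large; declaring ``$w$ is chosen too often'' a bad event does not make it improbable. The paper's fix is adaptive: when processing a new forest, it excludes the set $Z$ of vertices that are already endpoints of more than $d^{1/4}/2$ paths among the previously built forests (of size at most $4n/d^{1/4}$ by a simple double count), chooses pendant targets only outside $Z$ (and outside the used part of $Y_{10}$), and then proves by a separate disjointness argument that the endpoints left with fewer than $pd/30$ admissible targets — which must therefore be discarded — number at most $\eps n/d$. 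Your proposal needs this exclusion mechanism and the accompanying bound on how many paths it forces you to discard; without it the claimed $((1+\eps)n/d,d^{1/4},d^{1/4})$-boundedness fails.
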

\begin{proof} Let $\Delta_0=d^{1/4}$ and $\Delta_1=d^{1/4}$.
Take a maximal set $I\subset [d/2]$ for which there are edge-disjoint collections $\mathcal{Q}_i$, $\mathcal{R}_i$, $\mathcal{P}_i'$, $i\in I$, of path forests in $G$, and, for each $i\in I$, a path forest $\mathcal{P}_i''$ consisting of some of the paths in $\mathcal{P}'_i$, such that, for each $i\in I$,
\stepcounter{propcounter}
\begin{enumerate}[label = {{{\textbf{\Alph{propcounter}\arabic{enumi}}}}}]
\item\label{newnew1} each path in $\mathcal{Q}_i$ has length 2 with endvertices among the endvertices of $\mathcal{P}_i$ and its middle vertex in $Y$,
\item\label{newnew2} each path in $\mathcal{R}_i$ has length 1 with an endvertex among the endvertices of $\mathcal{P}_i$ and an endvertex in $Y$,
\item\label{newnew2a} the paths in $\mathcal{Q}_i$ are vertex disjoint from the paths in $\mathcal{R}_i$,
\item\label{newnew3} $\mathcal{P}_i$, $\mathcal{Q}_i$, $\mathcal{R}_i$ combine to form $\mathcal{P}_i'=\mathcal{P}_i+\mathcal{Q}_i+\mathcal{R}_i$,%which has at most $\eps n/10d$ paths with an endvertex among the endvertices of $\mathcal{P}_i$,
\item\label{newnew4} $\mathcal{P}_i''$ contains all but at most of the $\eps n/d$ paths in $\mathcal{P}_i'$,
\end{enumerate}
and
%\begin{enumerate}[label = {{{\textbf{\Alph{propcounter}\arabic{enumi}}}}}]\addtocounter{enumi}{5}
%\item\label{newnew5} 
the collection $\mathcal{P}''_i$, $i\in I$, is $((1+\eps)n/d,\Delta_0,\Delta_1)$-bounded.
%\end{enumerate}

Take such path forests $\mathcal{Q}_i,\mathcal{R}_i,\mathcal{P}_i',\mathcal{P}_i''$, $i\in I$.
Note that if $I=[d/2]$, then taking $\mathcal{P}'_i$, $i\in [d/2]$, gives us a collection of edge-disjoint path forests in $G$ which satisfies the conditions in the lemma (as seen by considering $\mathcal{P}''_i$, $i\in [d/2]$). Therefore, assume for contradiction that $I\neq [d/2]$.%we can pick some $i\ in [d/2]\setminus I$.

Let $G'$ be the graph of all of the edges of $G$ which are not in any of the paths in $\mathcal{Q}_i$ or $\mathcal{R}_i$ for any $i\in I$. As the collection $\mathcal{P}_i$, $i\in [d/2]$, is $(n/d^{1/9},d^{8/9},d^{8/9})$-bounded, each vertex $v\in V(G)\setminus Y$ appears as the endpoint of at most $d^{8/9}$ paths in total among the paths of $\mathcal{P}_i$, $i\in [d/2]$, and therefore $v$ has at most $d^{8/9}$ neighbouring edges in total in the path forests $\mathcal{Q}_i$ and $\mathcal{R}_i$, $i\in I$.
Thus, as $d_G(v,Y)\geq (1-\gamma)pd$ and $1/d\ll \gamma,p$, we have $d_{G'}(v,Y)\geq (1-2\gamma)pd$. Take a random partition $Y=Y_1\cup \ldots\cup Y_{10}$ by choosing the location of each $v\in Y$ independently and uniformly at random. Using the local lemma, we can assume that, for each $v\in V(G)\setminus Y$ and $j\in [10]$, $d_{G'}(v,Y_j)\geq (1-3\gamma)pd/10$ and $|Y_j|\leq (1+2\gamma)pn/10$.

Using that $I\neq [d/2]$, pick $i\in [d/2]\setminus I$.
For each $1\leq j\leq 10$ in turn, let $\mathcal{Q}_{i,j}$ be a maximal set of vertex-disjoint paths of length 2 in $G'$ such that each path in $\mathcal{Q}_{i,j}$ has its middle vertex in $Y_j$ and its endvertices among the endvertices of $\mathcal{P}_i+\mathcal{Q}_{i,1}+\ldots+\mathcal{Q}_{i,j-1}$, and $\mathcal{P}_i+\mathcal{Q}_{i,1}+\ldots+\mathcal{Q}_{i,j}$ is a path forest. We will show the following.

\begin{claim}\label{claim:nottoomanypaths}
For each  $0\leq j\leq 10$, $\mathcal{P}_i+\mathcal{Q}_{i,1}+\ldots+\mathcal{Q}_{i,j}$ contains at most $(1+\eps/2)n/d+n/d^{(j+1)/10}$ paths.
\end{claim}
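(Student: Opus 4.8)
The plan is to prove Claim~\ref{claim:nottoomanypaths} by induction on $j$, showing that each round of connecting (via middle vertices in $Y_j$) reduces the path count enough that the geometric-looking bound $n/d^{(j+1)/10}$ shrinks appropriately at each step. The base case $j=0$ is immediate: $\mathcal{P}_i$ has at most $n/d^{1/9}\leq n/d^{1/10} \leq (1+\eps/2)n/d + n/d^{1/10}$ paths by the $(n/d^{1/9},d^{8/9},d^{8/9})$-boundedness hypothesis.

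For the inductive step, suppose $\mathcal{P}_i+\mathcal{Q}_{i,1}+\ldots+\mathcal{Q}_{i,j-1}$ has at most $(1+\eps/2)n/d + n/d^{j/10}$ paths; call this forest $\mathcal{F}_{j-1}$ and let $N$ be its number of paths. If $N \leq (1+\eps/2)n/d + n/d^{(j+1)/10}$ already, there is nothing to prove (adding paths of length $2$ via $\mathcal{Q}_{i,j}$ only decreases the path count). So assume $N > (1+\eps/2)n/d + n/d^{(j+1)/10}$. The key step is to exploit maximality of $\mathcal{Q}_{i,j}$: after adding $\mathcal{Q}_{i,j}$, consider the set $Z$ of endvertices of the resulting forest. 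I would argue that the endvertices of $\mathcal{F}_{j-1}$ that are \emph{not} endvertices of $\mathcal{F}_{j-1}+\mathcal{Q}_{i,j}$, together with maximality, force the following: for any two endvertices $u,v$ of the final forest lying in the same "reachable" situation, the vertex $Y_j$ cannot contain a common neighbour of $u$ and $v$ that is unused. Concretely, every vertex $y\in Y_j$ has at most one of the endvertices of the final forest $\mathcal{F}_{j-1}+\mathcal{Q}_{i,j}$ in its neighbourhood \emph{within $G'$}, except for the $\leq 2|\mathcal{Q}_{i,j}|$ vertices of $Y_j$ already used as middle vertices — otherwise we could extend $\mathcal{Q}_{i,j}$. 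Hence, writing $Z'$ for the endvertices of the final forest, the edges of $G'$ from $Z'$ into $Y_j$ form (essentially) a forest/matching-like structure: $e_{G'}(Z', Y_j) \leq |Y_j| + (\text{correction})$. On the other hand, since $\mathcal{F}_{j-1}$ has $N$ paths it has $\geq 2N - (\text{few shorter/degenerate paths})$ endvertices, and crucially each such endvertex $v$ has $d_{G'}(v, Y_j) \geq (1-3\gamma)pd/10$ by our local-lemma partition of $Y$. Most of these endvertices survive into $Z'$ (each path of $\mathcal{Q}_{i,j}$ merges two endvertices, so $|Z'| \geq |Z| - 2|\mathcal{Q}_{i,j}|$, and the count decreases by exactly $|\mathcal{Q}_{i,j}|$ from $N$). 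Combining, $ (1-3\gamma)pd/10 \cdot |Z'| \leq e_{G'}(Z',Y_j) \leq |Y_j| + \text{correction} \leq (1+2\gamma)pn/10 + \text{correction}$, which gives $|Z'| \leq (1+O(\gamma))n/d \cdot (1 + o(1))$, and translating back, the number of paths is $\leq (1+\eps/2)n/d$, comfortably within the claimed bound since $\gamma \ll p \ll \eps$. I would have to be slightly careful separating out degenerate cases (paths of $\mathcal{F}_{j-1}$ that are single vertices, whose "two endvertices" coincide, and the at most $d^{8/9}$-type corrections coming from edges of $G$ removed to build $G'$), but these contribute only lower-order terms absorbed into the $n/d^{(j+1)/10}$ slack.

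The main obstacle I anticipate is making the maximality argument fully rigorous: one must carefully say which pairs of endvertices are "connectable" through $Y_j$ and verify that a common neighbour in $Y_j$ really does yield a length-$2$ path that can be added while keeping the structure a path forest — the subtlety being that the two endvertices might already be the two ends of the \emph{same} path (so joining them creates a cycle, not allowed), or the common neighbour might be needed elsewhere. The standard fix is to note that at each step at most one "bad" endvertex per path is excluded and that a vertex $y\in Y_j$ with two distinct-path endvertices in its $G'$-neighbourhood always admits a valid extension, so the deficiency in $e_{G'}(Z',Y_j)$ relative to $|Z'|$ is at most one per path plus the used middle vertices — all $O(N + |\mathcal{Q}_{i,j}|)$, and since $N \leq 2n/d \ll pn/d$ after the first reduction this is negligible. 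Once this bookkeeping is done, the degree lower bound $d_{G'}(v,Y_j)\geq(1-3\gamma)pd/10$ against the size bound $|Y_j|\leq (1+2\gamma)pn/10$ does all the work, and the geometric improvement $n/d^{j/10}\to n/d^{(j+1)/10}$ follows because after the first round we are already at $\leq 2n/d$ and every subsequent round simply re-confirms $\leq (1+\eps/2)n/d$.
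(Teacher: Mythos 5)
Your proposal is correct and follows essentially the same route as the paper: induction on $j$, then a double count of the $G'$-edges between a set of endvertices (one per path, to avoid the same-path/cycle issue) and the unused vertices $Y_j'\subseteq Y_j$, where maximality of $\mathcal{Q}_{i,j}$ gives the upper bound $\leq|Y_j'|$, the local-lemma degree bound $d_{G'}(v,Y_j)\geq(1-3\gamma)pd/10$ gives the lower bound, and the $d^{8/9}$-boundedness controls the correction from the at most $3n/d^{j/10}$ used middle vertices. The bookkeeping you flag as delicate is handled in the paper exactly as you anticipate, so there is no gap.
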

\begin{proof}
We will prove this by induction on $j$, noting that it follows for $j=0$ as $\mathcal{P}_i$ contains at most $n/d^{1/9}\leq n/d^{1/10}$ paths by the $(n/d^{1/9},d^{8/9},d^{8/9})$-boundedness condition. Then, suppose $j\in [10]$, and that $\mathcal{P}_i+\mathcal{Q}_{i,1}+\ldots+\mathcal{Q}_{i,j-1}$ contains at most $(1+\eps/2)n/d+n/d^{j/10}$ paths. Therefore, $\mathcal{Q}_{i,j}$ contains at most $3n/d^{j/10}$ paths, and so, if $Y_j'\subset Y_j$ is the set of vertices in $Y_j$ not appearing in any path in $\mathcal{Q}_{i,j}$, then $|Y_j\setminus Y_j'|\leq 3n/d^{j/10}$.

Let $X_j\subset V(G)\setminus Y$ be a maximal set of endvertices of $\mathcal{P}_i+\mathcal{Q}_{i,1}+\ldots+\mathcal{Q}_{i,j}$ which are in $V(G)\setminus Y$ and which are such that no two vertices in $X_j$ are endpoints of the same path in $\mathcal{P}_i+\mathcal{Q}_{i,1}+\ldots+\mathcal{Q}_{i,j}$.
By the maximality of $\mathcal{Q}_{i,j}$, no two vertices in $X_j$ have a common neighbour in $Y_j'$ in $G'$. Thus, the number of edges in $G'$ between $X_j$ and $Y_j'$ is at most $|Y_j'|\leq |Y_j|\leq (1+2\gamma)pn/10$. 

On the other hand, each vertex  $v\in X_j\subset V(G)\setminus Y$ satisfies $d_{G'}(v,Y_j)\geq (1-3\gamma)pd/10$, so 
the number of edges in $G'$ between $X_j$ and $Y_j'$ is (as each vertex in $Y_j\setminus Y_j'$ has at most $d^{8/9}$ neighbours among the endvertices of $\mathcal{P}_i$ by the $(n/d^{1/9},d^{8/9},d^{8/9})$-boundedness of the $\mathcal{P}_{k}$, $k\in [d/2]$) at least
\[
|X_j|\cdot (1-3\gamma)pd/10-|Y_j\setminus Y_j'|\cdot d^{8/9}\geq |X_j|\cdot (1-3\gamma)pd/10-d^{8/9}\cdot 3n/d^{j/10},
\]
so that, as $1/d\ll p$ and $\gamma\ll \eps$,
\[
|X_j|\leq \frac{(1+2\gamma)pn/10+d^{8/9}\cdot 3n/d^{j/10}}{(1-3\gamma)pd/10}\leq (1+6\gamma)n/d+n/d^{(j+1)/10}\leq (1+\eps/2)n/d+n/d^{(j+1)/10}.
\]
Then, by the definition of $X_j$, we have that $\mathcal{P}_i+\mathcal{Q}_{i,1}+\ldots+\mathcal{Q}_{i,j}$ contains at most
$(1+\eps/2)n/d+n/d^{(j+1)/10}$ paths, as required.
\claimproofend

Let $\mathcal{Q}_i=\mathcal{Q}_{i,1}+\ldots+\mathcal{Q}_{i,10}$ so that, by the claim with $j=10$, $\mathcal{P}_i+\mathcal{Q}_i$ contains at most $(1+\eps)n/d$ paths as $1/d\ll \eps$. Let $Z_i$ be the set of vertices in $Y_{10}$ which appear in $\mathcal{Q}_{i,10}$, so that, as $\mathcal{P}_i+\mathcal{Q}_{i,1}+\ldots+\mathcal{Q}_{i,9}$ contains at most $(1+\eps/2)n/d+n/d$ paths by Claim~\ref{claim:nottoomanypaths} with $j=9$, we have $|Z_i|\leq 3n/d$. Let $Z\subset Y$ be the set of vertices in $Y$ which are the endvertices of more than $\Delta_0/2=d^{1/4}/2$ of the paths in $\mathcal{P}''_j$, $j\in I$. Then, as $\mathcal{P}''_j$, $j\in I$, is  $((1+\eps)n/d,\Delta_0,\Delta_1)$-bounded and hence contains at most $|I|\cdot (1+\eps)n/d$ paths in total, we have, as $|I|\leq d/2$,
\begin{equation}\label{eqn:Zuppbound}
|Z|\leq \frac{2\cdot |I|\cdot (1+\eps)n/d}{\Delta_0/2}\leq \frac{4n}{\Delta_0}. %\gamma^2 pnd.
\end{equation}
Let $A_i$ be the set of endvertices of the paths in $\mathcal{P}_i+\mathcal{Q}_i$ for which both endvertices have at least $pd/30$ neighbours in $Y_{10}\setminus (Z\cup Z_i)$ in $G'$. Note that, by the maximality of $\mathcal{Q}_{i,10}$, the sets $N_{G'}(u,Y_{10})\setminus Z_i$ and $N_{G'}(u',Y_{10})\setminus Z_i$ are disjoint for $u,u'\in A_i$ unless $u$ and $u'$ are endpoints of the same path in $\mathcal{P}_i+\mathcal{Q}_i$.
Note then that $A_i$ partitions naturally into a set of unordered pairs $\mathcal{F}_i$ where if $\{u,u'\}\in \mathcal{F}_i$ then $u$ and $u'$ are endpoints of the same path in $\mathcal{P}_i+\mathcal{Q}_i$. For each $\{u,u'\}\in \mathcal{F}_i$, pick a vertex $v_u$ from $N_{G'}(u,Y_{10}\setminus (Z\cup Z_i))$ and a vertex $v_{u'}\in N_{G'}(u',Y_{10}\setminus(Z\cup Z_i))$ uniformly and independently at random subject to $v_u\neq v_{u'}$ and add $uv_u$ and $u'v_{u'}$ to $\mathcal{R}_i$. 

For each $v\in V(G)$, let $B_v$ be the event that $v$ has more than $\Delta_1=d^{1/4}$ neighbours among the set $\{v_u,v_{u'}:\{u,u'\}\in \mathcal{F}_i\}$.
For each $w\in N_G(v,Y_{10})$, there is at most one pair $\{u,u'\}\in \mathcal{F}_i$ for which $w\in N_{G'}(u)$(by the maximality again of $\mathcal{Q}_{i,10}$). Therefore, as $u,u'\in A_i$ here, $\mathbb{P}(w\in \{v_u,v_{u'}\})\leq 2/(pd/30)=60/pd$. Therefore, by Chernoff's bound, $\mathbb{P}(B_v)\leq e^{-\sqrt{d}}$ as $1/d\ll p$. Note that, for each $\{u,u'\}\in \mathcal{F}_i$, the choice of $v_u$ and $v_{u'}$ can affect $B_v$ only if $v$ is within a distance 2 of $u$ or $u'$ in $G$. Therefore, by the local lemma, we can assume that each vertex in $V(G)$ has at most $\Delta_1=d^{1/4}$ neighbours among the set $\{v_u,v_{u'}:\{u,u'\}\in \mathcal{F}_i\}$. Let $\mathcal{P}_i'=\mathcal{P}_i+\mathcal{Q}_i+\mathcal{R}_i$, and let $\mathcal{P}_i''$ be the paths in $\mathcal{P}_i'$ with both endvertices in $Y_{10}$. Then, by the choice from the local lemma, we have that every vertex in $V(G)$ has at most $\Delta_1$ neighbours among the endvertices of $\mathcal{P}_i''$. 
As, furthermore, none of the endvertices of the paths in $\mathcal{P}_i''$ are in $Z$, and there are at most $(1+\eps)n/d$ paths in $\mathcal{P}_i'$ and therefore $\mathcal{P}_i''$, we have that $\mathcal{P}''_j$, $j\in I\cup \{i\}$, is $((1+\eps)n/d,\Delta_0,\Delta_1)$-bounded, while \ref{newnew1}--\ref{newnew3} hold for each $j\in I\cup \{i\}$ by construction. Therefore, if we can show \ref{newnew4} for $i$ we will get a contradiction to the choice of $I$, and then we can conclude that $I=[d/2]$, which, as noted above, finishes the proof.

To prove \ref{newnew4}, we simply need to show that at most $\eps n/d$ of the paths in $\mathcal{P}_i'$ have an endpoint which is not in $A_i$. Take a maximal set $X$ of endvertices of the paths in $\mathcal{P}_i$ which come from different paths, such that each vertex in $X$ has at most $pd/30$ neighbours in $Y_{10}\setminus (Z\cup Z_i)$. Recall that each vertex in $V(G)\setminus Y$ has at least $(1-3\gamma)pd/10$ neighbours in $Y_{10}$ in $G'$. Thus, each vertex in $X$ has at least $pd/20$ neighbours in $Z_i\cup Z$ in $G'$.

Now, as the vertices in $X$ are endpoints of different paths in $\mathcal{P}_i+\mathcal{Q}_i$, we have concluded that they share no neighbours in $Y_{10}\setminus Z_i$, and therefore each vertex in $Z\setminus Z_i$ has at most 1 neighbour in $G'$, so there are at most $|Z|\leq 4n/\Delta_0$ edges between $X$ and $Z\setminus Z_0$ in $G'$ by \eqref{eqn:Zuppbound}.
Furthermore, we deduced that $|Z_i|\leq 3n/d$ and so, as $\mathcal{P}_j$, $j\in [d/2]$, is $(n/d^{1/9},d^{8/9},d^{8/9})$-bounded, and every vertex in $X$ is an endvertex of some path in $\mathcal{P}_i$, there are at most $|Z_i|\cdot d^{8/9}\leq 3n/d^{1/9}$ edges from $X$ to $Z_i$. Therefore, as $1/d\ll p,\eps$,
\[
|X|\leq \frac{(4n/\Delta_0)+(3n/d^{1/9})}{pd/30}\leq \frac{\eps n}{d},
\]
as required. Thus, by the definition of $X$, there are at most $\eps n/d$ paths in $\mathcal{P}_i'$ which are not in $\mathcal{P}_i''$. This completes the proof of \ref{newnew4}, and hence the lemma.
\end{proof}

%%%%%%%%%%%%%%%%%%%%%%%%%%%%%%%%%%%%%%%%%%%%%%%%%%%%%%%%%%%%%%%%%%%%%%%%%%%%%%%%%%%%%%%%%%%%%%%%%%%%%%%%%%%%%%%%%%%%%%%%%%%%%%%%%%%%
%%%%%%%%%%%%%%%%%%%%%%%%%%%%%%%%%%%%%%%%%%%%%%%%%%%%%%%%%%%%%%%%%%%%%%%%%%%%%%%%%%%%%%%%%%%%%%%%%%%%%%%%%%%%%%%%%%%%%%%%%%%%%%%%%%%%

\subsection{Proof of Theorem~\ref{thm:pathcovers}}\label{sec:pathcovers}

Lemma~\ref{Lemma_decomposition_path_forests_regular} and Lemma~\ref{Lemma_all_first_connecting_paths_newnew} already allow us to prove Theorem~\ref{thm:pathcovers}.

\begin{proof}[Proof of Theorem~\ref{thm:pathcovers}]
Let $d_0,\gamma$ and $p$ be such that $1/d_0\ll \gamma\ll  p\ll \eps$ and let $d\geq d_0$. Let $G$ be a $d$-regular $n$-vertex graph. Let $Y\subset V(G)$ be a random set chosen by including each vertex independently at random with probability $p$. Using the local lemma, and that $G$ is $d$-regular and $1/d\ll \gamma,p$, we can assume that $d_G(v,Y)=(1\pm \gamma)pd$ for each $v\in V(G)$ and $|Y|=(1\pm \gamma)pn$. Note that $G-Y$ has vertex degrees in $(1\pm \gamma)(1-p)d$.
Therefore, by Lemma~\ref{Lemma_decomposition_path_forests_regular}, and as $1/d\ll p$, $G-Y$ contains an $(n/d^{1/9},d^{8/9},d^{8/9})$-bounded edge-disjoint collection $\mathcal{P}_1,\ldots, \mathcal{P}_{d/2}$ of path forests which cover all but at most $\eps nd/8$ of the edges of $G-Y$. As there are at most $n\cdot (1+\gamma)pd\leq \eps nd/8$ edges with a vertex in $Y$, these path forests contain all but at most $\eps nd/4$ of the edges of $G$. Then, applying Lemma~\ref{Lemma_all_first_connecting_paths_newnew} we get an edge-disjoint collection $\mathcal{P}'_1,\ldots, \mathcal{P}'_{d/2}$ of path forests which cover all but at most $\eps nd/4$ edges of $G$
such that (simplifying the conclusion) for each $i\in [d/2]$, $\mathcal{P}_i'$ has at most $(1+\eps/4)n/d$ paths.

Taking $j\in [d/2]$ to maximise $|E(\mathcal{P}'_j)|$, we have that $|E(\mathcal{P}'_j)|\geq (nd/2-\eps nd/4)/(d/2)=(1-\eps/2)n$. %, so that $|V(\mathcal{P}_j)|\geq (1-\eps/2)n$.
Letting $k$ be the number of paths in $\mathcal{P}'_j$, we have that $k\leq (1+\eps/4)n/d$, and note that we are done if $k\leq n/(d+1)$, so we can assume otherwise. Then, observe that the total number of edges in the $\lfloor n/(d+1)\rfloor$ longest paths in $\mathcal{P}'_j$ is at least
\[
(1-\eps/2)n\cdot \frac{\lfloor n/(d+1)\rfloor}{k}\geq (1-\eps/2)n\cdot \frac{n/(d+1)-1}{(1+\eps/4)n/d}\geq (1-\eps/2)^2n\geq (1-\eps)n.
\]
Thus, $\mathcal{P}'_j$ contains a set of at most $n/(d+1)$ paths which cover all but at most $\eps n$ vertices of $G$, as required.
\end{proof}

%%%%%%%%%%%%%%%%%%%%%%%%%%%%%%%%%%%%%%%%%%%%%%%%%%%%%%%%%%%%%%%%%%%%%%%%%%%%%%%%%%%%%%%%%%%%%%%%%%%%%%%%%%%%%%%%%%%%%%%%%%%%%%%%%%%%
%%%%%%%%%%%%%%%%%%%%%%%%%%%%%%%%%%%%%%%%%%%%%%%%%%%%%%%%%%%%%%%%%%%%%%%%%%%%%%%%%%%%%%%%%%%%%%%%%%%%%%%%%%%%%%%%%%%%%%%%%%%%%%%%%%%%
%%%%%%%%%%%%%%%%%%%%%%%%%%%%%%%%%%%%%%%%%%%%%%%%%%%%%%%%%%%%%%%%%%%%%%%%%%%%%%%%%%%%%%%%%%%%%%%%%%%%%%%%%%%%%%%%%%%%%%%%%%%%%%%%%%%%
%%%%%%%%%%%%%%%%%%%%%%%%%%%%%%%%%%%%%%%%%%%%%%%%%%%%%%%%%%%%%%%%%%%%%%%%%%%%%%%%%%%%%%%%%%%%%%%%%%%%%%%%%%%%%%%%%%%%%%%%%%%%%%%%%%%%
\subsection{Almost-spanning nearly-regular subgraphs}\label{sec:nearregular}
The purpose of this section is to prove the following lemma, Lemma~\ref{lem_regularise_nearly_spanning_defective_vertices}. As discussed in Section~\ref{sec:proofsketch}, the removal of dense spots can significantly weaken the regularity of the graph we are decomposing. Given a graph with such weaker conditions, the following lemma partitions most of the edges into a few subgraphs which are all very close to being regular, so that we can later apply Lemma~\ref{Lemma_decomposition_path_forests_regular} to each of them.
 
\begin{lemma}\label{lem_regularise_nearly_spanning_defective_vertices}
     Let $1/d \ll \mu \ll \eps\leq 1$ and $\eta=d^{-0.9}$. Let $G$ be an $n$-vertex graph with $\Delta(G)\leq d$. Then, there is some $k\in \N$ such that there are edge-disjoint subgraphs $G_1,\ldots,G_k$ in $G$ and some even $d_1,\ldots,d_k\geq \mu d$ such that all but at most $\eps nd$ edges of $G$ are in $\bigcup_{i\in [k]}G_i$, $\sum_{i\in [k]}d_i\leq (1+\eps)d$, and, for each $i\in [k]$,  $G_i$ is $(1\pm \eta)d_i$-regular. 
\end{lemma}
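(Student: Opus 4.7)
The plan is to iteratively extract almost-spanning nearly-regular subgraphs using a near-regularisation result from~\cite{regularising,edgedisjointcycles}. Specifically, I would invoke (possibly after small adaptation) a result of the following form: there exist constants $c_1 = c_1(\mu) > 0$ and $c_2 = c_2(\mu) > 0$ with $\mu \ll c_2 \ll \eps$ such that any graph $H$ on at most $n$ vertices with $\Delta(H) \leq d$ and $|E(H)| \geq c_1 n d$ contains a subgraph $H'$ that is $(1 \pm \eta)d'$-regular for some even $d' \geq \mu d$ with $|V(H')| \geq (1 - c_2)n$. Such a statement is precisely the kind of output produced by the techniques of~\cite{regularising,edgedisjointcycles}, and the regularity quality $\eta = d^{-0.9}$ is comfortably within what those methods can achieve.

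Given this tool, I would proceed iteratively. Set $H_0 = G$ and, for $i \geq 1$, while $|E(H_{i-1})| > \eps n d$, apply the regularisation lemma to $H_{i-1}$ to obtain a nearly-regular subgraph $G_i$ with parameter $d_i$, and then set $H_i = H_{i-1} - E(G_i)$. Halt at step $k$ when $|E(H_k)| \leq \eps n d$. The resulting collection $G_1, \ldots, G_k$ is edge-disjoint by construction, covers all but at most $\eps n d$ edges of $G$, and each $G_i$ is $(1 \pm \eta)d_i$-regular with even $d_i \geq \mu d$.

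The bookkeeping to verify $\sum_i d_i \leq (1+\eps)d$ is routine. For each vertex $v \in V(G)$, edge-disjointness of the $G_i$'s gives $\sum_i d_{G_i}(v) \leq d_G(v) \leq d$; since $d_{G_i}(v) \geq (1 - \eta)d_i$ whenever $v \in V(G_i)$, we obtain $\sum_i [v \in V(G_i)] (1 - \eta) d_i \leq d$. Summing over $v$ and using $|V(G_i)| \geq (1 - c_2)n$ gives $\sum_i d_i \leq d / ((1 - c_2)(1 - \eta)) \leq (1 + \eps)d$, since $c_2, \eta \ll \eps$. Termination in at most $(1 + \eps)/\mu$ steps is immediate from $d_i \geq \mu d$ together with this bound on $\sum_i d_i$.

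The main obstacle is not the iteration itself but pinning down the precise form of the regularisation lemma: we need it to deliver a subgraph whose target degree is at least a constant fraction of $d$, with regularity to within $\eta = d^{-0.9}$, and crucially with $|V(H')|$ missing only a negligible fraction of $V(H)$. A secondary concern is that $G$ is only assumed to satisfy $\Delta(G) \leq d$ rather than being regular, so the regularisation must produce a subgraph of target degree $\gtrsim \mu d$ simply on the basis that the host has many edges (i.e.\ high \emph{average} degree), regardless of how degrees are distributed. Both of these issues are exactly what the ``non-trivial development'' of the results of~\cite{regularising,edgedisjointcycles} referenced in Section~\ref{sec:proofsketch} is designed to resolve.
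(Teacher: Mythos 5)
Your iteration and bookkeeping are fine \emph{conditional on} the regularisation lemma you posit, but that lemma is false, and the falsity is exactly the difficulty this lemma exists to overcome. Take $H$ to be the disjoint union of a $d$-regular graph on $2c_1 n$ vertices and $(1-2c_1)n$ isolated vertices: then $\Delta(H)\leq d$ and $|E(H)|= c_1 nd$, yet any subgraph with minimum degree at least $(1-\eta)\mu d>0$ lives entirely inside the $d$-regular part, so $|V(H')|\leq 2c_1 n$, which is nowhere near $(1-c_2)n$ once $c_1$ is small (and you need $c_1\approx\eps$ to run the iteration down to $\eps nd$ leftover edges). The same obstruction appears without isolated vertices whenever a positive fraction of vertices has degree $o(d)$. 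Your derivation of $\sum_i d_i\leq(1+\eps)d$ genuinely requires the almost-spanning property: the double count only gives $\sum_i |V(G_i)|(1-\eta)d_i\leq nd$, so if the extracted subgraphs can miss a constant fraction of $V(G)$ (e.g.\ $G$ the disjoint union of two $d$-regular graphs on $n/2$ vertices each, extracted one at a time) the bound degrades to $\sum_i d_i\approx 2d$. Note also that the actual outputs of~\cite{regularising,edgedisjointcycles} used in the paper do not supply what you assume: Lemma~\ref{thm:nearreg} needs a \emph{minimum}-degree hypothesis and gives no lower bound at all on $|V(H')|$, while Lemma~\ref{lem:allverticesregularspanningnearregular} gives an almost-spanning subgraph only when the host is already within a factor $1+\gamma$, $\gamma\leq 1/100$, of regular.

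The paper's proof handles this by decoupling the two steps you are trying to do at once. First it takes a \emph{maximal} edge-disjoint family $H_1,\ldots,H_t$ of $(1\pm\gamma)d_i'$-regular subgraphs with $d_i'\geq\beta d$ (these may each be far from spanning, but by Lemma~\ref{thm:nearreg} and a minimum-degree cleaning they cover all but $\eps nd/3$ edges, and each vertex lies in at most $2/\beta$ of them since $\sum_{i\in I_v}d_{H_i}(v)\leq d$). The key step is then a randomised reassembly: for each $j\in[k]$ with $k=1/2\mu$, each vertex is allocated to $R_i^{(j)}$ for exactly $\lfloor\beta_i k\rfloor$ values of $j$ (possible because $\sum_{i\in I_v}\beta_i\leq 1$), and each edge of $H_i$ is assigned to one index $j$ with both endpoints in $R_i^{(j)}$; the local lemma shows the resulting $G_j'$ are $(1\pm 4\beta)(d/k)$-regular, so the degree sum $\sum_j d/k=d$ comes out automatically rather than from a spanning assumption. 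Only then is Lemma~\ref{lem:allverticesregularspanningnearregular} applied to each $G_j'$ to boost $4\beta$ down to $\eta=d^{-0.9}$. You would need to supply an argument of this redistributive kind (or something equivalent) to close the gap.
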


%%%%%%%%%%%%%%%%%%%%%%%%%%%%%%%%%%%%%%%%%%%%%%%%%%%%%%%%%%%%%%%%%%%%%%%%%%%%%%%%%%%%%%%%%%%%%%%%%%%%%%%%%%%%%%%%%%%%%%%%%%%%%%%%%%%%
%%%%%%%%%%%%%%%%%%%%%%%%%%%%%%%%%%%%%%%%%%%%%%%%%%%%%%%%%%%%%%%%%%%%%%%%%%%%%%%%%%%%%%%%%%%%%%%%%%%%%%%%%%%%%%%%%%%%%%%%%%%%%%%%%%%%

%\input{5nearregular.tex}

To prove Lemma~\ref{lem_regularise_nearly_spanning_defective_vertices}, we will use the following two lemmas, which follow using very recent methods of Chakraborti, Janzer, Methuku and Montgomery~\cite{regularising,edgedisjointcycles}. For completion, we prove the lemmas in an appendix. Each lemma takes a graph which is roughly regular (with degrees varying by at most a constant multiple $C$, and a multiple $(1+\gamma)$, respectively) and finds within it a subgraph whose vertex degrees only differ by at most the addition of a constant multiple of the logarithm of the average degree and  which has average degree comparable to the original graph. In the second lemma, the average degree of the subgraph is particularly close to the original average degree and the subgraph contains almost all of the original vertices.

\begin{restatable}{lemma}{regularisingtwo}\label{thm:nearreg}
    Let $1/C'\polysmall 1/C\leq 1$. For any $d$, if a graph has degrees between $d$ and $Cd$, then it contains a subgraph with degrees between $d'$ and $d'+C'\log d'$ for some $d'\geq d/C'$.
\end{restatable}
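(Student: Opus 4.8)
The plan is to iteratively pass to subgraphs which restrict the degree spectrum to a dyadic range, and then invoke the known near-regularisation results (that is, the methods of~\cite{regularising,edgedisjointcycles} which we are quoting) to tighten a ``degrees in an interval $[d', 2d']$'' subgraph down to a ``degrees in $[d', d'+C'\log d']$'' subgraph. Concretely, suppose first we can find within any graph with degrees in $[d, Cd]$ a subgraph with all degrees in a dyadic window $[2^j, 2^{j+1})$ for some $j$ with $2^j \ge d/C''$ (for a constant $C''$ depending only on $C$); this is the main content and I discuss it below. Given such a subgraph $H$, its minimum degree is at least $2^j =: e$ and its maximum degree is less than $2e$, so it has degrees between $e$ and $Ce$ with the absolute constant $C = 2$ — and here we can apply whatever base near-regularisation statement we are importing to obtain a subgraph of $H$ with degrees between $e'$ and $e' + C'\log e'$ for some $e' \ge e/C'$. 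Chaining the two steps, with $e' \ge e/C' \ge d/(C' C'')$, we get the claimed $d' = e' \ge d/C'$ after relabelling constants (absorbing $C''$ into $C'$ via the hierarchy $1/C' \polysmall 1/C$).

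For the dyadic-window step, the natural approach is a weighting/averaging argument on the degree distribution. Writing $n_j$ for the number of vertices with degree in $[2^j, 2^{j+1})$, the number of edge-endpoints is $\sum_j n_j \cdot 2^j$ up to a factor of $2$, and since all degrees lie in $[d, Cd]$ only $O(\log C)$ values of $j$ are nonempty; hence some single window $[2^j, 2^{j+1})$ captures at least a $1/O(\log C)$ fraction of the edges. Restricting to vertices in that window does not directly give a subgraph with all degrees in the window, though — deleting the other vertices drops degrees. The standard fix is a ``peeling'' or Erd\H{o}s-type argument: repeatedly delete any vertex whose current degree has fallen below $2^j/(2C')$; each deletion removes fewer than $2^j/(2C')$ edges, and the total number of vertices is at most (number of edges)$/d \le$ (number of edges in the window)$\cdot O(\log C)/d$, so the total number of edges destroyed is a small fraction of those we started with, and the process terminates with a nonempty subgraph of minimum degree at least $2^j/(2C')$ and maximum degree (inherited) less than $2^{j+1}$, i.e.\ degrees in a bounded-ratio range $[2^j/(2C'), 2^{j+1})$ with ratio $4C'$ and bottom of the range $\ge d/(2CC')$. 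That is exactly a ``degrees between $e$ and $C''e$'' graph with $C''$ a constant and $e \ge d/(2CC') \ge d/C''$, feeding into the imported base result as above.

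The main obstacle is the base near-regularisation input itself: reducing a graph with degrees in a window $[e, 2e]$ to one with degrees in $[e', e' + C'\log e']$ while keeping $e' = \Omega(e)$. This is precisely where the recent techniques of Chakraborti, Janzer, Methuku and Montgomery are needed, and the paper states this is proved in an appendix using~\cite{regularising,edgedisjointcycles}; I would quote it as a black box here rather than reprove it. Everything else — the dyadic pigeonhole on $\log C$ many windows, the low-degree peeling to convert ``most vertices in a window'' into ``all degrees in an enlarged window'', and the constant bookkeeping through the hierarchy $1/C' \polysmall 1/C$ — is routine and can be carried out with only order-of-magnitude estimates. One mild subtlety to check is that the peeling does not empty the graph: since we only destroy an $O(1/C')$-fraction of the chosen window's edges and we chose $C'$ huge relative to $C$, the surviving subgraph is nonempty (indeed retains most of those edges), so it has a well-defined minimum degree in the target range.
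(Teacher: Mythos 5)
There is a genuine gap in your dyadic-window step, and it is the crux of the lemma. The pigeonhole on degree classes tells you that some window $[2^j,2^{j+1})$ accounts for a $1/O(\log C)$ fraction of the edge-\emph{endpoints}, i.e.\ of the edges \emph{incident} to vertices in that window. But when you then pass to the induced subgraph on that window and start peeling, the edges you are trying to preserve are the edges with \emph{both} endpoints in the window, and you have no lower bound on these whatsoever. A complete bipartite-like graph with one side of degree $d$ and the other of degree $Cd$ is a counterexample: each of the two occupied windows is incident to half the edges, yet each induces the empty graph, so the peeling terminates with nothing. Your bookkeeping ("the total number of edges destroyed is a small fraction of those we started with") compares the destroyed edges to the edges incident to the window, whereas the process can only survive if the edges \emph{inside} the window are numerous — and they need not be. This reduction from degree ratio $C$ to degree ratio $O(1)$ (keeping the minimum degree within a constant factor) is precisely the non-trivial content here; it cannot be done by restricting to a single degree class.

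What the paper does instead is iterate a one-step probabilistic regularisation (Lemma~\ref{lemma:foriteration}): after removing edges inside the high-degree class, it deletes each low-to-high edge and each low-degree vertex independently with probability $\eps$, so that high-degree vertices lose roughly a $2\eps$-fraction of their degree while low-degree vertices lose only an $\eps$-fraction; by the local lemma this shrinks the multiplicative degree spread by a factor $(1-\eps/2)$ at a cost of only $(1-2\eps)$ in minimum degree. Iterating $O(\log C)$ times brings the ratio from $C$ down to $1+1/100$ while the minimum degree drops only by a constant factor $\geq 2/C'$, and only then does Lemma~\ref{lem:allverticesregularspanningnearregular} convert the $(1+1/100)$-spread into the additive $C'\log d'$ window. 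Note also that the black box you propose to import for the "ratio $2$" base case is not available in that form: Lemma~\ref{lem:allverticesregularspanningnearregular} requires the degree spread to already be $1+\gamma$ with $\gamma\leq 1/100$, so even a correct reduction to a dyadic window would not suffice without further iteration of the one-step lemma.
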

\begin{restatable}{lemma}{regularisingone}\label{lem:allverticesregularspanningnearregular}
    There exists some $C>0$ such that for each $1/d\ll \gamma\leq 1/100$ the following holds. Let $G$ be a graph with $d \leq \delta(G), \Delta(G) \leq (1+\gamma)d$. Then, for some $d'\geq (1-40\gamma)d$, $G$ contains a subgraph $G'$ with $|V(G')|\geq (1-40\gamma)|V(G)|$ and $d'\leq \delta(G'), \Delta(G') \leq d'+C\log d'$.
\end{restatable}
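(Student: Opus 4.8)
Fix $L=C\log d$ for a sufficiently large absolute constant $C$ (at least as large as the constant supplied by the near-regularisation machinery of \cite{regularising,edgedisjointcycles}), and aim to produce a subgraph $G'$ with $\delta(G')\ge d'$ and $\Delta(G')\le d'+L$ for some $d'$ with $(1-40\gamma)d\le d'\le d$. Since $1/d\ll\gamma$, we have $\gamma d\gg L$, so the target window is much narrower than the spread $\gamma d$ of the degrees of $G$; in particular $G$ itself is far from good, and a genuinely sharper regularisation than Lemma~\ref{thm:nearreg} (which is allowed to lose a constant factor in the degree, so does not suffice here) is required — the extra hypothesis that $G$ is already $(1+\gamma)$-nearly regular is exactly what we will exploit to keep the degree close to $d$. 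The engine I would use is the $(g,f)$-factor theorem of Lovász: when $g(v)<f(v)$ for every $v$, a graph $H$ has a spanning subgraph $H'$ with $g(v)\le d_{H'}(v)\le f(v)$ for all $v$ if and only if $\sum_{s\in S}f(s)+\sum_{t\in T}d_{H-S}(t)\ge\sum_{t\in T}g(t)$ for every pair of disjoint sets $S,T\subseteq V(H)$. Applying this to $H=G$ with $f(v)=\min(d_G(v),d+L)$ and $g\equiv d-L$ very nearly works: using $\sum_{t\in T}d_{G-S}(t)=\sum_{t\in T}d_G(t)-e_G(S,T)$ together with $\delta(G)\ge d$ and $\Delta(G)\le(1+\gamma)d$, the inequality fails only when there is a pair $(S,T)$ with $e_G(S,T)$ close to its maximum possible value and with $|T|$ appreciably larger than $|S|$. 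Such a pair is a \emph{dense bipartite obstruction}: a set $T$ of almost-minimum-degree vertices almost all of whose edges go into a small set $S$. These genuinely obstruct — for instance an essentially biregular bipartite graph between $d$ vertices of degree $(1+\gamma)d$ and $(1+\gamma)d$ vertices of degree $d$ has no subgraph of min degree $\Omega(d)$ with all degrees in a window of width $o(\gamma d)$, as a one-line edge count shows — so they must be removed.

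The plan is therefore: \textbf{(i)} find a set $W\subseteq V(G)$ with $|W|\le 40\gamma n$ such that $G-W$ contains no dense bipartite obstruction, and \textbf{(ii)} apply the $(g,f)$-factor theorem to $G-W$ with $g\equiv d'$ and $f\equiv d'+L$, where $d'$ is taken of the form $d-O(\gamma d)$ so as to absorb the degree loss at the (surviving) neighbours of $W$; this produces the desired $G'$, which is spanning in $G-W$ and hence has $|V(G')|=n-|W|\ge(1-40\gamma)n$, and a short check gives $d'\ge(1-40\gamma)d$. To carry out \textbf{(i)} I would work iteratively: while the current graph violates the factor condition, take a maximal violating pair $(S,T)$ — equivalently a maximal dense bipartite obstruction — and delete a $\Theta(\gamma)$-fraction of its low-degree side (in the extremal example above this turns the obstruction on $\approx(2+\gamma)d$ vertices into a $\approx d$-regular bipartite graph after deleting $\approx\gamma d$ of them), adding also to $W$ any vertex that has lost too many edges, then repeat on what remains. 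The near-regularisation results of \cite{regularising,edgedisjointcycles} are used here to maintain precise control of the degree sequence of the current graph between rounds of surgery, which is needed because each deletion lowers the degrees of the deleted vertices' neighbours and could otherwise push the graph away from near-regularity.

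The step I expect to be the main obstacle is bounding the total number of deleted vertices by $40\gamma n$ while controlling the cascade that the deletions cause (new low-degree vertices, hence possibly new obstructions), and proving that the process terminates. The natural accounting is to charge the vertices deleted on account of an obstruction against the degree-excess it consumes: the total excess $\sum_{v\in V(G)}(d_G(v)-d)$ is at most $\gamma dn$, and an obstruction around a pair $(S,T)$ forces its high-degree side to be of degree genuinely bounded away from $d$ while being essentially saturated inside the dense bipartite part, so each vertex deleted can be paid for out of the excess degree this part uses up. Making this charging rigorous and uniform over overlapping and nested obstructions, and reconciling it with the regularity bookkeeping imported from \cite{regularising,edgedisjointcycles}, is where the real work — and the delicate constant-chasing that has to land on the factor $40$ — lies.
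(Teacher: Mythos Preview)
Your route is genuinely different from the paper's, and the paper's is considerably simpler. The paper does not touch the $(g,f)$-factor theorem or any structural obstruction analysis at all. Instead it proves a single probabilistic step (Lemma~A.1 in the appendix): given $G$ with degrees in $[d,(1+\gamma)d]$, split the vertices into ``low'' ($\le(1+\gamma/2)d$) and ``high'', delete each low vertex independently with probability $\eps$ and each low--high edge independently with probability $\eps$, and use the local lemma to show that with positive probability the surviving graph has degrees in $[d',(1+\gamma)(1-\eps/2)d']$ for some $d'\ge(1-2\eps)d$, while keeping a $(1-2\eps)$-fraction of the vertices. Applying this $50$ times with $\eps=\gamma/10$ halves $\gamma$ (Corollary~A.2), and then iterating that halving until $\gamma d/2^r\le 10^4\log d$ gives the lemma; the losses form a geometric series in $\gamma$, which is how the constant $40$ falls out cleanly (actually $20$ with room to spare). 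No obstruction hunting, no cascade management, no factor condition.

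Your plan is plausible in outline but, as you yourself flag, the whole proof lives in the part you have not done: controlling the deletion cascade and making the charging against $\sum_v(d_G(v)-d)\le\gamma dn$ rigorous across overlapping obstructions. Two further worries. First, your appeal to ``the near-regularisation results of \cite{regularising,edgedisjointcycles}\ldots to maintain precise control of the degree sequence between rounds of surgery'' is exactly what Lemma~\ref{lem:allverticesregularspanningnearregular} itself is meant to supply, so unless you are very specific about which intermediate statement from those papers you mean, this is circular. Second, the way you propose to absorb the degree loss at neighbours of $W$ by lowering $d'$ by $O(\gamma d)$ only works if every surviving vertex loses $O(\gamma d)$ neighbours into $W$; with $|W|$ as large as $40\gamma n$ that is not automatic, so you are relying entirely on the ``add to $W$ any vertex that has lost too many edges'' clause, which is precisely the cascade you have not bounded. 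The paper's random-sparsification approach sidesteps all of this: it never needs a global structural dichotomy, and the geometric decay of $\gamma$ does the bookkeeping for free.
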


To prove Lemma~\ref{lem_regularise_nearly_spanning_defective_vertices}, we start by taking a maximal collection $H_1,\ldots,H_t$ of nearly-regular subgraphs of $G$ whose average degree is not too far below that of $G$. It will follow easily from Lemma~\ref{thm:nearreg} that they will cover most of the edges of $G$, however, these subgraphs may contain significantly fewer vertices than $G$ (and so, in particular, the sum of the average degree of the subgraphs may exceed $d$). However, each vertex of $G$ will be in very few of the subgraphs $H_i$, so we can randomly choose $k$ (for some particular $k$) almost-spanning subgraphs of $G$ by, for the $j$th subgraph, allocating each vertex to a different subgraph $H_i$ which contains that vertex and including in the $j$th subgraph any edge of any $H_i$ which has both vertices allocated to $H_i$ for the $j$th subgraph. These $k$ subgraphs will not be edge-disjoint. In fact, using the local lemma, we will choose such subgraphs so that each edge in each $H_i$ will appear in many of the $k$ subgraphs; for each of these edges we will then randomly allocate it to one such subgraph and delete it from all the others. By carefully choosing all the probabilities involved, and using again the local lemma, we will be able to get subgraphs $G_i'$, $i\in [k]$, which contain almost all of the vertices of $G$ and which are nearly regular. Finally, as these subgraphs will not be quite regular enough (as the application of the local lemma cannot preserve as much of the near-regularity as we would like), we will boost the near-regularity of each $G_i'$ using Lemma~\ref{lem:allverticesregularspanningnearregular} to get a more regular subgraph $G_i\subset G_i'$ with only the loss of relatively few edges or vertices.

\begin{proof}[Proof of Lemma~\ref{lem_regularise_nearly_spanning_defective_vertices}] Let $\gamma$ and $\beta$ be such that $1/d\ll \gamma\ll \mu \ll \beta\polysmall \eps$ and note that we can assume $\eps\ll 1$. 
Let $H_1,\ldots, H_t$ be a maximal edge-disjoint collection of subgraphs of $G$ such that, for each $i\in [t]$, there is some $d_i'\geq \beta d$ such that $H_i$ is $(1\pm \gamma)d_i'$-regular. Let $H=\bigcup_{i\in [t]}H_i$.

    \begin{claim} We have $|E(G)\setminus E(H)|\leq \eps n d/3$.\label{clm:manysubgraphs}
    \end{claim}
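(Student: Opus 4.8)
The plan is to bound $|E(G) \setminus E(H)|$ by arguing that if too many edges were uncovered, then the graph $G' = (V(G), E(G) \setminus E(H))$ would contain a nearly-regular subgraph of sufficiently large average degree that could be added to the collection $H_1, \ldots, H_t$, contradicting maximality. First I would suppose for contradiction that $|E(G') | = |E(G) \setminus E(H)| > \eps n d / 3$. Since $\Delta(G') \leq \Delta(G) \leq d$, this means $G'$ has average degree at least $2\eps d / 3$, so after discarding vertices of low degree we can find a subgraph $G''$ of $G'$ with minimum degree at least, say, $\eps d / 3$ (a standard argument: repeatedly delete any vertex of degree below half the average degree; this removes fewer than half the edges). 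Note that $G''$ then has degrees between $\eps d/3$ and $d$, i.e., between $d''$ and $C d''$ where $d'' = \eps d /3$ and $C = 3/\eps$ is a constant.

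Next I would apply Lemma~\ref{thm:nearreg} to $G''$: with $C = 3/\eps$ fixed, there is a constant $C'$ (depending only on $\eps$) such that $G''$ contains a subgraph $H_{t+1}$ with degrees between $d'$ and $d' + C' \log d'$ for some $d' \geq d''/C' = \eps d / (3C')$. Since $1/d \ll \gamma$ and $C'$ depends only on $\eps$ (which is fixed relative to $\gamma$), we have $C' \log d' \leq C' \log d \leq \gamma d' $ for $d$ large — here I need $d' \geq \eps d/(3C') \geq \beta d$ provided $\beta$ is chosen small enough relative to $\eps$ (which it is, since $\beta \polysmall \eps$), and then $C'\log d' \le C'\log d \le \gamma \beta d \le \gamma d'$ since $1/d \ll \gamma$. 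Thus $H_{t+1}$ is $(1 \pm \gamma)d'$-regular (taking it to have degrees in $[d', d'+\gamma d'] \subseteq [(1-\gamma)d', (1+\gamma)d']$, possibly after a trivial rescaling of the "centre" value) with $d' \geq \beta d$, and $H_{t+1}$ is edge-disjoint from all the $H_i$ since its edges lie in $E(G')= E(G) \setminus E(H)$. This contradicts the maximality of the collection $H_1, \ldots, H_t$, proving the claim.

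The main obstacle I anticipate is getting the constants to line up cleanly, specifically ensuring that the degree-spread term $C' \log d'$ produced by Lemma~\ref{thm:nearreg} is genuinely absorbed into the multiplicative $(1\pm\gamma)$ error required by the definition of the collection $H_1, \ldots, H_t$. This works because the hierarchy $1/d \ll \gamma \ll \mu \ll \beta \polysmall \eps$ forces $\gamma$ to be much smaller than any function of $\eps$ but still much larger than $1/d$; so $C'$ (a function of $\eps$ alone) times $\log d$ is $o(\gamma d)$. One should also double-check the lower bound $d' \geq \beta d$: we get $d' \geq \eps d/(3C')$, and since $C' = C'(\eps)$ while $\beta \polysmall \eps$, we indeed have $\eps/(3C') \geq \beta$ for the relevant choices. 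A minor point is the preliminary "clean-up" step extracting $G''$ of large minimum degree from $G'$ — this is routine, costing at most a factor of $2$ in the number of edges, so starting from $|E(G')| > \eps nd/3$ we still get average degree $\Omega(\eps d)$ in $G''$, which is all that is needed.
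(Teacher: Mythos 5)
Your proof is correct and follows essentially the same route as the paper: assume more than $\eps nd/3$ uncovered edges, extract a subgraph of minimum degree $\eps d/3$ by the folklore argument, apply Lemma~\ref{thm:nearreg} with $C=3/\eps$, and contradict the maximality of $H_1,\ldots,H_t$. Your extra bookkeeping verifying that $C'\log d'$ is absorbed into the $(1\pm\gamma)d'$ window and that $d'\geq\beta d$ is exactly what the paper leaves implicit via the hierarchy $1/d\ll\gamma\ll\beta\polysmall\eps$.
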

    \begin{proof}
        Suppose, for contradiction, that $|E(G)\setminus E(H)|> \eps n d/3$. Then, by the well-known folklore result, $G\setminus H$ has a subgraph ${G}'$ with minimum degree at least $\eps d/3$. Using that $\Delta(G)\leq d$ and $\beta\ll \eps$, apply Lemma~\ref{thm:nearreg} with $C=3/\eps$ to find a subgraph $H_{t+1}$ of ${G}'$ that is $(1\pm \gamma)d_{t+1}$-regular for some $d_{t+1}\geq \beta d$, contradicting the maximality of the subgraphs $H_i$, $i\in [t]$.
    \end{proof}

%  Let

 For each $i\in [t]$, let $\beta_i=(1-\gamma)d_i'/d\geq \beta/2$. For each $v\in V(G)$, let $I_v$ be the set of $i\in [t]$ with $v\in V(H_i)$, noting that 
 \[
 d\geq d_G(v)\geq \sum_{i\in I_v}d_{H_i}(v)\geq \sum_{i\in I_v}\beta_id\geq |I_v|\cdot \beta d/2,
 \] so that $|I_v|\leq 2/\beta$. Take $k=1/2\mu$. For every $j\in [k]$, take disjoint subsets $R_1^{(j)},\ldots, R_t^{(j)}$ of $V(G)$, by determining the location  of each $v\in V(G)$ independently and uniformly at random subject to the following rule.
\begin{itemize}
\item For each $i\in I_v$, there are exactly $\lfloor \beta_i k\rfloor$ values of $j\in [k]$ for which $v\in R_i^{(j)}$.
\end{itemize}

To see that this is possible, note that, for each $v\in V(G)$, $\sum_{i\in I_v}\beta_id\leq \sum_{i\in I_v}d_{H_i}(v)\leq d$, so that $\sum_{i\in I_v}\beta_i\leq 1$.

For each $i\in [t]$, let $H'_i\subset H_i$ be the subgraph of edges $e\in E(H_i)$ such that the number of $j\in [k]$ with $V(e)\subset R_i^{(j)}$ is $(1\pm \beta)\beta_i^2k$. Let $H'=\bigcup_{i\in [t]}H_i'$.

\begin{claim}\label{clm:HHprimeclose}
    With positive probability, $|E(H)\setminus E(H')|\leq \eps n d/6$ and, for each $i\in [t]$ and $j\in [k]$, if $R_i^{(j)}\neq \emptyset$, then $H'_i[R_i^{(j)}]$ is $(1\pm \beta)\beta_i^2d$-regular.%, and, for each $j\in [k]$, $|V(G)\setminus \bigcup_{i\in[t]}R_i^{(j)}|\leq \eps^{3/2}n$.
\end{claim}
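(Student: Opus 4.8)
The claim is a standard "concentration plus local lemma" argument applied to the random choice of the sets $R_i^{(j)}$. I would fix an edge $e\in E(H_i)$ and estimate the probability that $e\notin E(H_i')$, i.e.\ that the number of $j\in[k]$ with $V(e)\subseteq R_i^{(j)}$ is \emph{not} $(1\pm\beta)\beta_i^2k$. Writing $V(e)=\{u,v\}$, the event $V(e)\subseteq R_i^{(j)}$ means $u\in R_i^{(j)}$ and $v\in R_i^{(j)}$ simultaneously. For a fixed $j$, since $u$ is placed among its $\lfloor\beta_i k\rfloor$-many ``$i$-slots'' out of $k$ uniformly (and likewise $v$, independently of $u$), the probability that slot $j$ is chosen for both is $(\lfloor\beta_i k\rfloor/k)^2=(1\pm o(1))\beta_i^2$; and over the $k$ choices of $j$ the count is a sum of (nearly) independent indicators with mean $(1\pm o(1))\beta_i^2 k$. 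Since $\beta_i\geq\beta/2$ and $k=1/2\mu$ with $\gamma\ll\mu\ll\beta$, this mean is $\gg 1$, so Chernoff (Lemma~\ref{chernoff}), together with the negligible rounding error $\lfloor\beta_i k\rfloor=(1\pm\gamma)\beta_i k$ absorbed into the $(1\pm\beta)$ window, gives that $\mathbb{P}(e\notin E(H_i'))\leq e^{-d^{0.1}/100}$, comfortably. (One has to be mildly careful that the two coordinate-choices for $u$ and $v$ are independent of each other, which they are by construction; the indicators over $j$ are not quite independent because the per-vertex allocation is a uniform choice of a $\lfloor\beta_ik\rfloor$-subset rather than i.i.d., but this only improves concentration, or one can use Azuma, Lemma~\ref{lem:Azuma}, with the per-vertex choices as coordinates.)

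**Setting up the local lemma.** Given the per-edge bound above, I would introduce, for each $i\in[t]$ and each $e\in E(H_i)$, the bad event $B_{i,e}$ that $e\notin E(H_i')$. The event $B_{i,e}$ depends only on the random allocations of the two endpoints of $e$. Two bad events $B_{i,e}$ and $B_{i',e'}$ are dependent only if $e$ and $e'$ share a vertex, and each vertex lies in at most $|I_v|\leq 2/\beta$ subgraphs $H_i$, each of maximum degree at most $d$; so $B_{i,e}$ is mutually independent of all but at most $O(d/\beta)\leq d^2$ other such events. Since $e\cdot e^{-d^{0.1}/100}\cdot d^2\leq 1$, the local lemma (Lemma~\ref{Lemma_local_symmetric}) applies and, with positive probability, \emph{every} edge of $E(H)$ lies in the corresponding $E(H_i')$ — in fact this gives $E(H)=E(H')$, which is stronger than the claimed $|E(H)\setminus E(H')|\leq\eps nd/6$. (If one prefers to keep the ``$\geq (1-\beta)\beta_i^2 d$'' / regularity conclusion honest one can also add, per vertex $v$ and per $i\in I_v$, the bad event that $d_{H_i'}(v)\neq(1\pm\beta)\beta_i^2 d$ after the edge deletions; but these degree events actually follow from the edge events, so they need not be listed separately.)

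**Deducing the regularity of $H_i'[R_i^{(j)}]$.** Having chosen an outcome with no $B_{i,e}$, I would fix $i\in[t]$ and $j\in[k]$ with $R_i^{(j)}\neq\emptyset$ and bound $d_{H_i'[R_i^{(j)}]}(v)$ for $v\in R_i^{(j)}$. Here the cleanest route is to add, in the same local-lemma application, a second family of bad events: for each $i\in[t]$, each $j\in[k]$, and each $v\in V(H_i)$, let $B_{i,j,v}$ be the event that $d_{H_i[R_i^{(j)}]}(v)\neq(1\pm\beta/2)\beta_i\cdot d_{H_i}(v)=(1\pm 2\gamma)(1\pm\beta/2)\beta_i^2 d$. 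For a neighbour $w$ of $v$ in $H_i$, conditioned on $v\in R_i^{(j)}$ the probability that $w\in R_i^{(j)}$ is $(1\pm\gamma)\beta_i$, and these are again nearly-independent over $w$ (negative association from the subset-choice), so Chernoff/Azuma gives $\mathbb{P}(B_{i,j,v})\leq e^{-d^{0.1}/100}$ since $\beta_i d_{H_i}(v)\geq(\beta/2)^2 d\gg 1$. Each $B_{i,j,v}$ depends only on the allocations of $v$ and its $H_i$-neighbours, so the dependency degree is still polynomial in $d$, and the local lemma accommodates all of $\{B_{i,e}\}\cup\{B_{i,j,v}\}$ at once. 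Since $E(H_i')=E(H_i)$ in the chosen outcome, $H_i'[R_i^{(j)}]=H_i[R_i^{(j)}]$, so $d_{H_i'[R_i^{(j)}]}(v)=(1\pm\beta)\beta_i^2 d$ after absorbing the $(1\pm 2\gamma)$ factor into the $(1\pm\beta)$ window (using $\gamma\ll\beta$), which is exactly the claimed near-regularity. The bound $|E(H)\setminus E(H')|\leq\eps nd/6$ is then immediate (indeed the left side is $0$).

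**Main obstacle.** The only genuinely delicate point is handling the dependence structure of the allocation: each vertex chooses a $\lfloor\beta_i k\rfloor$-subset of $[k]$ for each $i\in I_v$, jointly subject to $\sum_{i\in I_v}\beta_i\le 1$, so the indicators ``$v\in R_i^{(j)}$'' over $j$ (for fixed $i$, $v$) are not independent and are coupled across different $i$ as well. The fix is to observe these families are negatively associated (a uniform random subset of fixed size, and a balanced allocation of a vertex among disjoint groups of slots, both give negative association), so Chernoff-type bounds still apply in the form we need; alternatively, treat each vertex's joint allocation as a single coordinate of a product space and invoke Azuma (Lemma~\ref{lem:Azuma}) with Lipschitz constant bounded by $d$ (changing one vertex's allocation changes each relevant degree/count by at most $d$), which gives the same $e^{-d^{0.1}/100}$ bound after noting the number of coordinates influencing any one event is $O(d)$. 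Once this is in place, everything else is the routine local-lemma bookkeeping already exemplified in Section~\ref{sec:local}.
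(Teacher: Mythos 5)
Your argument has a genuine gap at its very first step, and the rest of the proposal is built on it. You claim that for a fixed edge $e=uv\in E(H_i)$ the probability that the number of $j\in[k]$ with $V(e)\subseteq R_i^{(j)}$ falls outside $(1\pm\beta)\beta_i^2k$ is at most $e^{-d^{0.1}/100}$, "since the mean is $\gg 1$". But $k=1/2\mu$ is a \emph{constant} independent of $d$, so the mean $\beta_i^2k$ is a (large) constant and Chernoff only gives a failure probability of order $\exp(-c\beta^2\beta_i^2k)$ --- a small constant, which the paper bounds by $\eps/6$, not anything decaying in $d$. Consequently the local lemma cannot be applied to the per-edge events $B_{i,e}$ (constant probability, polynomial dependency degree), and your conclusion that with positive probability $E(H)=E(H')$ is unobtainable this way; indeed for large $n$ one expects a constant fraction of the edges of each $H_i$ to be lost, so the claim $E(H)\setminus E(H')=\emptyset$ is simply the wrong target. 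Since your deduction of the near-regularity of $H_i'[R_i^{(j)}]$ also leans on $H_i'[R_i^{(j)}]=H_i[R_i^{(j)}]$, that part does not survive either.

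The correct route (and the one the paper takes) is to push the concentration one level up: for each vertex $v$ and each $i\in I_v$, define the bad event that more than $\eps d_i'/3$ of the $\approx\beta_i d$ edges of $H_i$ at $v$ fail to land in $H_i'$. Conditioning on $v$'s own allocation, the failure indicators over the neighbours $u\in N_{H_i}(v)$ are independent, each with probability at most $\eps/6$, so Chernoff over these $\Theta(d)$ trials gives a failure probability at most $e^{-\sqrt d}$, which is what the local lemma needs; summing $\eps d_i'/3$ over all $v$ and $i\in I_v$ then yields $|E(H)\setminus E(H')|\le\eps nd/6$. For the second assertion one must control $d_{H_i'}(v,R_i^{(j)})$ directly (an event combining "$u\in R_i^{(j)}$" with "$uv$ survives into $H_i'$", again concentrating over the $\Theta(d)$ neighbours of $v$), rather than deducing it from regularity of $H_i[R_i^{(j)}]$. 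Your remarks about negative association/Azuma for the subset-type allocations are fine but address a secondary issue; the primary obstacle is that the per-edge event is not rare enough in terms of $d$.
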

\begin{proof}
For each $v\in V(G)$ and $i\in I_v$, let $B_{v,i}$ be the event that at least $\eps d_i'/3$ edges incident to $v$ in $H_i$ are not in $H'_i$. Let $v\in V(G)$ and $i\in I_v$, and suppose we know which sets $R_i^{(j)}$, $j\in [k]$, contain $v$.
Then, for each $u\in N_{H_i}(v)$ the probability that $uv\in E(H')$, or, equivalently, that $u$ is not in $(1\pm \beta)\beta_i^2k$ of the $\lfloor \beta_i k\rfloor$ sets $R_i^{(j)}$, $j\in [k]$ which contain $v$, is, by Chernoff's bound applied to the appropriate hypergeometric variable, at most $\exp(-\beta^2\beta_i^2k/12)\leq \eps/6$, where we have used that $1/k\ll \beta,\eps$ and $\beta_i\geq \beta/2$. Furthermore, these events are independent over each $u\in N_{G_i}(v)$. Thus, by Chernoff's bound, $\mathbb{P}(B_v)\leq 2e^{-\eps^2(1-\gamma)d_i'/10^3}\leq e^{-\sqrt{d}}$, as $d_i'\geq \beta d$ and $1/d\ll \beta,\eps$.

For each $v\in V(G)$, $i\in I_v$ and $j\in [k]$, let $B_v^{i,j}$ be the event that $v\in R_i^{(j)}$ but $d_{H'_i}(v,R_i^{(j)})\neq (1\pm \beta)\beta_i^2d$. % and let $x(B_v^{i,j})=e^{-d^{1/5}}$.
%For each $j\in [k]$, let $B_j$ be the event that $|V(G)\setminus \bigcup_{i\in[t]}R_i^{(j)}|\leq \eps^{3/2}n$.
Let $v\in V(G)$, $i\in I_v$ and $j\in [k]$ and suppose we know which sets $v$ lies in, and that one of these sets is $R_i^{(j)}$.
Then, for each $u\in N_{H_i}(v)$ and $j\in [k]$ the probability that $u\in R_i^{(j)}$ is $\lfloor \beta_ik\rfloor/k =(1\pm \gamma)\beta_i$, and, when this happens, the probability that, of the $\lfloor \beta_i k\rfloor-1$ other sets $R_i^{(j)}$ which contain $v$, $u$ is not in $(1\pm \gamma)(\beta_i^2k-1)$ of them is, by Chernoff's bound applied to the appropriate hypergeometric variable, at most $\exp(-\gamma^2\beta_i^2k/12)\leq \gamma$. Thus, for each $u\in N_{H_i}(v)$ and $j\in [k]$ with $v\in R_i^{(j)}$, the probability that $u\in R_i^{(j)}$ and $u\in N_{H_i'}(v)$ is $(1\pm 2\gamma)\beta_i$.
Note furthermore, that these events are independent over each $u\in N_{G_i}(v)$ and $d_{H_i}(v)=(1\pm \gamma)d_i'=(1\pm 3\gamma)\beta_id$. Thus, $\mathbb{P}(B_v^{i,j})\leq e^{-\sqrt{d}}$ by Chernoff's bound.

Note that any bad event we have defined depends on the placement of a vertex $w$ only if the vertex $v$ in the subscript is within a distance 1 of each other in $G$, where $\Delta(G)\leq d$, and the placement of each vertex $w$ therefore affects at most $(d+1)\cdot (2/\beta)\cdot k \leq d^2$ events (where we have used that, for each $v\in V(G)$, $|I_v|\leq 2/\beta$, and $1/d\ll \mu,\beta$). Thus, by the local lemma, with positive probability none of the events $B_{v,i}$, $v\in V(G)$ and $i\in I_v$, or $B_v^{i,j}$, $v\in V(G)$, $i\in I_v$ and $j\in [k]$ hold. If none of the events $B_v^{i,j}$, $v\in V(G)$, $i\in I_v$ and $j\in [k]$ hold, then, for each $i\in [t]$ and $j\in [k]$, if $R_i^{(j)}\neq \emptyset$, then $H'_i[R_i^{(j)}]$ is $(1\pm \beta)\beta_i^2d$-regular. We will show that if none of the events $B_{v,i}$, $v\in V(G)$ and $i\in I_v$, hold, then $|E(H)\setminus E(H')|\leq \eps nd/6$, which completes the proof of the claim.

For this, note that, if none of the events $B_{v,i}$, $v\in V(G)$ and $i\in I_v$, hold, then
\begin{align*}
|E(H)\setminus E(H')|&= \frac{1}{2}\sum_{v\in V(G)}(d_H(v)-d_{H'}(v))=\frac{1}{2}\sum_{v\in V(G)}\sum_{i\in I_v}(d_{H_i}(v)-d_{H_i'}(v))
\leq \frac{1}{2}\sum_{v\in V(G)}\sum_{i\in I_v}\frac{\eps d_i'}{3}\\
&\leq \frac{1}{2}\sum_{v\in V(G)}\sum_{i\in I_v}\frac{\eps d_{H_i}(v)}{3}
\leq \frac{1}{2}\sum_{v\in V(G)}\frac{\eps d}{3}=\frac{\eps nd}{6}.
\end{align*}
Thus, with positive probability, the properties in the claim hold.
\claimproofend

    Fix a choice of $R_i^{(j)}$, $i\in [t]$ and $j\in [k]$, satisfying the properties in the claim. For each $i\in [t]$ and $e\in E(H'_i)$, independently and uniformly at random assign $e$ to some $j\in [t]$ with $V(e)\subset R_i^{(j)}$ and let $f(e)=j$. Then, for each $j\in [k]$, let $G'_j$ be the graph with vertex set $\bigcup_{i\in [t]}R_i^{(j)}$ and edge set $\bigcup_{i\in [t]}\{e\in E(H'_i):f(e)=j\}$. Let $d'=d/k$ and note that the graphs $G'_j$, $j\in [k]$, are, by design, edge disjoint.

\begin{claim}
    With positive probability, for each $j\in [k]$, $G_j'$ is $(1\pm 4\beta)d'$-regular.
\end{claim}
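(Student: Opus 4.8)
The plan is to show that each $G_j'$ is $(1\pm 4\beta)d'$-regular by analysing the degree of a fixed vertex $v$ in $G_j'$. First I would decompose the degree as $d_{G_j'}(v)=\sum_{i\in I_v:\,v\in R_i^{(j)}}d_{G_j'}(v,R_i^{(j)})$, where the sum is over those $i\in I_v$ for which $v\in R_i^{(j)}$; this is because every edge of $G_j'$ at $v$ comes from some $H_i'$ with both endpoints in $R_i^{(j)}$, so in particular $v\in R_i^{(j)}$. For each such $i$, we know from Claim~\ref{clm:HHprimeclose} that $H_i'[R_i^{(j)}]$ is $(1\pm\beta)\beta_i^2 d$-regular, so $v$ has $(1\pm\beta)\beta_i^2 d$ neighbours in $H_i'[R_i^{(j)}]$, each of which lands in $G_j'$ independently with probability $1/|\{j':V(e)\subset R_i^{(j')}\}|$. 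The key point is that for an edge $e\in E(H_i')$, by definition of $H_i'$ the number of $j'\in[k]$ with $V(e)\subset R_i^{(j')}$ is $(1\pm\beta)\beta_i^2 k$, so each such neighbour survives into $G_j'$ with probability $(1\pm 2\beta)/(\beta_i^2 k)$. Hence $\mathbb{E}[d_{G_j'}(v,R_i^{(j)})]=(1\pm 4\beta)\beta_i^2 d\cdot \frac{1}{\beta_i^2 k}=(1\pm 4\beta)d/k=(1\pm 4\beta)d'$ -- wait, this needs care: summing over the (up to $2/\beta$) values of $i$ must give $d'$ in total, not $d'$ per term.

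So more carefully: I would instead track $d_{G_j'}(v)$ as a whole. Condition on the (already fixed) sets $R_i^{(j')}$. The contribution from $H_i'$ is a sum of $d_{H_i'}(v,R_i^{(j)})=(1\pm\beta)\beta_i^2 d$ independent indicator variables (one per neighbouring edge), each with mean $(1\pm 2\beta)/(\beta_i^2 k)$, so this contribution has mean $(1\pm 5\beta)d/k\cdot$ -- no. Let me recompute: $(1\pm\beta)\beta_i^2 d \cdot (1\pm 2\beta)/(\beta_i^2 k) = (1\pm 4\beta)d/k = (1\pm 4\beta)d'$. That would make the total over all $i$ equal to $|\{i\in I_v: v\in R_i^{(j)}\}|\cdot d'$, which is wrong. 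The resolution is that we should sum the \emph{means} and then observe $\sum_i (1\pm\beta)\beta_i^2 d\cdot(1\pm 2\beta)/(\beta_i^2 k)$ is the wrong normalisation -- actually the right statement is that $d_{H_i}(v)=(1\pm 3\gamma)\beta_i d$, so $\sum_{i\in I_v}\beta_i$ can be up to $1$, meaning $\sum_i \beta_i^2 d/(\beta_i^2 k)=|I_v|d/k$ is indeed potentially large. The actual fix (which I'd need to locate in the paper's setup) must be that each vertex $v$ lies in $R_i^{(j)}$ for a total of $\sum_{i\in I_v}\lfloor\beta_i k\rfloor \approx k$ values of $j$ counted with multiplicity across $i$, so for a \emph{typical} $j$ only the right ``amount'' of mass is present; summing over $j\in[k]$, $\sum_j d_{G_j'}(v)=\sum_i d_{H_i'}(v)\approx d_G(v)\le d$, giving average $d/k=d'$. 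Thus the honest approach: show $\mathbb{E}[d_{G_j'}(v)]=(1\pm O(\beta))d'$ using that $v\in R_i^{(j)}$ for exactly $\lfloor\beta_i k\rfloor$ values of $j$ and $d_{H_i'}(v,R_i^{(j)})=(1\pm\beta)\beta_i^2 d$, so $\sum_j d_{G_j'}(v) = \sum_i \sum_{j: v\in R_i^{(j)}} \frac{d_{H_i'}(v,R_i^{(j)})}{(1\pm\beta)\beta_i^2 k} \approx \sum_i \lfloor\beta_i k\rfloor\cdot\frac{\beta_i^2 d}{\beta_i^2 k}\approx \sum_i\beta_i d \approx d$, and by symmetry over $j$ (or by a direct per-$j$ computation using that $\lfloor\beta_i k\rfloor/k=(1\pm\gamma)\beta_i$ and the survival probability $(1\pm 2\beta)/(\beta_i^2 k)$) each $\mathbb{E}[d_{G_j'}(v)]=(1\pm 3\beta)d'$.

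Then I would apply concentration: for fixed $v$ and $j$, $d_{G_j'}(v)$ is a sum of at most $d$ independent indicator variables (over the neighbouring edges of $v$ in $\bigcup_i H_i'$ with $v\in R_i^{(j)}$), so by Chernoff's bound (Lemma~\ref{chernoff}), using $\mathbb{E}[d_{G_j'}(v)]=(1\pm 3\beta)d'\geq (1\pm 3\beta)\mu d/k \cdot$ -- at any rate $\mathbb{E}[d_{G_j'}(v)]\geq \beta d/(2k)$ which is polynomial in $d$ -- we get $\mathbb{P}(|d_{G_j'}(v)-\mathbb{E}[d_{G_j'}(v)]|>\beta\mathbb{E}[d_{G_j'}(v)])\leq e^{-\sqrt d}$. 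Define the bad event $B_{v,j}$ accordingly for each $v\in V(G)$ and $j\in[k]$; since $B_{v,j}$ depends only on the edge-assignments $f(e)$ for edges $e$ incident to $v$, it is mutually independent of all $B_{w,j'}$ with $w$ at distance $\geq 2$ from $v$, so each bad event is independent of all but at most $(d+1)\cdot k\leq d^2$ others. By the local lemma (Lemma~\ref{Lemma_local_symmetric}), with positive probability no $B_{v,j}$ holds, and then every $d_{G_j'}(v)=(1\pm 3\beta)(1\pm\beta)d'\subseteq (1\pm 4\beta)d'$, so each $G_j'$ is $(1\pm 4\beta)d'$-regular.

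The main obstacle is bookkeeping the normalisation correctly: one must verify that the per-$(i,j)$ survival probability is $(1\pm O(\beta))/(\beta_i^2 k)$ -- which hinges precisely on the definition of $H_i'$ as those edges lying in $(1\pm\beta)\beta_i^2 k$ of the sets $R_i^{(j)}$ -- and that summing $d_{H_i'}(v,R_i^{(j)})\cdot (\text{survival prob})$ over $i\in I_v$ with $v\in R_i^{(j)}$ telescopes to $(1\pm O(\beta))d'$ rather than accumulating a factor of $|I_v|$. Once the expectation is pinned down, the concentration-plus-local-lemma step is routine and follows the template already used repeatedly in the paper.
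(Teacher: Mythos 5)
There is a genuine gap, and it sits exactly at the point you flagged but never resolved: the computation of $\mathbb{E}[d_{G_j'}(v)]$. You correctly observe that if $v$ could lie in $R_i^{(j)}$ for several $i\in I_v$ simultaneously, then summing the per-$i$ contributions $(1\pm 4\beta)d'$ would accumulate a factor of $|\{i: v\in R_i^{(j)}\}|$ and the claim would be false. But your attempted fixes do not work: the averaging argument $\sum_{j}d_{G_j'}(v)\approx d$ only controls the \emph{average} over $j$, not each individual $j$, and the ``direct per-$j$ computation'' you invoke at the end is precisely the step that needs the missing ingredient. You then assert $\mathbb{E}[d_{G_j'}(v)]=(1\pm 3\beta)d'$ without justification.

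The missing ingredient is in the construction itself: for each fixed $j$, the sets $R_1^{(j)},\ldots,R_t^{(j)}$ are chosen to be \emph{disjoint} subsets of $V(G)$ (each vertex is allocated, for the $j$th subgraph, to at most one of the $H_i$ containing it). Hence every $v\in V(G_j')$ lies in exactly one $R_i^{(j)}$, and all edges of $G_j'$ at $v$ come from that single $H_i'$. With this, the per-$j$ expectation is exactly your very first computation, with no sum over $i$: the degree of $v$ in $H_i'[R_i^{(j)}]$ is $(1\pm\beta)\beta_i^2 d$ by the previous claim, each such edge is assigned to $G_j'$ with probability $1/|\{j':V(e)\subset R_i^{(j')}\}|=(1\pm 2\beta)/(\beta_i^2k)$ by the definition of $H_i'$, and so $\mathbb{E}[d_{G_j'}(v)]=(1\pm 3\beta)d/k=(1\pm 3\beta)d'$. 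This is what the paper does. Your concentration and local-lemma steps (Chernoff over the at most $d$ independent edge-assignments incident to $v$, with each assignment affecting at most $2k$ degree variables and each degree depending on at most $d$ assignments) are correct and match the paper, so once the expectation is fixed the rest of your argument goes through.
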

\begin{proof} Let $v\in V(G)$, $i\in I_v$ and $j\in [k]$ with $v\in R_i^{(j)}$. Recall that the degree of $v$ in $H_i'[R_i^{(j)}]$ is $(1\pm \beta)\beta_i^2d$, and each edge $e$ in $H_i'$ has $(1\pm \beta)\beta_i^2k$ indices $j'\in [k]$ with $V(e)\subset R_i^{(j')}$. Therefore, the expected degree of $v$ in $G_j'$ is at least $(1- \beta)\beta_i^2d/(1+\beta)\beta_i^2k$ and at most $(1+\beta)\beta_i^2d/(1-\beta)\beta_i^2k$, so that 
\[
\E(d_{G_j'}(v))=(1\pm 3\beta)d/k=(1\pm 3\beta)d'.
\]
Thus, as $1/d\ll \mu,\beta$ and $k=1/2\mu$, using Chernoff's bound, for each $v\in V(G)$, $j\in [k]$, we have with probability at most $e^{-\sqrt{d}}$ that if $v\in V(G_j')$ then $d_{G_j'}(v)\neq(1\pm 4\beta) d'$. For each edge $e$, which graph $G'_j$, $j\in [k]$, is in affects the degrees of at most $2d-1$ vertices in each of the $k$ graphs, and each degree is affected by the location of at most $d$ edges, and therefore an application of the local lemma implies the claim.
\claimproofend

Using this claim, we can thus partition $H'\subset G$ into edge-disjoint graphs $G_i$, $i\in [k]$, in $G$ such that, for each $i\in [k]$, $G_i'$ is $(1\pm 4\beta)d'$-regular. For each $i\in [k]$, by Lemma~\ref{lem:allverticesregularspanningnearregular} (with $\gamma=10\beta$ and $d=1-4\beta$) we can find some $G_i\subset G'_i$ and even $d_i$ with $(1-400\beta)(1-4\beta)d'\leq d_i\leq (1+5\beta)d'$ such that $|V(G_i)|\geq (1-400\beta)|V(G'_i)|$ is $(1\pm \eta)d_i$-regular (where we have used that $\eta=d^{-0.9}$ and $1/d\ll \beta,1/k$). Note that, 
\begin{align*}
\sum_{i\in [k]}|E(G'_i)\setminus E(G_i)|&\leq \sum_{i\in [k]}((1+4\beta)d'\cdot |V(G_i)\setminus V(G'_i)|+n\cdot ((1+4\beta)-(1-400\beta)(1-4\beta))d'\\
&\leq k\cdot (1+4\beta)d'\cdot 400\beta n+k\cdot n\cdot 410\beta d'\leq 1000\beta dn\leq \eps n/3,
\end{align*}
as $\beta\ll \eps$ and $d'=d/k$.
Thus, combined with the properties from Claim~\ref{clm:manysubgraphs} and Claim~\ref{clm:HHprimeclose}, we have that the graphs $G_1,\ldots,G_k$ cover all but at most $2\eps nd/3+\eps nd/6\leq \eps nd$ edges of $G$, so that, as $\mu\ll \beta$, they satisfy the conditions of the lemma as
\[
\sum_{i\in [k]}d_i\leq (1+5\beta)\sum_{i\in [k]}d'=(1+5\beta)d\leq (1+\eps)d.\qedhere
\]
\end{proof}

%%%%%%%%%%%%%%%%%%%%%%%%%%%%%%%%%%%%%%%%%%%%%%%%%%%%%%%%%%%%%%%%%%%%%%%%%%%%%%%%%%%%%%%%%%%%%%%%%%%%%%%%%%%%%%%%%%%%%%%%%%%%%%%%%%%%
%%%%%%%%%%%%%%%%%%%%%%%%%%%%%%%%%%%%%%%%%%%%%%%%%%%%%%%%%%%%%%%%%%%%%%%%%%%%%%%%%%%%%%%%%%%%%%%%%%%%%%%%%%%%%%%%%%%%%%%%%%%%%%%%%%%%
%%%%%%%%%%%%%%%%%%%%%%%%%%%%%%%%%%%%%%%%%%%%%%%%%%%%%%%%%%%%%%%%%%%%%%%%%%%%%%%%%%%%%%%%%%%%%%%%%%%%%%%%%%%%%%%%%%%%%%%%%%%%%%%%%%%%
%%%%%%%%%%%%%%%%%%%%%%%%%%%%%%%%%%%%%%%%%%%%%%%%%%%%%%%%%%%%%%%%%%%%%%%%%%%%%%%%%%%%%%%%%%%%%%%%%%%%%%%%%%%%%%%%%%%%%%%%%%%%%%%%%%%%

\subsection{Proof of Lemma~\ref{Lemma_decomposition_path_forests}}\label{sec:prooflemmadecomppathforest}

Given Lemmas~\ref{Lemma_decomposition_path_forests_regular},~\ref{Lemma_all_first_connecting_paths_newnew} and~\ref{lem_regularise_nearly_spanning_defective_vertices} it is now short work to prove Lemma~\ref{Lemma_decomposition_path_forests}.
\begin{proof}[Proof of Lemma~\ref{Lemma_decomposition_path_forests}] Recall that we have $1/d\ll \gamma \ll \eps\leq 1$ and $G$ is an $n$-vertex graph with maximum degree at most $d$ in which all but at most $\gamma n$ vertices have degree at least $(1-\gamma)d$. Let $p$ satisfy $\gamma\ll p\ll \eps$. 
Let $Y\subset V(G)$ be a random set chosen by including each vertex independently at random with probability $p$. Using the local lemma, we can assume that, for every $v\in V(G)$ with $d_G(v)\geq (1-\gamma)d$, we have $d_G(v,Y)=(1\pm 2\gamma)pd$, and that $|Y|\leq (1+\gamma)pn$. 

Let $X$ be the set of vertices $v\in V(G)\setminus Y$ with $d_G(v,Y)=(1\pm 2\gamma)pd$, so that $|X|\geq n-(1+\gamma)pn-\gamma n$, and let $G'=G[X]$. Note that
\[
|E(G)\setminus E(G')|\leq ((1+\gamma)pn+\gamma n)\cdot d\leq \eps nd/4.
\]
Let $\eta=d^{-0.9}$. By Lemma~\ref{lem_regularise_nearly_spanning_defective_vertices}, there is some $k\in \mathbb{N}$ such that there are edge-disjoint subgraphs $G_1,\ldots,G_k$ in $G'$ and some even $d_1,\ldots,d_k\geq \gamma d$ such that all but at most $\eps nd/4$ edges of $G'$ are in $\bigcup_{i\in [k]}G_i$, $\sum_{i\in [k]}d_i\leq (1+\eps/10)d$ and, for each $i\in [k]$, $G_i$ is $(1\pm \eta)d_i$-regular. Note that $k\leq 2/\gamma$.
For each $i\in [k]$, using Lemma~\ref{Lemma_decomposition_path_forests_regular}, find in $G_i$ an $(n/d_i^{1/8},d_i^{7/8},d_i^{7/8})$-bounded edge-disjoint collection $\mathcal{P}^i_{1},\ldots, \mathcal{P}^i_{d_i/2}$ of path forests which cover all but at most $\eps nd_i/3$ edges of $G$. If $(\sum_{i\in [k]}d_i)/2< d/2$, then relabel these path forests as $\mathcal{P}_j$, $1\leq j\leq (\sum_{i\in [k]}d_i)/2$ and, for each $(\sum_{i\in [k]}d_i)/2\leq i\leq d/2$, let $\mathcal{P}_i$ be an empty path forest. If $(\sum_{i\in [k]}d_i)/2\geq d/2$, then let $\mathcal{P}_j$, $j\in [d/2]$, be path forests from $\mathcal{P}_j^i$, $j\in [d_i/2]$ and $i\in [k]$, which maximise the total number of edges in $\mathcal{P}_j$, $j\in [d/2]$. Note that, as $\sum_{i\in [k]}d_i\leq (1+\eps/10)d$, these path forests contain all but at most $\eps nd/4$ of the edges in $\mathcal{P}_j^i$, $j\in [d_i/2]$ and $i\in [k]$.

Note that, as $k\leq 2/\gamma$ and $1/d\ll \gamma$, the collection of path forests $\mathcal{P}_i$, $i\in [d/2]$, is $(n/d^{1/9},d^{8/9},d^{8/9})$-bounded, and contains all but at most $\eps n\cdot\sum_{i\in [t]}d_i/4\leq \eps nd/4$ of the edges of $\mathcal{P}_j^i$, $j\in [d_i/2]$ and $i\in [k]$.
Then, by Lemma~\ref{Lemma_all_first_connecting_paths_newnew}, there is a $((1+\eps)n/d,d^{1/4},d^{1/4})$-bounded collection of edge-disjoint path forests in $G'$ which covers all of the edges of $\mathcal{P}_i$, $i\in [d/2]$. Thus, this collection of paths forests  covers all but at most $\eps nd/4$ of the edges of $\mathcal{P}_j^i$, $j\in [d_i/2]$ and $i\in [k]$, and hence all but at most $\eps nd/2$ of the edges of $\bigcup_{i\in [t]}G_i$, and hence all but at most  $3\eps nd/4$ of the edges of $G'$, and hence all but at most $\eps nd$ of the edges of $G$, as required.
\end{proof}

%%%%%%%%%%%%%%%%%%%%%%%%%%%%%%%%%%%%%%%%%%%%%%%%%%%%%%%%%%%%%%%%%%%%%%%%%%%%%%%%%%%%%%%%%%%%%%%%%%%%%%%%%%%%%%%%%%%%%%%%%%%%%%%%%%%%%%%%%%%%%%%%
\section{Connecting to the dense bits}\label{sec:connecttodensebits}

In this section we prove Lemma~\ref{Lemma_X_connects}. In Section~\ref{sec:goodsamples}, we show that there will exist a `good sample' $X$ which samples certain important dense spots appropriately, before using this to connect paths to a maximal set of dense spots through $X$ in Section~\ref{sec:lemmaXproof}.

\subsection{Sampling of dense spots}\label{sec:goodsamples}
In order to facilitate our use of the local lemma, we need some control over the number of dense spots that we consider for our `sampling'. We do this using the following definition.

\begin{definition}
Say a $(\mu,d,K)$-dense graph $H\subset G[X]$ is \textbf{$(k,\eta,d)$-approximable in $G$ by neighbourhoods in $X$} if $G$ contains a set $V$ of at most $k$ vertices, which are pairwise a distance at most $k$ apart in $G$, and such that $|V(H)\triangle \bigcup_{v\in V}N_G(v,X)|\leq \eta d$.
\end{definition}

We now formalise the notion of a good sample. The following definition will be used with $\mathcal{V}$ as the set of vertex sets of the dense spots in the maximal collection $\mathcal{F}$ (see Section~\ref{sec:proofsketch}). The set $X$ should be thought of as a random set (with properties chosen to exist together with strictly positive probability by the local lemma) with vertex density $p$ in $V(G)$, which then has approximately the expected size, as well as approximately the expected intersection with each of the sets in $V\in \mathcal{V}$, and, moreover, similar properties with respect to the neighbourhoods of each vertex in all these subsets (see \ref{good:vertexdegree} and \ref{good:sizeinsubsets}). Finally, and most importantly, if there is an (appropriately approximable) dense spot in $G[X]$, then there must be a larger dense spot $H$ in $G$ such that if $H$ intersects on quite a few vertices in $V\in \mathcal{V}$ then the original smaller dense spot must also intersect that $V$ (this will be used to join the smaller dense spot to one of the dense spots in $\mathcal{F}$ in our application).

\begin{definition}\label{defn:goodsample}
A set $X\subset V(G)$ is a $(d,\gamma,k, \eta, K, p,\mathcal{V})$-\textbf{good-sample in $G$}, where $\mathcal{V}$ is a set of subsets of $V(G)$, if the following hold.
\stepcounter{propcounter}
\begin{enumerate}[label = {{\emph{\textbf{\Alph{propcounter}\arabic{enumi}}}}}]
    %\item\label{sizeofX} $|X|\leq 3 \eps n$.
    \item\labelinthm{good:vertexdegree} $|X|=(1\pm \gamma)p|V(G)|$ and, for any $v\in V(G)$ we have that $d_G(v,X)= (1\pm \gamma)pd$.

    \item\labelinthm{good:sizeinsubsets} For each $V\in \mathcal{V}$, $|V\cap X|=(1\pm \gamma)p|V|$, and, for any $v\in V(G)$ we have that $d_G(v,V\cap X)= p\cdot d_G(v,V)\pm \gamma pd$.
    \item\labelinthm{good:denseinheritance} For any $H\subset G[X]$ which is $(\eta,pd,K)$-dense and $(k,\eta,pd)$-approximable in $G$ by neighbourhoods in $X$, there exists some $H'\subset G$ which is $(6\eta,d,2K)$-dense such that, for each $V\in \mathcal{V}$, if $|V\cap V(H')|\geq d/2$, then $V\cap V(H)\neq \emptyset$.

\end{enumerate}
\end{definition}

Given these definitions, using the local lemma (as discussed in Section~\ref{sec:local}), it is relatively straightforward to show the existence of the good samples that we will need, as follows.

\begin{lemma}\label{Lemma_good_sample_exists}
Let $1/d\ll \gamma,1/k,\eta,1/K,p$.
    Let $G$ be a $d$-regular graph and let $\mathcal{V}$ be a set of vertex-disjoint subsets of $V(G)$ which each have size at least $d/2$ and at most $2Kd$. Then, there is a  $(d,\gamma,k, \eta, K, p,\mathcal{V})$-{good-sample} $X$ in $G$.
\end{lemma}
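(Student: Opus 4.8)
The plan is to pick $X$ at random by putting each vertex of $G$ into $X$ independently with probability $p$, and then to verify the three properties \ref{good:vertexdegree}--\ref{good:denseinheritance} via the local lemma, exactly in the style described in Section~\ref{sec:local}. First I would handle \ref{good:vertexdegree}: the size condition $|X| = (1\pm\gamma)p|V(G)|$ is dealt with as in Section~\ref{sec:local} by fixing an arbitrary partition of $V(G)$ into parts of size between $d/2$ and $d$ and requiring $X$ to hit each part within a $(1\pm\gamma)$ factor of its expectation, while the condition $d_G(v,X) = (1\pm\gamma)pd$ for each $v\in V(G)$ is a single bad event per vertex, each depending only on the $\leq d$ vertices of $N_G(v)$, with failure probability $\leq 2e^{-\gamma^2 pd/3}\leq e^{-d^{0.1}/100}$ by Chernoff (Lemma~\ref{chernoff}) since $1/d\ll\gamma,p$. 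Property \ref{good:sizeinsubsets} is entirely analogous: for each $V\in\mathcal V$ the event $|V\cap X|\neq (1\pm\gamma)p|V|$ depends on the $\leq 2Kd$ vertices of $V$, and for each $v\in V(G)$ and each $V\in\mathcal V$ the event $d_G(v,V\cap X)\neq p\,d_G(v,V)\pm\gamma pd$ depends only on the vertices of $N_G(v)\cap V$; again Chernoff gives failure probability $\leq e^{-d^{0.1}/100}$ (note $\gamma pd$ is an additive error, so even if $d_G(v,V)$ is small the bound holds, using that the relevant binomial has mean $p\,d_G(v,V)\leq pd$ and we can apply Chernoff with the multiplicative-error form scaled appropriately, or directly a Chernoff tail for additive deviation $\gamma pd$).

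The substantive property is \ref{good:denseinheritance}, and this is where I expect the main difficulty. The key observation is that the quantifier ``for any $H\subset G[X]$ which is $(\eta,pd,K)$-dense and $(k,\eta,pd)$-approximable in $G$ by neighbourhoods in $X$'' can be replaced, for the purpose of setting up bad events, by a quantifier over the finite combinatorial data that \emph{approximates} $H$: namely a set $V\subseteq V(G)$ of at most $k$ vertices that are pairwise at distance at most $k$ in $G$ (call the collection of all such sets $\mathcal S$, as in the last paragraph of Section~\ref{sec:local}). Given such a $V\in\mathcal S$, consider the ``candidate'' vertex set $W_V := \bigcup_{v\in V}N_G(v,X)$; any $H$ approximable by $V$ satisfies $|V(H)\triangle W_V|\leq \eta pd$. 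I would then define $H'_V := G[\bigcup_{v\in V}N_G(v)]$, a subgraph of $G$ not depending on $X$; since $|V|\leq k$ and $1/d\ll 1/k$, we have $|V(H'_V)|\leq kd\leq 2Kd$ and one checks that if $H$ is $(\eta,pd,K)$-dense with $|V(H)|\leq Kpd$, then $|W_V|\geq (1-2\eta)|V(H)|$ is not much smaller than $|V(H)|$, and using \ref{good:vertexdegree}/\ref{good:sizeinsubsets}-type control on how $N_G(v)$ meets $X$ one deduces that $H'_V$ contains $V(H)$ up to a small discrepancy and has minimum degree at least $(1-6\eta)d$ on the relevant part — so $H'_V$ (or a suitable induced subgraph of it) is $(6\eta,d,2K)$-dense. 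This is the part to be done carefully: relating the minimum degree of $H$ \emph{inside $X$} (at least $(1-\eta)pd$) to a minimum degree \emph{in $G$} (at least $(1-6\eta)d$), using that for a vertex $u\in V(H)$, its $X$-degree inside $V(H)$ is $(1\pm\gamma)p\cdot d_G(u, V(H)\text{-region of }G)$ up to additive $\gamma pd$ by \ref{good:sizeinsubsets}-style reasoning applied with $V$ taken to be $V(H'_V)$ — so one actually wants these additive-error degree estimates to hold for the sets $V(H'_V)$, $V\in\mathcal S$, which is why I would \emph{add} $\{V(H'_V):V\in\mathcal S\}$ to the family $\mathcal V$ when invoking the Chernoff/local-lemma bound for \ref{good:sizeinsubsets} (the number of such sets is polynomial in $d$ for fixed $k$, which is harmless).

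Finally, for the ``inheritance'' clause of \ref{good:denseinheritance}, fix $V\in\mathcal S$ and $V_0\in\mathcal V$ with $|V_0\cap V(H'_V)|\geq d/2$; I must ensure $V_0\cap V(H)\neq\emptyset$. For this I set up, for each pair $(V,V_0)\in\mathcal S\times\mathcal V$ with $|V_0\cap V(H'_V)|\geq d/2$ (a condition not involving $X$), the bad event $B_{V,V_0}$ that $V_0\cap W_V = \emptyset$, i.e.\ that $X$ misses the set $N_G(V_0\text{-part}) := \bigcup_{v\in V}N_G(v)\cap V_0$, which has size $\geq d/2$; this event has probability $(1-p)^{|N_G(V_0\text{-part})|}\leq (1-p)^{d/2}\leq e^{-pd/2}\leq e^{-d^{0.1}/100}$, and it depends only on whether the $\leq kd\leq 2Kd$ vertices of $\bigcup_{v\in V}N_G(v)$ lie in $X$. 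When no $B_{V,V_0}$ holds, then for the actual $H$ approximable by $V$: since $|V(H)\triangle W_V|\leq \eta pd$ and $|V_0\cap W_V|\geq 1$ is not quite enough, I would instead take the bad event to be ``$|V_0\cap W_V| \leq \eta pd$'', which still has probability $\leq e^{-d^{0.1}/100}$ (mean $\geq p\cdot d/2 \gg \eta pd$), and then $|V(H)\triangle W_V|\leq\eta pd < |V_0\cap W_V|$ forces $V_0\cap V(H)\neq\emptyset$, as required. For the local lemma bookkeeping I note, as in the last paragraph of Section~\ref{sec:local}, that each set $U\in\mathcal S$ is contained in at most $\mathrm{poly}(d)$ bad-event indices, each bad event depends on at most $\leq 2Kd$ vertices, and each vertex $w$ affects at most $\mathrm{poly}(d)$ bad events (here ``$\mathrm{poly}$'' has exponent a function of $k$ and $K$), so with all failure probabilities $\leq e^{-d^{0.1}/100}$ and $1/d\ll 1/k,\eta,1/K,p$ the condition $ep(\Delta+1)\leq 1$ of Lemma~\ref{Lemma_local_symmetric} holds, giving an $X$ avoiding all bad events, which is then the desired good-sample.
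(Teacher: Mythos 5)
Your overall strategy is the same as the paper's: choose $X$ by independent $p$-sampling, reduce the quantifier over dense $H\subset G[X]$ to a quantifier over the finite family $\mathcal S$ of approximating sets, work with $Y_U=\bigcup_{u\in U}N_G(u)$, and control everything by Chernoff plus the local lemma. Your treatment of \ref{good:vertexdegree}, \ref{good:sizeinsubsets} and of the inheritance clause (the events forcing $|V_0\cap Y_U\cap X|$ to exceed $\eta pd$ whenever $|V_0\cap Y_U|\geq d/2$) matches the paper's events $B_v,B_V,B_i,B_{V,v},B_{U,V}$ and is fine.

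There is, however, a genuine gap at the crux of \ref{good:denseinheritance}: the construction of the $(6\eta,d,2K)$-dense graph $H'$. Your candidate $H'_V=G[\bigcup_{v\in V}N_G(v)]=G[Y_U]$ does not work as stated, for two reasons. First, nothing controls the degree in $G[Y_U]$ of a vertex of $Y_U$ that lies outside $X$ (and outside $V(H)$); the density of $H$ only transfers, via the degree-comparison event, to the vertices of $V(H)\cap Y_U$, which is roughly a $p$-fraction of $Y_U$. Second, your size bound $|Y_U|\leq kd\leq 2Kd$ is not available: the hierarchy does not make $k\leq 2K$ (indeed in the application $k\gg K$); the correct bound $|Y_U|\leq \tfrac1p(|Y_U\cap X|+\eta pd)\leq\tfrac1p(|V(H)|+2\eta pd)\leq 2Kd$ comes from concentration of $|Y_U\cap X|$ together with $|V(H)|\leq Kpd$. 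The paper repairs the first point by defining $Z_U$ to be the set of vertices of $Y_U$ with at least $(1-3\eta)d$ neighbours in $G[Y_U]$ and taking $H'=G[Z_U]$: the degree-transfer event shows $V(H)\cap Y_U\subseteq Z_U$, and then the chain $p|Z_U|+\eta pd\geq |Z_U\cap X|\geq |Y_U\cap X|-\eta pd\geq p|Y_U|-2\eta pd$ gives $|Y_U\setminus Z_U|\leq 3\eta d$, whence $\delta(G[Z_U])\geq(1-6\eta)d$. Note that this chain requires an \emph{upper} concentration bad event for $|Z_U\cap X|$, where $Z_U$ is a deterministic subset of $Y_U$ that is not among the sets you propose to add to $\mathcal V$; your plan would need to include these events (or the sets $Z_U$, $U\in\mathcal S$) explicitly. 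Your hedge ``or a suitable induced subgraph of it'' points in the right direction but omits both the identification of that subgraph and the argument that it contains all but $O(\eta d)$ vertices of $Y_U$, which is where the real work lies.
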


\begin{proof} Let $t=\lceil|V(G)|/d\rceil$, and let $A_{i}$, $i\in [t]$, be a partition of $V(G)$ into sets which each have size between $d/2$ and $d$.
Let $X\subset V(G)$ be chosen by including each vertex independently at random with probability $p$.
For each $v\in V(G)$, let $B_v$ be the event that $d_G(v,X)\neq (1\pm \gamma)pd$.
For each $V\in\mathcal{V}$, let $B_{V}$ be the event that $|{V}\cap X|\neq (1\pm \gamma)p|{V}|$.
For each $i\in [t]$, let $B_{i}$ be the event that $|A_{i}\cap X|\neq (1\pm \gamma)p|A_{i}|$. For each $V\in \mathcal{V}$ and $v\in V(G)$, let $B_{V,v}$ be the event that $d_G(v,V\cap X)\neq p\cdot d_G(v,V)\pm \gamma pd$.

Let $\mathcal{S}$ consist of all of the subsets of $V(G)$ of size at most $k$ such that these vertices are all pairwise at most $k$ apart in $G$. For each $U\in \mathcal{S}$, let $Y_U=\cup_{u\in U}N_G(u)$ and let $B_U$ be the event that the following do not all hold.
\stepcounter{propcounter}
\begin{enumerate}[label = {{{\textbf{\Alph{propcounter}\arabic{enumi}}}}}]
\item For each $v\in V(G)$, we have $d_G(v,Y_U\cap X)\leq p\cdot d_G(v,Y_U)+\eta pd$.\label{prop:BU1}
\item If $Z_U$ is the set of vertices in $Y_U$ with at least $(1-3\eta)d$ neighbours in $G[Y_U]$, then $|Z_U\cap X|\leq p|Z_U|+\eta p d$.\label{prop:BU2}% and $|Y_U|\geq \eta d$
\item $|Y_U\cap X|= p|Y_U|\pm \eta pd$.\label{prop:BU3}
\end{enumerate}
For each $U\in \mathcal{S}$ and $V\in \mathcal{V}$ with $Y_U\cap V\neq \emptyset$, let $B_{U,V}$ be the event that the following does not  hold.
\begin{enumerate}[label = {{{\textbf{\Alph{propcounter}\arabic{enumi}}}}}]\addtocounter{enumi}{3}
\item $|(Y_U\cap V)\cap X|\geq p|Y_U\cap V|-\eta pd$.\label{prop:BUV}
\end{enumerate}
Using the local lemma, we can assume that none of the events we have defined holds (see Section~\ref{sec:local}). 

As no event $B_i$, $i\in [t]$, or $B_v$, $v\in V(G)$, holds, we have that \ref{good:vertexdegree} holds. As no event $B_{V}$, $V\in \mathcal{V}$, or $B_{V,v}$, $V\in \mathcal{V}$ and $v\in V(G)$, holds, we have that \ref{good:sizeinsubsets} holds. 
It is left then only to show that \ref{good:denseinheritance} holds.

For this, suppose we have $H\subset G[X]$ which is $(\eta,pd,K)$-dense and $(k,\eta,pd)$-approximable in $G$ by neighbourhoods in $X$. By the approximability of $H$, there exists some $U\in \mathcal{S}$ such that $|V(H)\triangle(Y_U\cap X)|\leq \eta pd$. Note that every vertex in $V(H)\cap Y_U$ has at least $(1-2\eta)pd$ neighbours in $Y_U\cap X$ as $H\subset G[X]$ is $(\eta,pd,K)$-dense, and, by \ref{prop:BU1}, if a vertex $v\in V(G)$ has at least $(1-2\eta)pd$ neighbours in $Y_U\cap X$, then it has at least $\frac{1}{p}(d_G(v,Y_U\cap X)-\eta pd)\geq (1-3\eta)d$ neighbours in $Y_U$ in $G$, and hence belongs to $Z_U$ (as defined in \ref{prop:BU2}). 
Therefore, $Z_U\cap X\subset Y_U\cap X$ has size at least $|V(H)\cap Y_U|\geq |Y_U\cap X|-\eta pd$ and hence 
\[
p|Z_U|+\eta pd\overset{\ref{prop:BU2}}{\geq} |Z_U\cap X|\geq |Y_U\cap X|-\eta pd\overset{\ref{prop:BU3}}{\geq} p|Y_U|-2\eta pd,
\]
so that $|Z_U|\geq |Y_U|-3\eta d$. Thus, setting $H':=G[Z_U]$, we have $\delta(H')\geq (1-6\eta)d$ by the definition of $Z_U$ in \ref{prop:BU2}. Furthermore,
\[
|Z_U|\leq |Y_U|\overset{\ref{prop:BU3}}{\leq} \frac{1}{p}(|Y_U\cap X|+\eta pd)\leq \frac{1}{p}(|V(H)|+2\eta pd)\leq \frac{1}{p}(Kpd+2\eta pd)\leq 2Kd,
\]
and therefore $H'$ is $(6\eta,d,2K)$-dense.

To show the last part of \ref{good:denseinheritance} holds for $H'=G[Z_U]$, suppose that $V(H')$ intersects with at least $d/2$ vertices in $V$, for some $V\in \mathcal{V}$. Then, as $Y_U$ contains $Z_U$ as so also interests with at least $d/2$ vertices in $V$, and as the event $B_{U,V}$ does not hold, we have $|(Y_U\cap V)\cap X|\geq pd/4$. As $|(Y_U\cap X)\setminus V(H)|\leq \eta pd$, we have by \ref{prop:BUV} that $V\cap V(H)\neq \emptyset$.
Thus, \ref{good:denseinheritance} holds, and therefore $X$ is a $(d,\gamma,k, \eta, K, p,\mathcal{V})$-{good-sample} in $G$, as required. 
\end{proof}

%%%%%%%%%%%%%%%%%%%%%%%%%%%%%%%%%%%%%%%%%%%%%%%%%%%%%%%%%%%%%%%%%%%%%%%%%%%%%%%%%%%%%%%%%%%%%%%%%%%%%%%%%%%%%%%%%%%%%%%%%%%%%%%%%%%%
%%%%%%%%%%%%%%%%%%%%%%%%%%%%%%%%%%%%%%%%%%%%%%%%%%%%%%%%%%%%%%%%%%%%%%%%%%%%%%%%%%%%%%%%%%%%%%%%%%%%%%%%%%%%%%%%%%%%%%%%%%%%%%%%%%%%
%%%%%%%%%%%%%%%%%%%%%%%%%%%%%%%%%%%%%%%%%%%%%%%%%%%%%%%%%%%%%%%%%%%%%%%%%%%%%%%%%%%%%%%%%%%%%%%%%%%%%%%%%%%%%%%%%%%%%%%%%%%%%%%%%%%%
%%%%%%%%%%%%%%%%%%%%%%%%%%%%%%%%%%%%%%%%%%%%%%%%%%%%%%%%%%%%%%%%%%%%%%%%%%%%%%%%%%%%%%%%%%%%%%%%%%%%%%%%%%%%%%%%%%%%%%%%%%%%%%%%%%%%

\subsection{Proof of Lemma~\ref{Lemma_X_connects}}\label{sec:lemmaXproof}
%For convenience, we restate Lemma~\ref{Lemma_X_connects}.
%\lemmaconnecttodense*

We come now to perhaps the most important part of our proof. Using a good sample, say $X$, provided by Lemma~\ref{Lemma_good_sample_exists}, we show that $X$ has the property required in Lemma~\ref{Lemma_X_connects}. That is, given a bounded edge-disjoint collection of path-forests $\mathcal{P}_1, \ldots, \mathcal{P}_{d/2}$ in $G - V(\mathcal{F})-X$ (as in the set up of Lemma~\ref{Lemma_X_connects}), we join lots of these paths together in an edge-disjoint fashion before joining all but at most a few of the resulting paths into the dense spots, all in some relatively well spread manner (that is, \ref{propX:twob} and \ref{propX:twoc} will hold). To do this, we join as many of the paths together in each path forest using vertices in $X$ (using paths with length at most $2k$ where $1/d\ll 1/k\ll \eta,p$, which together form the path forests denoted in the proof by $\mathcal{Q}_i$, $i\in [d/2]$), and then try to join as many of the paths into the dense spots in a well spread fashion using vertices in $X$ (using paths with length at most $k$, which together form the path forests denoted by $\mathcal{R}_i$, $i\in [d/2]$). We then show (for Claim~\ref{clm:actuallyallpathsgood}) that all of the $d/2$ resulting path forests will not have that many paths which do not end in some dense spot. 

We will show this by contradiction: If we can pick many endvertices of some $\mathcal{P}_i$ that have not had a path joined to them in $\mathcal{Q}_i$ or $\mathcal{R}_i$, then, looking at the iterative neighbourhoods in $X$ from these vertices, if they intersect within distance $k$ we will have a path we could perhaps add to $\mathcal{Q}_i$ (if the path uses no vertices in $\mathcal{Q}_i$). This will imply that not many of these iterating neighbourhoods can grow very large, and thus allow us to connect one of the path ends to an approximable dense spot in $G[X]$, and, hence, to one of the dense spots in the maximal collection $\mathcal{F}$, using \ref{good:denseinheritance}. In order to get a contradiction, we will need these iterating neighbourhoods to avoid vertices in $\mathcal{Q}_i$ and $\mathcal{R}_i$, the edges of all the connecting paths we have found, and some further vertices in order to ensure the final paths will be well spread (for \ref{propX:twob} and \ref{propX:twoc}) -- all of these vertices we collect into the set $Z_i$ in the proof of Claim~\ref{clm:actuallyallpathsgood} (see~\ref{Zreason1}--\ref{Zreason3}) and all of these edges we delete from $G$ to get $G'$. We cannot find a subgraph of $G'[X\setminus Z_i]$ with all its degrees close to $pd$ (as we would need for a simple iterating neighbourhood argument to work), but what we can do is find a large subset $X_k$ of almost all of $X\setminus Z_i$ such that we can iteratively find many neighbours in $G'$ in $X$ from vertices in $X_k$ as long as we only iterate up to $k$ times (by finding neighbours in subsequent sets $X_{k-1}\subset \ldots\subset X_1\subset X\setminus Z_i$ that we found in reverse order, showing they are large by induction, as at \eqref{eqn:induct}). Then, if there are too many unconnected endvertices of some $\mathcal{P}_i$, from at least one such endpoint we can do this iteration without growing in total very much (as intersecting iterated neighbourhoods would provide a short path connect two more original endpoints together).

Not only will this allow us to connect one more of the path ends in $\mathcal{P}_i$ to a dense spot in $G'[X\setminus Z_i]$, but, in this iteration we can easily find a sequence of vertices which are at distance at most $2k$ apart pairwise in $G$ whose neighbourhoods in $X$ together approximate the dense spot we find in $G'[X\setminus Z_i]$. This allows us to use \ref{good:denseinheritance} to connect the endpoint of $\mathcal{P}_i$ into a dense spot in $\mathcal{F}$, gaining the contradiction that will conclude the proof of the key claim, Claim~\ref{clm:actuallyallpathsgood}, from which we easily deduce Lemma~\ref{Lemma_X_connects}.

\begin{proof}[Proof of Lemma~\ref{Lemma_X_connects}] %Note that we can assume that $\gamma\ll p,\eta$, as this only strengthens the conclusion of the lemma.
Let $\mathcal{V}=\{V(G_i):i\in [t]\}$ and take $\gamma$ and  $k$ such that $1/d\ll \gamma\ll 1/k\ll p,\eta$.
Using Lemma~\ref{Lemma_good_sample_exists}, let $X\subset V(G)$ be a $(d,\gamma/4,k, \eta/6, K/2, p,\mathcal{V})$-{good-sample} in $G$. That is, the following hold.
\stepcounter{propcounter}
\begin{enumerate}[label = {{{\textbf{\Alph{propcounter}\arabic{enumi}}}}}]

    \item\label{gd:vertexdegree} $|X|=(1\pm \gamma/4)pn$ and, for any $v\in V(G)$ we have that $d_G(v,X)= (1\pm \gamma/4)pd$.
    \item\label{gd:sizeinsubsets} For each $i\in [t]$, $|V(G_i)\cap X|=(1\pm \gamma/4)p|V(G_i)|$, and, for any $v\in V(G)$ we have that $d_G(v,V(G_i)\cap X)= p\cdot d_G(v,V(G_i))\pm \gamma pd/4$.
    \item\label{gd:denseinheritance} For any $H\subset G[X]$ which is $(\eta/6,pd,K/2)$-dense and $(k,\eta/6,pd)$-approximable in $G$ by neighbourhoods in $X$,
    there exists some $H'\subset G$ which is $(\eta,d,K)$-dense such that if $H'$ intersects with some $G_{i}$, $i\in [t]$, in at least $d/2$ vertices
    then $V(G_i)\cap V(H)\neq \emptyset$.%$|V(G_i)\cap X|=(1\pm \gamma)p|V(G_i)|$.
\end{enumerate}

Due to the maximality of $\mathcal{F}=\{G_1,\ldots,G_t\}$, and as any two $(\eta,d,K)$-dense spots in $G$ that intersect on some vertex must intersect in at least $(1-2\eta)d\geq d/2$ vertices (as $\Delta(G)\leq d$), \ref{gd:denseinheritance} immediately implies the following.
\begin{enumerate}[label = {{{\textbf{\Alph{propcounter}\arabic{enumi}}}}}]\addtocounter{enumi}{3}
    \item\label{gd:denseinheritanceupdated} For any $H\subset G[X]$ which is $(\eta/6,pd,K/2)$-dense and $(k,\eta/6,pd)$-approximable in $G$ by neighbourhoods in $X$, there exists  some $i\in [t]$ with $V(G_i)\cap V(H)\neq \emptyset$.
    \end{enumerate}

We will show that $X$ satisfies the required property. Firstly, \ref{propX:minusone} follows from \ref{gd:vertexdegree}, and \ref{propX:zero} follows from \ref{gd:sizeinsubsets} and each $G_i$ being $(\eta, d, K)$-dense. Now, take any $(2n/d,d^{1/4},d^{1/4})$-bounded edge-disjoint collection of path-forests $\mathcal{P}_1, \ldots, \mathcal{P}_{d/2}$ in $G - V(\mathcal{F})-X$.
We will show that there exists in $G-(V(\mathcal{F})\setminus X)$ an edge-disjoint collection of path forests $\mathcal{P}'_1, \mathcal{P}'_2, \ldots, \mathcal{P}'_{d/2}$
such that \ref{propX:one}--\ref{propX:three} hold, completing the proof of the lemma.

Let $Y=V(G)\setminus (V(\mathcal{F})\cup X)$.
Let $\mathcal{Q}_1, \ldots, \mathcal{Q}_{d/2}$ be edge-disjoint path forests of paths in $G[X\cup Y]$ with length at most $2k$ and no internal vertices in  $Y$, such that, for each $i\in [d/2]$, all of the endvertices of $\mathcal{Q}_i$ are among the endvertices of $\mathcal{P}_i$, and $\mathcal{Q}_i+\mathcal{P}_i$ creates a path forest. Subject to this, maximise the total number of all of the paths in  $\mathcal{Q}_1, \ldots, \mathcal{Q}_{d/2}$.
Then, take edge-disjoint path forests $\mathcal{R}_1, \ldots, \mathcal{R}_{d/2}$ of paths in $G[Y\cup X]$ with length at most $k$, such that, for each $i\in [d/2]$, the paths in $\mathcal{R}_i$ each have one endvertex among the endvertices of $\mathcal{P}_i+\mathcal{Q}_i$ and  one endvertex in $X\cap V(\mathcal{F})$ and all of their internal vertices in $X\setminus V(\mathcal{F}\cup \mathcal{P}_i\cup \mathcal{Q}_i)$, and such that each vertex in $V(G)$ is an endvertex in total of at most $\sqrt{d}$ paths in $\mathcal{R}_i$, $i\in [d/2]$, and, for each $j\in [t]$ and $i\in [d/2]$, at most $\sqrt{d}$ of the paths in $\mathcal{R}_i$ end in $G_j$.
Subject to this, maximise the total number of all of the paths in  $\mathcal{R}_1, \ldots, \mathcal{R}_{d/2}$.
 For each $i\in [d/2]$, let $\mathcal{P}_i'$ be the set of paths in $\mathcal{P}_i+\mathcal{Q}_i+\mathcal{R}_i$ which have both endvertices in $V(\mathcal{F})$. By construction, we have that \ref{propX:one}, \ref{propX:twob} and \ref{propX:twoc} hold, so it is left only to prove \ref{propX:three}.

For each $i\in [d/2]$, let $\mathcal{P}_i''$ be the path forest of paths in $\mathcal{P}_i+\mathcal{Q}_i+\mathcal{R}_i$ which are not in $\mathcal{P}_i'$. We will show the following.

\begin{claim}\label{clm:actuallyallpathsgood} For each $i\in [d/2]$, $\mathcal{P}''_i$ contains at most $16n/Kd$ paths.
\end{claim}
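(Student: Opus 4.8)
### Proof proposal for Claim~\ref{clm:actuallyallpathsgood}

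\textbf{Overview of the strategy.} The plan is to argue by contradiction: if some $\mathcal{P}''_i$ contained more than $16n/Kd$ paths, then we would be able to extend either $\mathcal{Q}_i$ or $\mathcal{R}_i$, contradicting the maximality in the construction. Fix $i\in[d/2]$ and suppose $\mathcal{P}''_i$ has more than $16n/Kd$ paths. Each such path has at least one endvertex $v$ which is an endvertex of $\mathcal{P}_i+\mathcal{Q}_i$ not lying in $V(\mathcal{F})$ and to which no path of $\mathcal{R}_i$ has been attached (since otherwise the path would have an endpoint in $X\cap V(\mathcal{F})$, hence would lie in $\mathcal{P}'_i$); call such a $v$ a \emph{free end}. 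So there are more than $16n/Kd$ free ends, coming from distinct paths of $\mathcal{P}_i+\mathcal{Q}_i$.

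\textbf{Setting up the obstruction set $Z_i$ and the reduced graph $G'$.} First I would collect into a set $Z_i\subseteq V(G)$ all the vertices we must forbid our iterated neighbourhoods from meeting, namely: \textbf{(Z1)} all vertices used by paths of $\mathcal{Q}_i$ and $\mathcal{R}_i$ (these are few, since by $(2n/d,d^{1/4},d^{1/4})$-boundedness there are $O(n/d)$ paths in $\mathcal{P}_i$, each connecting path has length $\le 2k$, and $1/d\ll 1/k$, so $|V(\mathcal{Q}_i)\cup V(\mathcal{R}_i)|=O(kn/d)$); \textbf{(Z2)} the vertices of $X$ which already serve as endvertices of $\sqrt d$ paths in $\bigcup_j\mathcal{R}_j$, or which lie in some $G_j$ already hosting $\sqrt d$ ends of $\mathcal{R}_i$ — a short averaging argument (as in~\eqref{eqn:Zuppbound}) bounds the number of such vertices by $O(n/\sqrt d)$ plus $O(\sqrt d\cdot t)\le O(\sqrt d\cdot n/d)$ since $t\le n/(d/2)$; \textbf{(Z3)} a vertex from each \emph{other} free end and its immediate $G$-neighbourhood in $X$, so that if an iterated neighbourhood from one free end reaches the $X$-neighbourhood of another we could splice a short path into $\mathcal{Q}_i$. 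Let $G'=G-E(\mathcal{Q}_i)-E(\mathcal{R}_i)$ (deleting only the edges used in the connecting paths). In $G'$, every vertex of $Y\cup X$ still has at least $(1-2\gamma)pd$ neighbours in $X$, since each vertex of $G-V(\mathcal{F})-X$ lost only $O(\sqrt d)=O(d^{1/4})$ edges to $\mathcal{Q}_i,\mathcal{R}_i$ by boundedness.

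\textbf{The iterated-neighbourhood argument.} Since $G'[X\setminus Z_i]$ need not be near-regular, I would not iterate naively. Instead, following the sketch, I would build nested sets $X\setminus Z_i\supseteq X_1\supseteq\cdots\supseteq X_k$ \emph{in reverse order}: $X_1$ is the set of $v\in X\setminus Z_i$ with many $G'$-neighbours in $X\setminus Z_i$; $X_{j+1}$ the set of $v\in X_j$ with many $G'$-neighbours in $X_j$; a local-lemma-free induction (as at~\eqref{eqn:induct}) shows each $X_j$ misses only $O(\eta pn)$ vertices of $X$, using $|X|=(1\pm\gamma/4)pn$, $d_{G'}(v,X)\ge(1-2\gamma)pd$ and $1/k\ll\eta$. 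Then from \emph{any} free end $v\notin Z_i$ one can greedily grow a BFS-type sequence $N_0=\{v\}\subseteq N_1\subseteq\cdots\subseteq N_{k}$, where $N_{s+1}$ adds $G'$-neighbours in $X_{k-s}$ of $N_s$, all within distance $2k$ of $v$ in $G$. If at every step the size roughly multiplies, after $k$ steps we would exhaust $\Omega(pn/Kd)$ vertices of $X$; but the free ends are more than $16n/Kd$ and their neighbourhoods are forced (by Z3 and the maximality of $\mathcal{Q}_i$) to be pairwise disjoint once they meet $X$, so some free end $v$ must have its growth \emph{stall}: some $N_{s+1}$ fails to be much bigger than $N_s$. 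A stalled step yields a set $W\subseteq X$ with $|W|\le Kpd$ every vertex of which has $\ge(1-\eta)pd$ neighbours in $W$, i.e.\ an $(\eta,pd,K/2)$-dense subgraph $H=G[W]\subseteq G[X]$; moreover the at most $k$ "centre" vertices we used along the way witness that $H$ is $(k,\eta/6,pd)$-approximable in $G$ by neighbourhoods in $X$. By~\ref{gd:denseinheritanceupdated}, $H$ meets some $G_j$, $j\in[t]$. Since $v\notin Z_i$, the vertex $G_j$ still has fewer than $\sqrt d$ ends of $\mathcal{R}_i$ and the relevant $X$-vertices are available, so we can route a path of length $\le k$ from $v$ through $X\setminus(V(\mathcal{F}\cup\mathcal{P}_i\cup\mathcal{Q}_i)\cup Z_i)$ into $V(G_j)\cap X$ — a legal new path for $\mathcal{R}_i$, contradicting maximality. (If instead the neighbourhoods of two free ends meet before stalling, we get a path of length $\le 2k$ with interior in $X\setminus Y$ joining two ends of $\mathcal{P}_i+\mathcal{Q}_i$, contradicting the maximality of $\mathcal{Q}_i$.)

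\textbf{Main obstacle.} The delicate point is the bookkeeping that makes the stalled step genuinely produce a dense spot that is \emph{approximable} and simultaneously keeps enough room in $X$ (outside $Z_i$) to actually realise the new connecting path: one must verify that deleting $Z_i$ and $E(\mathcal{Q}_i)\cup E(\mathcal{R}_i)$ costs each vertex only $O(d^{1/4})$ in degree into $X$ (so the reverse-nested sets $X_j$ remain almost all of $X$), that the centres chosen along the BFS are pairwise within distance $2k$ and number at most $k$ (forcing $\eta$ versus $1/k$ to be set correctly), and that the approximation error $|V(H)\triangle\bigcup N_G(\cdot,X)|$ stays below $\eta pd$. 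Managing these simultaneously — essentially the reason $1/d\ll\gamma\ll 1/k\ll p,\eta$ — is the technical heart; the final contradiction itself is then immediate from the maximality clauses defining $\mathcal{Q}_i$ and $\mathcal{R}_i$.
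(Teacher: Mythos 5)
Your strategy is essentially the paper's: contradiction via the maximality of $\mathcal{Q}_i$ and $\mathcal{R}_i$, an obstruction set $Z_i$, reverse-nested sets $X_k\subseteq\cdots\subseteq X_1\subseteq X\setminus Z_i$ built so that iterated neighbourhoods stay large, a growth process from a free end that must stall inside a small reachable set, and then \ref{gd:denseinheritanceupdated} to route that end into some $G_m$, contradicting the maximality of $\mathcal{R}_i$. Two points in your implementation would need repair, though.

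First, your item (Z3) — putting \emph{every other} free end together with its whole $G$-neighbourhood in $X$ into $Z_i$ — breaks the size bound on $Z_i$. With more than $16n/Kd$ free ends, each contributing up to $(1+\gamma)pd$ vertices of $X$, this alone gives $|Z_i|=\Omega(pn/K)$, a constant fraction of $|X|\approx pn$; the induction $|X\setminus X_j|\leq(16/\gamma)^{j}|X\setminus X_0|$ then blows past $|X|$ after $k$ steps (since $\gamma\ll 1/k$), and the sets $X_j$ could be empty. The paper keeps $|Z_i|=O(Kn/\sqrt d)$ and instead handles collisions between free ends exactly as in your closing parenthetical: the maximality of $\mathcal{Q}_i$ forbids any path of length at most $2k$ through $X\setminus Z_i$ between two free ends, so their reach sets are disjoint and pigeonhole gives one free end whose set of vertices reachable within distance $k$ has size at most $|X|/r\leq Kpd/2$. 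You should drop (Z3) and rely solely on that mechanism.

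Second, you start the BFS "from any free end $v\notin Z_i$" using only that $v$ has $\geq(1-2\gamma)pd$ neighbours in $X$. But the growth must start inside $X_k$, and a priori all of $v$'s neighbours could lie in $X\setminus X_k$. The paper inserts a double-counting step here: if $8n/Kd$ free ends each had at most $pd/4$ neighbours in $X_k$, each would have $\geq pd/2$ neighbours in $X\setminus X_k$, while each vertex of $X\setminus X_k$ neighbours at most $d^{1/4}$ endpoints of $\mathcal{P}_i$ by boundedness, forcing $4pn/K\leq|X\setminus X_k|\cdot d^{1/4}\leq n/d^{1/12}$, a contradiction. Without this (or an equivalent) you cannot guarantee $|A_1|\geq pd/4$, and the stalling argument does not get off the ground.
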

\begin{proof} 
Suppose, for a contradiction, that there is some $i\in [d/2]$ for which $\mathcal{P}''_i$ contains at least $16n/Kd$ paths. 
Let $Z_i$ be the set of vertices $v\in X$ such that one of the following holds.
\stepcounter{propcounter}
\begin{enumerate}[label = {{{\textbf{\Alph{propcounter}\arabic{enumi}}}}}]
\item $v$ is in a path in $\mathcal{Q}_i+\mathcal{R}_i$.\label{Zreason1}
\item $v$ is in $G_j$ for some $j\in [t]$ such that at least $\sqrt{d}/2$ of the paths in $\mathcal{P}'_{i}$ end in $G_j$.\label{Zreason2}
\item $v$ is an endpoint of at least $\sqrt{d}/2$ paths in the collection $\mathcal{P}'_{i'}$, $i'\in [d/2]$.\label{Zreason3}
\end{enumerate}
Then,
\begin{equation}\label{eqn:Zupperbound}
|Z_i|\leq \frac{2n}{d}\cdot 2k+Kd\cdot \frac{2\cdot n/d}{\sqrt{d}/2}+\frac{d/2\cdot 2n/d\cdot 2}{\sqrt{d}/2}\leq \frac{10Kn}{\sqrt{d}}.
\end{equation}
Let $G'$ be the graph $G$ with every edge in $\bigcup_{i\in [d/2]}\mathcal{Q}_i$ and $\bigcup_{i\in [d/2]}\mathcal{R}_i$ removed, so that
\begin{equation}\label{eqn:EGupperbound}
|E(G)\setminus E(G')|\leq \frac{d}{2}\cdot 2\cdot \frac{2n}{d}\cdot 2k=4nk.
\end{equation}
Let $X_0=X\setminus Z_i$. Iteratively, for each $1\leq j\leq k$, let $X_j$ be the set of vertices in $X_{j-1}$ with at least $(1-\gamma/2)pd$ neighbours in $X_{j-1}$ in $G'$. We will show, by induction, that, for each $0\leq j\leq k$,
\begin{equation}\label{eqn:induct}
|X\setminus X_j|\leq (16/\gamma)^{j}\cdot \frac{10Kn}{\sqrt{d}}.
\end{equation}
Note first that this holds for $j=0$ by \eqref{eqn:Zupperbound}.
Let then $j\in [k]$ and assume that $|X\setminus X_{j-1}|\leq (16/\gamma)^{j-1}\cdot 10Kn/\sqrt{d}$.
Then, as every vertex in $X_{j-1}$ has at least $(1-\gamma/4)pd$ neighbours in $X$ in $G$ by \ref{gd:vertexdegree}, every vertex in $X_{j-1}\setminus X_j$ has at least $\gamma pd/4$ neighbouring edges which are either in $G[X]\setminus G'$
or which lead to a vertex of $X\setminus X_{j-1}$. Thus, using \ref{gd:vertexdegree} for each vertex in $X\setminus X_{j-1}$,
\[
|X_{j-1}\setminus X_j|\cdot \frac{\gamma pd}{4}\leq |X\setminus X_{j-1}| \cdot 2pd+ E(G[X]\setminus G')\overset{\eqref{eqn:EGupperbound}}{\leq} (16/\gamma)^{j-1}\cdot \frac{10Kn}{\sqrt{d}}\cdot 2pd+4nk\leq 3\cdot (16/\gamma)^{j-1}\cdot \frac{10Kn}{\sqrt{d}}\cdot pd,
\]
so that $|X_{j-1}\setminus X_j|\leq (12/\gamma)\cdot (16/\gamma)^{j-1}\cdot 10Kn/\sqrt{d}$, and hence
\[
|X\setminus X_j|\leq (12/\gamma)\cdot (16/\gamma)^{j-1}\cdot \frac{10Kn}{\sqrt{d}}+(16/\gamma)^{j-1}\cdot \frac{10Kn}{\sqrt{d}}\leq (16/\gamma)^{j}\cdot \frac{10Kn}{\sqrt{d}},
\]
as required. This completes the proof of the induction hypothesis, and therefore, \eqref{eqn:induct} holds for every $0\leq j\leq k$. In particular, then, $|X\setminus X_k|\leq (16/\gamma)^k\cdot 10Kn/\sqrt{d}\leq n/d^{1/3}$, as $1/d\ll \gamma,1/k,1/K$.

Now, recalling that $\mathcal{P}''_i$ contains at least $16n/Kd$ paths, we will show that at least $8n/Kd$ of these paths has an endpoint which is not in $V(\mathcal{F})$ which has at least $pd/4$ neighbours in $G'$ in $X_k$. Suppose to the contrary that there is a set $A$ of at least $8n/Kd$ of the endpoints of $\mathcal{P}''_i$ which are each not in $V(\mathcal{F})$ and have at most $pd/4$ neighbours in $G'$ in $X_k$. For each $v\in A$, as $v\notin V(\mathcal{F})$, and as  $\mathcal{P}_1, \ldots, \mathcal{P}_{d/2}$ is $(2n/d,d^{1/4},d^{1/4})$-bounded, $v$ is the endvertex of at most $d^{1/4}$ paths in $\mathcal{P}_1, \ldots, \mathcal{P}_{d/2}$,
and, hence, has degree at most $d^{1/4}$ in $G-G'$. Thus, by the choice of $A$ and by \ref{gd:vertexdegree}, $v$ has at least $pd/2$ neighbours in $G$ in $X\setminus X_k$. On the other hand, again as  $\mathcal{P}_1, \ldots, \mathcal{P}_{d/2}$ is $(2n/d,d^{1/4},d^{1/4})$-bounded, each vertex in $X\setminus X_k$ is the neighbour of at most $d^{1/4}$ of the vertices in $A$ (a subset of the endpoints of $\mathcal{P}_i$). Therefore,
\[
\frac{4pn}{K}=\frac{8n}{Kd}\cdot \frac{pd}{2}\leq |A|\cdot \frac{pd}{2}\leq |X\setminus X_k|\cdot d^{1/4}\leq \frac{n}{d^{1/12}},
\]
a contradiction as $1/d\ll 1/K,p$.

Therefore, $\mathcal{P}''_i$ contains at least $8n/Kd$ paths with an endpoint which is not in $V(\mathcal{F})$ and has at least $pd/4$ neighbours in $G'$ in $X_k$. Thus, taking $r=8n/Kd$, we can let $v_1,\ldots,v_{r}\notin V(\mathcal{F})$, be endvertices of different paths in $\mathcal{P}_i''$ which each have at least $pd/4$ neighbours in $X_k$ in $G'$.

Now, by the maximality of the path forests  $\mathcal{Q}_1, \ldots, \mathcal{Q}_{d/2}$ and \ref{Zreason1}, there is no path of length at most $2k$ in $G'-Z_i$ with internal vertices in $X$ between any two of the vertices $v_j$, $j\in [r]$. Therefore, we can pick some $j\in [r]$ for which there are at most $|X|/r\leq 2pd/r\leq Kpd/2$ vertices in $X\setminus Z_i$ which can be reached by a path of length at most $k$ from $v_j$ in $G'$ with vertices otherwise in $X\setminus Z_i$.
%$B_{G'-Z_i}^k(v_i)|\leq |X|/r\leq Kpd/2$.
Let $w_1=v_j$ and $A_1=N_{G'}(v_j,X_k)$, so that $|A_1|\geq pd/4$.
Then, for each $2\leq i'\leq k$, if possible, pick $w_{i'}\in A_{i'}$ with $|N_{G'}(w_{i'-1},X_{k-i'+1}\setminus A_{i'-1})|\geq \eta pd/12$ and let $A_{i'}=A_{i'-1}\cup N_{G'}(w_{i'},X_{k-i'+1})$, and, otherwise, stop.
As each $|A_{i'}|\leq Kpd/2$ and $|A_{i'}\setminus A_{i'-1}|\geq \eta pd/12$, this must stop for some $i'<k$ as $1/k\ll \eta,1/K$.
 Let $H=G'[A_{i'}]$. We will show that $H$ is $(\eta/6,pd,K/2)$-dense and $(k,\eta/6,pd)$-approximable in $G$ by neighbourhoods in $X$.

As $i'<k$, for each $v\in A_{i'}\subset X_{k-i'+1}$, $v$ has at least $(1-\gamma/2)pd$ neighbours in $G'$ in $X_{k-i'}$ and at most $\eta pd/12$ neighbours in $X_{k'-i'+1}\setminus A_{i'}$, so at least  $(1-\eta/6)pd$ neighbours in $A_{i'}$.
 Thus, as $|V(H)|= |A_{i'}|\leq Kpd/2$, $H$ is $(\eta/6,pd,K/2)$-dense. Furthermore, each vertex $w_1,\ldots,w_{i'}\in X_1$ has at most $2\gamma pd$ neighbours in $G[X]$ outside of $H$ by \ref{gd:vertexdegree} and the definition of $X_1$, and therefore, as $\gamma\ll 1/k, \eta,p$, the vertices  altogether have at most $k\cdot (2\gamma pd)\leq \eta pd/6$ vertices outside of $V(H)$ in $X$. Therefore, as $V(H)\subset \bigcup_{i''\in [i']}N_G(w_{i''},X)$, $H\subset G[X]$ is  $(k,\eta/6,pd)$-approximable in $G$ by neighbourhoods in $X$.

Therefore, by \ref{gd:denseinheritanceupdated}, there exists some $m\in [t]$ with $V(G_m)\cap V(H)\neq \emptyset$. Then, we can find a path from $v_j$ to $V(G_m)\cap V(H)$ with length at most $k$, which contradicts the maximality of the path forests $\mathcal{R}_{i''}$, $i''\in [d/2]$, where we recall that $V(H)$ contains no vertex in $Z_i$ and use \ref{Zreason1}, \ref{Zreason2} and \ref{Zreason3}.
\claimproofend

As all but at most $(1-\eps)d$ edges of each path in  $\mathcal{P}''_i$, $i\in [d/2]$, can be decomposed into paths of length $(1-\eps)d$, and $1/K\ll \eps$, by Claim~\ref{clm:actuallyallpathsgood} all but at most $(d/2)\cdot (1-\eps)d\cdot (16n/Kd)\leq \eps nd/4$ edges of  $E(\mathcal{P}''_1\cup\ldots\cup \mathcal{P}''_{d/2})$ can be decomposed into copies of $P_{(1-\eps)d}$, so that \ref{propX:three} holds, as required.
\end{proof}

%%%%%%%%%%%%%%%%%%%%%%%%%%%%%%%%%%%%%%%%%%%%%%%%%%%%%%%%%%%%%%%%%%%%%%%%%%%%%%%%%%%%%%%%%%%%%%%%%%%%%%%%%%%%%%%%%%%%%%%%%%%%%%%%%%%%%%%%%%%%%%%%
\section{Decomposing inside the dense bits}\label{sec:insidedensebits}

In this section, we will (mostly) decompose the dense spots with attached paths (see Section~\ref{sec:proofsketch}).
Firstly we note that so far we have no conditions on the dense spots that imply they are connected, which we will need to connect various paths together. Therefore, in Section~\ref{sec:connectivity} we give the notation of connectivity we will use (Definition~\ref{def:connected}) and show that we can partition the vertices of our dense spots into (essentially) connected dense spots (Lemma~\ref{Lemma_partition_to_connected_sets}) and in each such connected dense spot find a small vertex subset which can connect each pair of vertices in the connected dense spot using many possible short edge-disjoint paths using internal vertices only in that vertex subset. In Section~\ref{sec:forestsindensebits}, we show we can decompose a dense spot into many path forests of some specified numbers of vertices and few overall paths (which we later connect up, including to an attached path where appropriate, using our connectivity property). Finally, in Section~\ref{sec:finalproofdecomposedensebits}, we put this all together and prove Lemma~\ref{Lemma_decompose_inside_dense_bits}, thus completing the proof of all our key lemmas, and hence Theorem~\ref{Theorem_main}.
\subsection{Connectivity}\label{sec:connectivity}
First, we define our connectivity.
\begin{definition}\label{def:connected}
A graph $G$ is $(\zeta, \lambda, d)$-\textbf{connected} if, for all $A,B\subseteq V(G)$ with $|A|,|B|\geq \zeta d$, and for all $F\subseteq E(G)$ such that $|F|\leq \lambda d^2$, there exists a path between $A$ and $B$ in $G-F$.
\end{definition}

Observe that any graph with fewer than $2\zeta d$ vertices is trivially $(\zeta, \lambda, d)$-connected. We also have the following observation.

\begin{observation}\label{obs_remains_connected}
   If $G$ is $(\zeta, \lambda, d)$-connected and $F\subseteq E(G)$ satisfies $|F|\leq \tau d^2$, then $G-F$ is $(\zeta, \lambda-\tau, d)$-connected. \qed
\end{observation}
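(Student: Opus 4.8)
The plan is simply to unwind the definition of $(\zeta,\lambda,d)$-connectedness; no genuine work is required beyond tracking the edge-deletion budget. To show that $G-F$ is $(\zeta,\lambda-\tau,d)$-connected, I would fix arbitrary $A,B\subseteq V(G-F)=V(G)$ with $|A|,|B|\geq \zeta d$ and an arbitrary $F'\subseteq E(G-F)$ with $|F'|\leq (\lambda-\tau)d^2$, and aim to produce a path between $A$ and $B$ in $(G-F)-F'$. Note that deleting edges does not change the vertex set, so the size conditions on $A$ and $B$ are stated with respect to the same ground set throughout.

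The key point is that $(G-F)-F'=G-(F\cup F')$, and $F\cup F'$ is a set of edges of $G$ with
\[
|F\cup F'|\leq |F|+|F'|\leq \tau d^2+(\lambda-\tau)d^2=\lambda d^2.
\]
Hence applying the hypothesis that $G$ is $(\zeta,\lambda,d)$-connected, with the pair $A,B$ and the forbidden edge set $F\cup F'$, yields a path between $A$ and $B$ in $G-(F\cup F')=(G-F)-F'$. Since $A$, $B$, and $F'$ were arbitrary (subject to the stated size bounds), this is precisely the assertion that $G-F$ is $(\zeta,\lambda-\tau,d)$-connected.

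There is no real obstacle here: the only thing to be careful about is that the two deleted edge sets $F$ and $F'$ combine additively in the deletion budget, which is exactly why the connectivity parameter drops by $\tau$. If $\lambda-\tau\leq 0$ the conclusion is vacuous, so one may assume $\lambda>\tau$.
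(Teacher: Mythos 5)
Your proof is correct and is exactly the routine unwinding of the definition that the paper has in mind (the paper states this observation with no written proof, marking it as immediate). Nothing to add.
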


We now show how to partition a dense graph into highly connected subgraphs.
\begin{lemma}\label{Lemma_partition_to_connected_sets} Let $1/d\ll \lambda \polysmall 1/K, \beta$ with $\beta\leq 1/4$. Let $G$ be a $(\beta,d, K)$-dense graph. Then, for some $t\leq 2K$, there exists a set $J\subseteq V(G)$ and vertex-disjoint subgraphs $G_1,\ldots,G_t\subset G$ with $V(G)=V(G_1)\cup \ldots\cup V(G_t)$ such that
\stepcounter{propcounter}
\begin{enumerate}[label = {{\emph{\textbf{\Alph{propcounter}\arabic{enumi}}}}}]
    \item $|J|\leq \sqrt{\lambda} d$,\labelinthm{connectprop:1}
    \item $G_i- J$ is $(2\beta, d,K)$-dense for each $i\in [t]$, and\labelinthm{connectprop:2}
    \item $G_i$ is $(\lambda^{1/4},\lambda,d)$-connected for each $i\in [t]$.\labelinthm{connectprop:3}
\end{enumerate}
\end{lemma}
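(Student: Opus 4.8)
The plan is to build the partition greedily, repeatedly cutting a part along a sparse cut whenever it fails to be connected, and to absorb all the resulting degree losses into a single global ``junk set'' $J$ consisting of the vertices that lie on many cut edges.

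Start from the trivial partition $\{G\}$. While some current part $H$ is not $(\lambda^{1/4},\lambda,d)$-connected, Definition~\ref{def:connected} gives $A,B\subseteq V(H)$ with $|A|,|B|\geq\lambda^{1/4}d$ and $F\subseteq E(H)$ with $|F|\leq\lambda d^2$ such that $H-F$ has no $A$--$B$ path; taking $S$ to be the union of the components of $H-F$ that meet $A$ and $T=V(H)\setminus S$, every $S$--$T$ edge of $H$ lies in $F$, so $V(H)=S\cup T$ with $|S|,|T|\geq\lambda^{1/4}d$ and $e_H(S,T)\leq\lambda d^2$. Replace $H$ in the partition by the induced subgraphs $H[S]$ and $H[T]$. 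Each such step increases the number of parts by one and creates at most $\lambda d^2$ new cut edges, and (as argued below) there are at most $2K-1$ steps, so the procedure halts with induced subgraphs $G_1,\ldots,G_t$ partitioning $V(G)$, each of which is $(\lambda^{1/4},\lambda,d)$-connected; this gives the connectivity conclusion and the bound $t\leq 2K$.

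The crux, and the main obstacle, is that a sparse cut of an essentially $d$-dense graph cannot be very unbalanced; to exploit this I would prove by induction on the number $j$ of cuts performed so far the invariant that every current part has more than $d/2$ vertices. This holds for $j=0$ since $|V(G)|\geq(1-\beta)d+1>d/2$. Assuming it after $j$ cuts, there are at most $2K$ parts (their sizes sum to $|V(G)|\leq Kd$), so at most $2K\lambda d^2$ cut edges have been created; hence the set $J_j$ of vertices incident to at least $\beta d/2$ cut edges has $|J_j|\leq 8K\lambda d/\beta$, which is much smaller than $\lambda^{1/4}d$ since $\lambda\ll 1/K,\beta$. Now take the $(j+1)$-th cut, say of a part $H$ into $(S,T)$ with $e_H(S,T)\leq\lambda d^2$ and $|S|,|T|\geq\lambda^{1/4}d$, and suppose for contradiction that $|S|\leq d/2$. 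Any $v\in S\setminus J_j$ has fewer than $\beta d/2$ neighbours outside $H$, so $d_H(v)\geq(1-3\beta/2)d$, and therefore $v$ has at least $d_H(v)-|S|\geq(1-3\beta/2)d-d/2\geq d/8$ neighbours in $T$ (using $\beta\leq 1/4$); since $|S\setminus J_j|\geq\lambda^{1/4}d/2$, this forces $e_H(S,T)\geq\lambda^{1/4}d^2/16>\lambda d^2$, a contradiction. Thus $|S|,|T|>d/2$, the invariant is preserved, and since the number of parts strictly increases while staying at most $2K$, there are at most $2K-1$ cuts and $t\leq 2K$. Note that the bookkeeping here is what keeps the induction non-circular: the bound on $|J_j|$ (hence the anti-unbalanced-cut estimate) uses only that there are at most $2K$ parts, which is the inductive hypothesis for step $j$, not step $j+1$.

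Finally I would read off the remaining two conclusions from the terminal partition. The total number of cut edges is at most $(t-1)\lambda d^2\leq 2K\lambda d^2$, so the set $J$ of vertices incident to at least $\beta d/2$ cut edges satisfies $|J|\leq 4K\lambda d/\beta\leq\sqrt{\lambda}\,d$ (again using $\lambda\ll 1/K,\beta$), which is the bound on $|J|$. For the density of the parts, fix $i$ and $v\in V(G_i)\setminus J$: as $v$ has fewer than $\beta d/2$ neighbours outside $G_i$, we get $d_{G_i}(v)\geq(1-3\beta/2)d$, hence $d_{G_i-J}(v)\geq(1-3\beta/2)d-|J|\geq(1-2\beta)d$ since $\sqrt{\lambda}\leq\beta/2$; moreover $V(G_i)\setminus J\neq\emptyset$ because $|V(G_i)|>d/2\geq|J|$, and $|V(G_i-J)|\leq|V(G)|\leq Kd$, so $G_i-J$ is $(2\beta,d,K)$-dense, as required. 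Everything apart from the anti-unbalanced-cut estimate and the induction that supports it is routine counting.
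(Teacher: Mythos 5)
Your proof is correct. The greedy procedure itself --- repeatedly splitting a non-$(\lambda^{1/4},\lambda,d)$-connected part along the sparse cut witnessing the failure, then taking $J$ to be the vertices meeting many cut edges and reading off the density of $G_i-J$ --- is exactly what the paper does. Where you diverge is on the bound $t\leq 2K$, which you correctly identify as the only non-routine point. You establish it \emph{during} the process, by an inductive invariant that every part always has more than $d/2$ vertices, proved via an anti-unbalanced-cut estimate: a low-degree-loss vertex in a small side $S$ would send $\geq d/8$ edges across the cut, forcing $e_H(S,T)\geq \lambda^{1/4}d^2/16>\lambda d^2$. The paper instead settles for the crude bound $t\leq |V(G)|/(\lambda^{1/4}d)\leq K\lambda^{-1/4}$ (each part has at least $\lambda^{1/4}d$ vertices), which already suffices to get $|J|\leq\sqrt{\lambda}d$ and hence the density conclusion \textbf{(C2)}; it then observes \emph{a posteriori} that \textbf{(C2)} forces $|V(G_i)|\geq(1-2\beta)d\geq d/2$, which improves the bound to $t\leq 2K$ in one line. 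Your route costs an extra induction with its own bookkeeping (the sets $J_j$) but yields a slightly stronger structural fact (no unbalanced sparse cuts ever occur); the paper's ordering of deductions makes that fact unnecessary. The only blemishes are immaterial constants (e.g.\ $|J|\leq 8K\lambda d/\beta$ rather than $4K\lambda d/\beta$, since each cut edge is counted at both endpoints), all absorbed by $\lambda\ll 1/K,\beta$.
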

    \begin{proof} %[Proof of Lemma~\ref{Lemma_partition_to_connected_sets}]
    Set $\zeta:=\lambda^{1/4}$.
        Initialise $\mathcal{S}=\{G\}$, and, while there is some $G'\in \mathcal{S}$ which is not $(\zeta, \lambda,d)$-connected, take such a $G'$ and use this to find a set $F\subseteq E(G')$ and partition $V(G')=V_1\cup V_2$ such that $|F|\leq \lambda d^2$, $|V_1|,|V_2|\geq \zeta n$ and $e_{G-F}(V_1,V_2)=0$, before deleting the edges of $F$ and replacing $G'$ in $\mathcal{S}$ by $G'[V_1]$ and $G'[V_2]$. %% delete $F$, thereby inducing graphs $G_1,G_2$ whose vertices partition $V(G')$, satisfying that $e(G_1,G_2)=0$, and that $|G_1|, |G_2|\geq \zeta d$ (the existence of $G_1$ and $G_2$ follows from Lemma~\ref{equivalentdefn}). Keplace $G'$ in $\mathcal{S}$ with $G_1$ and $G_2$.
        \par Once this process terminates, for $t:=|\mathcal{S}|$, enumerate $\mathcal{S}$ as $\{G_1,\ldots, G_t\}$. Note that, from the process, if $i\neq j$, then we have deleted all of the edges in $G$ between $V(G_i)$ and $V(G_j)$, while each $G_i$ is $(\zeta,\lambda,d)$-connected (and so, in particular, \ref{connectprop:3} holds) with size at least $\zeta d$, and we deleted at most $t\cdot \lambda d^2$ edges from $G$ to get $\bigcup_{i\in[t]}G_i$. Noting that $V(G)=V(G_1)\cup\ldots\cup V(G_t)$ is a partition, we will define a $J$ satisfying \ref{connectprop:1} and \ref{connectprop:2}, and show that $t\leq 2K$, completing the proof of the lemma.

        Note that, $t\leq |V(G)|/(\zeta d)\leq K/\zeta=K\lambda^{-1/4}$, and the number of deleted edges is at most $t\cdot \lambda d^2$. Denote by $J$ the vertices of $G$ which are incident to more than $\lambda^{1/5} d$ deleted edges. Note that
        \begin{equation}\label{eqn:JSize}
        |J|\leq \frac{2t\cdot \lambda d^2}{\lambda^{1/5}d}\leq \frac{2K\lambda^{-1/4}\cdot \lambda d^2}{\lambda^{1/5}d}\leq  \sqrt{\lambda}d, %\lambda^{7/10}d,
        \end{equation}
        where we have used that $\lambda\polysmall 1/K$. Thus, \ref{connectprop:1} holds.

        Now, for each $i\in [t]$ and $v\in V(G_i)\setminus J$, $v$ is adjacent to at most $\lambda^{1/5}d$ edges in $G$ that were deleted (as $v\notin J$) and at most $|J|\leq \sqrt{\lambda} d$ edges to $J$. Thus, as $G$ is $(\beta,d, K)$-dense and $\lambda\polysmall \beta$, $v$ has at least $(1-\beta)d -  \sqrt{\lambda} d - \lambda^{1/5}d\geq (1-2\beta)d$ neighbours in $G_i-J$. As $|V(G_i)|\leq |V(G)|\leq Kd$, $G_i-J$ is therefore $(2\beta,d,K)$-dense, so \ref{connectprop:2} holds.
        As, for each $i\in [t]$, we now have $|V(G_i)|\geq (1-2\beta)d\geq d/2$, we get the improved bound $t\leq 2|V(G)|/d\leq 2K$, completing the proof of the required properties. 
    \end{proof}

    Next, we show that our connectivity implies we can find a short connecting path between any two vertex sets which are not too small.

    \begin{lemma}\label{lem_short_paths_connectivity} Let $1/d, \zeta,\lambda\leq 1$ and $K\geq 1$. Let $G$ be $(\zeta, \lambda, d)$-connected and have at most $Kd$ vertices, and suppose $U,V\subset V(G)$ are sets with size at least $\zeta n$.

    Then, $G$ contains a path from $U$ to $V$ with length at most $8K/\lambda$.
    \end{lemma}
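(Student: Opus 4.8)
The plan is to argue by contradiction using a breadth-first search layering from $U$. Suppose $G$ contains no $U$--$V$ path of length at most $8K/\lambda$, and write $\ell = \mathrm{dist}_G(U,V)$; applying $(\zeta,\lambda,d)$-connectivity with $F=\emptyset$ (which is legitimate since $|U|,|V|\geq \zeta d$) shows $\ell$ is finite, and the assumption gives $\ell > 8K/\lambda$. For $j\geq 0$ let $L_j$ be the set of vertices at distance exactly $j$ from $U$ in $G$ (so $L_0=U$), set $B_i := L_0\cup\cdots\cup L_i$, and let $E_i$ be the set of edges of $G$ between $L_i$ and $L_{i+1}$; these are precisely the edges of $G$ with one endpoint in $B_i$ and the other in $V(G)\setminus B_i$, so $G-E_i$ has $B_i$ in a different component from its complement.

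The first step is to show every layer-cut up to depth roughly $8K/\lambda$ is large. Put $L := \lfloor 8K/\lambda\rfloor$. Since $\ell > L$, for each $i\in\{0,\dots,L\}$ the ball $B_i$ contains $U$ but no vertex of $V$, so both $B_i$ and $V(G)\setminus B_i$ contain at least $\zeta d$ vertices. If $|E_i|\leq \lambda d^2$, then $(\zeta,\lambda,d)$-connectivity (Definition~\ref{def:connected}) applied with the pair $(U,V)$ and $F=E_i$ would yield a $U$--$V$ path in $G-E_i$, which is impossible as $U\subseteq B_i$ and $V\subseteq V(G)\setminus B_i$ with no edges between them in $G-E_i$. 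Hence $|E_i| > \lambda d^2$ for all $i\in\{0,\dots,L\}$. (In particular no such $L_{i+1}$ is empty, so the degenerate case is subsumed.)

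The second step is a counting argument bounding $L$. Since $E_i$ spans a bipartite graph between $L_i$ and $L_{i+1}$, we get $|L_i|\cdot|L_{i+1}|\geq |E_i| > \lambda d^2$, so $\max(|L_i|,|L_{i+1}|) > \sqrt{\lambda}\,d$ for every $i\in\{0,\dots,L\}$. Consequently, among the indices $\{0,1,\dots,L+1\}$, those $j$ with $|L_j|\leq \sqrt\lambda d$ contain no two consecutive integers, so at least $\lfloor (L+2)/2\rfloor \geq (L+1)/2$ of these $L+2$ pairwise-disjoint layers have more than $\sqrt\lambda d$ vertices. As $|V(G)|\leq Kd$, this gives $Kd \geq \sum_{j=0}^{L+1}|L_j| > \tfrac{L+1}{2}\sqrt\lambda d$, so $L+1 < 2K/\sqrt\lambda \leq 2K/\lambda$ using $\lambda\leq 1$. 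But $L=\lfloor 8K/\lambda\rfloor > 8K/\lambda - 1$, so $L+1 > 8K/\lambda \geq 8 > 2K/\lambda$ whenever $2K<8$... more robustly $8K/\lambda > 2K/\lambda$ always, a contradiction. Hence $G$ contains a $U$--$V$ path of length at most $8K/\lambda$.

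This is largely routine; the one point worth care is that the naive estimate $\sum_i |E_i|\leq e(G)\leq \binom{Kd}{2}$ only bounds the number of layers by $O(K^2/\lambda)$, which is too weak for the stated bound, so one must instead push the inequality $|L_i||L_{i+1}|>\lambda d^2$ through to force every second layer to be large — this costs only a factor $\sqrt\lambda$ rather than $\lambda$ and comfortably beats $8K/\lambda$.
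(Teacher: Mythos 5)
Your proof is correct and is essentially the paper's argument: both grow BFS/iterated-neighbourhood layers from $U$, relate the cut between consecutive layers to their sizes, invoke the $(\zeta,\lambda,d)$-connectivity on such a cut, and use $|V(G)|\le Kd$ in a pigeonhole on layer sizes. The only (immaterial) difference is direction: the paper finds two consecutive small layers to exhibit a cut of size at most $\lambda d^2$ and deduces directly that $V$ meets the ball, whereas you argue by contradiction that every cut exceeds $\lambda d^2$, forcing every other layer to exceed $\sqrt\lambda\,d$ and hence too many vertices.
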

    \begin{proof}
    Let $\ell=4/(K\lambda)$.
    Let $U_0=U$. For each $1\leq i\leq 2\ell$ in turn, let $U_i=U_{i-1}\cup N_G(U_{i-1})$. Then, $U_0\subset U_1\subset U_2\subset \ldots \subset U_{2\ell}$ is a nested sequence of sets in $V(G)$, and $|V(G)|\leq Kd$, so there must be $>\ell$ values of $i\in [2\ell]$ for which $|U_i\setminus U_{i-1}|\leq 2Kd/\ell$. Therefore, we can find some $j\in [2\ell-1]$ such that $|U_j\setminus U_{j-1}|\leq 2Kd/\ell$ and $|U_{j+1}\setminus U_{j}|\leq 2Kd/\ell$. Then,
    \[
    e_G(U_j,V(G)\setminus U_j)=e_G(U_j\setminus U_{j-1},U_{j+1}\setminus U_{j})\leq \Big(\frac{2Kd}{\ell}\Big)^2\leq \lambda d^2.
    \]
    As $G$ is $(\zeta, \lambda, d)$-connected, we therefore must have that either $|U_j|< \zeta d$ or $|V(G)\setminus U_j|<\zeta d$. As $|U_j|\geq |U_0|\geq \zeta d$, we thus must have $|V(G)\setminus U_j|<\zeta d$, and hence $V\cap U_j\neq \emptyset$. That is, there is a path from $U$ to $V$ in $G$ with length at most $j\leq 2\ell-1\leq 8K/\lambda$, as required.
    \end{proof}
Combined with Observation~\ref{obs_remains_connected}, this tells us that many such (almost as short) paths will exist, as follows.

    \begin{corollary}\label{cor_short_paths_connectivity} Let $1/d, \zeta,\lambda\leq 1$ and $K\geq 1$. Let $G$ be $(\zeta, \lambda, d)$-connected and have at most $Kd$ vertices, and suppose $U,V\subset V(G)$ are sets with size at least $\zeta n$.

    Then, $G$ contains at least $\lambda^2d^2/32K$ edge-disjoint paths from $U$ to $V$ with length at most $16K/\lambda$.
    \end{corollary}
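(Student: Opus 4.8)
The plan is to derive Corollary~\ref{cor_short_paths_connectivity} from Lemma~\ref{lem_short_paths_connectivity} by a greedy argument: repeatedly pull out short $U$--$V$ paths one at a time, deleting their edges, and use Observation~\ref{obs_remains_connected} to argue that the connectivity of $G$ is not destroyed too quickly. Concretely, suppose we have already found edge-disjoint $U$--$V$ paths $P_1,\ldots,P_j$ each of length at most $16K/\lambda$, and let $F=E(P_1)\cup\cdots\cup E(P_j)$. If $j<\lambda^2 d^2/32K$, then $|F|\le j\cdot 16K/\lambda < (\lambda^2 d^2/32K)\cdot (16K/\lambda) = \lambda d^2/2$, so in particular $|F|\le \tau d^2$ with $\tau=\lambda/2$. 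By Observation~\ref{obs_remains_connected}, $G-F$ is $(\zeta,\lambda-\tau,d)=(\zeta,\lambda/2,d)$-connected, and it has at most $Kd$ vertices, and $U,V$ still have size at least $\zeta d$. Applying Lemma~\ref{lem_short_paths_connectivity} to $G-F$ (with $\lambda$ replaced by $\lambda/2$) yields a further $U$--$V$ path $P_{j+1}$ of length at most $8K/(\lambda/2)=16K/\lambda$ in $G-F$, hence edge-disjoint from all previous paths. We continue until we have $\lambda^2 d^2/32K$ such paths.

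The only mild subtlety is bookkeeping with the hypotheses of Lemma~\ref{lem_short_paths_connectivity}: that lemma requires the host graph to be $(\zeta,\lambda',d)$-connected for the value $\lambda'$ it is invoked with, and produces paths of length at most $8K/\lambda'$; since throughout the process we only ever have $\lambda'\ge \lambda/2$, the length bound $8K/\lambda'\le 16K/\lambda$ holds uniformly, which is exactly the bound claimed in the corollary. One should also note that the size hypothesis $|U|,|V|\ge \zeta d$ (the statement writes $\zeta n$, matching the notation of Lemma~\ref{lem_short_paths_connectivity}, but since $G$ has at most $Kd$ vertices this is the relevant regime) is unaffected by deleting edges, so it persists at every step.

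I do not expect any genuine obstacle here — this is a routine ``robustify via greedy deletion'' argument, and both ingredients (the single-path lemma and the edge-deletion stability observation) are already in hand. The only thing to be careful about is the constant chasing: one wants the threshold number of paths, $\lambda^2 d^2/32K$, chosen small enough that the accumulated edge set $F$ never exceeds $\lambda d^2/2$ (so that at least $\lambda/2$ of the connectivity parameter survives), while the per-path length bound from Lemma~\ref{lem_short_paths_connectivity} applied with parameter $\lambda/2$ is still at most $16K/\lambda$. The choices above are consistent, so the proof goes through cleanly. If one wanted a cleaner constant one could instead stop the deletion once $|F|$ first exceeds $\lambda d^2/2$ and check that by then at least $\lambda^2 d^2/32K$ paths have been extracted, but the inductive phrasing above is simplest to write.

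\begin{proof}
Let $\ell=16K/\lambda$. We greedily build edge-disjoint $U$--$V$ paths, each of length at most $\ell$. Suppose that for some $j\geq 0$ we have already found edge-disjoint $U$--$V$ paths $P_1,\ldots,P_j$, each of length at most $\ell$, and set $F=E(P_1)\cup\cdots\cup E(P_j)$. If $j<\lambda^2 d^2/(32K)$, then
\[
|F|\leq j\cdot \ell< \frac{\lambda^2 d^2}{32K}\cdot\frac{16K}{\lambda}=\frac{\lambda d^2}{2}.
\]
Thus, by Observation~\ref{obs_remains_connected} applied with $\tau=\lambda/2$, the graph $G-F$ is $(\zeta,\lambda/2,d)$-connected. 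As $G-F$ has at most $Kd$ vertices and $U,V\subset V(G-F)$ still have size at least $\zeta d$, Lemma~\ref{lem_short_paths_connectivity} (applied with $\lambda/2$ in place of $\lambda$) gives a path $P_{j+1}$ from $U$ to $V$ in $G-F$ with length at most $8K/(\lambda/2)=16K/\lambda=\ell$. Since $P_{j+1}$ uses no edge of $F$, it is edge-disjoint from $P_1,\ldots,P_j$. Iterating, we obtain $\lceil\lambda^2 d^2/(32K)\rceil$ edge-disjoint $U$--$V$ paths, each of length at most $16K/\lambda$, as required.
\end{proof}
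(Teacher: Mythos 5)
Your proof is correct and is essentially the paper's argument: the paper takes a maximal edge-disjoint collection of short $U$--$V$ paths and uses Observation~\ref{obs_remains_connected} plus Lemma~\ref{lem_short_paths_connectivity} (with parameter $\lambda/2$) to lower-bound its size, which is just the maximality phrasing of your greedy iteration with identical constants.
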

    \begin{proof}
    Take a maximal collection $\mathcal{R}$ of edge-disjoint paths from $U$ to $V$ in $G$ which each have length at most $16K/\lambda$ and let the set of all of their edges be $E$. Then, by Lemma~\ref{lem_short_paths_connectivity}, the maximality of $\mathcal{R}$ implies that $G-E$ is not $(\zeta, \lambda/2, d)$-connected. Therefore, by Observation~\ref{obs_remains_connected}, we must have $|E|\geq \lambda d^2/2$. Thus, $|\mathcal{R}|\geq |E|/(16K/\lambda)\geq \lambda^2d^2/32K$, as required.
    \end{proof}

From this corollary, we can now show that we can even find many of these paths which only use internal vertices in a pre-selected, small vertex set (chosen randomly within the proof).

\begin{lemma}\label{connectingwithindense} $1/d\ll q\ll \lambda, \zeta, 1/K, \eps\leq 1$. Let $G$ be a $(\zeta,\lambda,d)$-connected graph such that $\Delta(G)\leq d$, $|V(G)|\leq Kd$, and $\delta(G)\geq \zeta d$.

Then, there is some $W\subseteq V(G)$ with $|W|\leq 2\eps |V(G)|$ and the following properties.
\stepcounter{propcounter}
\begin{enumerate}[label = {{\emph{\textbf{\Alph{propcounter}\arabic{enumi}}}}}]
\item  For each distinct $v,w\in V(G)$,  $G[W]$ contains at least $qd^2$ edge-disjoint paths of length at most $1/q$ between $N_G(v)$ and $N_G(w)$.\labelinthm{recall:1}
\item For all $v\in V(G)$, $d_G(v,W)\leq 2\eps d$.\labelinthm{recall:2}
\end{enumerate}
\end{lemma}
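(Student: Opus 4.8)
The plan is to obtain $W$ by a random selection, where each vertex of $G$ is placed into $W$ independently with probability $\eps$, and then to verify properties \ref{recall:1} and \ref{recall:2} via the local lemma (as set up in Section~\ref{sec:local}). Property \ref{recall:2} and the bound $|W| \le 2\eps|V(G)|$ are routine: for each $v \in V(G)$, $d_G(v,W)$ is a binomial variable with mean $\eps d_G(v) \le \eps d$, so by Chernoff's bound the probability it exceeds $2\eps d$ is tiny (here we use $\delta(G) \ge \zeta d$ and $1/d \ll q \ll \lambda, \zeta, 1/K, \eps$, so $\eps d$ is large); the size bound follows by partitioning $V(G)$ into blocks of size between $d/2$ and $d$ and controlling each block, exactly as in the first example application in Section~\ref{sec:local}. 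These events each depend on only $O(d)$ vertices, so they pose no difficulty for the local lemma.

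The substantive part is \ref{recall:1}. First I would use that $G$ is $(\zeta,\lambda,d)$-connected to apply Corollary~\ref{cor_short_paths_connectivity}: for each pair of distinct $v,w \in V(G)$, since $|N_G(v)|, |N_G(w)| \ge \zeta d$, there are at least $\lambda^2 d^2/(32K)$ edge-disjoint paths from $N_G(v)$ to $N_G(w)$ in $G$, each of length at most $16K/\lambda$. Call this family $\mathcal{R}_{v,w}$. The idea is that if $W$ happens to contain \emph{all} the internal vertices of some path $P \in \mathcal{R}_{v,w}$, then $P$ survives as a path in $G[W]$ joining $N_G(v)$ to $N_G(w)$ (we only need the interior in $W$, since the endpoints lie in $N_G(v) \cup N_G(w)$ which need not be in $W$ — so I should state \ref{recall:1} as paths in $G$ whose interior lies in $W$, or else include $N_G(v), N_G(w)$-endpoints appropriately; this matches the intended later use). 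Each such path has at most $16K/\lambda - 1$ internal vertices, so the probability it is fully contained (interior-wise) in $W$ is at least $\eps^{16K/\lambda} =: \rho$, a constant depending only on $\eps, K, \lambda$. For a fixed pair $v,w$, let $N_{v,w}$ be the number of paths in $\mathcal{R}_{v,w}$ whose interior lies in $W$. These indicator events are \emph{not} independent across the paths of $\mathcal{R}_{v,w}$ (paths may share internal vertices), but since $\mathcal{R}_{v,w}$ is edge-disjoint and each path has bounded length, each internal vertex lies in at most $d$ of the paths; hence $N_{v,w}$ is a sum of indicators with bounded dependency, and I would lower-bound it by a two-step argument: first pass to a sub-family of $\ge \lambda^2 d^2/(32K \cdot 16K/\lambda) = \lambda^3 d^2/(512K^2)$ \emph{internally-vertex-disjoint} paths by greedily discarding (each path kills at most $(16K/\lambda)^2$ others), then the survival events over this sub-family \emph{are} independent, so $\E[N_{v,w}] \ge \rho \cdot \lambda^3 d^2/(512K^2)$, and by Chernoff's bound $\mathbb{P}(N_{v,w} < \rho \lambda^3 d^2/(1024 K^2))$ is exponentially small in $d$ — comfortably below $e^{-d^{0.1}/100}$ since all the constants $\rho, \lambda, K$ are fixed and $1/d \ll q \ll \lambda, 1/K, \eps$. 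Taking $q$ small enough that $q \le \rho \lambda^3/(1024 K^2)$ and $1/q \ge 16K/\lambda$ gives \ref{recall:1}.

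For the local lemma bookkeeping: the event $B_{v,w}$ (that $N_{v,w}$ is too small) depends only on the membership in $W$ of the internal vertices of paths in $\mathcal{R}_{v,w}$, all of which lie within distance $16K/\lambda$ of $v$, so on $O(d^{16K/\lambda})$ vertices; changing one vertex's membership affects at most $O(d^{16K/\lambda})$ of the events $B_{v,w}$ (those with $v$ in a bounded ball) plus $O(d)$ degree-events. Since the constant in the exponent is fixed and each bad event has probability at most $e^{-d^{0.1}/100}$, the condition $ep(\Delta+1) \le 1$ holds for $d$ large, so the local lemma yields a choice of $W$ with none of the bad events, as required. The main obstacle is the dependency handling in the estimate of $N_{v,w}$ — specifically, reducing from the edge-disjoint family given by Corollary~\ref{cor_short_paths_connectivity} to an internally-vertex-disjoint subfamily of still-quadratic size so that a clean Chernoff bound applies; everything else is routine concentration plus the standard local-lemma framework.
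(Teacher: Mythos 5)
Your overall setup (random $W$ with vertex probability $\eps$, Corollary~\ref{cor_short_paths_connectivity} to produce $\lambda^2d^2/32K$ edge-disjoint short paths between $N_G(v)$ and $N_G(w)$, then concentration) matches the paper, and your treatment of \ref{recall:2} and of $|W|$ is fine. But the concentration step for \ref{recall:1} has a genuine gap. Your greedy reduction to an internally-vertex-disjoint subfamily claims each path kills at most $(16K/\lambda)^2$ others, which contradicts the observation you yourself make two lines earlier: the family is only \emph{edge}-disjoint, so a single internal vertex of degree up to $d$ can lie on up to $d/2$ of the paths. Hence one path of length at most $16K/\lambda$ meets at most $(16K/\lambda)\cdot d/2$ others at internal vertices, and the greedy subfamily has size only $\Omega(d)$, not $\Omega(d^2)$. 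A Chernoff bound over an independent subfamily of linear size can never certify the $qd^2$ surviving paths that the lemma demands, so the argument as written does not close.

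The paper avoids this by not discarding any paths: it keeps the full edge-disjoint family $\mathcal{R}_{u,v}$, lets $X_{u,v}$ count the paths all of whose vertices land in $W$, and observes that $X_{u,v}$ is $d$-Lipschitz in the vertex memberships (changing one vertex affects at most $d$ of the edge-disjoint paths, since $\Delta(G)\leq d$). With $\E X_{u,v}\geq (\lambda^2d^2/32K)\cdot \eps^{1+16K/\lambda}\geq 2qd^2$, Azuma's inequality (Lemma~\ref{lem:Azuma}) over the $|V(G)|\leq Kd$ coordinates gives failure probability at most $2\exp\bigl(-(qd)^2/2Kd\bigr)\leq e^{-\sqrt{d}}$, and then a plain union bound over the at most $(Kd)^2$ pairs suffices — no local lemma is needed for this lemma. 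Two smaller points: the statement requires the paths to lie in $G[W]$, so their endpoints (in $N_G(v)$ and $N_G(w)$) must also be in $W$; do not weaken this to "interior in $W$", just absorb the endpoints into the $\eps^{1+16K/\lambda}$ survival probability as the paper does. And your local-lemma bookkeeping for \ref{recall:1} is harmless but unnecessary once the union bound is available.
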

\begin{proof}
\par Let $W$ be a random subset of $V(G)$, sampled by including each vertex independently at random with probability $\eps$. For each $u,v\in V(G)$, we will show that \ref{recall:1} does not hold for that pair of vertices with probability at most $e^{-\sqrt{d}}$. Taking a union bound over all pairs of vertices will then show that \ref{recall:1} holds with probability at least $3/4$.

Fix then $u,v\in V(G)$. Using Corollary~\ref{cor_short_paths_connectivity}, take a collection $\mathcal{R}_{u,v}$ of at least $\lambda^2 d^2/32K$ edge-disjoint paths from $N_{G}(u)$ to $N_{G}(v)$ in $G$ which each have length at most $16K/\lambda$. Let $X_{u,v}$ be the number of paths in $\mathcal{R}_{u,v}$ whose vertices all lie in $W$. Note that, as the paths are edge-disjoint and $\Delta(G)\leq d$, changing whether a vertex is in $W$ or not can change $X_{u,v}$ by at most $d$, so therefore $X_{u,v}$ is $d$-Lipschitz. Furthermore, each path in $\mathcal{R}_{u,v}$ has all of its vertices in $W$ with probability at least $\eps^{1+16K/\lambda}$, so $\mathbb{E} X_{u,v}\geq (\lambda^2 d^2/32K)\cdot \eps^{1+16K/\lambda}\geq 2qd^2$ as $q\ll \lambda, 1/K,\eps$. Therefore, by Lemma~\ref{lem:Azuma}, we have
\[
\mathbb{P}(X_{u,v}<qd^2)\leq 2\exp\Big(-\frac{1}{2|V(G)|}\cdot \Big(\frac{qd^2}{d}\Big)^2\Big)\leq 2\exp\Big(-\frac{(qd)^2}{2Kd}\Big)\leq \exp(-\sqrt{d}),
\]
as claimed. Thus, as described above, by a suitable union bound, we have that \ref{recall:1} holds with probability at least $3/4$.

    Furthermore, by a simple application of Chernoff's bound and a union bound, as $\Delta(G)\leq d$, with probability at least $3/4$ we have that \ref{recall:2} holds. Finally, again by a simple application of Chernoff's bound, with probability at least $3/4$ we have that $|W|\leq 2\eps |V(G)|$. Together, then, $W$ satisfies all of the properties in the lemma with positive probability, and hence some such $W$ as desired must exist.
\end{proof}

%%%%%%%%%%%%%%%%%%%%%%%%%%%%%%%%%%%%%%%%%%%%%%%%%%%%%%%%%%%%%%%%%%%%%%%%%%%%%%%%%%%%%%%%%%%%%%%%%%%%%%%%%%%%%%%%%%%%%%%%%%%%%%%%%%%%%%%%%%%%%%%%
%%%%%%%%%%%%%%%%%%%%%%%%%%%%%%%%%%%%%%%%%%%%%%%%%%%%%%%%%%%%%%%%%%%%%%%%%%%%%%%%%%%%%%%%%%%%%%%%%%%%%%%%%%%%%%%%%%%%%%%%%%%%%%%%%%%%%%%%%%%%%%%%
%%%%%%%%%%%%%%%%%%%%%%%%%%%%%%%%%%%%%%%%%%%%%%%%%%%%%%%%%%%%%%%%%%%%%%%%%%%%%%%%%%%%%%%%%%%%%%%%%%%%%%%%%%%%%%%%%%%%%%%%%%%%%%%%%%%%%%%%%%%%%%%%

\subsection{Decomposing dense spots into path forests with specified sizes}\label{sec:forestsindensebits}
Using the work in Section~\ref{sec:connectivity} we can decompose each dense spot in Lemma~\ref{Lemma_decompose_inside_dense_bits} into a few connected dense spots and join up any paths in the same path forest coming into the same connected dense spot. This will leave at most $d/2$ paths coming into each dense spot, as there are at most $d/2$ distinct path forests to begin with. For example, suppose we have paths $P_1,\ldots,P_{d/2}$ which are attached to a well connected dense spot $G$, where every path $P_i$ has length $d/2$. Having set aside a vertex subset of $G$ to make connections (as in Section~\ref{sec:connectivity}), we will then wish to find in $G$ plenty (almost $d/2$) edge-disjoint paths of length $(1-\eps)d-d/2$ to join to the paths $P_i$ to get (roughly) a path of length $(1-\eps)d$, while decomposing the rest of the edges mostly into paths of length $(1-\eps)d$. To make this easier, we find instead an edge-disjoint path \emph{forest} of few paths which have the right number of vertices, before using the connectivity property to join them up (including possibly to some path $P_i$).

To find edge-disjointly path forests with specified numbers of vertices in our dense spots we will use the following lemma. Its proof is similar to that of Lemma~\ref{Lemma_decomposition_path_forests_regular}: we partition the vertices into sets $A_i$, $i\in [s]$, with $s=2d^{0.15}$ and find many large matchings between each of them, putting them together to find large path forests without too many paths. Instead of decomposing the auxiliary graph $K_s$ into paths of length $(s-1)$, however, we will use Theorem~\ref{packingpaths} to mostly decompose $K_s$ into paths of different lengths, chosen so that the path forests we produce will have the desired size. For this, we start the proof by batching together the desired path forest sizes into groups with similar sizes, as each path we find in $K_s$ will produce many path forests with a similar size.

\begin{lemma}\label{Lemma_dense_specfic_lengths}
Let $1/d\ll \beta,1/K\polysmall\eps\leq 1$. Let $G$ be a $(\beta,d,K)$-dense graph. Let $r\in \N$, and suppose $n_1,\ldots,n_r\in [\eps d,(1-\eps)d]$ are such that $\sum_{i\in [r]}n_i\leq (1-10^3\eps)|V(G)|d/2$.

Then, $G$ contains edge-disjoint path forests $F_1,\ldots,F_r$ such that, for each $i\in [r]$, $|V(F_i)|=n_i$ and $F_i$ contains at most $d^{9/10}$ paths.
\end{lemma}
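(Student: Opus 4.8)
The plan is to mimic the proof of Lemma~\ref{Lemma_decomposition_path_forests_regular}, replacing the fixed decomposition of $K_s$ into Hamilton paths with an approximate decomposition into paths of carefully chosen lengths, obtained via Theorem~\ref{packingpaths}. First I would set $s=2\lceil d^{0.15}\rceil$, $\eta=4d^{-0.4}$, and randomly partition $V(G)=A_1\cup\ldots\cup A_s$, using the local lemma to guarantee that $|A_i|=(1\pm\eta)|V(G)|/s$ for each $i$ and $d_G(v,A_i)=(1\pm\eta)d/s \ge \beta d/2s$ for each $v$ and $i$ (here the $(\beta,d,K)$-density of $G$ gives $\delta(G)\ge(1-\beta)d$, which is enough for Lemma~\ref{lem:vizing} to apply to each bipartite graph $G[A_j,A_k]$). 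Then for each edge $jk$ of $K_s$ I would extract $d'=(1-\eta')d/s$ edge-disjoint matchings $M_{jk,1},\ldots,M_{jk,d'}$ in $G[A_j,A_k]$, each of size at least $(1-\eta')|V(G)|/s$, where $\eta'=20\sqrt{\eta}$.

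Next comes the combinatorial heart: I would group the target sizes $n_1,\ldots,n_r$ into batches $B_1,\ldots,B_b$ where each batch collects sizes lying in a common short window, say of width $|V(G)|/s$, so that a single path length in $K_s$ can be used to produce all the forests in one batch. A path $Q$ of length $\ell_Q$ in $K_s$, when its matchings $M_{e,k}$ are glued together over $e\in E(Q)$ (for a fixed $k$), yields a path forest on roughly $\ell_Q\cdot|V(G)|/s$ vertices; by truncating to the desired exact count $n_i$ we get $|V(F_i)|=n_i$ on the nose, at the cost of leaving a bounded number of leftover edges, and the number of paths in each such forest is at most $2|V(G)|/s\le d^{9/10}$ (analogous to Claim~\ref{nottoomanymissing} and the boundedness bookkeeping in Lemma~\ref{Lemma_decomposition_path_forests_regular}). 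The key accounting is that the total length of all the $K_s$-paths I need is essentially $\frac{1}{d'}\cdot\frac{s}{|V(G)|}\sum_i n_i \le \frac{s}{(1-\eta')d}\cdot\frac{\sum_i n_i}{|V(G)|}\cdot 2$, which, using $\sum_i n_i\le(1-10^3\eps)|V(G)|d/2$ and the slack in $d'$ versus $d/s$, stays comfortably below $\binom{s}{2}$; hence Theorem~\ref{packingpaths} packs all the required paths into $K_s$ (I may need to round path lengths up or down by one and allow $K_{(1+o(1))s}$, absorbing the discrepancy into the $10^3\eps$ slack). Each path in this $K_s$-packing, together with a choice of index $k\in[d']$, then gives one of the forests $F_i$, and distinct (path, $k$) pairs use disjoint matchings, so the $F_i$ are edge-disjoint.

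I expect the main obstacle to be the exact-size requirement $|V(F_i)|=n_i$: the matchings $M_{e,k}$ are not perfect (they miss up to $\eta'|V(G)|/s$ vertices of each $A_i$), so the raw forest coming from a path $Q$ has a slightly variable vertex count, and one must argue that by choosing $\ell_Q$ appropriately and then deleting a few more edges one can hit $n_i$ precisely while keeping the number of paths below $d^{9/10}$ and not overspending edges. Handling this cleanly — probably by first producing a forest on at least $n_i$ vertices and then shortening paths to remove the surplus, noting that shortening only decreases the path count — together with checking that all the $\eps$-order error terms (from imperfect matchings, from truncation, from the $K_s$-packing slack, and from rounding $K_s$ up to $K_{(1+o(1))s}$) sum to well under the budget $10^3\eps|V(G)|d/2$, is the technical core. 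The boundedness of the path count and edge-disjointness then follow exactly as in Lemma~\ref{Lemma_decomposition_path_forests_regular}.
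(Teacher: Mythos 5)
Your overall strategy is exactly the paper's: random partition into $s=2\lceil d^{0.15}\rceil$ parts, Vizing matchings between parts, batching the targets $n_i$ and packing paths of the corresponding lengths into $K_s$ via Theorem~\ref{packingpaths}, then gluing matchings along those paths and trimming leaves to hit $n_i$ exactly. The accounting you sketch for the total path length in $K_s$ matches the paper's.

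However, there is a genuine gap at the very first step. You assert that $(\beta,d,K)$-density alone yields $d_G(v,A_i)=(1\pm\eta)d/s$ with $\eta=4d^{-0.4}$ and that this "is enough for Lemma~\ref{lem:vizing}". A $(\beta,d,K)$-dense graph only satisfies $\delta(G)\ge(1-\beta)d$ and $|V(G)|\le Kd$; even granting $\Delta(G)\le d$ (as in the applications), the degrees vary by a \emph{constant} factor $1\pm\beta$, so the concentration you claim is false and Lemma~\ref{lem:vizing} can only be applied with $\gamma\approx 2\beta$. This is not merely cosmetic: with $\gamma\approx 2\beta$ the matchings have size only $(1-10\sqrt{2\beta})\,|V(G)|/s$, i.e.\ each part $A_j$ has a constant fraction $\Theta(\sqrt\beta)$ of its vertices missed by each matching, so the forest obtained from a path $Q$ in $K_s$ has up to $\Theta\big(\sqrt\beta\,|V(G)|\big)=\Theta\big(\sqrt\beta\,Kd\big)$ paths — far exceeding the required bound $d^{9/10}$. (The analogue of Claim~\ref{nottoomanymissing} only gives a path count of order $\eta'|V(G)|$, which is $\le d^{9/10}$ precisely because there $\eta'=d^{-\Omega(1)}$.) The paper avoids this by first applying Lemma~\ref{lem:allverticesregularspanningnearregular} to pass to an almost-spanning subgraph $G'\subset G$ with $|V(G')|\ge(1-100\eps)|V(G)|$ whose degrees are $(1\pm 2d^{-0.4})d_0$ for some $d_0\ge(1-100\eps)d$, and only then performs the partition-and-Vizing step on $G'$; the vertices and degree lost in this regularization are absorbed into the $10^3\eps$ slack. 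Without this step your argument does not deliver forests with at most $d^{9/10}$ paths, so you need to add it (or some equivalent near-regularization) before the rest of your proof goes through.
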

\begin{proof}
Note that we can assume that $\eps\leq 1/10^3$. Let $n=|V(G)|\leq Kd$ and note that $r\leq |E(G)|/(\eps d)\leq Kd/2\eps$. Let $s=2d^{0.15}$ and $\bar{n}=\lfloor n/s\rfloor\geq (1-\eps)n/s$.
Take $\mu$ such that $1/d\ll \mu \ll \beta,1/K$ and $\mu n\in \mathbb{N}$ with $\mu n =0\;\textrm{mod}\;\bar{n}$. We will batch the lengths $n_i$ together so that in each batch they have length varying by only up to $\mu n$. For this, for each $i\in [r]$, let $n_i'=\lceil n_i/\mu n\rceil\cdot \mu {n}\leq n_i+\mu n$. Note that there are at most $d/(\mu d)+1\leq 2/\mu$ different values taken by $n_i'$, $i\in [r]$, and 
\begin{equation}\label{eqn:nprimesum}
\sum_{i\in [r]}n_i'\leq \sum_{i\in [r]}(n_i+\mu n)\leq (1-10^3\eps)n\cdot \frac{d}{2}+\frac{Kd}{2\eps}\cdot \mu n\leq (1-5\cdot 10^2\eps)n\cdot \frac{d}{2},
\end{equation}
as $1/d\ll \mu \ll \beta,1/K,\eps$.

Let  $i_{\mathrm{min}}=\min\{n_i'/\bar{n}:i\in [r]\}$ and $i_{\mathrm{max}}=\max\{n_i'/\bar{n}:i\in [r]\}$,
so that $i_{\mathrm{min}}\geq \eps n/\bar{n}\geq \eps s$ and $i_{\mathrm{max}}\leq (1-\eps+\mu)n/\bar{n}\leq (1-\eps/2)s$. Let $I=\{i:\exists j\in [r]\text{ s.t.\ }n_j'=i\cdot \bar{n}\}$, so that $i_{\mathrm{min}}\leq i\leq i_{\mathrm{max}}$ for each $i\in I$, and $|I|\leq 2/\mu$. %$I=\{i:i_{\mathrm{min}}\leq i\leq i_{\mathrm{max}}\}$. 
For each $i\in I$, let $m_i$ be the number of $n_j'$, $j\in [r]$, with $n_j'=i\cdot \bar{n}$.
Let $\eta'=40d^{-0.2}$ and $d'=(1-\eta')(1-100\beta)d/s$. For each $i\in I$, let $s_i=\lceil m_i/d'\rceil$.
Then,
\begin{align*}
\sum_{i\in I}i\cdot s_i&\leq \sum_{i\in I}i+\sum_{i\in I}i\cdot\frac{m_i}{d'}\leq \frac{2s}{\beta}+\sum_{i\in I}i\cdot\frac{|\{j\in[r]:n_j'=i\cdot \bar{n}\}|}{(1-\eta')(1-100\beta)d/s}\\
&\leq \frac{\eps s^2}{10}+\sum_{j\in [r]}\frac{n_j'}{\bar{n}}\cdot\frac{s}{(1-\eta')(1-100\beta)d}\\
&\leq \frac{\eps s^2}{10}+\sum_{j\in [r]}\frac{n_j'}{(1-\eps)n/s}\cdot\frac{s}{(1-\eta')(1-100\beta)d}\\
&\overset{\eqref{eqn:nprimesum}}{\leq} \frac{\eps s^2}{10}+(1-5\cdot 10^2\eps)n\cdot\frac{d}{2}\cdot \frac{1}{(1-\eps)n/s}\cdot\frac{s}{(1-\eta')(1-100\beta)d}\\
&\leq (1-\eps/4)\binom{s}{2}\leq \binom{(1-\sqrt{\eps}/3)s}{2}.
\end{align*}
Thus, by Theorem~\ref{packingpaths}, we can take a collection $\mathcal{Q}$ of edge-disjoint paths in $K_s$ such that, for each $i\in I$,
 there are $s_i$ paths with $i$ vertices in $\mathcal{Q}$. For each $i\in I$, let $\mathcal{Q}_i$ be the set of paths in $\mathcal{Q}$ with $i$ vertices, so that $|\mathcal{Q}_i|=s_i$.

Let $\gamma=2d^{-0.4}$. Apply Lemma~\ref{lem:allverticesregularspanningnearregular} to $G$ with $(1-\eps)d$ in place of $d$ and $2\eps$ in place of $\gamma$ to find a subgraph $G'\subset G$ with $|V(G')|\geq (1-100\eps)|V(G)|$ and some $d_0\geq (1-100\eps)d$ such that, for each $v\in V(G')$, $d_{G'}(v)=(1\pm \gamma)d_0$.

\par  Let $\eta=4d^{-0.4}$. As in the proof of Lemma~\ref{Lemma_decomposition_path_forests_regular}, using the local lemma, take a partition of $V(G')$ as $A_1,\ldots, A_s$ for which the following properties hold.
\stepcounter{propcounter}
\begin{enumerate}[label = {{{\textbf{\Alph{propcounter}\arabic{enumi}}}}}]
\item For each $v\in V(G)$ and $i\in [s]$, $d_{G}(v,A_i)=(1\pm \eta)d/s$.\label{prop:degreeintoAi2}
\item For each $i\in [s]$, $|A_i|=(1\pm \eta)n/s$.\label{prop:Aisize2}
\end{enumerate}
Note that $\eta'=40d^{-0.2}=20\sqrt{\eta}$ and $d'=(1-\eta')d_0/s$. For each edge $e=jk$ in the complete $s$-vertex graph $K_s$, using Lemma~\ref{lem:vizing}, \ref{prop:degreeintoAi2}, and \ref{prop:Aisize2},  find $d'$ edge-disjoint matchings in ${G}[A_j,A_k]$ which each have at least $(1-\eta')n/s$ edges.
Call these matchings $M_{e,i}$, $i\in [d']$.

\par For each $P\in \mathcal{Q}$ and $i\in [d']$, let $F_{P,i}$ be the path forest with vertex set $\cup_{j\in V(P)}A_j$ and edge set $\cup_{e\in E(P)}M_{e,i}$. Observe that, as the paths in $\mathcal{Q}$ are edge-disjoint, the subgraphs $F_{P,i}$, $P\in \mathcal{Q}$ and $i\in [d']$ are edge disjoint. Moreover, observe that, for each $P\in \mathcal{Q}$ and $i\in [d']$, if $j$ is an interior vertex of $P$, then there are at most $2(|A_j|-(1-\eta')n/s)$ vertices in $A_j$ which are an endvertex of a path in $F_{P,i}$. Thus,
the number of paths in $F_{P,i}$ is at most, by \ref{prop:Aisize2},
\[
2(1+\eta)\frac{n}{s}+(s-2)\cdot 2((1+\eta)-(1-\eta'))\frac{n}{s}\leq \frac{3n}{s}+{4\eta'n}\leq \frac{4n}{s}\leq d^{9/10},
\]
as $n\leq Kd$, $s=2d^{0.15}$ and $1/d\ll 1/K$.

Thus, the path forests $F_{P,i}$, $P\in \mathcal{Q}$ and $i\in [d']$, are edge-disjoint, each have at most $d^{9/10}$ paths, and, for each $i\in I$, there are at least $s_i\cdot d'\geq m_i$ path forests with $\bar{n}\cdot i$ vertices. Thus, by the choice of the $m_i$, $i\in I$, we can find $r$ edge-disjoint path forests $F_i$, $i\in [r]$, in $G$ such that, for each $i\in [r]$, $F_i$ has at most $d^{9/10}$ paths and at least $n'_i\geq n_i$ vertices. Iteratively removing $n_i'-n_i$ leaves from $F_i$, for each $i\in [r]$, then gives the desired paths.
\end{proof}

For our application, we now prove a simple variant of Lemma~\ref{Lemma_dense_specfic_lengths} in which the endvertices of the paths additionally satisfy some weak spreadness condition.

\begin{corollary}\label{Corollary_dense_specfic_lengths}
Let $1/d\ll \beta,1/K\polysmall\eps\leq 1$. Let $G$ be a $(\beta,d,K)$-dense graph. Let $r\in \N$, and suppose $n_1,\ldots,n_r\in [\eps d,(1-\eps)d]$ are such that $\sum_{i\in [r]}n_i\leq (1-2\cdot 10^3\eps)|V(G)|d/2$.

Then, $G$ contains edge-disjoint path forests $F_1,\ldots,F_r$ such that, for each $i\in [r]$, $n_i\leq |V(F_i)|\leq (1+\eps)n_i$ and $F_i$ contains at most $d^{9/10}$ paths, and each vertex in $G$ appears as the endpoint of at most $d^{19/20}$ paths in $F_1,\ldots,F_r$.
\end{corollary}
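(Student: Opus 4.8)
The plan is to derive Corollary~\ref{Corollary_dense_specfic_lengths} from Lemma~\ref{Lemma_dense_specfic_lengths} by a random splitting argument on the dense graph $G$, with the local lemma controlling the spreadness of the path endpoints. First I would split $G$ randomly into two spanning edge-disjoint subgraphs, or equivalently, observe that it suffices to find the path forests so that their endpoints are well spread; the extra factor of $2$ in the hypothesis ($2\cdot 10^3\eps$ versus $10^3\eps$) is the slack we have to play with. Concretely, apply Lemma~\ref{Lemma_dense_specfic_lengths} not once but repeatedly on edge-disjoint ``copies'' of the capacity of $G$: partition (most of) $E(G)$ into a bounded number $L$ of edge-disjoint near-regular subgraphs $G^{(1)},\ldots,G^{(L)}$ of $G$ (using Lemma~\ref{lem_regularise_nearly_spanning_defective_vertices} applied to $G$, noting $\Delta(G)\le d$ and $\delta(G)\ge(1-\beta)d$, so almost all edges lie in a few nearly-regular pieces, each still $(\beta',d_j,K')$-dense after a trivial degree adjustment). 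Actually the cleanest route avoids the regularisation machinery: instead, I would sample, for each $i\in[r]$, an independent uniformly random subset of $V(G)$ or a random ``colour'', and run Lemma~\ref{Lemma_dense_specfic_lengths} on a randomly chosen edge-sparsified subgraph; but the simplest correct implementation is the following.

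Here is the approach I would actually write. Apply Lemma~\ref{Lemma_dense_specfic_lengths} to $G$ with the \emph{same} list $n_1,\dots,n_r$ but now think of the construction inside that lemma's proof: the endpoints of $F_i$ all lie in at most two of the vertex classes $A_1,\dots,A_s$ produced by the local lemma, and each class has size $(1\pm\eta)n/s$ with $s=2d^{0.15}$, so on average a vertex is an endpoint of at most $O(r/s)\le O(Kd/(\eps d^{0.15+0.85}))$... this is not quite strong enough directly, which is the point of the corollary. So instead, before applying Lemma~\ref{Lemma_dense_specfic_lengths}, I would \emph{randomly permute/relabel} which target length $n_i$ is assigned to which of the path forests $F_{P,\iota}$ produced in the proof, and observe that the assignment of an interior-class vertex $v$ to a path endpoint is determined by $v$ landing near a ``misaligned'' spot in one of the many matchings; since the matchings for distinct $(P,\iota)$ are edge-disjoint and $\Delta(G)\le d$, the number of path forests in which a fixed $v$ can be an endpoint is at most $d^{8/9}$ or so by edge-disjointness, and then a local-lemma / Azuma argument over the random partition $A_1\cup\dots\cup A_s$ bounds the number of those in which $v$ is actually an endpoint by $d^{19/20}$. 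More cleanly: run Lemma~\ref{Lemma_dense_specfic_lengths} on a random subgraph obtained by a $(1/L)$-colouring of $E(G)$ for suitable bounded $L$, get path forests within each colour class, take the union, and use that within each colour class the endpoint-multiplicity is $\le d^{9/10}$ (since each $F_i$ has $\le d^{9/10}$ paths and the forests within one colour class are vertex-disjoint on their \emph{interior}... ) — summing over $L$ colours gives $L\cdot d^{9/10}\le d^{19/20}$.

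The key steps, in order, are: (i) fix constants with $1/d\ll\beta,1/K\ll\mu\ll\eps$ and set $L=\lceil 1/\mu\rceil$ or similar bounded value; (ii) use the local lemma to partition $E(G)$ (or $V(G)$, whichever gives cleaner near-regularity) into $L$ parts each inducing a $(\beta',d/L,K)$-ish dense-like subgraph, absorbing the lost edges into the $\eps$-slack — here one must check the degree condition $\delta\ge(1-\beta')(d/L)$ survives, using Chernoff plus the local lemma exactly as in Section~\ref{sec:local}; (iii) distribute the requested lengths $n_1,\dots,n_r$ across the $L$ colour classes so that each class receives a sub-list still satisfying the hypothesis of Lemma~\ref{Lemma_dense_specfic_lengths} with its own average degree (this uses $\sum n_i\le (1-2\cdot10^3\eps)|V(G)|d/2$, which after dividing by $L$ and accounting for lost edges still beats the $(1-10^3\eps)$ threshold of the lemma, once we also relax $|V(F_i)|=n_i$ to $n_i\le|V(F_i)|\le(1+\eps)n_i$ so that we may work with $\lfloor n_i/L\rfloor$-type quantities — actually we apply the lemma inside each colour class with the \emph{full} $n_i$ but only to the $r/L$ requests assigned to that class); (iv) apply Lemma~\ref{Lemma_dense_specfic_lengths} inside each colour class to get path forests with $\le d^{9/10}$ paths each, all edge-disjoint across classes since the classes are edge-disjoint; (v) bound endpoint multiplicity: a vertex $v$ is an endpoint of at most $d^{9/10}$ paths coming from a single colour class, hence at most $L\cdot d^{9/10}\le d^{19/20}$ in total.

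The main obstacle I expect is step (iii)–(v): making the bookkeeping honest so that \emph{every} colour class, after the random edge-split and the loss of $\le\eps nd$ edges, is still a $(\beta',d',K')$-dense graph with $d'$ within a constant factor of $d/L$ \emph{and} still satisfies the summation hypothesis $\sum_{i\in\text{class}} n_i\le (1-10^3\eps)|V|d'/2$ — note the $n_i$ do not shrink when we pass to a colour class, so we really need each colour class to have average degree $\ge$ roughly $d/L$ while carrying only $\approx r/L$ of the requests, and the requests have length up to $(1-\eps)d$, which could exceed $d'\approx d/L$; the fix is that Lemma~\ref{Lemma_dense_specfic_lengths} is applied to the \emph{original} $G$ per request-batch rather than to disjoint subgraphs, and the edge-disjointness we need across batches is obtained instead by reserving, before each application, the edges already used — but then the $\le d^{9/10}$-paths bound must be re-proved for the residual graph. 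Reconciling ``edge-disjoint across all $r$ forests'' with ``each forest has few paths'' and ``endpoints well spread'' simultaneously is exactly the delicate part; I would handle it by running a single application of the mechanism inside Lemma~\ref{Lemma_dense_specfic_lengths}'s proof (one partition $A_1\cup\dots\cup A_s$, one family of matchings $M_{e,\iota}$) and then adding, on top of the path-packing of $K_s$ used there, a further local-lemma argument controlling, over the \emph{random} partition, how many of the resulting edge-disjoint forests $F_{P,\iota}$ can use any fixed vertex as an endpoint — this is a Chernoff bound with failure probability $e^{-\sqrt d}$ and dependency polynomial in $d$, giving the $d^{19/20}$ bound directly and keeping everything edge-disjoint by construction.
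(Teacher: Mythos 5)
There is a genuine gap, and the argument you eventually settle on would not work. Your step (v) claims that a fixed vertex is an endpoint of at most $d^{9/10}$ of the forests produced within one application of Lemma~\ref{Lemma_dense_specfic_lengths}; this does not follow from anything. The bound ``$F_i$ has at most $d^{9/10}$ paths'' controls the number of paths \emph{per forest}, not the number of forests using a fixed vertex as an endpoint: the forests $F_{P,\iota}$ are only edge-disjoint, and a vertex $v\in A_j$ where $j$ is an end-class of many paths $P$ in the decomposition of $K_s$ can be an endpoint of up to roughly $s\cdot d'\approx d$ of the forests. This is a structural feature of the construction, not a fluctuation, so no Chernoff/local-lemma argument over the random partition $A_1\cup\dots\cup A_s$ (your fallback plan) can bring it down to $d^{19/20}$. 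Your colour-class approach fails for the reason you yourself identify: the requested sizes $n_i$ can be as large as $(1-\eps)d$, which exceeds the degree parameter $d/L$ of a colour class, so Lemma~\ref{Lemma_dense_specfic_lengths} cannot be applied inside a class.

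The paper's proof is a short deterministic cleanup that you do not attempt, even though you correctly spot both pieces of slack (the factor $2$ in $2\cdot10^3\eps$ and the relaxation $n_i\le|V(F_i)|\le(1+\eps)n_i$). Apply Lemma~\ref{Lemma_dense_specfic_lengths} once with inflated sizes $n_i'=(1+\eps)n_i$ (the sum hypothesis still clears the $(1-10^3\eps)$ threshold) to get $F_1',\dots,F_r'$. The total number of path-endpoints over all forests is at most $r\cdot 2d^{9/10}\le (Kd/\eps)\cdot 2d^{9/10}$, so at most $\eps^2 d\le \eps n_i$ vertices can be endpoints of at least $d^{19/20}$ of the forests; processing $i=1,\dots,r$ in turn, iteratively delete from $F_i'$ any endpoint lying in the set $Z_i$ of vertices already over-used by $F_j$, $j<i$. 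Each deletion removes a vertex of $Z_i$, so at most $|Z_i|\le\eps n_i$ vertices are lost and $|V(F_i)|\ge n_i'-\eps n_i= n_i$. This global counting-plus-trimming step is the missing idea.
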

\begin{proof}
For each $i\in [r]$, let $n_i'=(1+\eps)n_i$, noting that
\[
\sum_{i\in [r]}n_i'=(1+\eps)\sum_{i\in [r]}n_i\leq (1-\eps)(1-2\cdot 10^3\eps)\cdot \frac{|V(G)|}{2}\leq (1-10^3\eps)\cdot \frac{|V(G)|}{2}.
\]
Therefore, by Lemma~\ref{Lemma_dense_specfic_lengths}, we can find edge-disjoint path forests $F'_1,\ldots,F_r'$  in $G$ such that, for each $i\in [r]$, $|F'_i|=n_r'$ and $F_i'$ contains at most $d^{9/10}$ paths.

For each $1\leq i\leq r$ in turn do the following to find $F_i\subset F_i'$ with at most $d^{9/10}$ paths. Let $Z_i$ be the vertices of $G$ which appear as the endpoints of at least $d^{19/20}$ different paths in $F_j$, $j<i$, so that $|Z_i|\leq r\cdot 2d^{9/10}/d^{19/20}\leq (Kd^2/\eps d)\cdot 2d^{-1/20}\leq \eps^2d\leq\eps n_i$. Thus, we can iteratively delete any endpoints in $Z_i$ from the paths in $F_i'$ until they are all outside of $Z_i$, to get $F_i$, while deleting at most $|Z_i|\leq \eps n_i$ vertices, so that, hence, $|F_i|\geq n_i$. By the choice of $Z_i$, $i\in [r]$, the path forests $F_i$, $i\in [r]$, have our desired properties.
\end{proof}

We now use Corollary~\ref{Corollary_dense_specfic_lengths} to deduce an almost decomposition of a connected dense spot with a few attached paths, as follows.

\begin{lemma}\label{lem:densespotplusattachedpathdecomposes} Let $1/d\ll \lambda \ll p,1/K\polysmall\eps\leq 1$. Let $G$ be a $(\lambda^{1/4},\lambda/2,d)$-connected graph which contains a set $J\subset V(G)$ with $|J|\leq \eps d/4$ such that $G-J$ is $(p,d,K)$-dense, and suppose that $\delta(G)\geq d/8K$ and $\Delta(G)\leq d$. Let $P_1,\ldots,P_{d/2}$ be edge-disjoint paths which each have exactly one vertex in $G$, which is, moreover, an endvertex and in $J$. Suppose that no vertex of $G$ is the endvertex of more than $\sqrt{d}$ paths $P_i$, $i\in [d/2]$.

Then, all but at most $\eps|V(G)|d$ edges of $G\cup P_1\cup \ldots \cup P_{d/2}$ can be decomposed into copies of $P_{(1-\eps)d}$.
\end{lemma}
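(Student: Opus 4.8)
The plan is to first deal with the attached paths $P_1,\dots,P_{d/2}$, each of which touches $G$ in a single endvertex lying in $J$, and then decompose what remains of $G$ using Corollary~\ref{Corollary_dense_specfic_lengths} together with the connectivity property. Write $\ell=(1-\eps)d$, and for each $i\in[d/2]$ let $p_i$ be the length of $P_i$. If $p_i\geq \ell$ we can cut $P_i$ into copies of $P_\ell$ plus a remnant of fewer than $\ell$ edges with one endvertex in $J$; so after this preprocessing we may assume every $P_i$ has length $p_i<\ell$. The idea is to extend each such $P_i$ into a path of length exactly $\ell$ (up to a small error) by appending to its endvertex in $J$ a path of length $\ell-p_i$ lying almost entirely inside $G-J$; if $\ell-p_i$ is too small (say $<\eps d$) we simply discard $P_i$, losing at most $(d/2)\cdot \eps d\le \eps|V(G)|d/2$ edges in total since $|V(G)|\ge d$. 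For the remaining indices, set $n_i=\ell-p_i\in[\eps d,(1-\eps)d]$; the total $\sum n_i$ is at most $(d/2)\cdot d$, which is far more than the budget $(1-2\cdot10^3\eps)|V(G)|d/2$ allowed by Corollary~\ref{Corollary_dense_specfic_lengths}, so we must be more careful — we only run the corollary on a sub-collection whose lengths sum to within the allowed bound, discard the rest (bounding the discarded edges against $\eps|V(G)|d$), and reserve the remaining roughly $\Omega(\eps)|V(G)|d$ edges of $G$ to be decomposed directly into copies of $P_\ell$ at the end.

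The heart is the joining step. First, using Lemma~\ref{connectingwithindense} with a suitable $q\ll\lambda,\eps$, reserve a set $W\subseteq V(G)$ with $|W|\le 2\eps|V(G)|$ such that between any two neighbourhoods $N_G(v),N_G(w)$ there are $qd^2$ edge-disjoint connecting paths of length $\le 1/q$ inside $G[W]$, and no vertex has more than $2\eps d$ neighbours in $W$. Remove $E(G[W])$ from $G$ (this is $\le 2\eps|V(G)|d$ edges, absorbed into the error) so the connecting paths are automatically edge-disjoint from everything else. Apply Corollary~\ref{Corollary_dense_specfic_lengths} to $G-J-W$ (still $(2p,d,K)$-dense after removing $J$ and $W$, since each vertex loses at most $|J|+2\eps d$ neighbours) with target lengths $n_i$, getting edge-disjoint path forests $F_i$ with $n_i\le|V(F_i)|\le(1+\eps)n_i$, at most $d^{9/10}$ paths each, and each vertex an endpoint of at most $d^{19/20}$ of the $F_i$-paths. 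Now for each active $i$: $F_i$ has at most $d^{9/10}$ components, so we must join them up (and join the whole thing to the $J$-endvertex of $P_i$) using at most $d^{9/10}$ connecting paths through $W$; each connecting path goes between two neighbourhoods $N_G(u),N_G(u')$ where $u,u'$ are current endvertices, and by Lemma~\ref{connectingwithindense} there are $qd^2$ choices, of which we have used up far fewer (across all $i$ and all joins, at most $(d/2)\cdot d^{9/10}\cdot(1/q)\ll qd^2$ edges of $W$ are touched at any fixed pair, and more crucially at any fixed endvertex the bounded-endpoint conditions keep the demand at $o(qd^2)$) — so a valid choice always exists. Each join costs $\le 1/q$ extra edges and $\le1/q$ extra vertices, contributing at most $(d/2)\cdot d^{9/10}\cdot(1/q)$ stray edges total, which is $o(\eps|V(G)|d)$. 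The result, for each active $i$, is a single path of length $(1\pm\eps)\ell$; trim it to exactly $P_\ell$, losing $O(\eps d)$ edges per index, total $\le\eps|V(G)|d/4$.

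Finally, the edges of $G$ not used above — the reserved $\Omega(\eps)|V(G)|d$ edges plus the edges of the $F_i$-forests that sum to length within the corollary's budget are all used; what is genuinely left over is $G$ restricted to a set of edges of density $O(\eps)$ around each vertex (from $J$, from $W$, from the discarded short-$P_i$ slack, from trimming), so there are at most, say, $10^4\eps|V(G)|d$ leftover edges — and the reserved part itself, being a $(p',d,K)$-dense-ish connected graph, can be decomposed into copies of $P_\ell$ up to an $\eps|V(G)|d$ error by a direct application of Lemma~\ref{Lemma_dense_specfic_lengths} with $r$ copies of the target length $\ell$ (using $\sum \ell \le (1-10^3\eps)|V'|d/2$ after we have reserved exactly the right fraction), then trimming to $P_\ell$ exactly. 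Summing all error terms gives a decomposition of all but at most $\eps|V(G)|d$ edges of $G\cup P_1\cup\dots\cup P_{d/2}$ into copies of $P_{(1-\eps)d}$.

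The main obstacle I expect is the bookkeeping around the budget mismatch: $\sum_i n_i$ can be as large as $\Theta(d^2)$ while Corollary~\ref{Corollary_dense_specfic_lengths} only accepts $O(|V(G)|d)=O(Kd^2)$ — this is actually fine numerically (the budget is linear in $d^2$ with a constant depending on $K$ and $\eps$), but one must check that after discarding $P_i$'s with small slack the surviving lengths really do fit, that the discarded edges are genuinely $\le\eps|V(G)|d$, and that enough edges of $G$ remain (roughly a $10^3\eps$-fraction) to absorb the leftover clean-up as a final $P_\ell$-decomposition via Lemma~\ref{Lemma_dense_specfic_lengths}. The second delicate point is verifying the bounded-endpoint hypotheses propagate correctly: the $\sqrt{d}$-bound on how often a vertex of $G$ is an endpoint of some $P_i$, combined with the $d^{19/20}$-bound from the corollary, must keep every single endvertex from demanding more than the $qd^2$ available connecting paths through $W$, which needs $q$ chosen small enough relative to all of $\lambda,1/K,\eps$ but still with $1/d\ll q$.
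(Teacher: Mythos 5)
Your overall architecture matches the paper's: reserve a connector set $W$ via Lemma~\ref{connectingwithindense}, build path forests of prescribed sizes in $G-J-W$ via Corollary~\ref{Corollary_dense_specfic_lengths}, and stitch them to the attached paths through $W$. But there is a genuine quantitative gap at your first step. When $\ell-p_i<\eps d$ you ``discard $P_i$, losing at most $(d/2)\cdot\eps d$ edges in total''. Discarding $P_i$ does not cost $\ell-p_i$ edges; it costs all $p_i>(1-2\eps)d$ edges of $P_i$, since a path of length $\ell-1$ contains no copy of $P_\ell$. Nothing prevents all $d/2$ attached paths from having length, say, $(1-\tfrac{3}{2}\eps)d$, in which case you discard roughly $d^2/2$ edges, while $|V(G)|$ can be as small as about $(1-p)d$; that is, you lose a constant fraction of $|V(G)|d$ rather than an $\eps$-fraction, which breaks the final error bound. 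The paper avoids this entirely: it never insists that $P_i$ be extended to length exactly $(1-\eps)d$, but instead chooses $n_i$ so that $\ell_i+n_i\equiv 0\pmod{(1-\eps)d}$ and cuts the resulting long path $P_i\cup F_i\cup(\text{connectors})$ into \emph{several} copies of $P_{(1-\eps)d}$; no attached path is ever wholly thrown away, and the slack is accounted for in aggregate rather than per path.

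Two further points. First, your budget worry is misdirected: since each $n_i\le(1-\eps)d$, one has $\sum_{i\le d/2}n_i\le(1-\eps)d\cdot d/2\le(1-\eps/4)|V(G')|d/2$ because $|V(G')|\ge(1-2p)d$ and $p\ll\eps$, so the corollary's hypothesis holds without discarding anything; the paper in fact \emph{pads} the list with extra indices having $\ell_i=0$ and $n_i=(1-\eps)d$ until the sum nearly exhausts $|E(G')|$, and this single application is exactly how the remaining edges of $G$ get decomposed. Second, your separate clean-up phase --- ``a direct application of Lemma~\ref{Lemma_dense_specfic_lengths}\ \ldots\ then trimming to $P_\ell$ exactly'' --- does not work as stated: that lemma returns path \emph{forests} with up to $d^{9/10}$ components, which contain no copies of $P_{(1-\eps)d}$ until they too are connected up through $W$; there is also no clean ``reserved dense subgraph'' left over, since the forests live edge-disjointly inside all of $G'$. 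Folding the leftover edges into the same joining step, as the paper does, removes both problems.
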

\begin{proof} Let $q$ be such that $1/d\ll q\ll \lambda$.
By Lemma~\ref{connectingwithindense}, there is a set $W\subset V(G)$ with $|W|\leq \lambda |V(G)|$ and the following property.

\stepcounter{propcounter}
\begin{enumerate}[label = {{{\textbf{\Alph{propcounter}}}}}]%\arabic{enumi}
\item  \label{Wiconnects} For each distinct $v,w\in V(G)$, there are at least $qn^2$ edge-disjoint paths with length at most $1/q$ between $N_G(v)$ and $N_G(w)$ in $G[W]$ with all their vertices in $W$. \label{Wconnects} 
\end{enumerate}

Now, as $G-J$ is $(p, d, K)$-dense, and $|W|\leq \lambda d$,
$G':=G-J-W$ is $(2p,d,K)$-dense, and
\begin{equation}\label{eqn:Gprimesize}
|V(G')|\geq |V(G)\setminus J|-|W|\geq \delta(G-J)-\lambda d\geq (1-p)d-pd=(1-2p)d.
\end{equation}
 For each $i\in [d/2]$, let $\ell_i$ be the length of $P_i$, and let $n_i\in [(1-\eps)d]$ be such that $\ell_i+n_i=0\mod(1-\eps)d$.
 Note that 
 \[
 \sum_{i\in [d/2]}n_i\leq (1-\eps)d\cdot d/2\overset{\eqref{eqn:Gprimesize}}{\leq} (1-\eps/4)|V(G')|d/2.
 \]
Using this, let $s$ be the largest integer such that
\begin{equation}\label{eqn:sdefn}
(s-d/2)\cdot (1-\eps)d+\sum_{i\in [d/2]}n_i\leq (1-\eps/4)|V(G')|d/2,
\end{equation}
and, for each $d/2<i\leq s$, let $\ell_i=0$ and $n_i=(1-\eps)d$. Note that, by the choice of $s$,
\begin{align}
(1-\eps/4)|V(G')|d/2&\overset{\eqref{eqn:sdefn}}{\geq}\sum_{i\in [s]}n_i=(s-d/2)\cdot (1-\eps)d+\sum_{i\in [d/2]}n_i\geq (1-\eps/4)|V(G')|d/2-(1-\eps)d\nonumber \\
&\geq (1-\eps)|V(G)|d/2\geq |E(G)|-\eps |V(G)|d/2,\label{eqn:last}
\end{align}
where we have used that $\Delta(G)\leq d$.

By Corollary~\ref{Corollary_dense_specfic_lengths}, there are edge-disjoint path forests $F_1,\ldots,F_{s}\subset G'$, each of at most $d^{9/10}$ paths, such that $n_i\leq |V(F_i)|\leq (1+\eps)n_i$ for each $i\in [s]$ and every vertex in $V(G_j)$ appears as the endvertex of at most $d^{19/20}$ of the paths in $F_1,\ldots,F_s$.
Now, for each $i\in [s]$,  let $E_i$ be a set of pairs of vertices from the endvertices of $F_i$ and $P_i$ so that $P_i\cup F_i+{E}_i$ is a path, and note that $|E_i|\leq d^{9/10}$ as $F_i$ contains at most $d^{9/10}$ paths.
Let $F\subset \{(i,xy):i\in [d/2],xy\in E_i\}$ be a maximal set for which there are edge-disjoint paths $R_{i,xy}$, $(i,xy)\in F$, for which the following hold.\stepcounter{propcounter}
\begin{enumerate}[label = {{{\textbf{\Alph{propcounter}\arabic{enumi}}}}}]
\item For each $(i,xy)\in F$, $R_{i,xy}$ is an $x,y$-path in $G$ with length at most $2/q$ and internal vertices in $W$.\label{nearlynearly1}
\item For each $i\in [d/2]$, the internal vertices of $R_{i,e}$, $(i,e)\in F$, are all distinct.\label{nearlynearly2}
\item Every vertex in $V(G)$ appears in at most $2d^{199/200}$ edges in $\bigcup_{(i,xy)\in F:v\notin \{x,y\}}R_{i,xy}$.\label{nearlynearly3}
\end{enumerate}

Suppose for contradiction that there is some $(i,xy)$ with $i\in [d/2]$ and $xy\in E_i$ such that $(i,xy)\notin F$.  By \ref{nearlynearly3} and as each vertex in $G$ appears as the endvertex of at most $\sqrt{d}$ paths $P_i$, $i\in [d/2]$, and at most $d^{19/20}$ of the paths in $F_1,\ldots,F_s$, each vertex is in at most $2d^{199/200}+2\sqrt{d}+2d^{19/20}\leq qd/4$ edges in paths in  $\bigcup_{(i,xy)\in F}R_{i,xy}$.
Using \ref{Wiconnects}, find a set $\mathcal{R}$ of $qd^2$ edge-disjoint paths of length at most $1/q$ between $N_G(x)$ and $N_G(y)$ and internal vertices in $W$. Note that, due to the maximality of $F$, every path in $\mathcal{R}$ must either contain an edge of $R_{i',e}$, $(i',e)\in F$, or contain an internal vertex in $R_{i,e}$, $(i,e)\in F$, or contain a vertex appearing in at most $2d^{199/200}$ edges in $\bigcup_{(i',e)\in F}R_{i',e}$, or there is an edge from its endpoints to $\{x,y\}$ which is in $\bigcup_{(i,xy)\in F}R_{i,xy}$.
However, in order, this implies that  
 \begin{equation}\label{eqn:lastmaybe}
|\mathcal{R}|\leq s\cdot d^{9/10}\cdot \frac{2}{q}
+d^{9/10}\cdot \frac{2}{q}\cdot d
+\frac{s\cdot 2\cdot d^{9/10}\cdot (2/q)}{2d^{199/200}}\cdot d
+2\cdot d^{9/10}\cdot d 
\leq \frac{qd^2}{2},
 \end{equation}
 where we have used that $s\leq |V(G)|\cdot (d/2)/((1-\eps)d)\leq Kd$.
As \eqref{eqn:lastmaybe} contradicts that $|\mathcal{R}|\geq qd^2$, we must then have that $F=\{(i,xy):xy\in E_i\}$.

Now, note that, for each $i\in [s]$, $P_i\cup F_i\cup(\bigcup_{e\in E_i}R_{i,e})$ is a path with length at least $\ell_i+n_i$. As, for each $i\in [s]$, $\ell_i+n_i=0\mod (1-\eps)d$, $P_i\cup F_i\cup(\bigcup_{e\in E_i}R_{i,e})$ contains edge-disjoint copies of $P_{(1-\eps)d}$ which cover at least $\ell_i+n_i$ edges. Therefore, in total, we can find edge-disjoint copies of $P_{(1-\eps)d}$ which cover at least $\sum_{i\in [s]}(\ell_i+n_i)$ edges of $G\cup (P_1\cup\ldots\cup P_{d/2})$. As the number of edges in $P_1\cup\ldots \cup P_{d/2}$ is $\sum_{i\in [s]}\ell_i$, and, from \eqref{eqn:last}, 
these paths cover all but at most $\eps |V(G)|d$ edges of $G\cup P_1\cup\ldots\cup P_{d/2}$, as required.
\end{proof}

%%%%%%%%%%%%%%%%%%%%%%%%%%%%%%%%%%%%%%%%%%%%%%%%%%%%%%%%%%%%%%%%%%%%%%%%%%%%%%%%%%%%%%%%%%%%%%%%%%%%%%%%%%%%%%%%%%%%%%%%%%%%%%%%%%%%%%%%%%%%%%%%
%%%%%%%%%%%%%%%%%%%%%%%%%%%%%%%%%%%%%%%%%%%%%%%%%%%%%%%%%%%%%%%%%%%%%%%%%%%%%%%%%%%%%%%%%%%%%%%%%%%%%%%%%%%%%%%%%%%%%%%%%%%%%%%%%%%%%%%%%%%%%%%%
%%%%%%%%%%%%%%%%%%%%%%%%%%%%%%%%%%%%%%%%%%%%%%%%%%%%%%%%%%%%%%%%%%%%%%%%%%%%%%%%%%%%%%%%%%%%%%%%%%%%%%%%%%%%%%%%%%%%%%%%%%%%%%%%%%%%%%%%%%%%%%%%

\subsection{Proof of Lemma~\ref{Lemma_decompose_inside_dense_bits}}\label{sec:finalproofdecomposedensebits}
Finally, then, we can prove Lemma~\ref{Lemma_decompose_inside_dense_bits}.
\begin{proof}[Proof of Lemma~\ref{Lemma_decompose_inside_dense_bits}] Take $q$ and $\lambda$ with $1/d\ll q\ll \lambda \polysmall \eta$.
We will prove the lemma by induction on $t=|\mathcal{F}|$. If $|\mathcal{F}|=0$, then we must have that all of the path forests are empty, as there is nowhere for their endvertices to go, and thus the result is trivial in this case. Suppose then that $t\geq 1$ and that the lemma is true if $|\mathcal{F}|=t-1$, and let $|\mathcal{F}|=t$.

Let $H=G_t$, and, using \ref{prop:densedecomp1}, let $X_H\subseteq V(H)$ be such that $X_H$ contains all the vertices of $\cup_{i\in [d/2]}\mathcal{P}_i$ in $V(G_i)$ (which are necessarily endpoints), and, for each $v\in V(H)$, $d_{H}(v,X_H)=(1\pm \eta)pd$. From this, and as $H$ is $(\eta,d,K)$-dense, we have that $H-X_H$ is $(2p,d,K)$-dense. Therefore, by Lemma~\ref{Lemma_partition_to_connected_sets}, we can find some $s\in \N$ and a set $J\subset V(H)\setminus X_H$ and vertex-disjoint subgraphs $H_{1},\ldots,H_{s}$ whose vertex sets partition $V(H)\setminus X_H$ such that each $H_{i}$, $i\in [s]$, is $(\lambda^{1/4},\lambda,d)$-connected, each $H_{i}-J$ is $(4p, d, K)$-dense, $s\leq 2K$, and $|J|\leq \sqrt{\lambda}d$.
Furthermore, using that $H$ is $(\eta,d,K)$-dense, and so each vertex in $X_H$ has at least $(1-\eta)d-(1+\eta)pd\geq d/2$ neighbours in $V(H)\setminus X_H$, partition $X_H$ as $\bigcup_{i\in [s]}X_{H,i}$ so that, for each $i\in [s]$ and $v\in X_{H,i}$, $v$ has at least $d/4K$ neighbours in $G$ in $V(H_{i})$.

Let $\mathcal{Q}^0=\emptyset$ and, for each $i\in [d/2]$, let $\mathcal{P}_i^0=\mathcal{P}_i$.
We will show by induction the following claim, where, essentially, for each $j$ in turn, we look to connect paths in $\mathcal{P}_i^{j-1}$ (for each $i\in [d/2]$) with endvertices in $H_j$ together while decomposing at most $1$ path from each $\mathcal{P}_i^{j-1}$, $i\in [d/2]$, along with the unused edges in $H_j$ into copies of $P_{(1-\eps)d}$ using Lemma~\ref{lem:densespotplusattachedpathdecomposes} (below, these copies of $P_{(1-\eps)d}$ appear as those in the set $\mathcal{Q}^j$, while $\mathcal{P}_i^{j}$ will be some of the paths in $\mathcal{P}_i^{j-1}$ which have been joined up where necessary so that they have no endpoints in $H_j$).
%that, for each $0\leq j\leq s$, we can find a path forest $\mathcal{P}_i^{(j)}$ containing $\mathcal{P}_i^{(j-1)}$
% that, for each $j'\in [j]$, we can connect together all paths in $\mathcal{P}_i$, $i\in [d/2]$, coming into $G_{j'}$, and mostly decompose the rest of $G_{j'}$ along with these remaining paths into paths with length $(1-\eps)d$. That is, we show the following.

\begin{claim}\label{lastclaim}
For each $0\leq j\leq s$, there are edge-disjoint path forests $\mathcal{P}_i^j$, $i\in [d/2]$, and $\mathcal{Q}^j$ such that the following hold (where \emph{\ref{prop:densedecomp1new}}--\emph{\ref{prop:densedecomp3new}} allow us to maintain the properties for the paths $\mathcal{P}_i^j$ that will later allow us to apply the induction hypothesis to these paths and $G_1,\ldots,G_{t-1}$).
\stepcounter{propcounter}
\begin{enumerate}[label = {{\emph{\textbf{\Alph{propcounter}\arabic{enumi}}}}}]
        \item\labelinthm{prop:densedecomp1new} For each $i\in [t-1]$, there is some $X_i\subseteq V(G_i)$ so that $X_i$ contains all of the vertices of $V(\mathcal{P}^j_1)\cup\ldots\cup V(\mathcal{P}^j_{d/2})$ in $V(G_i)$, and, for each $v\in V(G_i)$, $d_{G_i}(v,X_i)=(1\pm \eta)pd$.% and $d_{H_i}(v, C_i)\leq 2pd$ for each $c\in C_i$ and $v\in H_i$.
        \item\labelinthm{prop:densedecomp2new}  Each vertex $v\in V(G_1)\cup \ldots \cup V(G_{t-1})$ is an endpoint of in total at most $\sqrt{d}$ paths from $\mathcal{P}^j_1,\ldots, \mathcal{P}_{d/2}^j$.
        \item\labelinthm{prop:densedecomp3new} For each $j'\in [t-1]$ and $i\in [d/2]$ at most $\sqrt{d}$ of the paths in $\mathcal{P}_i^j$ have at least one endpoint in $G_{j'}$.
\item For each $i\in [d/2]$, the path forest $\mathcal{P}_i^j$ is contained in $\mathcal{P}_i\cup (\bigcup_{j'\leq j}H_{j'})$ and its paths have both endpoints in $\cup_{j'>j}X_{j'}$. \labelinthm{prop1}
\item $\mathcal{Q}^j$ is a collection of edge-disjoint copies of $P_{(1-\eps)d}$ with edges in $E(\bigcup_{j'\leq j}H_{j'})\cup (\bigcup_{i\in [d/2]}E(\mathcal{P}_i))$ which is edge-disjoint from each $\mathcal{P}_i^j$, $i\in [d/2]$.\labelinthm{prop2}
\item The number of edges in $\bigcup_{j'\leq j}H_{j'}$ and $\bigcup_{i\in [d/2]}\mathcal{P}_i$ which are not in $\mathcal{P}_i^j$, $i\in [d/2]$, or $\mathcal{Q}^j$ is at most $\sum_{j'\leq j}\eps |V(H_{j'})|d/2$.\labelinthm{prop3}
\end{enumerate}
\end{claim}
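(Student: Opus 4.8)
The plan is to prove Claim~\ref{lastclaim} by induction on $j$, processing the connected pieces $H_1,\ldots,H_s$ of $H=G_t$ one at a time; properties \ref{prop:densedecomp1new}--\ref{prop:densedecomp3new} are carried along precisely so that, once $j=s$ and the surviving path forests have no endvertex left in $V(H)$, the outer induction on $t$ can be applied to $G_1,\ldots,G_{t-1}$ together with those forests. For $j=0$ take $\mathcal{P}_i^0=\mathcal{P}_i$ and $\mathcal{Q}^0=\emptyset$: then \ref{prop:densedecomp1new}--\ref{prop:densedecomp3new} are just \ref{prop:densedecomp1}--\ref{prop:densedecomp3} restricted to $G_1,\ldots,G_{t-1}$, property \ref{prop1} holds since every endvertex of $\mathcal{P}_i$ lying in $V(H)$ lies in $X_H=\bigcup_{j'}X_{j'}$ by \ref{prop:densedecomp1}, and \ref{prop2}--\ref{prop3} are vacuous. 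Assume the statement for $j-1$; to pass to $j$, note first that since $\mathcal{P}_i^{j-1}$ is a path forest whose endvertices are among the original endvertices of $\mathcal{P}_i$, and at most $\sqrt d$ paths of $\mathcal{P}_i$ meet $V(G_t)$ by \ref{prop:densedecomp3}, at most $2\sqrt d$ paths of $\mathcal{P}_i^{j-1}$ have an endvertex in $X_j\subseteq V(H)$; call these the \emph{active} paths for $i$.

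Second, I would set up rerouting inside $H_j$. Deleting $J$ from $H_j$ leaves a $(4p,d,K)$-dense graph which is still $(\lambda^{1/4},\lambda/2,d)$-connected by Observation~\ref{obs_remains_connected}, so Lemma~\ref{connectingwithindense}, applied with a parameter $\mu\ll p,1/K,\eps$ in place of its $\eps$, gives $W_j\subseteq V(H_j)\setminus J$ with $|W_j|\le 2\mu|V(H_j)|$, $d_{H_j}(v,W_j)\le 2\mu d$ for all $v$, and at least $qd^2$ edge-disjoint paths of length at most $1/q$ inside $(H_j-J)[W_j]$ between the $H_j$-neighbourhoods of any two vertices. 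Then, for each $i$ in turn, I would splice the active paths together through $W_j$: repeatedly take two distinct active paths — or one active path having two endvertices in $X_j$ together with any other active path, so that no cycle is ever formed — choose an endvertex in $X_j$ of each, and join them by going from a neighbour of the first, along a short path in $W_j$ supplied by Lemma~\ref{connectingwithindense}, to a neighbour of the second, each such $W_j$-path picked edge-disjoint from all earlier ones. There are at most $d^{3/2}$ splices over all $i$, each using $O(1/q)$ edges, with $qd^2$ choices available for each, so a greedy choice succeeds, and the total number of edges of $H_j$ used is at most $\lambda d^2/100$; hence $H_j$ with these edges removed is still $(\lambda^{1/4},\lambda/2,d)$-connected, and as the removed edges are incident to $W_j$ it is still $(cp,d,K)$-dense off $J\cup W_j$ for a small absolute constant $c$. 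After splicing, at most one active path survives for each $i$, with exactly one endvertex in $X_j$; put every other resulting path into $\mathcal{P}_i^j$, and note no path of $\mathcal{P}_i^j$ has an endvertex in $X_{j'}$ for any $j'\le j$ (splicing and discarding create no new endvertex in $X_H$, and inductively none lay in $X_{j'}$ for $j'<j$).

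Third — the crux — I would decompose what remains of $H_j$ together with the at most $d/2$ leftover paths via Lemma~\ref{lem:densespotplusattachedpathdecomposes}. The obstacle is that each leftover path ends in $X_j$, which may have order $pd$ vertices, whereas Lemma~\ref{lem:densespotplusattachedpathdecomposes} requires its attached paths to meet $G$ inside a set of size at most $\eps d/4$. To circumvent this, take a uniformly random $J^*\subseteq V(H_j)$ of size $\lceil\mu d\rceil$: since each endvertex of an active path has at least $d/4K$ neighbours in $V(H_j)$, with positive probability every leftover endvertex has a neighbour in $J^*$, and these neighbours can be chosen so that no vertex of $J^*$ is used more than $\sqrt d$ times; extend each leftover path by that single edge so it now ends in $J^*$. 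Now apply Lemma~\ref{lem:densespotplusattachedpathdecomposes} to $H_j$ with the splice edges removed, which is $(\lambda^{1/4},\lambda/2,d)$-connected, is $(cp,d,K)$-dense off the set $J\cup W_j\cup J^*$ of size at most $\eps d/4$, has minimum degree at least $d/8K$ and maximum degree at most $d$; feed it $J\cup W_j\cup J^*$ as its defect set, the at most $d/2$ extended leftover paths as $P_1,\ldots,P_{d/2}$ (at most $\sqrt d$ ending at any vertex, by \ref{prop:densedecomp2new} and our choice of $J^*$), and a suitably scaled error parameter. This produces edge-disjoint copies of $P_{(1-\eps)d}$ covering all but at most $\eps|V(H_j)|d/2$ of the edges of $H_j$ and the leftover paths; adjoin them to $\mathcal{Q}^{j-1}$ to obtain $\mathcal{Q}^j$. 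Then \ref{prop1}--\ref{prop3} follow from the construction, while \ref{prop:densedecomp1new}--\ref{prop:densedecomp3new} are inherited from stage $j-1$ because splicing and one-edge extensions never increase any vertex's endvertex-count nor the number of paths meeting any $G_{j'}$ with $j'\in[t-1]$, and the sets $X_i$, $i\in[t-1]$, are untouched and still contain every vertex of $\bigcup_k\mathcal{P}_k^j$ lying in $V(G_i)$; this closes the induction. The main obstacle is exactly this step — compressing the possibly spread-out leftover endpoints into a genuinely small defect set while preserving $(\lambda^{1/4},\lambda/2,d)$-connectedness and $(O(p),d,K)$-density after deleting that defect set and the splice edges, all with the running constants kept inside the hierarchy $1/d\ll\lambda\ll p,1/K\ll\eps$ demanded by Lemma~\ref{lem:densespotplusattachedpathdecomposes}.

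One further delicate point is that an active path with both endvertices in $X_j$ must be chained to other active paths rather than closed into a cycle; once this is respected, maintaining \ref{prop:densedecomp1new}--\ref{prop:densedecomp3new} throughout is automatic, since every operation we perform only merges or truncates members of the path forests.
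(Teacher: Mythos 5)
Your proposal follows essentially the same route as the paper: induction on $j$ over the connected pieces $H_1,\ldots,H_s$, splicing together the path-ends landing in $X_{H,j}$ via short connectors inside $H_j$ until (by maximality) at most one path per forest still ends there, and then decomposing the rest of $H_j$ together with the leftover paths via Lemma~\ref{lem:densespotplusattachedpathdecomposes}, with \ref{prop:densedecomp1new}--\ref{prop:densedecomp3new} inherited because no operation creates new endvertices in $G_1,\ldots,G_{t-1}$.

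Two of your choices differ from the paper's, and one needs repair. For the splicing you reserve a connector set $W_j$ via Lemma~\ref{connectingwithindense}; but that lemma, as stated, only supplies many short $W$-paths between $N_G(v)$ and $N_G(w)$ for $v,w$ vertices \emph{of the graph it is applied to} (here $H_j-J$), whereas your $x,y$ lie in $X_{H,j}$, outside that graph. You would either have to rerun its Azuma argument for the pairs of sets $N_G(x,V(H_j))$, $N_G(y,V(H_j))$ (each of size at least $d/4K$), or do what the paper does: apply Corollary~\ref{cor_short_paths_connectivity} directly to $H_j$ with these two sets, choosing each connector greedily to avoid a small bad set of previously used vertices and edges. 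Second, your one-edge extension into a random $J^*$ is not in the paper, which instead keeps the leftover paths' endvertices in $X_{H,j}$ and absorbs them into the role of $J$ in Lemma~\ref{lem:densespotplusattachedpathdecomposes}; your fix is a reasonable alternative, but note that the extension edges run between $X_{H,j}$ and $V(H_j)$ and so lie outside $E(\bigcup_{j'\leq j}H_{j'})\cup\bigcup_i E(\mathcal{P}_i)$, so \ref{prop2} (and likewise \ref{prop1}, already an issue for the paper's own connector edges $xx',yy'$) must be read as also permitting edges of $G_t$ incident to $X_H$ -- harmless for the final count, since those edges are discarded anyway, but it means the claim as literally stated is not what your construction produces. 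With the connectivity step repaired as above, the rest (base case, the at-most-one-surviving-path argument, and the edge count for \ref{prop3}) matches the paper.
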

\begin{proof}
Note that this is easily true for $j=0$ using that we set $\mathcal{P}_i^0=\mathcal{P}_i$ for each $i\in [d/2]$ and $\mathcal{Q}^0=\emptyset$, where \ref{prop:densedecomp1new}--\ref{prop:densedecomp3new} follow from \ref{prop:densedecomp1}--\ref{prop:densedecomp3}. Assume then that $j\in [s]$ and that we have edge-disjoint path forests $\mathcal{P}_i^{j-1}$, $i\in [d/2]$, and $\mathcal{Q}^{j-1}$ satisfying \ref{prop:densedecomp1new}--\ref{prop3} with $j-1$ in place of $j$.
For each $i\in [d/2]$, let $E_i\subset X_{H,j}^{(2)}$ be a maximal set of pairs of vertices from the endpoints of $\mathcal{P}_i^{j-1}$ in $X_{H,j}$ such that $\mathcal{P}_i^{j-1}+E_i$ is a path forest (where it may be that none of the pairs in $E_i$ are edges in the graph). Note that in $\mathcal{P}_i^{j-1}+E_i$ at most 1 path will have an endpoint in $V(H_j)$, by the maximality of $E_i$.

We will now greedily find edge-disjoint paths $R_{i,e}$, $i\in [d/2]$ and $e\in E_i$, such that, the following hold.
\stepcounter{propcounter}
\begin{enumerate}[label = {{{\textbf{\Alph{propcounter}\arabic{enumi}}}}}]
\item For each $i\in [d/2]$ and $xy\in E_i$, $R_{i,xy}$ is an $x,y$-path in $H_j$ with length at most $2/q$.\label{nearly1}
\item For each $i\in [d/2]$, the internal vertices of $R_{i,e}$, $e\in E_i$, are all pairwise disjoint.\label{nearly2}
\item Every vertex in $V(H_j)$ appears in at most $2d^{3/4}$ edges in $\bigcup_{i\in [d/2],e\in E_i}R_{i,e}$.\label{nearly3}
\end{enumerate}
To see this is possible, suppose we have found paths $R_{i,e}$, $(i,e)\in F$, for some $F\subset \{(i,e):i\in [d/2],e\in E_j^i\}$ and are looking to find the path $R_{i',xy}$, for some $i'\in [d/2]$ and $xy\in E_i$ with $(i',xy)\notin F$.
 Let $Z_{i',xy}$ be the set of vertices $v$ in $H_j$ which either 
\begin{itemize}
    \item are in at least $d^{3/4}$ edges in $\bigcup_{(i,e)\in F}R_{i,e}$, or
    \item are an internal vertex of one of the paths $R_{i',e}$, $(i',e)\in F$, or
    \item are such that $xv$ or $yv$ are in some $R_{i,e}$, $(i,e)\in F$.
\end{itemize}
By \ref{prop:densedecomp3new} we have $|F|\leq d^{3/2}$, so that, using \ref{prop:densedecomp3new} and \ref{prop:densedecomp2new}, we have %will have found at most $\sqrt{d}$ paths $R_{i,e}$, $(i,e)\in F$ going through $x$, and similarly at most  paths $R_{i,e}$, $(i,e)\in F$ going through $y$.
\begin{equation}\label{eq:newZsize}
|Z_{i',xy}|\leq \frac{d^{3/2}\cdot (2/q)}{d^{3/4}}+(2/q)\cdot \sqrt{d}+2\sqrt{d} \leq \frac{qd}{4}.
\end{equation}

Let $E_F$ be the set of all the edges of all the paths $R_{i,e}$, $i\in [d/2]$ and $e\in E_i$, so that $|E_F|\leq (2/q)\cdot d^{3/2}\leq q d^2/2$ by \ref{prop:densedecomp3new}. 
By the choice of $X_{H,j}$, and as $x,y\in X_{H,j}$ implies $d_G(x,V(H_j)),d_G(y,V(H_j))$, and as $\lambda \ll 1/K$ and $H_j$ is $(\lambda^{1/4},\lambda,d)$-connected, by Corollary~\ref{cor_short_paths_connectivity} 
there is a set $\mathcal{R}$ of at least $qd^2$ edge-disjoint paths of length at most $1/q$ between $N_G(x,V(H_j)\setminus Z_{i',xy})$ and $N_G(y,V(H_j)\setminus Z_{i',xy})$ in $G$.
As $\Delta(G)\leq d$, for each $v\in V(G)$ at most $d$ of the paths in $\mathcal{R}$ go through $v$. Therefore, as $|E_F|\leq qd^2/2$ and as $|Z_{i',xy}|\leq qd/4$ by \eqref{eq:newZsize}, we can find a path $R'_{i',xy}\in \mathcal{R}$ which has no vertex in $Z_{i',xy}$ or edge in $E_F$. 
Say $R'_{i',xy}$ has endpoints $x'\in N_G(x,V(H_j)\setminus Z_{i',xy})$ and $y'\in N_G(y,V(H_j)\setminus Z_{i',xy})$. 
Add edges $xx'$ and $yy'$ to $R'_{i',xy}$ and call this path $R_{i',xy}$. Then, by the choice of $Z_{i',xy}$ (so that $xx',yy'\notin E_F$) and $E_F$, this path avoids all of the edges in $E_F$, and its internal vertices are disjoint from those of  $R_{i',e}$, $(i',e)\in F$, and no vertex in $V(H_j)$ appears in more than $2d^{3/4}$ edges in $\bigcup_{(i,e)\in F\cup \{(i',xy)\}}R_{i,e}$. Therefore, we can pick $R_{i',xy}$ as required, and thus can pick paths $R_{i,e}$, $i\in [d/2]$ and $e\in E_i$ satisfying \ref{nearly1}--\ref{nearly3}.
 %, we can now find distinct vertices $x',y'\in V(G_i)\setminus Z$ with $xy,x'y'\in E(H)$.

Now, for each $i\in [d/2]$, by the choice of $E_i$ and by \ref{nearly1} and \ref{nearly2}, we have that $\mathcal{P}^{j-1}_i\cup(\bigcup_{e\in E_i}R_{i,e})$ is a path forest. Let $\mathcal{P}^j_i$ be the path forest of paths in $\mathcal{P}^{j-1}_i\cup(\bigcup_{e\in E_i}R_{i,e})$ with no endvertices in $X_j$, so that, therefore, \ref{prop1} holds.
For each $i\in [d/2]$, by the maximality of $E_i$ we have that there is at most one path in $\mathcal{P}^{j-1}_i\setminus \mathcal{P}^j_i$ -- call this path $P_i$ where it exists, and otherwise set $P_i=\emptyset$. Let $H'_j$ be $H_j$ with the edges of the paths in $\mathcal{P}^j_i$ removed. 
Then, as $H_j-J$ is $(4p,d,K)$-dense, by \ref{nearly3}, we have that $H_j'-J$ is $(5p,d,K)$-dense. 
Furthermore, as we removed at most $(2/q)\cdot d^{3/2}\leq \lambda d^2/2$ edges from $H_j$ to get $H_j'$, we have that $H_j'$ is $(\lambda^{1/4},\lambda/2,d)$-connected. 
Finally, each path $P_i$ which is non-empty has an endvertex with at least $d/4K-\sqrt{d}\geq d/8K$ neighbours in $V(H_j)$ in $G'$.
Thus, by Lemma~\ref{lem:densespotplusattachedpathdecomposes},  there is a set, $\mathcal{Q}$ say, of copies of $P_{(1-\eps)d}$ which decomposes all but at most $\eps |V(H_j)|d/2$ of the edges of $G'_j\cup (\cup_{i\in [d/2]}P_i)$. Let $\mathcal{Q}^j=\mathcal{Q}^{j-1}\cup \mathcal{Q}$, and note that both \ref{prop2} and \ref{prop3} are satisfied. 
Finally, then, note that \ref{prop:densedecomp1new}--\ref{prop:densedecomp3new} hold as, for each $i\in [t]$, all the endvertices of $\mathcal{P}^j_1,\ldots,\mathcal{P}^j_{d/2}$ in $V(G_i)$ are endvertices of $\mathcal{P}^{j-1}_1,\ldots,\mathcal{P}^{j-1}_{d/2}$. Thus, \ref{prop:densedecomp1new}--\ref{prop3} hold, as required.
\claimproofend
Setting $j=s$ in Claim~\ref{lastclaim}, take edge-disjoint path forests $\mathcal{P}_i^s$, $i\in [d/2]$, and $\mathcal{Q}^s$, such that \ref{prop:densedecomp1new}--\ref{prop3} hold. Now, \ref{prop1} implies that the collections of paths all have no endpoints in $H$, and therefore $\mathcal{Q}^s$ is a collection of edge disjoint copies of $P_{(1-\eps)d}$ in $G$ by \ref{prop2} which, by \ref{prop3}, decomposes all but at most $\eps \sum_{j\in [s]}|V(H_j)|d/2$ of the edges of $\bigcup_{j\in [s]}G_j$ and $\mathcal{P}_i$, $i\in [d/2]$, which are not in $\mathcal{P}^s_i$, $i\in [d/2]$. As there are at most $|J|d+|V(G_t)|\cdot pd\leq \eps |V(G_t)|d/2$ edges of $G_j$ which are not in $\bigcup_{j\in [s]}H_j$, all but at most $\eps |V(G_t)|d$ edges of $G_t$ and $\mathcal{P}_i$, $i\in [d/2]$, are contained in $\mathcal{Q}^s$ or $\mathcal{P}^s_i$, $i\in [2]$. Then, using \ref{prop:densedecomp1new}--\ref{prop:densedecomp3new}, by the induction hypothesis on $\mathcal{F}'=\{G_1,\ldots,G_{t-1}\}$ and the paths $\mathcal{P}^s_i$, $i\in [d/2]$, there is a set $\mathcal{Q}$ of copies of $\mathcal{P}_{(1-\eps)d}$ which decomposes all but at most $\eps \sum_{i=1}^{t-1}|V(G_i)|d$ edges of $\mathcal{F}'=\{G_1,\ldots,G_{t-1}\}$ and the paths $\mathcal{P}^s_i$, $i\in [d/2]$. Combined with $\mathcal{Q}^s$, this gives a set of copies of $P_{(1-\eps)d}$ which decomposes all but at most $\eps \sum_{i=1}^{t}|V(G_i)|d=\eps |V(G)|d$ of the edges of $\mathcal{F}$ and $\mathcal{P}_i$, $i\in [d/2]$. This completes the proof of the induction step, and hence the lemma.
\end{proof}

%%%%%%%%%%%%%%%%%%%%%%%%%%%%%%%%%%%%%%%%%%%%%%%%%%%%%%%%%%%%%%%%%%%%%%%%%%%%%%%%%%%%%%%%%%%%%%%%%%%%%%%%%%%%%%%%%%%%%%%%%%%%%%%%%%%%%%%%%%%%%%%%
%%%%%%%%%%%%%%%%%%%%%%%%%%%%%%%%%%%%%%%%%%%%%%%%%%%%%%%%%%%%%%%%%%%%%%%%%%%%%%%%%%%%%%%%%%%%%%%%%%%%%%%%%%%%%%%%%%%%%%%%%%%%%%%%%%%%%%%%%%%%%%%%
%%%%%%%%%%%%%%%%%%%%%%%%%%%%%%%%%%%%%%%%%%%%%%%%%%%%%%%%%%%%%%%%%%%%%%%%%%%%%%%%%%%%%%%%%%%%%%%%%%%%%%%%%%%%%%%%%%%%%%%%%%%%%%%%%%%%%%%%%%%%%%%%

%%%%%%%%%%%%%%%%%%%%%%%%%%%%%%%%%%%%%%%%%%%%%%%%%%%%%%%%%%%%%%%%%%%%%%%%%%%%%%%%%%%%%%%%%%%%%%%%%%%%%%%%%%%%%%%%%%%%%%%%%%%%%%%%%%%%%%%%%%%%%%%%
\section{Concluding remarks}
\label{sec:concluding}
In this paper, we showed that it is possible to approximately decompose $d$-regular graphs into paths with length approximately $d$. Improving this to give a full answer to Kotzig's orginal problem (Problem~\ref{prob:kotzig}), even for large $d$, appears very hard and certainly requires further new ideas and methods. This is true even for strengthening Theorem~\ref{Theorem_main} to find paths of length $d$ instead of $\lceil(1-\eps)d\rceil$, where it should be noted that our `dense spots' may have (slightly) fewer than $d$ vertices and thus may not approximately decompose into paths of length $d$. Though this paper is motivated by the paucity of results on the decomposition of sparse graphs, we note that Kotzig's problem is unsolved even in the dense regime, and showing that, when $d=\Omega(n)$, any $d$-regular $n$-vertex graph can be decomposed into copies of $P_d$ is an interesting open problem.

\par Decompositions of regular sparse graphs into other subgraphs have also been studied, where the existence of sparse regular graphs with high girth mean that we can only study comparable questions to Problem~\ref{prob:kotzig} when the graph will be decomposed into trees. Here, Graham and H\"aggkvist \cite{haggkvist1989decompositions} conjectured in 1989 that any $2d$-regular graph decomposes into any $d$-edge tree, giving a far reaching generalisation of Ringel's conjecture. This problem is wide open. Theorem~\ref{Theorem_main} implies that every $2d$-regular graph decomposes approximately into copies of $P_d$, and it would be very interesting to generalise this to obtain an approximate decomposition of any $2d$-regular graph into any $d$-edge tree, which appears to be beyond the capabilities of the methods used here.

\textbf{Acknowledgements.} Parts of this work were carried out when the first and third authors visited the Institute for Mathematical Research (FIM) of ETH Z\"urich and when the fourth author visited the London School of Economics (LSE). We would like to thank FIM and LSE for their hospitality and for creating a
stimulating research environment.

\bibliographystyle{abbrv}
\bibliography{bib}
\appendix
\section{Near regularisation: Proof of Lemmas~\ref{thm:nearreg} and~\ref{lem:allverticesregularspanningnearregular}}

In this appendix, we prove the two lemmas we need to efficiently find near-regular subgraphs, which we restate here for convenience.

\regularisingtwo*
\regularisingone*

As noted in~\cite[Section 8]{edgedisjointcycles}, these lemmas can be proved using a recent technique of Chakraborti, Janzer, Methuku and Montgomery~\cite{regularising,edgedisjointcycles}. In this, we take any graph $G$ which is approximately regular and carefully take a random subgraph $G'$ which we can show is slightly closer to being regular with positive probability (see~\cite[Section~2.4]{edgedisjointcycles} for a more detailed sketch). This will give us the following lemma, which we can then apply iteratively to ultimately find a very nearly regular subgraph without losing very much in the average degree.

\begin{lemma}\label{lemma:foriteration} Let $1/d\ll 1$, $\eps\leq 1/100$ and $\gamma\geq 10\eps$ such that $\eps d\geq 10^3\log d$.
Let $G$ be a graph in which $d\leq d(v)\leq (1+\gamma)d$ for each $v\in V(G)$. Then, for some $d'\geq (1-2\eps)d$, $G$ contains a subgraph $G'$ with $d'\leq d_{G'}(v)\leq (1+\gamma)(1-\eps/2)d'$ and $|V(G')|\geq (1-2\eps)|V(G)|$.
\end{lemma}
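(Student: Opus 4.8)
The plan is to prove this by a single random-subgraph argument of the kind developed by Chakraborti, Janzer, Methuku and Montgomery, rather than by any iterative procedure (the iteration happens \emph{above} this lemma, when we deduce Lemmas~\ref{thm:nearreg} and~\ref{lem:allverticesregularspanningnearregular}). Given $G$ with $d\le d(v)\le(1+\gamma)d$, we want to "shave off" a bit of the degree at the high-degree vertices without losing too many vertices or dropping the minimum degree by more than a $(1-2\eps)$ factor. First I would normalise: let $d'$ be the target minimum degree, which will end up being $(1-2\eps)d$ or something very close, and aim to produce $G'\subseteq G$ with $d'\le d_{G'}(v)\le (1+\gamma)(1-\eps/2)d'$ for all $v\in V(G')$, while $|V(G')|\ge(1-2\eps)|V(G)|$. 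The key observation is that the ``window'' of allowed degrees in $G'$, namely $[d',(1+\gamma)(1-\eps/2)d']$, is a multiplicative factor $(1+\gamma)(1-\eps/2)\le 1+\gamma-\eps/2-\gamma\eps/2 < 1+\gamma$ of its lower end, so it is \emph{strictly narrower} (relative to the old window $[d,(1+\gamma)d]$) — this is exactly the ``getting closer to regular'' gain that the method is designed to extract.

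The main step is the random construction. Following the technique referenced in the excerpt (\cite{regularising,edgedisjointcycles}, and the sketch in \cite[Section~2.4]{edgedisjointcycles}), I would take a random subset $W\subseteq V(G)$ by including each vertex independently with some carefully chosen probability $p$ close to $1$ (of order $1-c\eps$ for a suitable constant $c$), and set $G'=G[W]$, then iteratively delete any vertex whose degree has fallen outside the target window. The degree of a surviving vertex $v$ in $G[W]$ is $\mathrm{Bin}(d_G(v),p)$-like and concentrates around $p\,d_G(v)$; choosing $p$ so that $p(1+\gamma)d$ lands just at the top of the new window and $pd$ lands comfortably above $d'$, Chernoff's bound (Lemma~\ref{chernoff}) shows each vertex individually falls outside the window with probability at most $e^{-\Omega(\eps^2 d)}$, which is $o(1)$ since $\eps d\ge 10^3\log d$. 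The delicate part — and the part where the CJMM technique really does the work — is controlling the \emph{knock-on} vertex losses: deleting a vertex can push its neighbours below threshold. Here one argues that the total number of deleted vertices is small either by a direct expectation/Azuma argument (Lemma~\ref{lem:Azuma}), showing the deletion process is a bounded-Lipschitz functional of the independent choices, or by the potential-function / entropy-compression style analysis used in \cite{regularising}; the upshot is that with positive probability at most $2\eps|V(G)|$ vertices are removed in total.

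The hard part will be exactly this bookkeeping on the cascading deletions: one needs the window to be wide enough (using $\gamma\ge 10\eps$) that a vertex comfortably inside it does not get ejected merely because a few neighbours leave, so that the deletion front does not propagate. Concretely I would show: a vertex $v$ is only deleted if either (a) its degree in $G[W]$ already deviated from $p\,d_G(v)$ by more than, say, $\eps pd/10$ — a low-probability event handled by Chernoff — or (b) it lost more than $\eps d/10$ neighbours to deletions; and then bound the number of type-(b) deletions by a discharging/counting argument against the type-(a) deletions, since each deleted vertex has at most $(1+\gamma)d$ neighbours and hence ``causes'' boundedly many further deletions on average. Combining, the expected number of deletions is $O(\eps|V(G)|)$ (after absorbing constants into the choice of $p$), and a Markov-type or local-lemma argument gives a fixed outcome with $\le 2\eps|V(G)|$ deletions and every remaining vertex in the window. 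Finally one records that the resulting $d'$ satisfies $d'\ge(1-2\eps)d$ by the choice of $p$, and that $G'$ is nonempty and has the stated degree bounds, completing the proof. I would then remark that this is precisely the engine that, iterated $O(\log d)$ times with $\gamma$ shrinking geometrically and $\eps$ scaled appropriately at each step, yields Lemmas~\ref{thm:nearreg} and~\ref{lem:allverticesregularspanningnearregular}.
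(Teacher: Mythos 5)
There is a genuine gap, and it is in the core mechanism rather than in the deferred bookkeeping. Your construction $G'=G[W]$ with $W$ a uniform random vertex subset (each vertex kept with probability $p$) scales \emph{every} vertex's degree by the same factor $p$ in expectation: the degrees land near $[pd,\,p(1+\gamma)d]$, whose top-to-bottom ratio is still $1+\gamma$. But the target window $[d',(1+\gamma)(1-\eps/2)d']$ has the strictly smaller ratio $(1+\gamma)(1-\eps/2)$, so no choice of $p$ can make uniform sampling work; on a $(1+\gamma)d$-regular graph, for instance, essentially all vertices would land above the top of the window and your clean-up deletion would wipe out the graph. The missing idea is an \emph{asymmetric} deletion: the paper first removes edges between vertices of degree $\geq d+1$ (keeping $\delta\geq d$), splits $V(G)$ into $U_L=\{v:d(v)\leq(1+\gamma/2)d\}$ and $U_H$ (which is then independent), and deletes each $U_L$--$U_H$ edge with probability $\eps$ \emph{and} each $U_L$ vertex with probability $\eps$. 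A high-degree vertex then loses a $\approx(2\eps-\eps^2)$ fraction of its edges while a low-degree vertex loses only $\approx\eps$, and this differential is exactly what shrinks the multiplicative window. With this scheme there is no cascade at all: Chernoff plus the symmetric local lemma give a single outcome in which every surviving vertex is already in the window, so the iterative deletion process you propose (and whose analysis you defer to a "discharging/entropy-compression" argument) is not needed and would in any case be the wrong fix for the wrong construction.

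A secondary but real problem is your concentration claim. You need each degree to stay within an additive $\Theta(\eps d)$ of a mean of order $d$, i.e.\ relative deviation $\Theta(\eps)$, which Chernoff bounds by $e^{-\Omega(\eps^2 d)}$ --- and the hypothesis $\eps d\geq 10^3\log d$ does \emph{not} make $\eps^2 d$ large (it can be as small as $(\log d)^2/d$). The paper instead applies Chernoff to the number of \emph{deleted} edges at a vertex, a variable with mean $\Theta(\eps d)$ and constant relative tolerance, yielding $e^{-\Omega(\eps d)}\leq d^{-3}$, which is what the hypothesis is calibrated for and what the local lemma needs.
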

\begin{proof} Let $n=|V(G)|$ and assume that $G$ has no edges between any vertex with degree at least $d+1$ (for otherwise we could delete such an edge and maintain that $\delta(G)\geq d$). Let $U_{{L}}=\{v\in V(G):d(v)\leq (1+\gamma/2)d\}$ and $U_{{H}}=\{v\in V(G):d(v)> (1+\gamma/2)d\}$ be the set of low and high degree vertices in $G$ respectively, and note that there are no edges in $G[U_{{H}}]$.
Let $G'$ be a random subgraph of $G$ given by
\begin{itemize}[noitemsep,nolistsep]
    \item deleting each edge from $U_{L}$ to $U_{H}$ independently at random with probability $\eps$, and
    \item deleting each vertex in $U_{L}$ independently at random with probability $\eps$.
\end{itemize}

For each $v\in V(G)$, let $B_v$ be the event that $v\in V(G')$ but $d_{G'}(v)\notin [(1-5\eps/4)d,(1-7\eps/4)(1+\gamma)d]$. Let $t=\lceil |V(G)|/d\rceil$ and let $V(G)=A_1\cup\ldots\cup A_t$ be an arbitrary partition of $V(G)$ into sets with size between $d$ and $d/2$. For each $i\in [t]$, let $B_i$ be the event that $|V(G')\cap A_i|<(1-2\eps)|A_i|$. Let $d'=(1-5\eps/4)d$, so that, as $(1-7\eps/4)\leq (1-5\eps/4)(1-\eps/2)$, if no event $B_v$, $v\in V(G)$, occurs, then the degrees of $G'$ are in $[d',(1-\eps/2)(1+\gamma)d']$. Furthermore, if no event $B_i$, $i\in [t]$, occurs, then $|V(G')|\geq (1-2\eps)|V(G)|$. Therefore, it is sufficient to show that, with positive probability, no event $B_v$, $v\in V(G)$, or $B_i$, $i\in [t]$, holds.

For each $i\in [t]$, as $|A_i|\geq d/2$, we have, by Chernoff's bound, that
\begin{equation}\label{eqn:localbound1}
\mathbb{P}(B_i)=\mathbb{P}(|A_i\setminus V(G')|> 2\eps|A_i|)\leq 2\exp(-\eps d/24)\leq d^{-3}.
\end{equation}

If $v\in U_{H}$, then, for each $u\in V(G)$ with $uv\in E(G)$, we have $u\in U_L$, and so $uv\in E(G')$ exactly when $u$ and $uv$ are not deleted, so the probability that $uv\in E(G')$ is $(1-\eps)^2$. Thus, if $v\in U_H$, then
$\E(d_{G-G'}(v))= (2\eps-\eps^2)\cdot d_G(v)$. %$=(1-2\eps+\eps^2)\big(1+\frac{3\gamma}{4}\pm \frac{\gamma}{4}\big)d.%\leq (1-2\eps+\eps^2)(1+\gamma)d\leq (1-7.5\eps)(1+\gamma)d.
As $\gamma\geq 10\eps$ and $\eps\leq 1/100$, we have here that $(1-9\eps/4)\cdot d_G(v)\geq (1-9\eps/4)(1+\gamma/2)d\geq (1-5\eps/4)d$. Thus, 
\[
\mathbb{P}(B_v)\leq \mathbb{P}(d_{G'}(v)\notin [(1-9\eps/4)\cdot d_G(v),(1-7\eps/4)\cdot d_G(v)])=\mathbb{P}(d_{G-G'}(v)\notin [(7\eps/4)\cdot d_G(v),(9\eps/4)\cdot d_G(v)]).
\]
Then, by Chernoff's bound (in particular, Lemma~\ref{chernoff} applied with $\gamma=1/10$)
\begin{equation}\label{eqn:localbound2}
\mathbb{P}(B_v)\leq 2\exp(-(2\eps-\eps^2)d/300)\leq d^{-3}.
\end{equation}

Now, suppose $v\in U_L$. If $v$ survives into $V(G')$, then, note that, for each $uv\in E(G)$, if $v\in U_L$, then the probability that $uv\notin E(G')$ is $\eps$ (the probability that $v$ is deleted), while if $v\in U_H$ then the probabilty that $uv\in E(G')$ is also $\eps$ (the probability that $uv$ is deleted). Thus, if $v\in U_L$, then 
$\E(d_{G-G'}(v)|v\in V(G'))=\eps\cdot d_G(v)$.
As $\gamma\geq 10\eps$ and $\eps\leq 1/100$, we have that $d_G(v)\leq (1+\gamma/2)d\leq (1-7\eps/4)(1+\gamma)d$, so that, by Chernoff's bound
\begin{equation}\label{eqn:localbound3}
\mathbb{P}(B_v)\leq \mathbb{P}(d_{G-G'}(v)\geq 5\eps d_G(v)/4|v\in V(G'))\leq \exp(-\eps d_G(v)/48)\leq \exp(-\eps d/48)\leq d^{-3}.
\end{equation}

By \eqref{eqn:localbound1}, \eqref{eqn:localbound2}, and \eqref{eqn:localbound3}, all the `bad events' we have defined occur with probability at most $d^{-3}$. Each `bad event' is affected by the possible deletion of at most $(1+\gamma)d$ vertices and $(1+\gamma)d$ edges, and the possible deletion of each vertex/edge affects at most $(1+\gamma)d+1$ `bad events'. As $1/d\ll 1$, we have $e\cdot d^{-3}\cdot (2(1+\gamma)d((1+\gamma)d+1)+1)\leq 1$, so by the local lemma there is some choice of $G'$ for which none of the `bad events' hold, as required.
\end{proof}

Note that if $\gamma\leq 1/10$, then we can iteratively apply Lemma~\ref{lemma:foriteration} with $\eps=\gamma/10$ 50 times to immediately get the following corollary.

\begin{corollary}\label{cor:morereg}
Let $1/d\ll 1$ and $\gamma\leq 1/10$ such that $\gamma d\geq 10^4\log d$.
Let $G$ be a graph in which $d\leq d(v)\leq (1+\gamma)d$ for each $v\in V(G)$. Then, for some $d'\geq (1-10\gamma)d$, $G$ contains a subgraph $G'$ with $d'\leq d_{G'}(v)\leq (1+\gamma/2)d'$ and $|V(G')|\geq (1-10\gamma)|V(G')|$.
\end{corollary}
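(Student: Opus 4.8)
The plan is to obtain Corollary~\ref{cor:morereg} by iterating Lemma~\ref{lemma:foriteration}, at each step feeding back the (improved) degree ratio and shrinking the parameter $\eps$ accordingly. Set $G_0=G$, $d_0=d$, $\gamma_0=\gamma$, so that $G_0$ has all degrees in $[d_0,(1+\gamma_0)d_0]$. Given $G_i$ with all degrees in $[d_i,(1+\gamma_i)d_i]$ and $\gamma_i>\gamma/2$, apply Lemma~\ref{lemma:foriteration} to $G_i$ with parameters $d\mapsto d_i$, $\gamma\mapsto\gamma_i$ and $\eps\mapsto\gamma_i/10$. Its hypotheses hold: the condition $\gamma_i\ge 10(\gamma_i/10)$ is satisfied with equality; $\gamma_i/10\le\gamma/10\le 1/100$ since $\gamma\le 1/10$; and $(\gamma_i/10)d_i\ge 10^3\log d_i$, which follows from $\gamma d\ge 10^4\log d$ by the bounds on $\gamma_i$ and $d_i$ noted below. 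This produces a subgraph $G_{i+1}\subseteq G_i$ with all degrees in $[d_{i+1},(1+\gamma_{i+1})d_{i+1}]$, where $d_{i+1}\ge(1-\gamma_i/5)d_i$, $|V(G_{i+1})|\ge(1-\gamma_i/5)|V(G_i)|$, and $1+\gamma_{i+1}=(1+\gamma_i)(1-\gamma_i/20)$.

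The crucial observation is that $\gamma_{i+1}=\gamma_i(19/20-\gamma_i/20)\le\tfrac{19}{20}\gamma_i$, so the ratio excess contracts geometrically; hence after some $k\le 14$ (in particular $k\le 50$) steps we first reach $\gamma_k\le\gamma/2$, at which point we stop. Since $\gamma_i\le\gamma\le 1/10$ throughout and $k\le 14$, Bernoulli's inequality gives $d_k\ge d\prod_{i<k}(1-\gamma_i/5)\ge d(1-\gamma/5)^{14}\ge(1-14\gamma/5)d\ge(1-10\gamma)d$, and similarly $|V(G_k)|\ge(1-10\gamma)|V(G)|$; note also $d_i\ge(1-\gamma/5)^{14}d\ge d/2$ throughout, so $1/d_i\ll 1$ remains available. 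Setting $d'=d_k$ and $G'=G_k$, all degrees of $G'$ lie in $[d',(1+\gamma_k)d']\subseteq[d',(1+\gamma/2)d']$, which is exactly the claimed conclusion.

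There is no substantive difficulty here; the only point requiring a little care is that the hypotheses of Lemma~\ref{lemma:foriteration} survive each iteration, i.e.\ that neither the degree scale $d_i$ nor the scaled parameter $\gamma_i/10$ collapses before the ratio has contracted below $1+\gamma/2$. This is ensured by the two facts just used: we only iterate while $\gamma_i>\gamma/2$ (so $\gamma_i/10>\gamma/20$), and $d_i\ge d/2$ since only $O(1)$ iterations occur; together with $\gamma d\ge 10^4\log d$ this keeps $(\gamma_i/10)d_i$ comfortably above $10^3\log d_i$ for all $d$ large. Everything else is the geometric-contraction bookkeeping of the previous paragraph.
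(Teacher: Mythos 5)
Your proof is correct and follows the same route as the paper, which establishes Corollary~\ref{cor:morereg} by iterating Lemma~\ref{lemma:foriteration}; indeed your adaptive choice $\eps_i=\gamma_i/10$ is the right way to keep the hypothesis $\gamma_i\geq 10\eps_i$ valid at every step, a point that the paper's one-line proof (``apply the lemma $50$ times with $\eps=\gamma/10$'') glosses over. The only loose end is a constant: from $\gamma_i\geq\gamma/2$ and $d_i\geq d/2$ you get $\eps_i d_i\geq \gamma d/40\geq 250\log d$, which does not literally reach the stated threshold $10^3\log d_i$ of Lemma~\ref{lemma:foriteration} --- but the paper's own iteration carries the same slack, and the Chernoff estimates inside that lemma comfortably absorb this factor.
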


We can now apply Corollary~\ref{cor:morereg} iteratively to prove Lemma~\ref{lem:allverticesregularspanningnearregular}.
\begin{proof}[Proof of Lemma~\ref{lem:allverticesregularspanningnearregular}] Let $G_0=G$, and let $r$ be the smallest integer with $\gamma d/2^r\leq 10^4\log d$.
For each $0\leq i<r$ in turn, apply Corollary~\ref{cor:morereg} with $\gamma'=\gamma/2^i$ to $G_i$ to get $G_{i+1}$ with $|V(G_{i+1})|\geq (1-10\gamma/2^i)|V(G_{i+1})|$ and vertex degrees in $[d_{i+1},(1+\gamma/2^{i+1})d_{i+1}]$ for some $d_{i+1}\geq (1-10\gamma/2^i)d_i$. Then, we have 
\[
|V(G_r)|\geq \prod_{i=0}^{r-1}(1-10\gamma/2^i)|V(G)|\geq \Big(1-10\gamma \cdot \sum_{i=0}^{\infty}2^{-i}\Big)|V(G)|=(1-20\gamma)|V(G)|
\]
and the degrees in $G_r$ are between $[d_{r},(1+\gamma/2^{r+1})d_{r}]\subset [d_r,d_r+10^4\log d]$, where 
\[
d_r\geq \prod_{i=0}^{r-1}(1-10\gamma/2^i)d\geq (1-20\gamma)d,
\]
so that $G'=G_r$ satisfies the required conditions with $C=2\cdot 10^4$ as $\log d_r\geq (\log d)/2$.
\end{proof}

Finally, we can prove Lemma~\ref{thm:nearreg}.

\begin{proof}[Proof of Lemma~\ref{thm:nearreg}] Note that we can assume that $C\geq 10$ and $1/d\ll 1$.
Let $\eps=1/1000$ and $k=10^4\log C$. Let $G$ be a graph with vertex degrees between $d$ and $Cd$. Apply Lemma~\ref{lemma:foriteration} iteratively $k$ times to $G$ to get a subgraph $G''$ in which the vertex degrees differ by a factor of at most $\max\{1+10\eps,(1-\eps/4)^{k}\cdot C\}\leq 1+\frac{1}{100}$ with $d(G')\geq (1-2\eps)^{k}d(G)\geq 2d/C'$. Then, applying Lemma~\ref{lem:allverticesregularspanningnearregular} gives the required subgraph.
\end{proof}

\end{document}